\newtheorem{thm}{Theorem}[section]
\newtheorem{prop}[thm]{Proposition}
\newtheorem{lemma}[thm]{Lemma}
\newtheorem{proof}[thm]{Proof}
\newtheorem{rem}[thm]{Remark}
\newtheorem{example}[thm]{Example}
\newcommand{\beq}{\begin{equation}}
\newcommand{\eeq}{\end{equation}}
\numberwithin{equation}{section} \topmargin=-2.0cm \oddsidemargin=1cm
\begin{document}

\title{\textbf{A well-balanced positivity-preserving quasi-Lagrange moving mesh DG method for the shallow water equations}\footnote{M. Zhang and J. Qiu were supported partly by Science Challenge Project (China), No. TZ 2016002 and
National Natural Science Foundation--Joint Fund (China) grant U1630247.
This work was carried out while M. Zhang was visiting the Department of Mathematics, the University of Kansas
under the support by the China Scholarship Council (CSC: 201806310065).
}}
\author{Min Zhang\footnote{School of Mathematical Sciences, Xiamen University,
Xiamen, Fujian 361005, China.
E-mail: minzhang2015@stu.xmu.edu.cn.
},
~Weizhang Huang\footnote{Department of Mathematics, University of Kansas, Lawrence, Kansas 66045, USA. E-mail: whuang@ku.edu.
},
~and Jianxian Qiu\footnote{School of Mathematical Sciences and Fujian Provincial Key Laboratory of Mathematical Modeling and High-Performance Scientific Computing, Xiamen University, Xiamen, Fujian 361005, China.
E-mail: jxqiu@xmu.edu.cn.
}
}
\date{}
\maketitle
\noindent\textbf{Abstract:}
A high-order, well-balanced, positivity-preserving quasi-Lagrange moving mesh DG method is presented for the shallow water equations with non-flat bottom topography. The well-balance property is crucial to the ability of a scheme to simulate perturbation waves over the lake-at-rest steady state such as waves on a lake or tsunami waves in the deep ocean. The method combines a quasi-Lagrange moving mesh DG method, a hydrostatic reconstruction technique, and a change of unknown variables. The strategies in the use of slope limiting, positivity-preservation limiting, and change of variables to ensure the well-balance and positivity-preserving properties are discussed. Compared to rezoning-type methods, the current method treats mesh movement continuously in time and has the advantages that it does not need to interpolate flow variables from the old mesh to the new one and places no constraint for the choice of an update scheme for the bottom topography on the new mesh.
A selection of one- and two-dimensional examples are presented to demonstrate the well-balance property, positivity preservation, and high-order accuracy of the method and its ability to adapt the mesh according to features in the flow and bottom topography.

\vspace{5pt}

\noindent\textbf{The 2020 Mathematics Subject Classification:} 65M50, 65M60, 76B15, 35Q35

\vspace{5pt}

\noindent\textbf{Keywords:}
well-balance, positivity-preserving, high-order accuracy,
quasi-Lagrange moving mesh, DG method,
shallow water equations

\newcommand{\h}{\hspace{1.cm}}
\newcommand{\hh}{\hspace{2.cm}}
\newtheorem{yl}{\hspace{1.cm}Lemma}
\newtheorem{dl}{\hspace{1.cm}Theorem}
\renewcommand{\sec}{\section*}
\renewcommand{\l}{\langle}
\renewcommand{\r}{\rangle}
\newcommand{\be}{\begin{eqnarray}}
\newcommand{\ee}{\end{eqnarray}}

\normalsize \vskip 0.2in
\newpage

\section{Introduction}

The shallow water equations (SWEs) model the water flow over a surface
such as hydraulic jumps/shocks and open-channel flows
in the ocean/hydraulic engineering.
They can be derived by integrating the Navier-Stokes equations in depth
under the hydrostatic assumption when the depth of the flow is small compared
to its horizontal dimensions.
The two-dimensional SWEs can be cast in conservative form as
\begin{equation}
\label{cswe-2d}
V_t +  \nabla \cdot \mathcal{F}(V)  = \mathcal{S}(h,B),
\end{equation}
where $h$ is the depth of water, $V = (h, m, w)^T$ denote the conservative variables,
$(m, w) = (hu, hv)$ are the discharges,
$(u,v)$ are the velocities,
$B = B(x,y)$ is the bottom topography assumed to be a given time-independent function,
$g$ is the gravitation acceleration,
and the flux $\mathcal{F}(V)$ and the source $\mathcal{S}(h,B)$ are given by
\begin{equation}\label{cswe-2d-flux}
\mathcal{F}(V) =
\begin{bmatrix*}[l]
  m&w\\
  \frac{m^2}{h}+\frac{1}{2}gh^2 &\frac{mw}{h}\\
  \frac{mw}{h}&\frac{w^2}{h}+\frac{1}{2}gh^2\\
\end{bmatrix*} ,
\quad
\mathcal{S}(h,B) = \begin{bmatrix*}[l] 0\\ -hgB_x\\ -hgB_y\end{bmatrix*}.
\end{equation}
An illustration of $h$, $B$, and the free water surface level $\eta = h + B$
is given in Fig.~\ref{Fig:swe-model}.
\begin{figure}[htbp]
  \centering
  \includegraphics[width=0.5\textwidth]{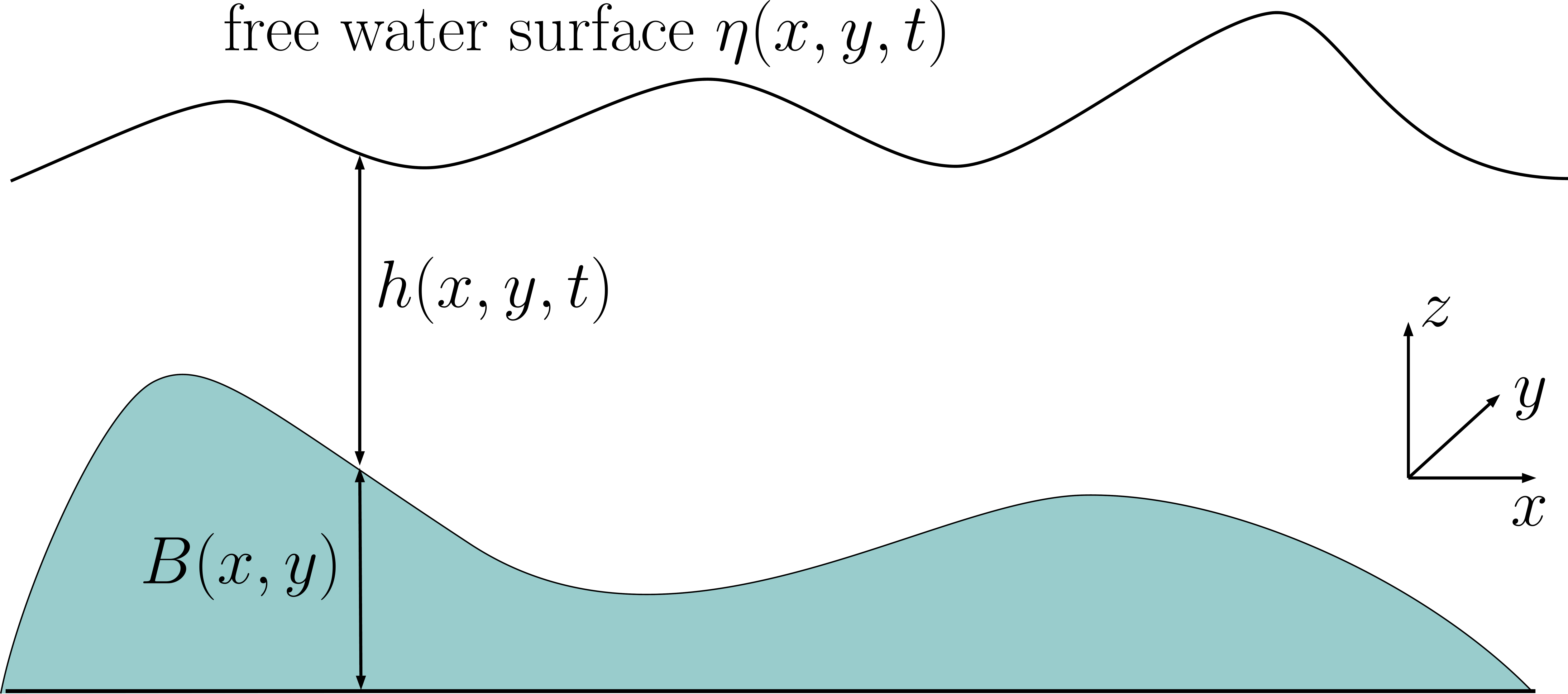}
  \caption{An illustration of the water depth $h$, the bottom topography $B$, and the free water surface level $\eta = h + B$.}
  \label{Fig:swe-model}
\end{figure}

We are interested in the preservation of the ``lake-at-rest'' steady state solution
\begin{equation}
\label{still-water-2d}
m=hu=0, \quad w=hv=0,
\quad
\eta = h+B = C
\end{equation}
where $C$ is a constant.
Many physical phenomena can be described as small perturbations of this steady-state solution,
including waves on a lake or tsunami waves in the deep ocean.
They are difficult, if not impossible, to capture by a numerical method that does not preserve (\ref{still-water-2d}),
on an unrefined mesh. Thus, for the numerical simulation of perturbation waves over
the lake-at-rest steady state, it is important to develop schemes that preserve (\ref{still-water-2d}).
These schemes are said in literature to be well-balanced or have the well-balance property or the C-property. Bermudez and Vazquez \cite{Bermudez-Vazquez-1994} first introduced a concept
of the ``exact C-property". Since then, a number of well-balanced numerical methods
have been developed for the SWEs, e.g., finite volume methods
\cite{Audusse-etal-2004Siam, Bermudez-Vazquez-1994, LeVeque-1998JCP, Zhou-etal-2001JCP},
finite difference/volume WENO methods \cite{Lu-Qiu-Wang-2010JCM,Xing-Shu-2005JCP,Xing-Shu-2006JCP, Xing-Shu-2006CiCP},
and discontinuous Galerkin (DG) methods \cite{Eskilsson-Sherwin-2004,Ern-etal-2008,Li-etal-2018JCAM,
Tumolo-2013JCP, Xing-Shu-2006JCP, Xing-Shu-2006CiCP,Xing-Zhang-Shu-2010,Xing-Zhang-2013JSC}.
DG methods have the advantages of high-order accuracy, high parallel efficiency, and flexibility
for $hp$-adaptivity and arbitrary geometry and meshes.

The SWEs exhibit interesting structures including hydraulic jumps/shocks, rarefaction waves,
and stationary state transitions. Resolving them in the numerical solution
requires fine spatial spacings and thus mesh adaptation becomes a useful tool in improving
the computational accuracy and efficiency. Studies have been made in this direction in the past.
For example, Tang \cite{Tang-2004} developed an adaptive moving structured mesh
kinetic flux-vector splitting (KFVS) scheme for the SWEs and showed that the method
leads to more accurate solutions than methods based on fixed meshes although the well-balance
property was not addressed specifically in the work.
Lamby et al. \cite{Lamby-etal-2005} proposed an adaptive multi-scale finite volume method for the SWEs
with source terms, combining a B-spline based quadtree grid generation
strategy and a fully adaptive multi-resolution method.
Remacle et al. \cite{Remacle-etal-2006} studied an $h$-adaptive meshing procedure
for the transient computation of the SWEs.
Zhou et al. \cite{Zhou-etal-2013} proposed a well-balanced adaptive moving mesh generalized Riemann problem (GRP)-based finite volume scheme for the SWEs with irregular bottom topography.
Donat et al. \cite{Donat-etal-2014JCP} developed a well-balanced shock capturing adaptive
mesh refinement (AMR) scheme for shallow water flows.
Arpaia and Ricchiuto \cite{Arpaia-Ricchiuto-2018} considered several arbitrary Lagrangian-Eulerian (ALE) formulations of the SWEs on moving meshes and provided a discrete analog in the well-balanced finite volume and residual distribution framework.
Most recently, a high-order, well-balanced, positivity-preserving and rezoning-type adaptive moving mesh DG method
was proposed in \cite{Zhang-Huang-Qiu-2020arXiv} for the SWEs. It requires that
both the flow variables and bottom topography be updated from the old mesh to the new one
at each time step using the same interpolation scheme. A positivity-preserving DG-interpolation scheme
\cite{Zhang-Huang-Qiu-2019arXiv} has been used in \cite{Zhang-Huang-Qiu-2020arXiv}
for the purpose.

We consider here a quasi-Lagrange approach of adaptive moving mesh methods where the mesh is considered to move continuously between time steps and interpolation of the flow variables between the old mesh and the new one is unnecessary.
The quasi-Lagrange moving mesh DG (QLMM-DG) method has been  used successfully
for solving hyperbolic conservation laws \cite{Luo-Huang-Qiu-2019JCP}
and the radiative transfer equation \cite{Zhang-Cheng-Huang-Qiu-2020CiCP}.
Our focus here is on its application to the SWEs and the well-balance property,
and we shall use a change of unknown variables.
More specifically, we use the new variables $(\eta= h + B,\, hu,\, hv)$ instead of the original ones
$(h,\, hu,\, hv)$ and rewrite the flux \eqref{cswe-2d-flux} into a special form
(cf. \eqref{swe-2d-FUh}) by replacing only some of $h$'s with $\eta$.
In the construction of the DG numerical flux, we modify the value of $h$ using
the hydrostatic reconstruction technique of
\cite{Audusse-etal-2004Siam,Xing-Shu-2006CiCP,Xing-Zhang-Shu-2010,Xing-Zhang-2013JSC}
but keep $\eta$ unmodified.
We will show that the new QLMM-DG method, in both semi-discrete and fully discrete forms, preserves
\eqref{still-water-2d} while maintaining the high-order accuracy of DG methods.
We will also show that a QLMM-DG scheme can be developed based on the SWEs
in the original variables but the resulting scheme is well-balanced only in semi-discrete form.

It is worth pointing out that the bottom topography $B$ needs to be updated on the new mesh
at each time step in the current method. Nevertheless, unlike the rezoning moving mesh DG method
in \cite{Zhang-Huang-Qiu-2020arXiv}, it places no constraint on the choice of the scheme
for updating $B$ to attain the well-balance property.
We use $L^2$-projection for this purpose in our computation
since it is straightforward and economic to implement.

Another challenge in the numerical solution of the SWEs is to preserve the nonnegativity of the water depth in the computation.
Following \cite{Xing-Zhang-Shu-2010,Xing-Zhang-2013JSC}, we apply a linear scaling positivity-preserving (PP) limiter \cite{Liu-Osher1996,ZhangShu2010,ZhangXiaShu2012} to the water depth.
However, the PP limiter destroys the well-balance property.
To recover the property, we propose to make a high-order correction to the approximation of the bottom topography
according to the modifications in the water depth due to the PP limiting.
Numerical examples show that this strategy works out well.

For the mesh adaptation, we use a moving mesh PDE (MMPDE) method \cite{Huang-Sun-2003JCP,Huang-Russell-2011,Huang-Kamenski-2015JCP}
which is known to produce meshes free of tangling \cite{Huang-Kamenski-2018MC}.
The MMPDE method uses a metric tensor to determine the size, shape, and orientation of the mesh elements
throughout the physical domain.
Following \cite{Zhang-Huang-Qiu-2020arXiv}, we compute the metric tensor
based on the equilibrium variable $\mathcal{E}=\frac{1}{2}(u^2+v^2)+g(h+B)$ and the water depth $h$
in the hope that the mesh adapts to the features in the water flow and bottom topography.

An outline of the paper is organized as follows.
\S\ref{sec:WB-MMDG} is devoted to the description of the QLMM-DG method based on the new variables $(\eta,hu,hv)$ and its well-balance property.
The discussion of a moving mesh method based on the formulation in the original variables $(h,hu,hv)$ is presented
in \S\ref{sec:cswe-QLMMDG}.
The generation of an adaptive moving mesh using the MMPDE moving mesh method is
described in Appendix \ref{sec:mmpde}.
In \S\ref{sec:numerical-results}, a selection
of one- and two-dimensional examples are presented and analyzed.
Finally, \S\ref{sec:conclusions} contains the conclusions.

\section{The well-balanced QLMM-DG method}
\label{sec:WB-MMDG}

In this section we describe the high-order well-balanced positivity-preserving QLMM-DG method for the numerical solution of the SWEs with non-flat bottom topography. This method combines
the quasi-Lagrange moving mesh DG method of \cite{Luo-Huang-Qiu-2019JCP,Zhang-Cheng-Huang-Qiu-2020CiCP}
with the hydrostatic reconstruction technique \cite{Audusse-etal-2004Siam,Xing-Shu-2006CiCP,Xing-Zhang-2013JSC} and
a change of unknown variables to attain the well-balance property.
The method is described here only in two dimensions. It has a similar form in one dimension.

We use here the new variables $ U = (\eta, m,w)^T$ instead of the original ones
$ V = (h,m,w)^T$, where $\eta = h + B$. We rewrite the SWEs \eqref{cswe-2d} and \eqref{cswe-2d-flux} into
\begin{equation}\label{swe-2d-another}
U_t +  \nabla \cdot \mathbf{F}(U,h)  = S(\eta,B),
\end{equation}
where
\begin{equation}
\label{swe-2d-FUh}
\begin{cases}
\begin{split}
&S(\eta,B) = \big(0,-g\eta B_x,-g\eta B_y \big)^T,
\\&\mathbf{F}(U,h) =
 \begin{bmatrix*}[l]
  m&w\\
  \frac{m^2}{h}+\frac{1}{2}g(2 h\eta-\eta^2 ) &\frac{mw}{h}\\
  \frac{mw}{h}&\frac{w^2}{h}+\frac{1}{2}g(2 h\eta-\eta^2 )\\
 \end{bmatrix*}
=
 \begin{bmatrix*}[l]
 \bm{f}^1(U,h)\\
 \bm{f}^2(U,h)\\
 \bm{f}^3(U,h)\\
\end{bmatrix*} .
\end{split}
\end{cases}
\end{equation}
Here, the dependence of $\mathbf{F}$ on $h$ is expressed explicitly although
$h$ is a linear function of $\eta$. This is because the value of $h$ will be modified
but $\eta$ is kept unmodified in the computation of the numerical flux to attain the well-balance property.
Moreover, not all of $h$'s in the flux have been replaced by $\eta - B$, i.e.,
some are replaced with the new variable $\eta$ and some remain the same.
Obviously, there are many of these combinations and thus many forms of the flux; for example, see (\ref{swe-2d-FUh}
and (\ref{swe-2d-etamw-flux}). These forms are equal to each other mathematically but can be different numerically.
Indeed, we will show that {\it the form (\ref{swe-2d-FUh})
leads to a well-balanced scheme in both semi-discrete and fully discrete forms
while it is unclear if a well-balanced
scheme can be obtained from (\ref{swe-2d-etamw-flux})} (cf. Remark~\ref{another-form}).
We will also show in \S\ref{sec:cswe-QLMMDG} that a QLMM-DG scheme can be developed based
on \eqref{cswe-2d} and \eqref{cswe-2d-flux} using the original variables
but the resulting scheme is well-balanced only in semi-discrete form.

For the moment we assume that a sequence of simplicial meshes having the same number of elements and vertices
and the same connectivity, $\mathcal{T}^0_h,\,\mathcal{T}^1_h,\, ...$, have been obtained for time instants
$t_0,\, t_1,\, ...$. The generation of these meshes is discussed in Appendix~\ref{sec:mmpde}.
Recall that we consider here the quasi-Lagrange approach of moving mesh methods where
the mesh is considered to move continuously in time. To this end,
for any $n \ge 0$, we define $\mathcal{T}_h(t),~ t\in(t_n, t_{n+1}]$, as a mesh having the same
number of elements and vertices and the same connectivity as $\mathcal{T}_h^n$ and $\mathcal{T}_h^{n+1}$
and having the vertices and nodal velocities given by
\begin{equation}\label{location+speed}
\begin{split}
&\bm{x}_i(t)= \frac{t-t_n}{\Delta t_n}\bm{x}_i^{n}+\frac{t_{n+1}-t}{\Delta t_n}\bm{x}_i^{n+1},
\quad i = 1,...,N_v
\\&\dot{\bm{x}}_i = \frac{\bm{x}_i^{n+1}-\bm{x}_i^{n}}{\Delta t_n}, \quad i = 1,...,N_v.
\end{split}
\end{equation}
We also define the piecewise linear mesh velocity function $\dot{\bm{X}}(\bm{x},t) =\big(\dot{X}, \dot{Y}\big)$ as
\begin{equation}\label{Xdot-1}
\dot{\bm{X}}(\bm{x},t)= \sum_{i=1}^{N_v} \dot{\bm{x}}_i \phi_i(\bm{x},t),
\end{equation}
where $ \phi_i(\bm{x},t)$ is the linear basis function associated with the vertex $\bm{x}_i$.
For any $K \in \mathcal{T}_h(t)$, let $\phi_{K}^j(\bm{x},t),~ j=1,\cdots, n_b\equiv (k+1)(k+2)/2$
be the basis functions of the set
of polynomials of degree at most $k\ge 1$ on $K$, $P^{k}(K)$.
The DG finite element space is defined as
\begin{equation}\label{Vh}
\mathcal{V}^{k}_h(t)= \{q\in L^2(\Omega):\; q|_{K}\in P^{k}(K),
\; \forall K\in \mathcal{T}_h(t) \}.
\end{equation}

\subsection{The semi-discrete well-balanced QLMM-DG scheme}

Multiplying \eqref{swe-2d-another} with a test function $\phi\in \mathcal{V}^{k}_h(t)$,
integrating the resulting equation over $K$, and using the Reynolds transport theorem, we have
\begin{equation}
\label{re-test-1}
\frac{d}{d t}\int_{K}U \phi d\bm{x}+ \int_{K}\phi\nabla \cdot \big{(}- U \dot{\bm{X}}\big{)}d\bm{x}
+ \int_{K}\phi\nabla \cdot \mathbf{F}(U,h) d\bm{x}= \int_{K}S(\eta,B)\phi d\bm{x}.
\end{equation}
Recall that we have $h = \eta - B$.
Denote
\begin{equation}
\label{H-def}
\begin{split}
\mathbf{H}(U,h)
&= \mathbf{F}(U,h) - U \dot{\bm{X}}
=
 \begin{bmatrix*}[l]
 \bm{f}^1(U,h) -\eta\dot{\bm{X}} \\
 \bm{f}^2(U,h) -m\dot{\bm{X}}\\
 \bm{f}^3(U,h)-w\dot{\bm{X}}\\
 \end{bmatrix*} .
\end{split}
\end{equation}
From the divergence theorem, we can rewrite (\ref{re-test-1}) as
\begin{equation}
\label{re-test}
\frac{d}{d t}\int_{K}U \phi d\bm{x} -  \int_{K} \mathbf{H}(U,h) \cdot \nabla \phi d\bm{x}
+ \sum_{e_K \in \partial K}\int_{e_K}\phi \mathbf{H}(U,h) \cdot \bm{n}d s= \int_{K}S(\eta,B)\phi d\bm{x},
\end{equation}
where $\bm{n} = (n_x,n_y)^T$ is the outward unit normal to the boundary $ \partial K$.

The Jacobian matrix of the vector-valued function $\mathbf{H}\cdot \bm{n}$ with respect to $U$
(with $h$ being considered as a linear function of $\eta$) reads as
\[
\label{Jmat-H}
 \begin{bmatrix*}[l]
    -\dot{X}n_x - \dot{Y}n_y & n_x & n_y \\
    (c^2-u^2)n_x-uvn_y & (2u-\dot{X})n_x+(v-\dot{Y})n_y & un_y\\
    (c^2-v^2)n_y-uvn_x & vn_x & (u-\dot{X})n_x+(2v-\dot{Y})n_y \\
 \end{bmatrix*},
\]
where $c = \sqrt{gh}$ is the sound speed.
The eigenvalues of this matrix can be found as
\begin{equation}\label{eigen-H}
\begin{cases}
\begin{split}
&\lambda^{1}(U,h,\dot{\bm{X}})=(u-\dot{X})n_x +(v-\dot{Y})n_y -c,
\\
&\lambda^{2}(U,h,\dot{\bm{X}})=(u-\dot{X})n_x +(v-\dot{Y})n_y,
\\
&\lambda^{3}(U,h,\dot{\bm{X}})=(u-\dot{X})n_x +(v-\dot{Y})n_y +c.
\end{split}
\end{cases}
\end{equation}

For any variable $q_h$ on the boundary $\partial K$, we denote
by $q_{h,K}^{int}$ and $q_{h,K}^{ext}$ as the values of $q_h$ on $\partial K$ from the interior and exterior of $K$, respectively.
We also note that the bottom topography function $B = B(\bm{x})$ needs to be projected
into the finite element space $\mathcal{V}_h^{k}(t)$ and denote it by $B_h$.
We use the global Lax-Friedrichs numerical flux to approximate
$\mathbf{H}(U,h)\cdot \bm{n}^e_K$ for $\bm{x} \in e_K \subset \partial K$, i.e.,
\begin{equation}\label{lf-flux}
\begin{split}
\hat{ \mathbf{H}}|_{e_K}
&=\hat{ \mathbf{H}}(U_{h,K}^{int},h_{h,K}^{int};U_{h,K}^{ext},h_{h,K}^{ext}; \bm{n}^e_K)
\\&=
\frac{1}{2}\Big{(}
\big{(}\mathbf{H}(U_{h,K}^{int},h_{h,K}^{int})+\mathbf{H}(U_{h,K}^{ext},h_{h,K}^{ext}) \big{)} \cdot \bm{n}_K
-\alpha_h (U_{h,K}^{ext}-U_{h,K}^{int})\Big{)},
\end{split}
\end{equation}
where $h_{h,K}^{int}= \eta_{h,K}^{int} - B_{h,K}^{int}$, $h_{h,K}^{ext}= \eta_{h,K}^{ext} - B_{h,K}^{ext}$, and
\[
\alpha_h = \max\limits_{K, e_K, m}\left ( \max\limits \Big (
|\lambda^{m}(U_{h,K}^{int},h_{h,K}^{int},\dot{\bm{X}}|_{e_K})|,
\; |\lambda^{m}(U_{h,K}^{ext},h_{h,K}^{ext},\dot{\bm{X}}|_{e_K})|\Big{)} \right ).
\]
We can then define a semi-discrete DG approximation $U_h \in \mathcal{V}^{k}_h(t)$
for \eqref{swe-2d-another} such that
\begin{equation}\label{semi-QLMMDG-2d}
\begin{split}
&\frac{d}{d t}\int_{K}U_h \phi d\bm{x}
 -\int_{K} \mathbf{H}(U_h, h_h)\cdot\nabla \phi d\bm{x}
 +\sum_{e_K \in \partial K}\int_{e_K} \phi \hat{ \mathbf{H}}|_{e_K} ds
\\&~~~~~~~~~~~~~~~~~~
 = \int_{K}S(\eta_h,B_h)\phi d\bm{x}, \quad \forall \phi \in \mathcal{V}^{k}_h(t), \quad \forall  K\in \mathcal{T}_h(t) ,
\end{split}
\end{equation}
where $h_h = \eta_h - B_h$.
In actual computation, the area and line integrals in the above equation
are calculated using Gaussian quadrature rules.
Define the residual associated with this scheme as
\begin{equation}\label{residual-2d}
\begin{split}
R_{h,K}(t)
=&\frac{d}{d t}\int_{K}U_h \phi d\bm{x}  - \int_{K}\phi\nabla \cdot( U_h \dot{\bm{X}} ) d\bm{x}
\\=& \int_{K}S(\eta_h,B_h)\phi d\bm{x} +\int_{K} \mathbf{H}(U_h,h_h)\cdot\nabla \phi d\bm{x}
\\&
- \sum_{e_K\in \partial K}\int_{e_K} \phi \hat{ \mathbf{H}}|_{e_K} ds
- \int_{K}\phi\nabla \cdot( U_h \dot{\bm{X}} ) d\bm{x},\quad \forall  K\in \mathcal{T}_h(t) .
\end{split}
\end{equation}
Note that a term is added in the definition to account for the effects of mesh movement.
It is not difficult to show that (\ref{still-water-2d}) is preserved if this residual vanishes
for the lake-at-rest steady state. Unfortunately, it can be verified that the latter does not hold in general
and thus the scheme \eqref{semi-QLMMDG-2d} is not well-balanced.
The main issue is that the numerical flux $\hat{ \mathbf{H}}|_{e_K}$ does not reduce
to $\mathbf{H}\big( U_{h,K}^{int},h_{h,K}^{int}\big)\cdot \bm{n}_K^e$
for the lake-at-rest steady state and the line integrals cannot be converted back into an area
integral involving $\mathbf{H}$ under the divergence theorem.

To attain the well-balance property, we use the hydrostatic reconstruction technique
of \cite{Audusse-etal-2004Siam,Xing-Shu-2006CiCP,Xing-Zhang-2013JSC}
to construct a new numerical flux $\hat{ \mathbf{H}}^{*}|_{e_K}$ from $\hat{ \mathbf{H}}|_{e_K}$.
To this end, we first compute
\begin{equation}
\label{reconst-2d-h}
\begin{cases}
\begin{split}
&h_{h,K}^{*,int}|_{e_K} =\max \Big( 0, \eta_{h,K}^{int}|_{e_K}-\max\big(B_{h,K}^{int}|_{e_K},B_{h,K}^{ext}|_{e_K}\big)\Big),
\\&h_{h,K}^{*,ext}|_{e_K} =\max \Big( 0, \eta_{h,K}^{ext}|_{e_K}-\max\big(B_{h,K}^{int}|_{e_K},B_{h,K}^{ext}|_{e_K}\big)\Big).
\end{split}
\end{cases}
\end{equation}
Notice that the value of $B_h$ on $e_K$ is taken as $\max\big(B_{h,K}^{int}|_{e_K},B_{h,K}^{ext}|_{e_K}\big)$.
Moreover, $h_{h,K}^{*,int}|_{e_K}$ and $h_{h,K}^{*,ext}|_{e_K}$ are chosen
to guarantee $h_{h,K}^{*,int}|_{e_K} \ge 0$ and $h_{h,K}^{*,ext}|_{e_K}\ge 0$ while trying to satisfy
\begin{equation*}
\begin{cases}
\begin{split}
&h_{h,K}^{*,int} + \max\big(B_{h,K}^{int}|_{e_K},B_{h,K}^{ext}|_{e_K}\big) = \eta_{h,K}^{int}|_{e_K},
\\&
h_{h,K}^{*,ext} + \max\big(B_{h,K}^{int}|_{e_K},B_{h,K}^{ext}|_{e_K}\big)=\eta_{h,K}^{ext}|_{e_K}.
\end{split}
\end{cases}
\end{equation*}
Then, the interior and exterior values of $U=(\eta,m,w)^T$ are modified as
\begin{equation}
\label{reconst-2d-U}
\begin{cases}
\begin{split}
 &  \eta_{h,K}^{*,int} =   \eta_{h,K}^{int} ,\qquad\qquad \eta_{h,K}^{*,ext} =   \eta_{h,K}^{ext} ,
\\&  m_{h,K}^{*,int} =\frac{h_{h,K}^{*,int}}{h_{h,K}^{int}}m_{h,K}^{int}, \quad  m_{h,K}^{*,ext} =\frac{h_{h,K}^{*,ext}}{h_{h,K}^{ext}}m_{h,K}^{ext},
\\&  w_{h,K}^{*,int} =\frac{h_{h,K}^{*,int}}{h_{h,K}^{int}}w_{h,K}^{int}, \quad  w_{h,K}^{*,ext} =\frac{h_{h,K}^{*,ext}}{h_{h,K}^{ext}}w_{h,K}^{ext}.
\end{split}
\end{cases}
\end{equation}
It is worth pointing out that {\em the value of $h$ has been modified but that of $\eta$ remains unmodified.}
Finally, the new flux $\hat{ \mathbf{H}}^{*}$ on the edge $e_K \in \partial K$ is given by
\begin{align}
\label{eK-flux}
&\hat{ \mathbf{H}}^{*}|_{e_K} =
\hat{ \mathbf{H}}(U_{h,K}^{*,int},h_{h,K}^{*,int};U_{h,K}^{*,ext},h_{h,K}^{*,ext};\bm{n}^e_K)+
\Delta^{*}_{e_K}\cdot \bm{n}_K^e,
\end{align}
where
\begin{equation}
\label{correct-2d}
\Delta^{*}_{e_K}= \begin{bmatrix*}[l]
 0&0\\
  g \eta_{h,K}^{int}|_{e_K}\big(h_{h,K}^{int}|_{e_K}-h_{h,K}^{*,int}|_{e_K}\big) &0\\
   0& g \eta_{h,K}^{int}|_{e_K}\big(h_{h,K}^{int}|_{e_K}-h_{h,K}^{*,int}|_{e_K}\big)\\
 \end{bmatrix*}.
\end{equation}
The correction term is chosen to satisfy (\ref{upwind-prpty}) below.

\begin{lemma}\label{Hstar2Hint}
For the lake-at-rest steady state, the numerical flux $\hat{ \mathbf{H}}^{*}|_{e_K}$ defined in \eqref{eK-flux} and \eqref{correct-2d} satisfies
\begin{equation}
\label{upwind-prpty}
\hat{ \mathbf{H}}^{*}|_{e_K} = \mathbf{H}\big( U_{h,K}^{int},h_{h,K}^{int}\big)\cdot \bm{n}_K^e.
\end{equation}
\end{lemma}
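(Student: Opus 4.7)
The plan is to specialize every ingredient in \eqref{eK-flux}--\eqref{correct-2d} to the lake-at-rest state, show that the hydrostatic reconstruction makes the interior and exterior arguments of $\hat{\mathbf{H}}$ coincide (so the Lax--Friedrichs average and dissipation both collapse to a single evaluation), and then verify by direct substitution that the residual hydrostatic-pressure gap between $h_{h,K}^{int}$ and $h_{h,K}^{*,int}$ is exactly what $\Delta^{*}_{e_K}$ was designed to cancel.

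First I would substitute $m=w=0$ and $\eta\equiv C$ into \eqref{reconst-2d-h}--\eqref{reconst-2d-U}. Because $\eta_{h,K}^{int}|_{e_K}=\eta_{h,K}^{ext}|_{e_K}=C$, the two reconstructed heights collapse to the common nonnegative value $C-\max(B_{h,K}^{int}|_{e_K},B_{h,K}^{ext}|_{e_K})$; the scaling in \eqref{reconst-2d-U} sends zero momenta to zero; and $\eta^{*,int}=\eta^{*,ext}=C$. Thus $U_{h,K}^{*,int}=U_{h,K}^{*,ext}$ and $h_{h,K}^{*,int}=h_{h,K}^{*,ext}$, so the jump term $-\alpha_h(U^{*,ext}-U^{*,int})$ in \eqref{lf-flux} vanishes and the symmetric average degenerates to the single evaluation $\mathbf{H}(U_{h,K}^{*,int},h_{h,K}^{*,int})\cdot\bm{n}_K^e$.

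The key observation for the rest of the proof is that $U_{h,K}^{*,int}=U_{h,K}^{int}$ at rest, since the reconstruction leaves $\eta$ untouched and the momenta are identically zero. Consequently the mesh-velocity piece $U\dot{\bm{X}}$ inside $\mathbf{H}$ (see \eqref{H-def}) is completely unaffected by the reconstruction, and the only discrepancy between $\mathbf{H}(U^{*,int},h^{*,int})$ and $\mathbf{H}(U^{int},h^{int})$ lives in the pressure entry $\tfrac{1}{2}g(2h\eta-\eta^2)$ on the diagonal of the momentum rows of $\mathbf{F}$. Subtracting, with $\eta^{int}=\eta^{*,int}=C$ so the $\eta^2$ piece drops out, produces exactly $g\eta_{h,K}^{int}(h_{h,K}^{int}-h_{h,K}^{*,int})$ in the $(2,x)$ and $(3,y)$ slots and zero elsewhere, i.e.\ precisely $\Delta^{*}_{e_K}\cdot\bm{n}_K^e$ from \eqref{correct-2d}. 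Adding this correction restores $\mathbf{H}(U_{h,K}^{int},h_{h,K}^{int})\cdot\bm{n}_K^e$ and yields \eqref{upwind-prpty}. I do not foresee any genuine analytic obstacle; the only subtlety, already built into the design of $\Delta^{*}_{e_K}$, is that it uses the local $\eta_{h,K}^{int}$ rather than the global constant $C$, which is what keeps the identity pointwise at the edge quadrature nodes and independent of the particular rest level $C$.
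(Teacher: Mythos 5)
Your proposal is correct and follows essentially the same route as the paper: specialize to $(\eta_h,m_h,w_h)=(C,0,0)$, note that \eqref{reconst-2d-h}--\eqref{reconst-2d-U} make the starred interior and exterior states coincide so the Lax--Friedrichs flux \eqref{lf-flux} collapses by consistency to $\mathbf{H}(U_{h,K}^{*,int},h_{h,K}^{*,int})\cdot\bm{n}_K^e$, and then check that $\Delta^{*}_{e_K}\cdot\bm{n}_K^e$ exactly supplies the pressure difference $g\eta_{h,K}^{int}\big(h_{h,K}^{int}-h_{h,K}^{*,int}\big)$ since $\eta$ is unmodified. The only nitpick is that the common reconstructed depth is $\max\big(0,\,C-\max(B_{h,K}^{int},B_{h,K}^{ext})\big)$ rather than $C-\max(B_{h,K}^{int},B_{h,K}^{ext})$ outright, but your argument uses only the equality of the interior and exterior values, so this does not affect the proof.
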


\begin{proof}
For the lake-at-rest steady state $(\eta_h,m_h,w_h) =(C,0,0)$, where $C$ is a constant,
we have
\begin{align*}
&\eta_{h,K}^{int}|_{e_K}=\eta_{h,K}^{ext}|_{e_K}=C, \quad
m_{h,K}^{int}|_{e_K}= m_{h,K}^{ext}|_{e_K}=0,
\quad w_{h,K}^{int}|_{e_K}= w_{h,K}^{ext}|_{e_K}=0.
\end{align*}
From the definition \eqref{reconst-2d-h} and \eqref{reconst-2d-U}, we have
\begin{align*}
\label{pre-consi}
& h_{h,K}^{*,int}|_{e_K}=h_{h,K}^{*,ext}|_{e_K},\quad U_{h,K}^{*,int}|_{e_K}=U_{h,K}^{*,ext}|_{e_K},
\\
& \eta_{h,K}^{*,int}|_{e_K}=\eta_{h,K}^{*,ext}|_{e_K}=C, \quad m_{h,K}^{*, int}|_{e_K}=m_{h,K}^{*, ext}|_{e_K}=0, \quad w_{h,K}^{*, int}|_{e_K} = w_{h,K}^{*, ext}|_{e_K} = 0.
\end{align*}
Using these and the consistency of the numerical flux, from (\ref{swe-2d-FUh}) and \eqref{eK-flux} we have
\begin{equation*}
\begin{split}
\hat{ \mathbf{H}}^{*}|_{e_K}
&=
\hat{ \mathbf{H}}(U_{h,K}^{*,int},h_{h,K}^{*,int};U_{h,K}^{*,ext},h_{h,K}^{*,ext};\bm{n}^e_K)+
\Delta^{*}_{e_K}\cdot \bm{n}_K^e
\\&=\mathbf{H}\big(U_{h,K}^{*,int},h_{h,K}^{*,int}\big)\cdot \bm{n}_K^e+
\Delta^{*}_{e_K}\cdot \bm{n}_K^e
\\&= \begin{bmatrix*}[l]
   -\big(\dot{X}\eta_{h,K}^{*,int}\big)\big|_{e_K} &-\big(\dot{Y}\eta_{h,K}^{*,int}\big)\big|_{e_K}  \\
  \frac{g}{2}\Big (2\eta_{h,K}^{*,int}h_{h,K}^{*,int}-\big(\eta_{h,K}^{*,int}\big)^2\Big)\Big|_{e_K}&0\\
  0&  \frac{g}{2}\Big (2\eta_{h,K}^{*,int}h_{h,K}^{*,int}-\big(\eta_{h,K}^{*,int}\big)^2\Big)\Big|_{e_K}
 \end{bmatrix*}\cdot \bm{n}_K^e
\\&+
 \begin{bmatrix*}[l]
 0&0\\
  \big(g \eta_{h,K}^{int}(h_{h,K}^{int}-h_{h,K}^{*,int})\big)\big|_{e_K} &0\\
   0& \big(g \eta_{h,K}^{int}(h_{h,K}^{int}-h_{h,K}^{*,int})\big)\big|_{e_K}\\
 \end{bmatrix*}\cdot \bm{n}_K^e
\\&=
 \begin{bmatrix*}[l]
   -\big(\dot{X}\eta_{h,K}^{int}\big)\big|_{e_K} &-\big(\dot{Y}\eta_{h,K}^{int}\big)\big|_{e_K}  \\
  \frac{g}{2}\Big(2\eta_{h,K}^{int}h_{h,K}^{int}-\big(\eta_{h,K}^{int}\big)^2\Big)\Big|_{e_K}&0\\
  0&  \frac{g}{2}\Big (2\eta_{h,K}^{int}h_{h,K}^{int}-\big(\eta_{h,K}^{int}\big)^2\Big)\Big|_{e_K}
 \end{bmatrix*}
 \cdot \bm{n}_K^e
\\&
=
\mathbf{H}\big(U_{h,K}^{int},h_{h,K}^{int}\big)\cdot\bm{n}_K^e.
\end{split}
\end{equation*}
\hfill \end{proof}

\vspace{10pt}

Replacing $\hat{ \mathbf{H}}|_{e_K}$ by $\hat{ \mathbf{H}}^{*}|_{e_K}$ in \eqref{semi-QLMMDG-2d},
we obtain the semi-discrete QLMM-DG scheme, i.e., to find $U_h \in \mathcal{V}^{k}_h(t)$ such that
\begin{equation}
\label{wb-semi-QLMMDG-2d}
\begin{split}
&\frac{d}{d t}\int_{K}U_h \phi d\bm{x}
-\int_{K} \mathbf{H}(U_h, h_h)\cdot\nabla \phi d\bm{x}
+\sum_{e_K\in \partial K }\int_{e_K} \phi \hat{ \mathbf{H}}^{*}|_{e_K} ds
\\&~~~~~~~~~~~~~~~~~~~~~~~~
 = \int_{K}S(\eta_h,B_h)\phi d\bm{x}, \quad \forall \phi \in \mathcal{V}^{k}_h(t),\quad \forall  K\in \mathcal{T}_h(t)
\end{split}
\end{equation}
where $h_h = \eta_h - B_h$.
Denote the residual for this scheme as
\begin{equation}\label{wb-residual-2d}
\begin{split}
R^{*}_{h,K}(t)=&R^{*}(U_h,h_h,\phi,B_h)|_{K}
\\=& \int_{K}S(\eta_h,B_h)\phi d\bm{x}
+\int_{K} \mathbf{H}(U_h,h_h)\nabla \phi d\bm{x}
\\&- \sum_{e_K\in \partial K}\int_{e_K} \phi \hat{ \mathbf{H}}^{*}|_{e_K} ds
- \int_{K}\phi\nabla \cdot( U_h \dot{\bm{X}} ) d\bm{x}.
\end{split}
\end{equation}

\begin{prop}
\label{Rk0-prop}
If all integrals in \eqref{wb-residual-2d} are computed exactly (with suitable Gaussian
quadrature rules), the residual of the semi-discrete QLMM-DG scheme \eqref{wb-semi-QLMMDG-2d}
with the numerical flux \eqref{eK-flux} and \eqref{correct-2d} vanishes
for the lake-at-rest steady state and thus the scheme \eqref{wb-semi-QLMMDG-2d} is well-balanced.
\end{prop}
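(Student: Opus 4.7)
My plan is to substitute the lake-at-rest state $U_h=(C,0,0)^T$, $h_h = C - B_h$ directly into the residual (\ref{wb-residual-2d}) and show that the four pieces conspire to give zero. First, I would invoke Lemma~\ref{Hstar2Hint} on every edge $e_K \subset \partial K$ to replace $\hat{\mathbf{H}}^*|_{e_K}$ by the interior trace $\mathbf{H}(U_h,h_h)|_{e_K}\cdot \bm{n}_K^e$. Because $m_h=w_h=0$ everywhere, all $m_h^2/h_h$, $m_hw_h/h_h$, and $w_h^2/h_h$ terms in $\mathbf{H}$ vanish, so $\mathbf{H}(U_h,h_h)$ restricted to $K$ is an honest polynomial and the divergence theorem applies without any regularity concern from the nonlinear ratios.

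Next, I would apply the divergence theorem to the boundary sum to obtain
\[
\sum_{e_K \in \partial K} \int_{e_K} \phi\, \mathbf{H}(U_h,h_h)\cdot \bm{n}_K^e\, ds
= \int_K \mathbf{H}(U_h,h_h)\cdot \nabla \phi \, d\bm{x}
+ \int_K \phi\, \nabla \cdot \mathbf{H}(U_h,h_h)\, d\bm{x}.
\]
The first term on the right cancels the second integral in $R^*_{h,K}$. Writing $\mathbf{H}=\mathbf{F}-U_h\dot{\bm{X}}$ and expanding $\nabla\cdot \mathbf{H} = \nabla\cdot \mathbf{F} - \nabla\cdot(U_h\dot{\bm{X}})$, the mesh-velocity piece cancels the last integral of $R^*_{h,K}$ exactly. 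This cancellation is what makes the argument survive on a time-dependent mesh, and it works independently of the lake-at-rest structure; what remains is
\[
R^*_{h,K}(t) = \int_K \phi \,\bigl[\, S(\eta_h,B_h) - \nabla \cdot \mathbf{F}(U_h,h_h) \,\bigr]\, d\bm{x}.
\]

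Finally, I would verify the pointwise identity $S(\eta_h,B_h) = \nabla\cdot \mathbf{F}(U_h,h_h)$ at the steady state. Row 1 is immediate since $(m_h,w_h)=(0,0)$. For row 2, using $\eta_h\equiv C$ one has $2h_h\eta_h - \eta_h^2 = 2Ch_h - C^2$, so $\partial_x\bigl[\tfrac{g}{2}(2Ch_h - C^2)\bigr] = gC\,\partial_x h_h = -gC(B_h)_x = -g\eta_h (B_h)_x$, which matches the second component of $S(\eta_h,B_h)$; row 3 is analogous. With exact quadrature the integrand vanishes identically on $K$, so $R^*_{h,K}(t)=0$ as claimed. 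The main obstacle to watch for is precisely this last algebraic cancellation: had the pressure been retained in its original form $\tfrac{g}{2}h_h^2$, its derivative would be $gh_h (B_h)_x = g(\eta_h - B_h)(B_h)_x$, differing from $g\eta_h(B_h)_x$ by the nonzero discrete quantity $gB_h(B_h)_x$. It is the particular rewriting $\tfrac{g}{2}h^2 \mapsto \tfrac{g}{2}(2h\eta-\eta^2)$ in (\ref{swe-2d-FUh}), together with leaving $\eta_h$ unmodified in the flux even after the hydrostatic reconstruction of $h_h$, that makes the discrete pressure derivative balance the discrete source verbatim when $\eta_h$ is constant and $B_h$, $h_h$ live in the same polynomial space.
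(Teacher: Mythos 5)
Your proposal is correct and follows essentially the same route as the paper's proof: invoke Lemma~\ref{Hstar2Hint} to replace $\hat{\mathbf{H}}^{*}|_{e_K}$ by the interior trace, apply the divergence theorem so the mesh-velocity term cancels via $\mathbf{H}=\mathbf{F}-U_h\dot{\bm{X}}$, and reduce the residual to $\int_K\big(S(\eta_h,B_h)-\nabla\cdot\mathbf{F}(U_h,h_h)\big)\phi\,d\bm{x}$. Your only addition is spelling out the final pointwise cancellation $\partial_x\big[\tfrac{g}{2}(2Ch_h-C^2)\big]=-g\eta_h(B_h)_x$, which the paper leaves implicit; this is a welcome clarification but not a different argument.
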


\begin{proof}
For the lake-at-rest steady state $(\eta_h,m_h,w_h) =(C,0,0)$, using the Lemma \ref{Hstar2Hint},
the divergence theorem, and the definition \eqref{H-def}, we have, for any $K\in \mathcal{T}_h(t)$
and any $\phi \in \mathcal{V}^{k}_h(t)$,
\[
\label{we-residue-zero}
\begin{split}
R^{*}_{h,K}(t)
= &\int_{K}S(\eta_h,B_h)\phi d\bm{x}- \int_{K}\phi\nabla \cdot( U_h \dot{\bm{X}} ) d\bm{x}
\\&
- \sum_{e_K\in \partial K}\int_{e_K} \phi \hat{ \mathbf{H}}^{*}|_{e_K} ds+\int_{K} \mathbf{H}(U_h,h_h)\cdot\nabla \phi d\bm{x}
\\
=&\int_{K}S(\eta_h,B_h)\phi d\bm{x}- \int_{K}\phi\nabla \cdot( U_h \dot{\bm{X}} ) d\bm{x}
\\&- \sum_{e_K\in \partial K}\int_{e_K} \phi \mathbf{H}\big(U_{h,K}^{int},h_{h,K}^{int}\big)\cdot\bm{n}_K^e ds
+\int_{K} \mathbf{H}(U_h,h_h)\cdot\nabla \phi d\bm{x}
\\
=&\int_{K}S(\eta_h,B_h)\phi d\bm{x}
-\int_{K}\phi \nabla \cdot \big(\mathbf{H}(U_h,h_h)+ U_h \dot{\bm{X}}\big)d\bm{x}
\\
=&\int_{K} \big(S(\eta_h,B_h)-\nabla \cdot \mathbf{F}(U_h,h_h)\big)\phi d\bm{x}
=0 .
\end{split}
\]
\hfill \end{proof}

\vspace{10pt}

To see the convergence order of the scheme, we rewrite \eqref{wb-semi-QLMMDG-2d} into
\begin{align}
& \frac{d}{d t}\int_{K}U_h \phi d\bm{x}
 +\sum_{e_K \in \partial K}\int_{e_K} \phi \hat{ \mathbf{H}}|_{e_K} ds
-\int_{K} \mathbf{H}(U_h,h_h)\cdot \nabla \phi d\bm{x}
\notag
\\
& \qquad\qquad = \int_{K}S(h_h,B_h)\phi d\bm{x}
+\sum_{e_K \in \partial K}\int_{e_K} \phi \big(\hat{ \mathbf{H}}^{*}- \hat{ \mathbf{H}}\big)|_{e_K} ds, \quad \forall \phi \in \mathcal{V}^{k}_h(t).
 \label{well-semi-QLMMDG-2d-2}
\end{align}
This is a standard DG scheme for \eqref{swe-2d-another} on a moving mesh with a correction term (the last term). Notice that
\begin{equation*}
\begin{split}
\big|\eta_{h,K}^{int}|_{e_K} \big(h_{h,K}^{int}|_{e_K}-h_{h,K}^{*,int}|_{e_K}\big)\big|
&\leq\big|\eta_{h,K}^{int}|_{e_K}\big|\cdot\big|\big(h_{h,K}^{int}-h_{h,K}^{*,int}\big)|_{e_K}\big|
=\mathcal{O}(a^{k+1}_{max}),
\end{split}
\end{equation*}
where $a_{max}$ denotes the maximum element diameter of the mesh. This gives
$\hat{ \mathbf{H}}^{*}- \hat{ \mathbf{H}}=\mathcal{O}(a^{k+1}_{max})$.
Thus, the scheme \eqref{wb-semi-QLMMDG-2d} is $(k+1)$-th-order in space.

\begin{rem}\label{another-form}
{\em
As mentioned earlier, we can replace all of $h$'s in the flux by $\eta-B$. This gives the system \cite{Kurganov-Levy-2002}
\begin{equation}\label{swe-2d-another-2}
U_t +  \nabla \cdot \mathbb{F}(U,B)  = S(\eta,B),
\end{equation}
where
\begin{equation}\label{swe-2d-etamw-flux}
\begin{split}
&\mathbb{F}(U,B) =
 \begin{bmatrix*}[l]
  m&w\\
  \frac{m^2}{\eta-B}+\frac{1}{2}g(\eta^2-2 \eta B ) &\frac{mw}{\eta-B}\\
  \frac{mw}{\eta-B}                                 &\frac{w^2}{\eta-B}+\frac{1}{2}g(\eta^2-2 \eta B )\\
 \end{bmatrix*}.
\end{split}
\end{equation}
Applying the same procedure to this system, we can obtain a QLMM-DG scheme.
For the lake-at-rest steady state, for this scheme we have
\begin{align*}
&\eta_{h,K}^{int}|_{e_K}=\eta_{h,K}^{ext}|_{e_K}= C,
\quad m_{h,K}^{int}|_{e_K}= m_{h,K}^{ext}|_{e_K}=0,
\quad w_{h,K}^{int}|_{e_K}= w_{h,K}^{ext}|_{e_K}=0,
\end{align*}
and $U_{h,K}^{int}|_{e_K}=U_{h,K}^{ext}|_{e_K}$.
However, since $\mathbb{F}(U,B)$ involves $B$ explicitly and since
$B_{h,K}^{int}|_{e_K}$ is not equal to $B_{h,K}^{ext}|_{e_K}$ in general, it is unclear
how the numerical flux can be modified to ensure (\ref{upwind-prpty}).
Hence, it remains unknown if a well-balanced QLMM-DG scheme can be developed based on (\ref{swe-2d-another-2}).
}
\end{rem}

\subsection{The fully discrete well-balanced QLMM-DG scheme}

We consider the third-order explicit total variation diminishing (TVD) Runge-Kutta scheme to discretize \eqref{wb-semi-QLMMDG-2d} in time.
For notational simplicity, we denote
\begin{equation}\label{L-Uh}
\begin{split}
\mathcal{L}(U_h,h_h,\phi,B_h)|_{K} & =
-\sum_{e_K\in \partial K}\int_{e_K} \phi\hat{ \mathbf{H}}^{*}|_{e_K} ds
+\int_{K} \mathbf{H}(U_h,h_h)\cdot\nabla \phi d\bm{x} \\
& \qquad
 +\int_{K}S(\eta_h,B_h)\phi d\bm{x}.
\end{split}
\end{equation}
We can rewrite \eqref{wb-semi-QLMMDG-2d} into a compact form as
\begin{equation}
\label{QLMMDG-ode}
\frac{d}{d t}\int_{K}U_h \phi d\bm{x}=\mathcal{L}(U_h,h_h,\phi,B_h)|_{K}, \quad \forall \phi \in \mathcal{V}_h^k(t).
\end{equation}
Applying the third-order explicit TVD Runge-Kutta scheme to the above equation, we obtain
the fully-discrete QLMM-DG scheme as
\begin{equation}
\label{RK3-QLMMDG}
\begin{cases}
\begin{split}
\int_{K^{n,(1)}} U_h^{n,(1)}\phi^{n,(1)}d\bm{x}
&=\int_{K^{n}} U_h^n\phi^n d\bm{x}
+\Delta t_n \mathcal{L}(U_h^n,h_h^n,\phi^n,B_h^n)|_{K^n},
\\
\int_{K^{n,(2)}} U_h^{n,(2)}\phi^{n,(2)}d\bm{x}
&=\frac{3}{4}\int_{K^{n}} U_h^n\phi^{n}d\bm{x}+\frac{1}{4}\int_{K^{n,(1)}}U_h^{n,(1)}\phi^{n,(1)} d\bm{x}
\\&~~~~~~~~~~~~~
+\frac{1}{4}\Delta t_n \mathcal{L}(U_h^{n,(1)},h_h^{n,(1)},\phi^{n,(1)},B^{n,(1)}_h)|_{K^{n,(1)}},
\\
\int_{K^{n+1}}U_h^{n+1}\phi^{n+1}d\bm{x}
&= \frac{1}{3}\int_{K^{n}} U_h^n\phi^n d\bm{x}+\frac{2}{3}\int_{K^{n,(2)}} U_h^{n,(2)}\phi^{n,(2)}d\bm{x}
\\&~~~~~~~~~~~~~
+\frac{2}{3}\Delta t_n \mathcal{L}(U_h^{(2)},h_h^{n,(2)},\phi^{n,(2)},B^{n,(2)}_h)|_{K^{n,(2)}},
\end{split}
\end{cases}
\end{equation}
where
$U_h^{n,(1)}$, $h_h^{n,(1)}$, $\phi^{n,(1)}$, $B_h^{n,(1)}$, $K^{n,(1)}$ are stage values
at $t = t_n + \Delta t_n$, $U_h^{n,(2)}$, $h_h^{n,(2)}$, $\phi^{n,(2)}$, $B_h^{n,(2)}$, $K^{n,(2)}$ are the values
at $t = t_n + \frac{1}{2} \Delta t_n$, and
$U_h^{n+1}$, $h_h^{n+1}$, $\phi^{n+1}$, $K^{n+1}$ are at $t = t_n + \Delta t_n$.

We now make a few remarks on the above scheme.
We first note that $h_h$ is updated by
\[
h_h^{n,(1)} = \eta_h^{n,(1)} - B_h^{n,(1)},
\quad h_h^{n,(2)} = \eta_h^{n,(2)} - B_h^{n,(2)},  \quad h_h^{n+1} = \eta_h^{n+1} - B_h^{n+1}.
\]

Second, let $\hat{K}$ be the reference element and
$\hat{\phi} = \hat{\phi}(\bm{\xi})$ be an arbitrary basis function.
Then, the test functions in (\ref{RK3-QLMMDG})
corresponding to this basis function are related by
\begin{align*}
& \phi^{n}(\bm{x}) = \hat{\phi}(F^{-1}_{K^{n}}(\bm{x})),\quad
\phi^{n,(1)}(\bm{x}) = \hat{\phi}(F^{-1}_{K^{n,(1)}}(\bm{x})), \\
& \phi^{n,(2)}(\bm{x}) = \hat{\phi}(F^{-1}_{K^{n,(2)}}(\bm{x})),\quad
\phi^{n+1}(\bm{x}) = \hat{\phi}(F^{-1}_{K^{n+1}}(\bm{x})),
\end{align*}
where $F^{-1}_K$ is the inverse of the affine mapping $F_K: \hat{K} \to K$
with $K$ being $K^{n}$, $K^{n,(1)}$, $K^{n,(2)}$, or $K^{n+1}$.

Third, the area of $K^{n,(1)}$, $K^{n,(2)}$ and $K^{n+1}$ is needed in the computation
of the integrals in (\ref{RK3-QLMMDG}). It can be calculated using
the coordinates of the vertices of the elements. But this does not preserve the so-called
geometric conservation law (GCL) that is a geometric identity in the continuous setting.
Using the Reynolds transport theorem and the divergence theorem,
we can find the GCL as
\begin{equation}
\label{GCL-0}
\frac{d }{d t} \int_{K}  d \bm{x} = \int_{\partial K} \dot{\bm{X}} \cdot \bm{n} d s
\quad \mbox{ or } \quad
\frac{d }{d t} \int_{K}  d \bm{x} = \int_{K} \nabla \cdot \dot{\bm{X}} d \bm{x}.
\end{equation}
Since $\dot{\bm{X}}$ is a linear function in $K$ and $\nabla \cdot \dot{\bm{X}}$ is constant, we get
\begin{equation}
\label{GCL-c}
\frac{d}{d t}|K| =  |K| \nabla \cdot \dot{\bm{X}} |_{K}.
\end{equation}
Applying the third-order Runge-Kutta scheme to the above equation, we have
\begin{equation}
\label{GCL-d}
\begin{cases}
\begin{split}
|K^{n,(1)}|
&= |K^{n}|+\Delta t_n |K^{n}|\nabla\cdot\dot{\bm{X}}^{n}_{K^{n}},
\\
|K^{n,(2)}|
&= \frac{3}{4}|K^{n}|
+\frac{1}{4}\big{(}|K^{n,(1)}|
+\Delta t_n |K^{n,(1)}|\nabla\cdot\dot{\bm{X}}^{n,(1)}_{K^{n,(1)}}\big{)},
\\
|K^{n+1}|
&= \frac{1}{3}|K^{n}|
+\frac{2}{3}\big{ (} |K^{n,(2)}|
+\Delta t_n |K^{n,(2)}|\nabla\cdot\dot{\bm{X}}^{n,(2)}_{K^{n,(2)}}\big{)} .
\end{split}
\end{cases}
\end{equation}
Thus, the area of $K^{n,(1)}$, $K^{n,(2)}$ and $K^{n+1}$ can be updated using this equation.
We note that a factor involving the area of the element appears in the computation of
$\nabla\cdot\dot{\bm{X}}^{n,(1)}_{K^{n,(1)}}$ and $\nabla\cdot\dot{\bm{X}}^{n,(2)}_{K^{n,(2)}}$
and this factor should be computed using $|K^{n,(1)}|$ and $|K^{n,(2)}|$
(i.e., the values obtained through the above equation instead of those directly computed using
the coordinates of the element vertices), respectively.
The preservation of GCL has been studied extensively in the context of moving mesh computation; e.g.,
see Trulio and Trigger \cite{Trulio-Trigger-1961} and Thomas and Lombard \cite{Thomas-Lombard-1979}.
As will be seen in Proposition~\ref{wb-RK3-QLMMDG-prop}, updating the area of elements
using (\ref{GCL-d}) is an important step for the QLMM-DG scheme (\ref{RK3-QLMMDG}) to be well-balanced.

It is interesting to point out that $|K^{n+1}|$ calculated through (\ref{GCL-d})
is the same as that directly computed using the vertex coordinates of $K^{n+1}$.
(The other stage values $|K^{n,(1)}|$ and $|K^{n,(2)}|$ are different from their counterparts in general.)
Indeed, this property holds for the third-order Runge-Kutta scheme in one, two, and three dimensions.
The validity of this property depends on the time integration scheme used and the dimensionality of the space.
For example, it holds only in one dimension when the forward Euler scheme is used.
In case when $|K^{n+1}|$ calculated through the GCL update is not equal to that
computed using the vertex coordinates, we suggest to start the GCL update with
$|K^n|$ calculated from the vertex coordinates. This does not affect the satisfaction of GCL.

Fourth, we emphasize that the update of $B$ does not affect the well-balance property
of the QLMM-DG scheme. For this reason,
in our computation we use $L^2$-projection to compute $B_h^{n,(1)}$, $B_h^{n,(2)}$, and
$B_h^{n+1}$. Since $B$ is a given function, $L^2$-projection is straightforward and economic
to implement.

It is interesting to point out that the requirements for how $B$ is updated are different
in the current QLMM-DG method and the rezoning-type moving mesh DG method of
\cite{Zhang-Huang-Qiu-2020arXiv}. The latter requires that the same scheme be used
to update both the bottom topography and flow variables. It is shown
there that a DG-interpolation scheme
works out well for this purpose but $L^2$-projection may be difficult to use. This is because, when
it is used for $B$, then $L^2$-projection needs to be used for the flow variables as well.
Since only numerical approximations are available for the flow variables, their $L^2$-projection from the old
mesh to the new one requires finding the intersection between elements in the new and old meshes and
performing numerical integration thereon, which is known to be a difficult, if not impossible, task in programming.

Fifth, the DG solution of the SWEs may contain spurious oscillations and even nonlinear instability. We need to apply a nonlinear limiter after each Runge-Kutta stage to aviod those spurious oscillations.
However, caution must be taken since this limiting procedure can destroy the well-balance property.
Following \cite{Audusse-etal-2004Siam,Xing-Shu-2006CiCP,Zhou-etal-2001JCP},
we use the TVB limiter \cite{DG-series2,DG-series3,DG-series5} for the local characteristic variables based on
the variables $\big(\eta,m, w\big)$.
This procedure is known to preserve the lake-at-rest steady state and conserve the cell averages.

Sixth, another challenge in the numerical solution of the SWEs is to preserve the nonnegativity of
the water depth $h$ in the computation.
Following \cite{Xing-Zhang-Shu-2010,Xing-Zhang-2013JSC}, we can show that, after each Runge-Kutta
stage of the scheme (\ref{RK3-QLMMDG}), the cell averages of the current approximation of $h$
are nonnegative if the cell averages and the function values of the previous approximation of $h$
at a set of special quadrature points (Gauss-Lobatto quadrature points in one dimension) \cite{Xing-Zhang-2013JSC}
for each mesh element are nonnegative. Since the TVB limiter preserves the cell averages,
we can use the linear scaling PP limiter \cite{Liu-Osher1996,ZhangShu2010,ZhangXiaShu2012}
to ensure the nonnegativity of $h$ after each application of the TVB limiter.

However, the PP limiter destroys the well-balance property.
To restore the property, we make a high-order correction to the current approximation of the bottom topography
according to the modifications in the water depth due to the PP limiting, i.e.,
\begin{equation}
\label{B-update-2}
\hat{B}_h = B_h  - (\hat{h}_h - h_h),
\end{equation}
where $\hat{h}_h$ denotes the modification of $h_h$ by the PP limiter. It is known that \cite{Liu-Osher1996,ZhangShu2010,ZhangXiaShu2012} this PP limiter maintains the cell averages
and high-order accuracy, i.e., $\overline{\hat{h}_K} = \overline{h_K}$, for all elements $K$
and $\hat{h}_h - h_h =\mathcal{O}(a_{max}^{k+1})$, where $a_{max}$ is the maximum element diameter
of the mesh. Thus, $\hat{B}_h$ has the same cell averages as $B_h$.

It is worth pointing out that the above trick has been used successfully in the rezoning-type
moving mesh DG method \cite{Zhang-Huang-Qiu-2020arXiv} to restore the well-balance property.

Finally, to ensure the stability of the method, the time step for (\ref{RK3-QLMMDG}) is chosen subject to
the Courant-Friedrichs-Lewy (CFL) condition \cite{DG-review}.
For a fixed mesh, the time step is taken as
\begin{equation}\label{FMDG-cfl}
\Delta t_{n,1} \leq \frac{C_{cfl}}{\max\limits_{K, e}\Big{(} \max\limits_m |\tilde{\lambda}^{m}(U_h^n,h_h^n)|\Big{)}}\cdot a^{n}_{min} ,
\end{equation}
where $C_{cfl}$ is a constant typically chosen to be less than $1/(2k+1)$,
$a^{n}_{min}$ is the minimum height of the elements of $\mathcal{T}_h^n$, and
$\tilde{\lambda}^{m}(U,h)$, $m = 1,\, 2,\, 3$ denote the eigenvalues of $\mathbf{F}'(U,h)\cdot \bm{n}_K$, i.e.,
\[
\label{eigen_F}
\tilde{\lambda}^{1}(U,h)=un_x +vn_y -c,\quad
\tilde{\lambda}^{2}(U,h)=un_x +vn_y ,
\quad
\tilde{\lambda}^{3}(U,h)=un_x +vn_y +c .
\]
For a moving mesh, we need to consider the extra convection term caused by mesh movement
and thus take the time step as
\begin{equation}\label{MMDG-cfl}
\Delta t_{n,2} \leq \frac{C_{cfl}}{\max\limits_{K, e}\Big{(} \max\limits_m |\lambda^{m}(U_h^n,h_h^n,\dot{\bm{X}}^n)|\Big{)}}\cdot \min\big{(} a^{n}_{min},a^{n+1}_{min}) ,
\end{equation}
where $\lambda^{m},~m=1,2,3$ are defined in \eqref{eigen-H}.
Finally, we take $\Delta t_n = \min\big( \Delta t_{n,1}, \Delta t_{n,2} \big)$.

Next, we show that the fully discrete QLMM-DG scheme \eqref{RK3-QLMMDG}
is well-balanced in the following proposition.

\begin{prop}
\label{wb-RK3-QLMMDG-prop}
If the area of mesh elements is updated according to \eqref{GCL-d} and
all integrals in \eqref{L-Uh} are computed exactly,
then the fully discrete QLMM-DG scheme \eqref{RK3-QLMMDG} preserves
the lake-at-rest steady-state solutions,
i.e., $\eta^n_h= C$, $m^n_h=0$, and $w^n_h=0$ imply $\eta^{n+1}_h = C$, $m^{n+1}_h=0$, and $w^{n+1}_h=0$,
where $C$ is a constant.
\end{prop}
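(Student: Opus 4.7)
The plan is to argue by induction across the three Runge-Kutta stages of \eqref{RK3-QLMMDG}, verifying that the ansatz $(\eta_h,m_h,w_h)=(C,0,0)$ solves each stage equation. The essential algebraic input is an \emph{explicit} form of $\mathcal{L}$ at the lake-at-rest state, which is already implicit in the proof of Proposition~\ref{Rk0-prop}: combining the vanishing residual $R^{*}_{h,K}=0$ with the definitions \eqref{wb-residual-2d} and \eqref{L-Uh} yields
\[
\mathcal{L}(U_h,h_h,\phi,B_h)\big|_K
\;=\;\int_K\phi\,\nabla\!\cdot\!\bigl(U_h\dot{\bm{X}}\bigr)\,d\bm{x}
\;=\;\Bigl(\,C\,\nabla\!\cdot\!\dot{\bm{X}}\big|_K\!\!\int_K\!\phi\,d\bm{x},\;0,\;0\Bigr)^{\!T},
\]
since $\eta_h=C$, $m_h=w_h=0$, and $\dot{\bm{X}}$ is affine on $K$ so that $\nabla\!\cdot\!\dot{\bm{X}}|_K$ is a scalar constant.

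For the first RK stage I would treat the momentum and discharge components first: both sides of \eqref{RK3-QLMMDG} collapse to $\int_{K^{n,(1)}}m_h^{n,(1)}\phi^{n,(1)}\,d\bm{x}=0$ and similarly for $w_h^{n,(1)}$, for every test function, so nonsingularity of the DG mass matrix on $K^{n,(1)}$ forces $m_h^{n,(1)}=w_h^{n,(1)}=0$. For the $\eta$-component, I would substitute the trial value $\eta_h^{n,(1)}=C$ and verify the stage identity. Using the affine-pullback relation $\phi^n(\bm{x})=\hat{\phi}(F_{K^n}^{-1}(\bm{x}))$, one has $\int_{K^n}\phi^n\,d\bm{x}=(|K^n|/|\hat K|)\int_{\hat K}\hat\phi\,d\bm\xi$, and likewise on $K^{n,(1)}$; after cancelling the common nonzero factor $C\int_{\hat K}\hat\phi\,d\bm\xi/|\hat K|$, the identity collapses to
\[
|K^{n,(1)}|\;=\;|K^n|+\Delta t_n\,|K^n|\,\nabla\!\cdot\!\dot{\bm{X}}^n\big|_{K^n},
\]
which is exactly the first line of the GCL update \eqref{GCL-d}. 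Invoking mass-matrix nonsingularity once more gives $\eta_h^{n,(1)}\equiv C$ on $K^{n,(1)}$.

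The second and third RK stages go through by the same template, using the induction hypothesis that $U_h^{n,(1)}$ (and subsequently $U_h^{n,(2)}$) is a lake-at-rest state so that Proposition~\ref{Rk0-prop} applies on $K^{n,(1)}$ and $K^{n,(2)}$ as well. Each $\mathcal{L}$-contribution on the right-hand side of \eqref{RK3-QLMMDG} then produces the corresponding GCL increment, and the convex combinations collapse, after the same affine-pullback cancellation, to the second and third identities of \eqref{GCL-d}; the $m$- and $w$-components remain identically zero throughout. The one bookkeeping obstacle worth flagging is the need to lift the GCL identity for $|K|$ to an identity for $\int_K\phi\,d\bm{x}$, and this is precisely where the exact-integration assumption and the time-independence of the factor $\int_{\hat K}\hat\phi\,d\bm\xi/|\hat K|$ are used; everything else is direct verification.
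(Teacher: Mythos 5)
Your argument is correct and is essentially the paper's own proof: both reduce $\mathcal{L}$ at the lake-at-rest state to the mesh-movement term $\int_K\phi\,\nabla\cdot(U_h\dot{\bm{X}})\,d\bm{x}$ via the vanishing residual of Proposition~\ref{Rk0-prop}, dispose of the $m$- and $w$-components trivially, and then pull back to the reference element so that the discrete GCL \eqref{GCL-d} yields $\int_{K}(\eta_h-C)\phi\,d\bm{x}=0$ stage by stage; your explicit stage-wise induction just makes precise what the paper does sequentially. One cosmetic point: rather than ``cancelling'' the factor $C\int_{\hat K}\hat\phi\,d\bm{\xi}$ (which vanishes for mean-zero basis functions), note that the stage identity is the GCL relation multiplied by that factor, so it follows from \eqref{GCL-d} whether or not the factor is zero.
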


\begin{proof}
Recall that $R^{*}(U_h,h_h,\phi,B_h)|_{K}$ vanishes for the lake-at-rest steady state.
Comparing the expressions of $\mathcal{L}(U_h,h_h,\phi,B_h)|_{K}$ in \eqref{L-Uh} and $R^{*}(U_h,h_h,\phi,B_h)|_{K}$ in \eqref{wb-residual-2d},
we have
\[
\label{relation-1}
\mathcal{L}(U_h,h_h,\phi,B_h)|_{K} = R^{*}(U_h,h_h,\phi,B_h)|_{K}
+ \int_{K}\phi\nabla \cdot( U_h \dot{\bm{X}} ) d\bm{x}.
\]
Then \eqref{RK3-QLMMDG} becomes
\[
\label{RK3-QLMMDG-s}
\begin{split}
\int_{K^{n,(1)}} U_h^{n,(1)}\phi^{n,(1)}d\bm{x}
&=\int_{K^{n}} U_h^n\phi^n d\bm{x}
+\Delta t_n \int_{K^n}\phi^{n}\nabla \cdot\big( U^n_h \dot{\bm{X}}^n \big) d\bm{x},
\\
\int_{K^{n,(2)}} U_h^{n,(2)}\phi^{n,(2)}d\bm{x}
&=\frac{3}{4}\int_{K^{n}} U_h^n\phi^{n}d\bm{x}+\frac{1}{4}\int_{K^{n,(1)}}U_h^{n,(1)}\phi^{n,(1)} d\bm{x}
\\&~~~~~~~~~~~~~~~~~~~~~~~
+\frac{1}{4}\Delta t_n\int_{K^{n,(1)}}\phi^{n,(1)}\nabla \cdot\big( U^{n,(1)}_h \dot{\bm{X}}^{n,(1)} \big) d\bm{x},
\\
\int_{K^{n+1}}U_h^{n+1}\phi^{n+1}d\bm{x}
&= \frac{1}{3}\int_{K^{n}} U_h^n\phi^n d\bm{x}+\frac{2}{3}\int_{K^{n,(2)}} U_h^{n,(2)}\phi^{n,(2)}d\bm{x}
\\&~~~~~~~~~~~~~~~~~~~~~~~
+\frac{2}{3}\Delta t_n\int_{K^{n,(2)}}\phi^{n,(2)}\nabla \cdot\big( U^{n,(2)}_h \dot{\bm{X}}^{n,(2)} \big) d\bm{x}.
\end{split}
\]
It is not difficult to show from the above equations that $m^{n+1}_h=0$ and $w^{n+1}_h=0$ if $m^n_h=0$ and $w^n_h=0$.
We rewrite the first component of the above equations as
\begin{equation}
\label{RK3-proof-eta}
\begin{split}
\int_{K^{n,(1)}} \eta_h^{n,(1)}\phi^{n,(1)}d\bm{x}
=&~~\int_{K^{n}} \eta_h^n\phi^n d\bm{x}
+\Delta t_n \int_{K^n}\phi^{n}\nabla \cdot\big( \eta^n_h \dot{\bm{X}}^n \big) d\bm{x},
\\
\int_{K^{n,(2)}} \eta_h^{n,(2)}\phi^{n,(2)}d\bm{x}
=& \frac{3}{4}\int_{K^{n}} \eta_h^n\phi^n d\bm{x}
+\frac{1}{4}\int_{K^{n,(1)}}\eta_h^{n,(1)}\phi^{n,(1)} d\bm{x}
\\&~~~~~~~~~~~~~~~~~~
+\frac{1}{4}\Delta t_n \int_{K^{n,(1)}}\phi^{n,(1)}\nabla \cdot\big( \eta^{n,(1)}_h \dot{\bm{X}}^{n,(1)} \big) d\bm{x},
\\
\int_{K^{n+1}} \eta_h^{n+1}\phi^{n+1}d\bm{x}
=& \frac{1}{3}\int_{K^{n}} \eta_h^n\phi^n d\bm{x}
+\frac{2}{3}\int_{K^{n,(2)}}\eta_h^{n,(2)}\phi^{n,(2)} d\bm{x}
\\&~~~~~~~~~~~~~~~~~~
+\frac{2}{3}\Delta t_n \int_{K^{n,(2)}}\phi^{n,(2)}\nabla \cdot \big( \eta^{n,(2)}_h \dot{\bm{X}}^{n,(2)} \big) d\bm{x}.
\end{split}
\end{equation}
Taking $\eta^n_h = C$ in the first equation of \eqref{RK3-proof-eta}, changing independent variables, we get
\[
\label{WB-eta-1}
|K^{n,(1)}|\int_{\hat{K}}\eta^{n,(1)}\hat{\phi} d\bm{\xi}
=|K^{n}|\int_{\hat{K}}C\hat{\phi} d\bm{\xi}
+\Delta t_n|K^{n}|\nabla\cdot \dot{\bm{X}}|_{K^{n}} \int_{\hat{K}}C\hat{\phi}d\bm{\xi}.
\]
From the first equation of the discrete GCL \eqref{GCL-d}, we have
\begin{equation*}
|K^{n,(1)}|\int_{\hat{K}}\big{(}\eta^{n,(1)}-C\big{)} \hat{\phi}d\bm{\xi} = 0
\quad \hbox{or}\quad
\int_{K^{n,(1)}}\big{(}\eta^{n,(1)}-C\big{)} \phi^{n,(1)}d\bm{x}=0.
\end{equation*}
From the arbitrariness of $\phi$ and $K^{n,(1)}$, this implies $\eta^{n,(1)} \equiv C$
on $\mathcal{D}$.

Similarly, we can show $\eta_h^{n,(2)}\equiv C$ and $\eta_h^{n+1}\equiv C$ on $\mathcal{D}$.
\hfill \end{proof}

\vspace{10pt}

To conclude this section, we summarize the procedure of the well-balanced QLMM-DG method in Algorithm~\ref{QLMM-DG}.
\begin{algorithm}
\caption{The well-balanced QLMM-DG method for the SWEs on moving meshes.}\label{QLMM-DG}
\begin{itemize}
\item[0.] {\bf Initialization.}
 Project the initial physical variables and bottom topography into the DG space $\mathcal{V}_h^{k,0}$ to obtain $U^0_h =  (\eta^0_h,m^0_h,w^0_h)^T$ and $B_h^0$. For $n = 0, 1, ...$, do

\item[1.] {\bf Mesh adaptation.}
Generate the new mesh $\mathcal{T}_h^{n+1}$ using the MMPDE moving mesh method (cf. Appendix \ref{sec:mmpde}).

\item[2.] {\bf Solution of the SWEs on the moving mesh.}
Integrate the SWEs from $t_n$ to $t_{n+1}$ using the QLMM-DG scheme \eqref{RK3-QLMMDG} to obtain $U^{n+1}_h = (\eta_h^{n+1},m^{n+1}_h,w^{n+1}_h)^T$.
\begin{itemize}
\item[2(a).] At each of the Runge-Kutta stage, we update $B$.
Compute $B_h^{n,(1)}$, $B_h^{n,(2)}$, and $B_h^{n+1}$ using $L^2$-projection on the corresponding meshes.
\item[2(b).] After each of the Runge-Kutta stage, we apply the TVB limiter for the local characteristic variables based on
the variables $\big(\eta,m, w\big)$.
\item[2(c).] After the TVB limiter, we apply the linear scaling PP limiter to $h_h$, followed by
the correction (\ref{B-update-2}) to $B_h$.
\end{itemize}
\end{itemize}
\end{algorithm}

\section{The well-balance property of the QLMM-DG\\ scheme in the original variables}
\label{sec:cswe-QLMMDG}

For comparison purpose, in this section we discuss the well-balance property of
the QLMM-DG scheme developed based on the SWEs (\ref{cswe-2d}) in the original variables.
The same moving mesh DG procedure and hydrostatic reconstruction technique described
in the previous section can be applied to \eqref{cswe-2d}.
This leads to a semi-discrete well-balanced QLMM-DG scheme but unfortunately, its fully
discrete version is not well-balanced.

Specifically, the semi-discrete QLMM-DG scheme based on \eqref{cswe-2d} is to find the solution $V_h \in \mathcal{V}^{k}_h(t)$ such that
\begin{equation}
\label{WB-semi-QLMMDG-cswe}
\begin{split}
&\frac{d}{d t}\int_{K}V_h \phi d\bm{x}
 +\sum_{e_K\in \partial K }\int_{e_K} \phi \hat{ \mathcal{H}}^{*}|_{e_K} ds
 -\int_{K} \mathcal{H}(V_h)\cdot\nabla \phi d\bm{x}
\\&~~~~~~~~~~~~~~~~~
 = \int_{K}\mathcal{S}(h_h,B_h)\phi d\bm{x}, \quad \forall \phi \in \mathcal{V}^{k}_h(t),\quad \forall  K\in \mathcal{T}_h(t),
\end{split}
\end{equation}
where $\mathcal{H}(V):=\mathcal{F}(V)- V \dot{\bm{X}}$.  The modified numerical
flux $\hat{ \mathcal{H}}^{*}$ for the edge $e_K$ is defined as
\begin{align}
\label{eK-flux-cswe}
&\hat{ \mathcal{H}}^{*}|_{e_K} =
\hat{ \mathcal{H}}(V_{h,K}^{*,int}, V_{h,K}^{*,ext},\bm{n}^e_K)+
\Delta^{*}_{e_K}\cdot \bm{n}_K^e,
\end{align}
where
\begin{equation}\label{reconst-h-cswe}
\begin{cases}
\begin{split}
&h_{h,K}^{*,int}|_{e_K} =
\max \Big( 0, h_{h,K}^{int}|_{e_K}+B_{h,K}^{int}|_{e_K}
             -\max\big(B_{h,K}^{int}|_{e_K},B_{h,K}^{ext}|_{e_K}\big)
             \Big),
\\&
h_{h,K}^{*,ext}|_{e_K} =
\max \Big( 0, h_{h,K}^{ext}|_{e_K}+B_{h,K}^{ext}|_{e_K}
             -\max\big(B_{h,K}^{int}|_{e_K},B_{h,K}^{ext}|_{e_K}\big)
             \Big),
\end{split}
\end{cases}
\end{equation}
\begin{equation}
\begin{split}
\label{red-V-2d}
&V_{h,K}^{*,int}|_{e_K}= \begin{bmatrix*}[l]
     ~~~~~~~~h_{h,K}^{*,int}|_{e_K}    \\
     \Big(\frac{h_{h,K}^{*,int}}{h_{h,K}^{int}}m_{h,K}^{int}\Big)\Big|_{e_K}\\
     \Big(\frac{h_{h,K}^{*,int}}{h_{h,K}^{int}}w_{h,K}^{int}\Big)\Big|_{e_K}\\
 \end{bmatrix*},
\quad \quad
V_{h,K}^{*,ext}|_{e_K}= \begin{bmatrix*}[l]
     ~~~~~~~~h_{h,K}^{*,ext}|_{e_K}    \\
     \Big(\frac{h_{h,K}^{*,ext}}{h_{h,K}^{ext}}m_{h,K}^{int}\Big)\Big|_{e_K}\\
     \Big(\frac{h_{h,K}^{*,ext}}{h_{h,K}^{ext}}w_{h,K}^{int}\Big)\Big|_{e_K}\\
 \end{bmatrix*},
\end{split}
\end{equation}
\begin{equation}
\label{correct-2d-cswe}
\Delta^{*}_{e_K}= \begin{bmatrix*}[l]
 \dot{X}|_{e_K}h_{h,K}^{*,int}|_{e_K} -\dot{X}|_{e_K}h_{h,K}^{int}|_{e_K} &
 \dot{Y}|_{e_K}h_{h,K}^{*,int}|_{e_K} -\dot{Y}|_{e_K}h_{h,K}^{int}|_{e_K}  \\
   \frac{g}{2} (h_{h,K}^{int}|_{e_K})^2-\frac{g}{2} (h_{h,K}^{*,int}|_{e_K})^2 &0\\
   0&\frac{g}{2} (h_{h,K}^{int}|_{e_K})^2-\frac{g}{2} (h_{h,K}^{*,int}|_{e_K})^2\\
 \end{bmatrix*} .
\end{equation}
It can be verified that
\begin{equation}
\label{wb-flux-cswe-2}
\hat{ \mathcal{H}}^{*}|_{e_K} = \mathcal{H}\big( V_{h,K}^{int}\big)\cdot \bm{n}_K^e
\end{equation}
holds when the lake-at-rest steady state is reached.
It is also not difficult to show that the residual of \eqref{WB-semi-QLMMDG-cswe}
\begin{equation}\label{residue-cswe}
\begin{split}
\mathcal{R}_{h,K}(V_h,\phi,B_h) &
= \int_{K}\mathcal{S}(h_h,B_h)\phi d\bm{x} +\int_{K} \mathcal{H}(V_h)\nabla \phi d\bm{x}
\\&~~~~~~~~~~
- \sum_{e_K}\int_{e_K\in \partial K} \phi \hat{ \mathcal{H}}^{*}|_{e_K} ds
- \int_{K}\phi\nabla \cdot( V_h \dot{\bm{X}} ) d\bm{x}
\end{split}
\end{equation}
vanishes for the lake-at-rest steady state if all integrals involved in the \eqref{residue-cswe} are computed
exactly. Thus, the semi-discrete scheme \eqref{WB-semi-QLMMDG-cswe} is well-balanced.

To study the well-balance property of a fully discrete scheme, we notice that \eqref{WB-semi-QLMMDG-cswe}
can be rewritten into a more compact form as
\begin{equation*}\label{ode}
\frac{d}{d t}\int_{K}V_h \phi d\bm{x}=\mathcal{R}_{h,K}(V_h,\phi,B_h)+\int_{K}\phi\nabla \cdot( V_h \dot{\bm{X}} ) d\bm{x}, \quad \forall \phi \in \mathcal{V}_h^k(t).
\end{equation*}

For simplicity, we consider the first-order forward Euler scheme here. Other explicit Runge-Kutta schemes can be considered similarly. The fully discrete QLMM-DG scheme reads as
\begin{equation*}
\label{Euler-U}
\begin{split}
\int_{K^{n+1}} V_h^{n+1}\phi^{n+1}d\bm{x}
= &\int_{K^{n}} V_h^n\phi^n d\bm{x}
\\&+\Delta t_n \Big(\mathcal{R}_{h,K}(V_h^n,\phi^n,B_h^n)+\int_{K^n}\phi^n\nabla \cdot( V^n_h \dot{\bm{X}}^n ) d\bm{x}\Big).
\end{split}
\end{equation*}
The corresponding GCL preserving update of the element area is given by
\begin{equation}
\label{Euler-Karea-1}
|K^{n+1}|
= |K^{n}|+\Delta t_n |K^{n}|\nabla\cdot\dot{\bm{X}}^n|_{K^{n}}.
\end{equation}
For the lake-at-rest steady state, recalling that $\mathcal{R}_{h,K}(V_h^n,\phi^n,B_h^n)=0$, we have
\begin{equation}
\label{Euler-U-1}
\int_{K^{n+1}} V_h^{n+1}\phi^{n+1}d\bm{x}
= \int_{K^{n}} V_h^n\phi^n d\bm{x}
+\Delta t_n \int_{K^n}\phi^n\nabla \cdot( V^n_h \dot{\bm{X}}^n ) d\bm{x},
\end{equation}
or in a component-wise format,
\begin{align}
&\int_{K^{n+1}} h_h^{n+1}\phi^{n+1}d\bm{x}
= \int_{K^{n}} h_h^n\phi^n d\bm{x}
+\Delta t_n \int_{K^n}\phi^n\nabla \cdot( h^n_h \dot{\bm{X}}^n ) d\bm{x},
\label{Euler-h}
\\&
\int_{K^{n+1}} m_h^{n+1}\phi^{n+1}d\bm{x}
= \int_{K^{n}} m_h^n\phi^n d\bm{x}
+\Delta t_n \int_{K^n}\phi^n\nabla \cdot( m^n_h \dot{\bm{X}}^n ) d\bm{x},
\label{Euler-m}
\\&
\int_{K^{n+1}} w_h^{n+1}\phi^{n+1}d\bm{x}
= \int_{K^{n}} w_h^n\phi^n d\bm{x}
+\Delta t_n \int_{K^n}\phi^n\nabla \cdot( w^n_h \dot{\bm{X}}^n ) d\bm{x}.
\label{Euler-w}
\end{align}
From \eqref{Euler-m} and \eqref{Euler-w}, we have $m^{n+1}_h=0$ and $w^{n+1}_h=0$ if
$m^n_h=0$ and $w^n_h=0$. For the water surface level, we can rewrite (\ref{Euler-h}) into
\begin{align*}
&\int_{K^{n+1}} (h_h^{n+1}+B_h^{n+1}) \phi^{n+1}d\bm{x}
 = \int_{K^{n}} (h_h^n+B_h^n) \phi^n d\bm{x}
+\Delta t_n \int_{K^n}\phi^n\nabla \cdot( (h^n_h+B_h^n) \dot{\bm{X}}^n ) d\bm{x}
\\
& \qquad \qquad + \int_{K^{n+1}} B_h^{n+1} \phi^{n+1}d\bm{x} - \int_{K^{n}} B_h^n \phi^n d\bm{x}
-\Delta t_n \int_{K^n}\phi^n\nabla \cdot( B_h^n \dot{\bm{X}}^n ) d\bm{x} .
\end{align*}
Multiplying (\ref{Euler-Karea-1}) with $\hat{\phi}$, integrating the resulting equation over $\hat{K}$,
changing the independent variables, and noticing that $\nabla \cdot \dot{\bm{X}}^n|_{K^{n}}$ is constant,
we get
\[
\int_{K^{n+1}}  \phi^{n+1}d\bm{x}
= \int_{K^{n}}  \phi^{n}d\bm{x} +\Delta t_n \int_{K^n}\phi^n\nabla \cdot \dot{\bm{X}}^n  d\bm{x} .
\]
Combining the above two equations and assuming that $h^n_h+B^n_h=C$, we get
\begin{equation}
\label{Euler-B-1}
\begin{split}
&\int_{K^{n+1}} (h_h^{n+1}+B_h^{n+1}-C) \phi^{n+1}d\bm{x}
\\
& \qquad \qquad = \int_{K^{n+1}} B_h^{n+1} \phi^{n+1}d\bm{x} - \int_{K^{n}} B_h^n \phi^n d\bm{x}
-\Delta t_n \int_{K^n}\phi^n\nabla \cdot( B_h^n \dot{\bm{X}}^n ) d\bm{x} .
\end{split}
\end{equation}
Since the right-hand side does not vanish in general for non-flat $B$, we do not have
$h^{n+1}_h+B^{n+1}_h=C$ and thus the scheme is not well-balanced.

Interestingly, (\ref{Euler-B-1}) suggests that if we update $B$ according to
\begin{equation}
\label{Euler-B-2}
\int_{K^{n+1}} B_h^{n+1} \phi^{n+1}d\bm{x} = \int_{K^{n}} B_h^n \phi^n d\bm{x}
+ \Delta t_n \int_{K^n}\phi^n\nabla \cdot( B_h^n \dot{\bm{X}}^n ) d\bm{x} ,
\end{equation}
then the scheme will be well-balanced. However, the above equation is actually the forward Euler
discretization of the semi-discrete problem
\[
\frac{d}{d t} \int_K B_h \phi d \bm{x} = \int_K \phi  \nabla \cdot (B_h \dot{\bm{X}}) d \bm{x},
\]
which in turn is a ``central" Galerkin approximation to the equation
\[
\frac{\partial B}{\partial t} = 0 .
\]
Thus, (\ref{Euler-B-2}) is unconditionally unstable and cannot be used for updating $B$.

\section{Numerical results}
\label{sec:numerical-results}

In this section we present numerical results obtained with the well-balanced QLMM-DG method described in \S\ref{sec:WB-MMDG} for a selection of one- and two-dimensional examples for the SWEs.

In the computation we take the CFL number in (\ref{FMDG-cfl}) and (\ref{MMDG-cfl}) as
$0.3$ for $P^1$-DG and $0.18$ for $P^2$-DG in one dimension, and $0.2$ for $P^1$-DG
and $0.1$ for $P^2$-DG in two dimensions, unless otherwise ststed.
For the TVB limiter implemented in the RKDG scheme, the TVB constant $M_{tvb}$
is taken as zero except for the accuracy test Example \ref{test5-1d} to avoid the accuracy
order reduction near the extrema.
The gravitation constant $g$ is taken as $9.812$.
Since analytical exact solutions are not available for all of the examples,
unless otherwise stated, we take the numerical solution obtained with the $P^2$-DG method
with a fixed mesh of $N=10,000$ as a reference solution.
The errors are computed based on the values of the numerical solution at $21$ points on each element.
Except for the accuracy test (Example \ref{test5-1d}) and the lake-at-rest steady-state flow tests
(Example \ref{test1+2-1d} and Example \ref{test1-2d}), to save space we omit the results for $P^1$-DG
since they are similar to those for $P^2$-DG.

We generate the adaptive moving mesh using the MMPDE moving mesh method; see Appendix \ref{sec:mmpde}.
A key to the method is that a metric tensor $\mathbb{M}=\mathbb{M}(\bm{x})$
is used to control the size, shape, and orientation of mesh elements
throughout the domain. Roughly speaking, mesh elements are moved toward the goal
that the circumscribed ellipse of each element $K$ is similar to the ellipse whose
principal axes coincide with the eigen-directions of an average of $\mathbb{M}$ on $K$,
$\mathbb{M}_K$, and semi-lengths of the principal axes are inversely proportional to
the square root of the corresponding eigenvalues of $\mathbb{M}_K$ (cf. Appendix \ref{sec:mmpde}).
Various metric tensors have been proposed; e.g., see \cite{Huang-Sun-2003JCP, Huang-Russell-2011}.
We use here an optimal metric tensor based on the $L^2$-norm of piece linear interpolation error.
Let $q$ be a physical variable and $q_h$ be its finite element approximation.
Denote by $H_K$ a recovered Hessian of $q_h$ on $K\in \mathcal{T}_{h}$
obtained using the least squares fitting Hessian recovery technique \cite{Zhang-Naga-2005Siam}.
Assuming that the eigen-decomposition of $H_K$ is given by
\[
H_K = Q\hbox{diag}(\lambda_1,\cdots,\lambda_d)Q^T,
\]
where $Q$ is an orthogonal matrix, we define
\[
|H_K| = Q\hbox{diag}(|\lambda_1|,...,|\lambda_d|)Q^T.
\]
Then, the metric tensor is defined as
\begin{equation}\label{mer}
\mathbb{M}_{K} =\det \big{(}\alpha_{h}\mathbb{I}+|H_K|\big{)}^{-\frac{1}{d+4}}
\big{(}\alpha_{h}\mathbb{I}+|H_K|\big{)},
\quad \forall K \in \mathcal{T}_h
\end{equation}
where $\mathbb{I}$ is the identity matrix, $\det(\cdot)$ is the determinant of a matrix,
and $\alpha_{h}$ is a regularization parameter defined implicitly through the algebraic equation
\[
\sum_{K\in\mathcal{T}_h}|K|\, \hbox{det}(\alpha_{h}\mathbb{I}+|H_K|)^{\frac{2}{d+4}}
=2\sum_{K\in\mathcal{T}_h}|K|\,
\hbox{det}(|H_K|)^{\frac{2}{d+4}}.
\]

Following \cite{Zhang-Huang-Qiu-2020arXiv}, we compute the metric tensor based on the equilibrium variable $\mathcal{E}=\frac{1}{2}(u^2+v^2)+g\eta $ and
the water depth $h$.
A motivation for this is that the mesh is adapted to both the perturbations of the lake-at-rest steady state
through $\mathcal{E}$ and the water depth distribution through $h$.
To be specific, we first compute $\mathbb{M}^{\mathcal{E}}_{K} $ and $\mathbb{M}^{h}_{K} $ using
(\ref{mer}) with $q = \mathcal{E}$ and $h$, respectively.
Then, a new metric tensor is obtained through matrix intersection as
\begin{equation}\label{mer-Eh}
\tilde{\mathbb{M}}_{K}
=\frac{{\mathbb{M}}^{\mathcal{E}}_{K} }{|||{\mathbb{M}}^{\mathcal{E}}_{K}|||}
\cap\frac{\delta\cdot{\mathbb{M}}^{h}_{K}}{|||{\mathbb{M}}^{h}_{K}|||} ,
\end{equation}
where $|||\cdot |||$ denotes the maximum absolute value
of the entries of a matrix and ``$\cap$" stands for matrix intersection (cf. \cite{Zhang-Cheng-Huang-Qiu-2020CiCP}).
Notice that both ${\mathbb{M}}^{\mathcal{E}}_{K}$ and ${\mathbb{M}}^{h}_{K}$ have been normalized
before taking matrix intersection. We take $\delta=0.1$ in our numerical computation.

The next step is to make sure that the metric tensor is bounded above since it is known \cite{Huang-Kamenski-2018MC}
that the moving mesh generated by the MMPDE method stays nonsingular (i.e., free from tangling)
if the metric tensor is bounded and the initial mesh is nonsingular. We define
\begin{equation}\label{mer-ceil}
  \hat{\mathbb{M}}_K = \frac{\tilde{\mathbb{M}}_K}{\sqrt{1+\Big(\frac{\hbox{tr}(\tilde{\mathbb{M}}_K)}{\beta}\Big)^2}},
  \end{equation}
where $\beta$ is a positive number. It can be shown that $\lambda_{\max}(\hat{\mathbb{M}}_K) \leq \beta$.
In our computation, we take $\beta=1000$.

The metric tensor is generally non-smooth since the recovered Hessian can be very rough.
A common practice in the moving mesh context is to smooth the metric tensor for smoother meshes.
Here we simply average the metric tensor at a vertex over its neighboring vertices, i.e.,
\begin{equation}
\mathbb{M}_i \longleftarrow \frac{1}{|\mathcal{N}_i|}\sum_{j\in\mathcal{N}_i}\hat{\mathbb{M}}_j, \quad \forall
\bm{x}_i\in \mathcal{T}_h
\end{equation}
where $\hat{\mathbb{M}}_i$ is the nodal value at vertex $\bm{x}_i$ obtained by area-averaging the values
of the metric tensor on the neighboring elements,
$\mathcal{N}_i$ denotes the set of the immediate neighboring vertices (including itself) of $\bm{x}_i$,
and $|\mathcal{N}_i|$ is the length of $\mathcal{N}_i$.
This process can be repeated several times every time after the metric tensor is  computed.


\begin{example}\label{test5-1d}
(The accuracy test for the 1D SWEs over a sinusoidal hump.)
\end{example}

In this example we verify the high-order accuracy of the well-balanced QLMM-DG method.
The bottom topography is a sinusoidal hump
\begin{equation*}
B(x)=\sin^2(\pi x),\quad x \in (0,1).
\end{equation*}
Periodic boundary conditions are used for all unknown variables.
The initial conditions are
\begin{equation*}
\eta(x,0)=5+e^{\cos(2\pi x)} + B(x), \quad
hu(x,0)=\sin\big(\cos(2\pi x)\big).
\end{equation*}
This example has been used as an accuracy test by a number of researchers; e.g., see
\cite{Li-etal-2018JCAM,Xing-Shu-2006JCP,Xing-Shu-2006CiCP}.
The final simulation time is $T=0.1$ when the solutions remain smooth.
A reference solution is obtained using the $P^2$-DG method with a fixed mesh of $N=20,000$.
The TVB minmod constant $M_{tvb}$ is taken as $40$ in this example to avoid the accuracy order reduction near the extrema.
The $L^1$ and $L^\infty$ norm of the error for $h=\eta-B$ and $hu$ is plotted as a function of $N$
in Fig.~\ref{Fig:test5-1d-h-hu} for fixed and moving meshes.
One can see that the QLMM-DG method is second-order for $P^1$-DG and
third-order for $P^2$-DG in both $L^1$ and $L^\infty$ norm.
Moreover, the error for moving meshes is a slightly smaller than but otherwise comparable to the error
for fixed meshes. The error is much smaller for $P^2$-DG than $P^1$-DG,
as expected for smooth problems.

\begin{figure}[h]
\centering
\subfigure[$L^1$-error: $h$]{
\includegraphics[width=0.35\textwidth,trim=40 0 40 10,clip]{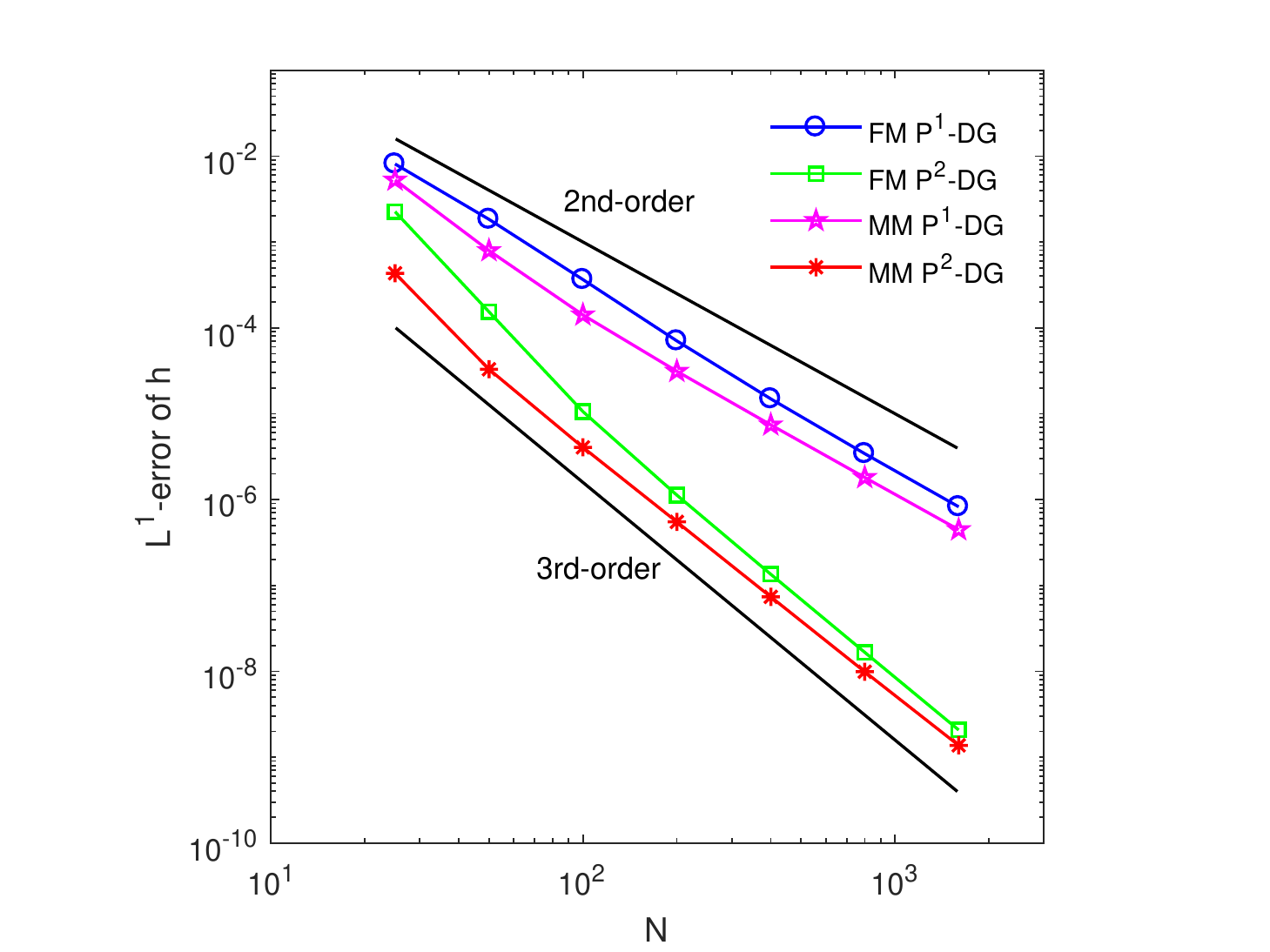}}
\subfigure[$L^\infty$-error: $h$]{
\includegraphics[width=0.35\textwidth,trim=40 0 40 10,clip]{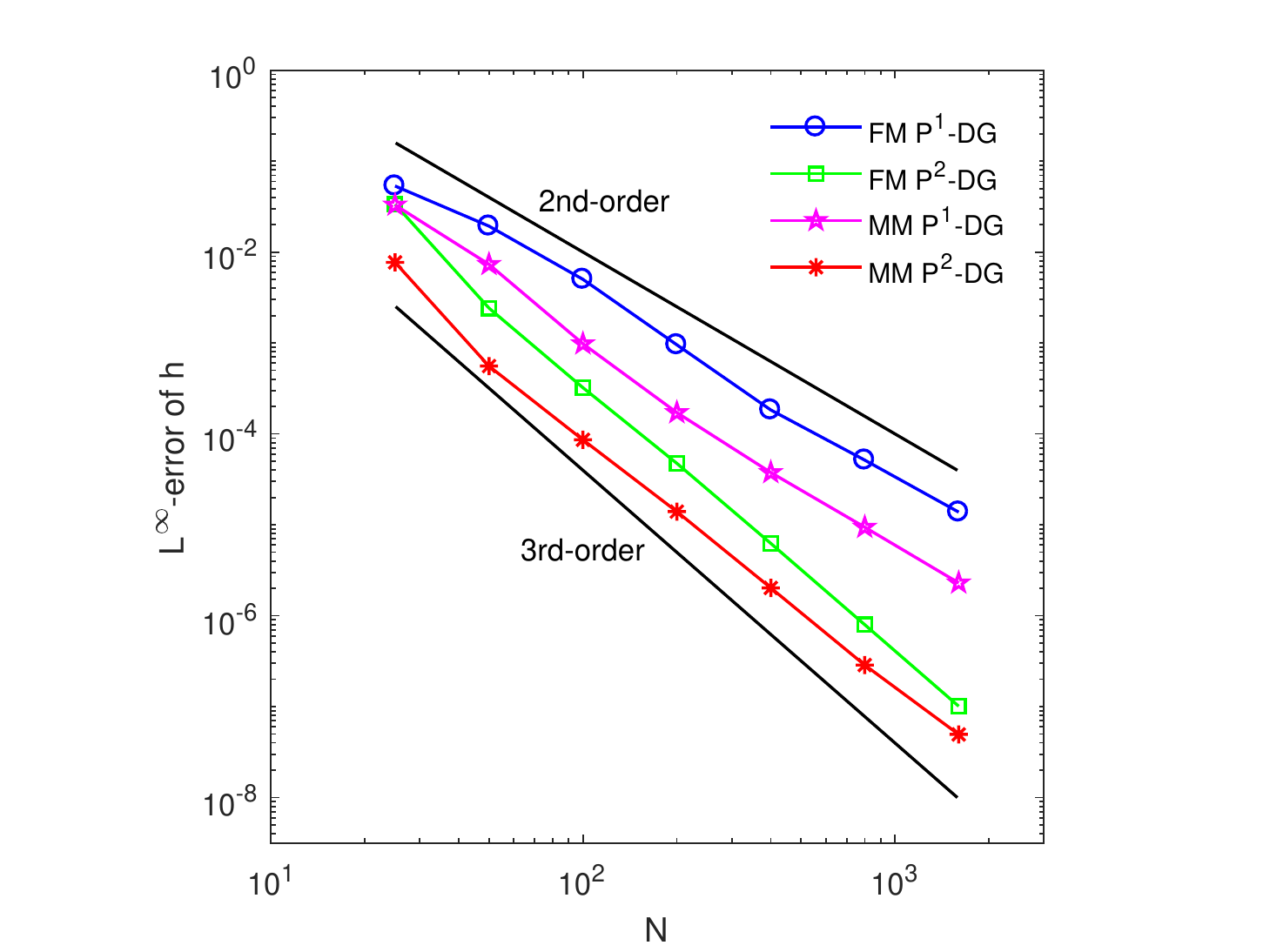}}
\subfigure[$L^1$-error: $hu$]{
\includegraphics[width=0.35\textwidth,trim=40 0 40 10,clip]{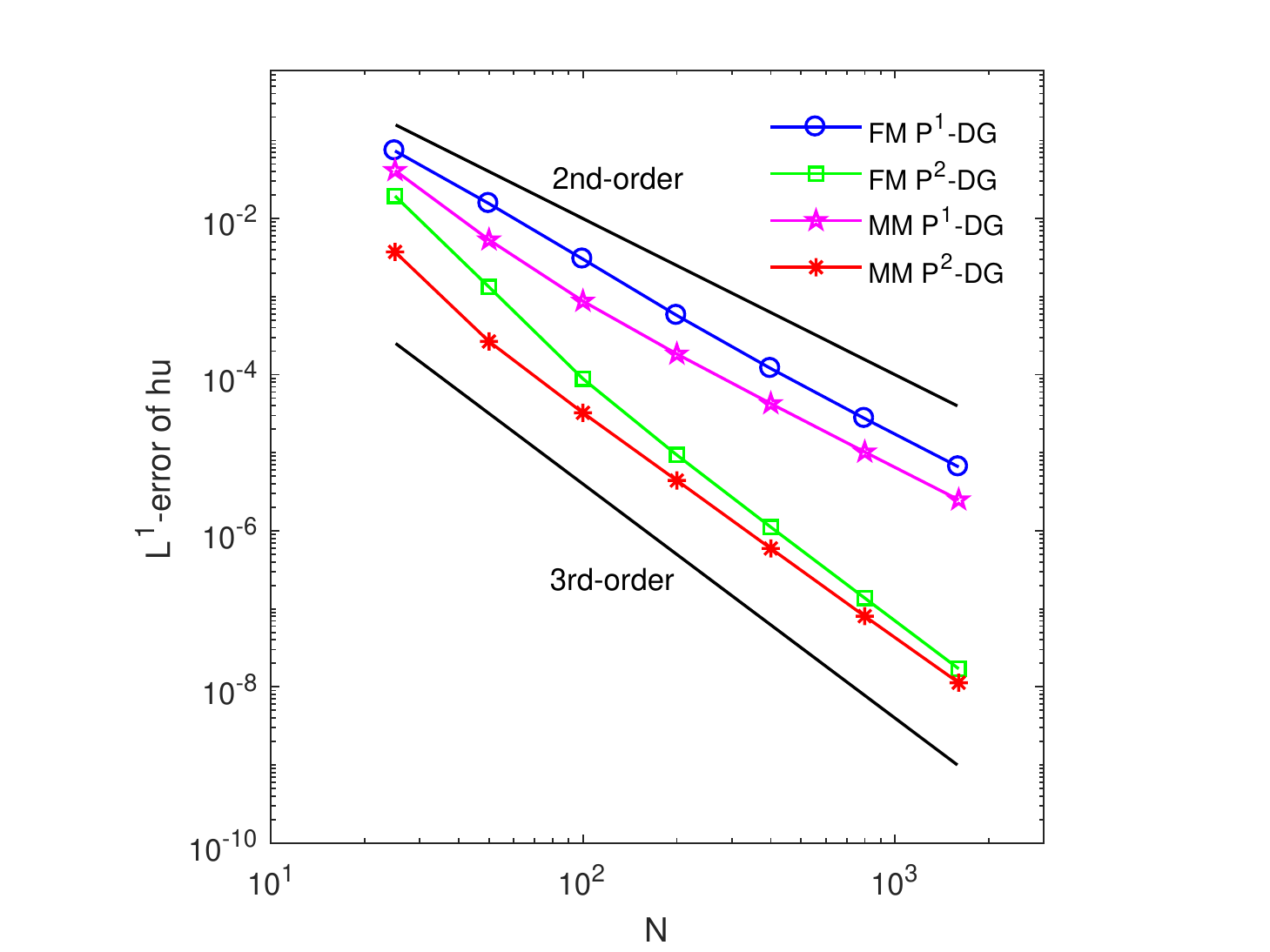}}
\subfigure[$L^\infty$-error: $hu$]{
\includegraphics[width=0.35\textwidth,trim=40 0 40 10,clip]{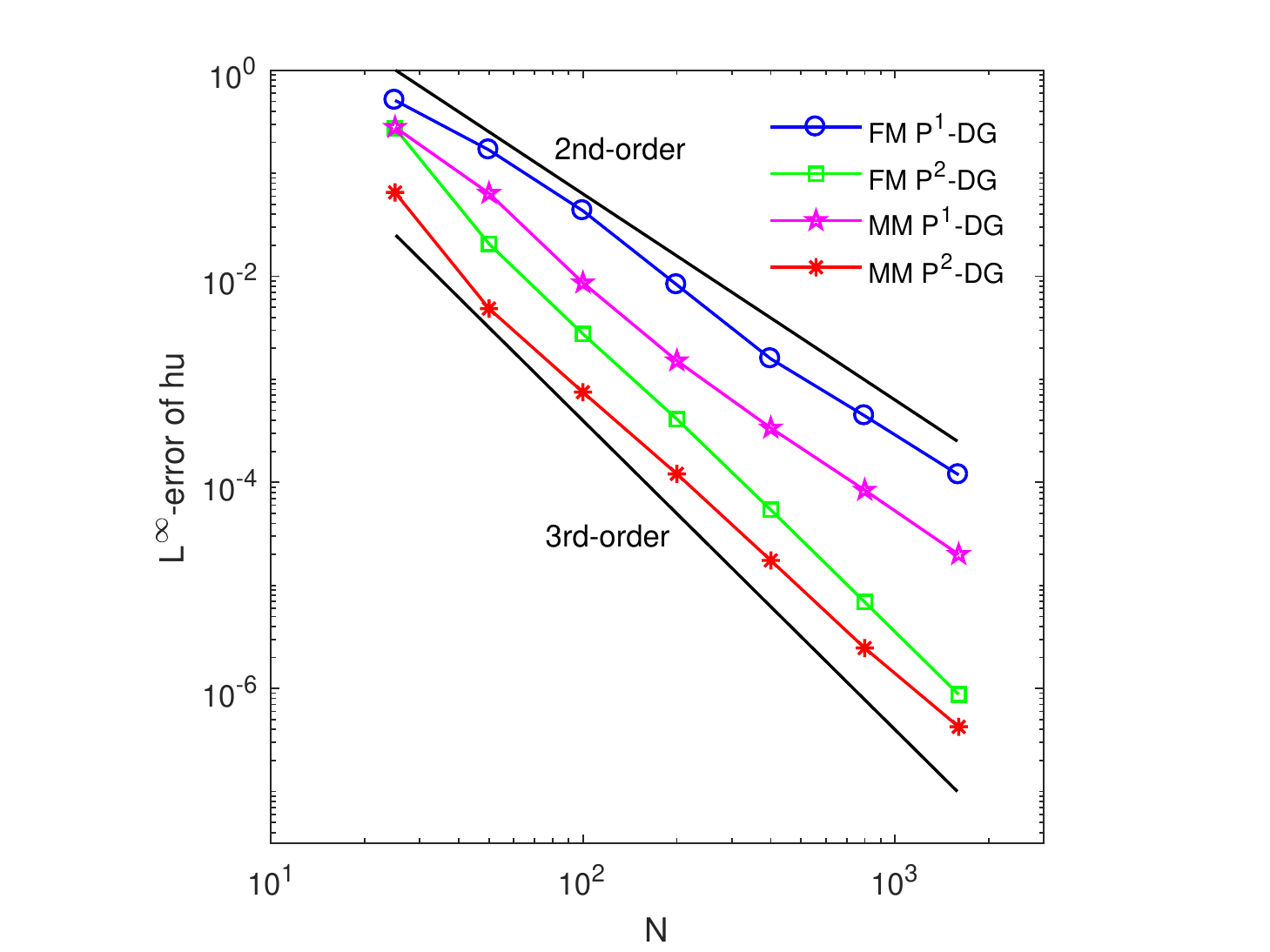}}
\caption{Example \ref{test5-1d}. The $L^1$ and $L^\infty$ norm of the error for the water depth $h=\eta-B$ and water discharge $hu$ is plotted as a function of $N$  for fixed and moving meshes.}
\label{Fig:test5-1d-h-hu}
\end{figure}


\begin{example}\label{test1+2-1d}
(The lake-at-rest steady-state flow test for the 1D SWEs over three different bottom topographies.)
\end{example}
We now test the well-balance property of the QLMM-DG method with two smooth topographies and a discontinuous topography,
\begin{align}
& B(x)=5 e^{-\frac25(x-5)^2},  \quad x \in (0,10)
\label{B-1}
\\
& B(x)=
\begin{cases}
4,& \text{for } x \in (4, 8) \\
0,& \text{for } x \in (0,4) \cup (8, 10)
\end{cases}
\label{B-2}
\\& B(x)=10 e^{-\frac25(x-5)^2},  \quad x \in (0,10).
\label{dry-B-1}
\end{align}
The initial solution is taken as the lake-at-rest steady state,
\begin{equation*}
u =0, \quad \eta = h+B=10.
\end{equation*}
We expect that this steady-state solution is preserved since the QLMM-DG method is well-balanced.
The final time is $T=0.5$.

The $L^1$ and $L^\infty$ error for $\eta$ and $hu$ is listed in Tables~\ref{tab:test1-1d-p1-error}
and~\ref{tab:test1-1d-p2-error} for smooth $B$ (\ref{B-1}) and
in Tables~\ref{tab:test2-1d-p1-error} and \ref{tab:test2-1d-p2-error} for discontinuous $B$ (\ref{B-2}).
We can observe that the error is at the level of round-off error (double precision in MATLAB),
which demonstrates that the QLMM-DG method is well-balanced.

The bottom topography \eqref{dry-B-1} (with a dry region) is used to demonstrate the well-balance and PP properties
of the QLMM-DG method. This topography has a similar shape as \eqref{B-1} but its height touches
the surface level at $x=5$ where $h=0$ initially. The computed water depth can have negative values during
the computation and the application of the PP limiter is necessary.
We computed the solution up to $t=0.5$. To ensure positivity preservation (cf. \cite{Xing-Zhang-Shu-2010}), we take
smaller CFL numbers as $0.3$ and $0.15$ for $P^1$-DG and $P^2$-DG, respectively, for this test.
The $L^1$ and $L^\infty$ error for $h+B$ and $hu$ is listed in Tables~\ref{tab:test-1d-wb-pp-P1error} and \ref{tab:test-1d-wb-pp-P2error} for $P^1$-DG and $P^2$-DG, respectively.
The results clearly show that the QLMM-DG method is well-balanced.
\begin{table}[h]
\caption{Example \ref{test1+2-1d}. Well-balance test for the $P^1$-DG method
with fixed and moving meshes for smooth $B$ defined in (\ref{B-1}).}
\vspace{3pt}
\centering
\label{tab:test1-1d-p1-error}
\begin{tabular}{ccccc}
 \toprule
  & \multicolumn{2}{c}{$\eta$}&\multicolumn{2}{c}{$hu$}\\
$N$&$L^1$-error  &$L^{\infty}$-error  &$L^1$-error  &$L^{\infty}$-error \\
\midrule
~   &  \multicolumn{4}{c}{\em{FM-DG method} }\\
\midrule
25	&	4.288E-16	&	2.262E-15	&	8.582E-15	&	2.344E-14	\\
50	&	4.143E-16	&	2.578E-15	&	1.541E-14	&	3.836E-14	\\
100	&	1.431E-15	&	4.804E-15	&	2.537E-14	&	6.756E-14	\\
\midrule
~   &  \multicolumn{4}{c}{\em{QLMM-DG method} }\\
\midrule
25	&	6.726E-15	&	1.307E-14	&	1.196E-14	&	3.719E-14	\\
50	&	1.185E-14	&	2.585E-14	&	2.153E-14	&	8.861E-14	\\
100	&	2.303E-14	&	5.630E-14	&	3.731E-14	&	1.906E-13	\\
 \bottomrule	
\end{tabular}
\end{table}
\begin{table}[h]
\caption{Example \ref{test1+2-1d}. Well-balance test for the $P^2$-DG method with fixed and moving meshes for smooth $B$ defined in (\ref{B-1})}
\vspace{3pt}
\centering
\label{tab:test1-1d-p2-error}
\begin{tabular}{ccccc}
 \toprule
  & \multicolumn{2}{c}{$\eta$}&\multicolumn{2}{c}{$hu$}\\
$N$&$L^1$-error  &$L^{\infty}$-error  &$L^1$-error  &$L^{\infty}$-error \\
\midrule
~   &  \multicolumn{4}{c}{\em{FM-DG method} }\\
\midrule
25	&	4.697E-15	&	7.849E-15	&	1.771E-14	&	3.968E-14	\\
50	&	4.813E-15	&	7.691E-15	&	1.617E-14	&	4.954E-14	\\
100	&	4.919E-15	&	8.766E-15	&	3.464E-14	&	8.394E-14	\\
\midrule
~   &  \multicolumn{4}{c}{\em{QLMM-DG method} }\\
\midrule
25	&	1.510E-14	&	2.146E-14	&	1.510E-14	&	3.684E-14	\\
50	&	2.289E-14	&	3.709E-14	&	3.102E-14	&	9.464E-14	\\
100	&	4.132E-14	&	6.846E-14	&	4.847E-14	&	1.843E-13	\\
 \bottomrule	
\end{tabular}
\end{table}
\begin{table}[h]
\caption{Example \ref{test1+2-1d}. Well-balance test for the $P^1$-DG method with fixed and moving meshes for discontinuous $B$ defined in (\ref{B-2}).}
\vspace{3pt}
\centering
\label{tab:test2-1d-p1-error}
\begin{tabular}{ccccc}
 \toprule
  & \multicolumn{2}{c}{$\eta$}&\multicolumn{2}{c}{$hu$}\\
$N$&$L^1$-error  &$L^{\infty}$-error  &$L^1$-error  &$L^{\infty}$-error \\
\midrule
~   &  \multicolumn{4}{c}{\em{FM-DG method} }\\
\midrule
25	&	3.540E-16	&	1.638E-15	&	5.512E-15	&	1.709E-14	\\
50	&	3.530E-16	&	2.171E-15	&	1.310E-14	&	3.317E-14	\\
100	&	3.499E-16	&	2.361E-15	&	2.185E-14	&	5.404E-14	\\
\midrule
~   &  \multicolumn{4}{c}{\em{QLMM-DG method} }\\
\midrule
25	&	5.445E-15	&	1.128E-14	&	1.113E-14	&	3.214E-14	\\
50	&	1.316E-14	&	2.443E-14	&	1.926E-14	&	6.575E-14	\\
100	&	2.344E-14	&	5.195E-14	&	4.425E-14	&	1.702E-13	\\
 \bottomrule	
\end{tabular}
\end{table}
\begin{table}[h]
\caption{Example \ref{test1+2-1d}. Well-balance test for the $P^2$-DG method with fixed and moving meshes for discontinuous $B$ defined in (\ref{B-2}).}
\vspace{3pt}
\centering
\label{tab:test2-1d-p2-error}
\begin{tabular}{ccccc}
 \toprule
  & \multicolumn{2}{c}{$\eta$}&\multicolumn{2}{c}{$hu$}\\
$N$&$L^1$-error  &$L^{\infty}$-error  &$L^1$-error  &$L^{\infty}$-error \\
\midrule
~   &  \multicolumn{4}{c}{\em{FM-DG method} }\\
\midrule
25	&	4.517E-15	&	6.204E-15	&	1.088E-14	&	3.416E-14	\\
50	&	4.682E-15	&	7.090E-15	&	1.876E-14	&	5.815E-14	\\
100	&	4.482E-15	&	6.680E-15	&	1.967E-14	&	5.498E-14	\\
\midrule
~   &  \multicolumn{4}{c}{\em{QLMM-DG method} }\\
\midrule
25	&	1.393E-14	&	2.126E-14	&	1.840E-14	&	4.412E-14	\\
50	&	2.177E-14	&	3.325E-14	&	3.417E-14	&	9.337E-14	\\
100	&	4.194E-14	&	6.710E-14	&	4.863E-14	&	1.867E-13	\\
 \bottomrule	
\end{tabular}
\end{table}
\begin{table}[h]
\caption{Example \ref{test1+2-1d}. Well-balance test for the $P^1$-DG method with fixed and moving meshes for the bottom topography \eqref{dry-B-1} (with a dry region).}
\vspace{3pt}
\centering
\label{tab:test-1d-wb-pp-P1error}
\begin{tabular}{ccccc}
 \toprule
  & \multicolumn{2}{c}{$\eta$}&\multicolumn{2}{c}{$hu$}\\
$N$&$L^1$-error  &$L^{\infty}$-error  &$L^1$-error  &$L^{\infty}$-error \\
\midrule
~   &  \multicolumn{4}{c}{\em{$P^1$ FM-DG method} }\\
\midrule
25	&2.463E-16	&	1.142E-15	&	1.193E-14	&	2.986E-14	\\
50	&4.796E-16	&	2.175E-15	&	1.726E-14	&	4.508E-14	\\
100	&6.640E-16	&	2.897E-15	&	1.492E-14	&	5.099E-14	\\
\midrule
~   &  \multicolumn{4}{c}{\em{$P^1$ QLMM-DG method} }\\
\midrule
25	&5.670E-15	&	1.136E-14	&	7.756E-15	&	3.501E-14	\\
50	&1.177E-14	&	2.691E-14	&	1.510E-14	&	7.347E-14	\\
100	&2.331E-14	&	5.537E-14	&	3.161E-14	&	1.801E-13	\\

 \bottomrule	
\end{tabular}
\end{table}
\begin{table}[h]
\caption{Example \ref{test1+2-1d}. Well-balance test for the $P^2$-DG method with fixed and moving meshes for the bottom topography \eqref{dry-B-1} (with a dry region).}
\vspace{3pt}
\centering
\label{tab:test-1d-wb-pp-P2error}
\begin{tabular}{ccccc}
 \toprule
  & \multicolumn{2}{c}{$\eta$}&\multicolumn{2}{c}{$hu$}\\
$N$&$L^1$-error  &$L^{\infty}$-error  &$L^1$-error  &$L^{\infty}$-error \\
\midrule
~   &  \multicolumn{4}{c}{\em{$P^2$ FM-DG method} }\\
\midrule
25	&4.513E-15	&	5.389E-15	&	2.117E-14	&	5.636E-14	\\
50	&4.579E-15	&	6.190E-15	&	3.079E-14	&	7.233E-14	\\
100	&4.644E-15	&	6.589E-15	&	3.433E-14	&	9.521E-14	\\
\midrule
~   &  \multicolumn{4}{c}{\em{$P^2$ QLMM-DG method} }\\
\midrule
25	&1.460E-14	&	2.264E-14	&	1.847E-14	&	4.411E-14	\\
50	&2.686E-14	&	4.208E-14	&	4.812E-14	&	1.182E-13	\\
100	&4.887E-14	&	7.505E-14	&	5.989E-14	&	1.909E-13	\\
 \bottomrule	
\end{tabular}
\end{table}

\begin{example}\label{test3-1d}
(The perturbed lake-at-rest steady-state flow test for the 1D SWEs.)
\end{example}
Following
\cite{Donat-etal-2014JCP,LeVeque-1998JCP,Li-etal-2018JCAM,Xing-Shu-2006JCP,Xing-Shu-2006CiCP},
we use this example to demonstrate that the QLMM-DG method is able to capture small perturbations
of the lake-at-rest steady-state flow over non-flat bottom topography. We also use it to demonstrate
the mesh adaptation ability of the method. The bottom topography in this example is taken as
\begin{equation}\label{test3-1d-B1}
B(x)=
\begin{cases}
0.25(\cos(10\pi(x-1.5))+1),& \text{for } x \in (1.4, 1.6) \\
0,& \text{for } x \in (0, 1.4) \cup (1.6, 2)
\end{cases}
\end{equation}
which has a bump in the middle of the physical interval.
The initial conditions are
\begin{equation*}
\eta(x,0)=
\begin{cases}
1+\varepsilon,& \text{for}~1.1\leq x\leq 1.2\\
1,& \text{otherwise}
\end{cases}
\quad \hbox{and}\quad u(x,0)=0,
\end{equation*}
where $\varepsilon$ is a constant for the perturbation magnitude.
We consider two cases, $\varepsilon=0.2$ (big pulse) and $\varepsilon=10^{-5}$ (small pulse).
The initial conditions for both cases are plotted in Fig.~\ref{Fig:test3-1d-initial}.
We use the transmissive boundary conditions and compute the solution up to $T=0.2$
when the right wave has passed the bottom bump.

The mesh trajectories obtained with the $P^2$-DG method and a moving mesh of $N=160$
are shown in Fig.~\ref{Fig:test3-1d-mesh}.
The obtained water surface level $\eta$ and discharge $hu$ are shown in Figs.~\ref{Fig:test3-1d-large-eta}
and \ref{Fig:test3-1d-large-hu} (for $\varepsilon=0.2$) and Figs.~\ref{Fig:test3-1d-small-eta} and
\ref{Fig:test3-1d-small-hu} (for $\varepsilon=10^{-5}$).
It is interesting to observe that the initial wave splits into two waves at about $t=0.0165$
and the two waves propagate left and right at the characteristic speeds $\pm \sqrt{gh}$,
respectively. The right-propagating wave interacts with
the bottom bump and generates a complex wave structure in the bump region.
Thus, it is beneficial to concentrate mesh points around the bump.

From the numerical results, we can see that the mesh points are concentrated
around the waves before and after the split and in the region of the bottom bump.
This is what we want as well as expect. To explain, we recall that the metric tensor for mesh adaptation
is constructed based on the equilibrium variable $\mathcal{E} = u^2/2 + g \eta$ and
the water height $h$. It is not difficult to imagine that the mesh points concentrate
around the waves because $\eta$ and thus $\mathcal{E}$ have significant changes there.
Meanwhile, $\eta$ is constant in most places of the domain. Then, the spatial variations
in $B$ are reflected in $h$, which in turn leads to the higher mesh concentration
in the region of the bottom bump.
Figs.~\ref{Fig:test3-1d-large-eta}, \ref{Fig:test3-1d-large-hu}, \ref{Fig:test3-1d-small-eta}, and
\ref{Fig:test3-1d-small-hu} show that the QLMM-DG method is able to capture
perturbations, small or large, of the lake-at-rest steady-state flow over non-flat bottom topography.
Moreover, the moving mesh solutions with $N=160$ are more accurate than those with
fixed meshes of $N=160$ and $N=480$ and contain no visible spurious numerical oscillations.

To verify the well-balance and positivity-preserving properties of the QLMM-DG method
we increase the height of the bottom topography \eqref{test3-1d-B2} to contain a dry region (near $x = 1.5$),
\begin{equation}
\label{test3-1d-B2}
\begin{split}
&B(x)=
\begin{cases}
0.5(\cos(10\pi(x-1.5))+1),& \text{for } x \in (1.4, 1.6) \\
0,& \text{for } x \in (0, 1.4) \cup (1.6, 2) .
\end{cases}
\end{split}
\end{equation}
We repeat the computation with $\varepsilon = 10^{-5}$.
The bottom topography, the initial water level, and the mesh trajectories of $N=160$
obtained with the $P^2$ QLMM-DG method are plotted in Fig.~\ref{Fig:PP-test3-1d-initial}.
The mesh has higher concentration around the shock waves and the non-flat topography region.
The mesh trajectories show that the right moving shock stops after it hits the dry region.

The water surface $\eta$ and discharge $hu$ obtained with $P^2$-DG and a moving mesh of $N=160$
and fixed meshes of $N=160$ and $N=640$ are plotted in Figs.~\ref{Fig:PP-test3-1d-eta} and
\ref{Fig:PP-test3-1d-hu}. The results show that the DG method with moving or fixed meshes
is able to capture the waves of small perturbation for situations containing dry regions.
Moreover, the moving mesh solutions with $N=160$ are more accurate than those with
fixed meshes of $N=160$ and $N=640$ and contain no visible spurious numerical oscillations.
\begin{figure}[H]
\centering
\subfigure[Big pulse $\varepsilon=0.2$]{
\includegraphics[width=0.35\textwidth,trim=40 0 40 10,clip]{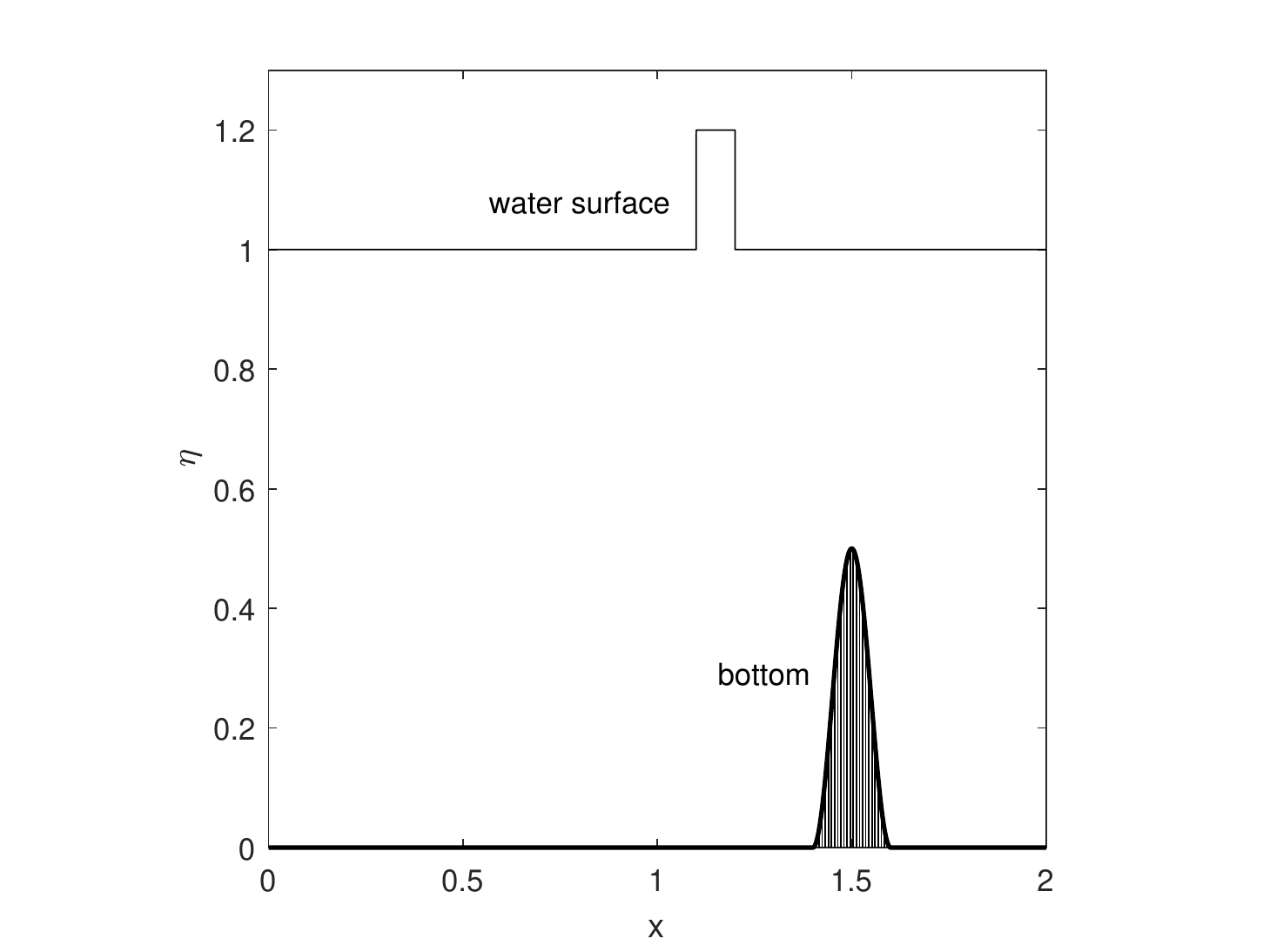}}
\subfigure[Small pulse $\varepsilon=10^{-5}$]{
\includegraphics[width=0.35\textwidth,trim=40 0 40 10,clip]{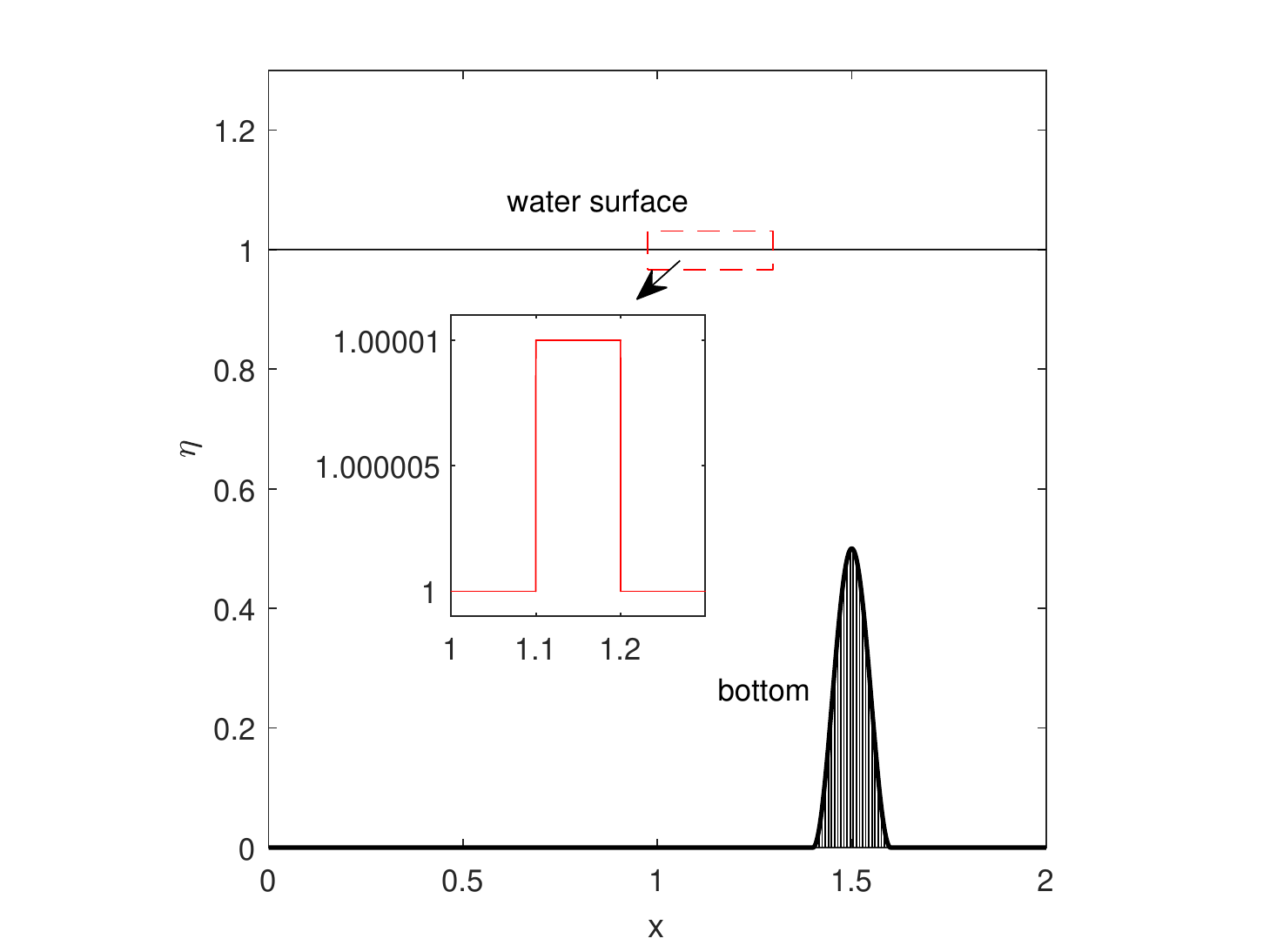}}
\caption{Example \ref{test3-1d}. The initial water surface level $\eta$ and the bottom topography $B$ are plotted for the pulse of $\varepsilon=0.2$ and $\varepsilon=10^{-5}$.}
\label{Fig:test3-1d-initial}
\end{figure}

\begin{figure}[H]
\centering
\subfigure[Big pulse $\varepsilon=0.2$]{
\includegraphics[width=0.35\textwidth,trim=40 0 40 10,clip]{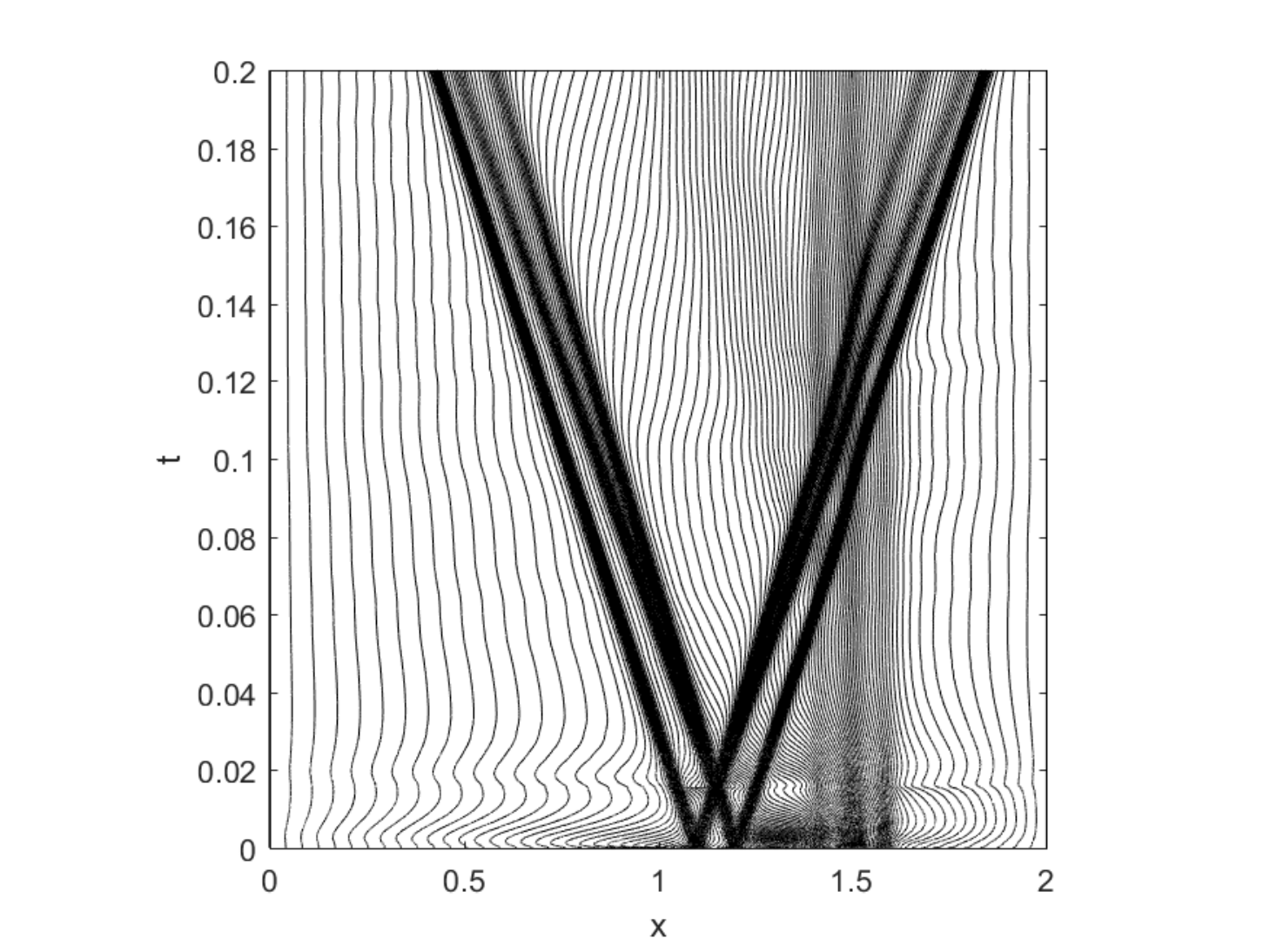}}
\subfigure[Small pulse $\varepsilon=10^{-5}$]{
\includegraphics[width=0.35\textwidth,trim=40 0 40 10,clip]{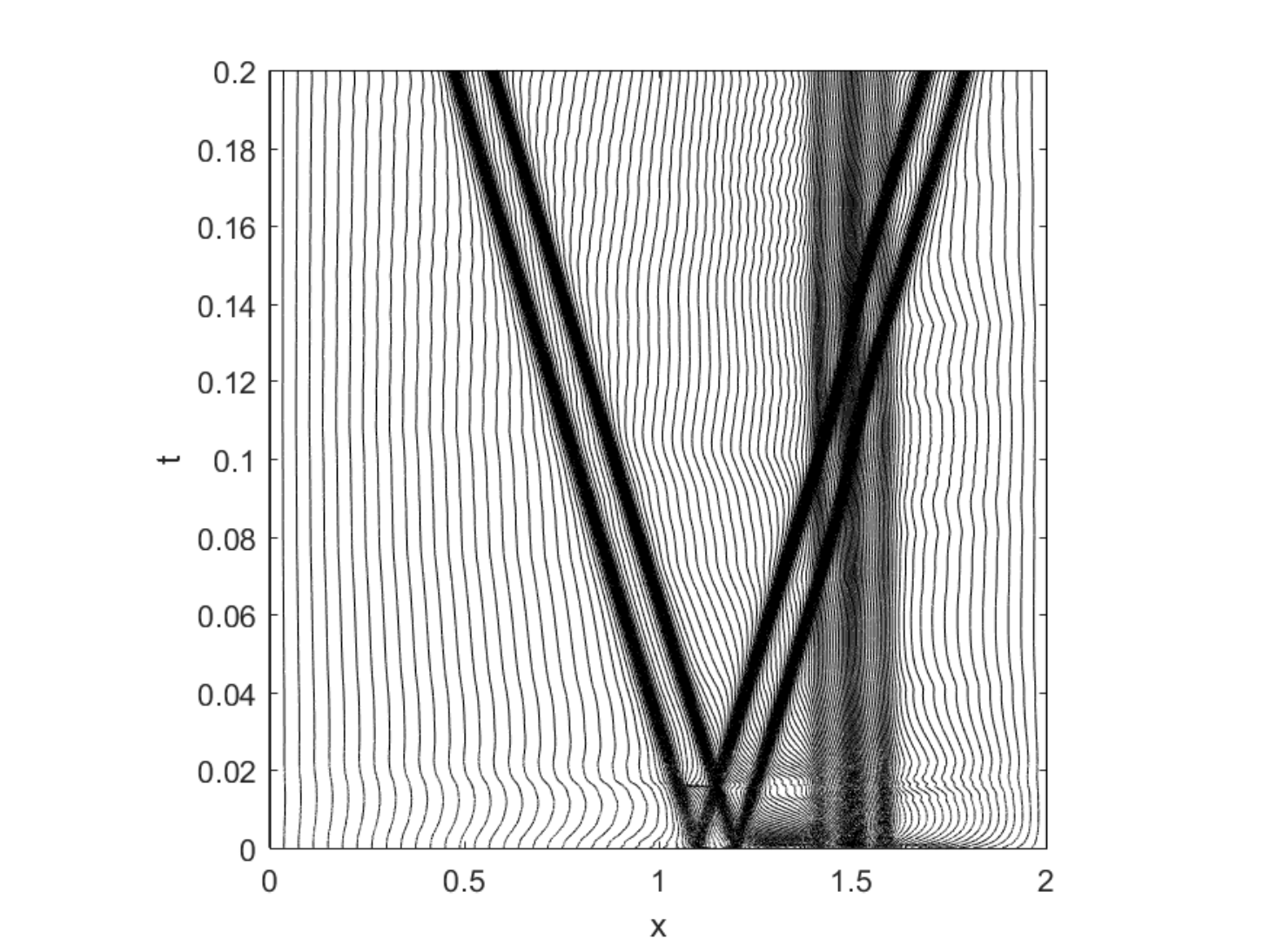}}
\caption{Example \ref{test3-1d}. The mesh trajectories are obtained with the $P^2$-DG method and a moving mesh of $N=160$ for the pulse of $\varepsilon=0.2$ and $\varepsilon=10^{-5}$.}
\label{Fig:test3-1d-mesh}
\end{figure}

\begin{figure}[H]
\centering
\subfigure[$\eta$: FM 160 vs MM 160]{
\includegraphics[width=0.35\textwidth,trim=10 0 40 10,clip]{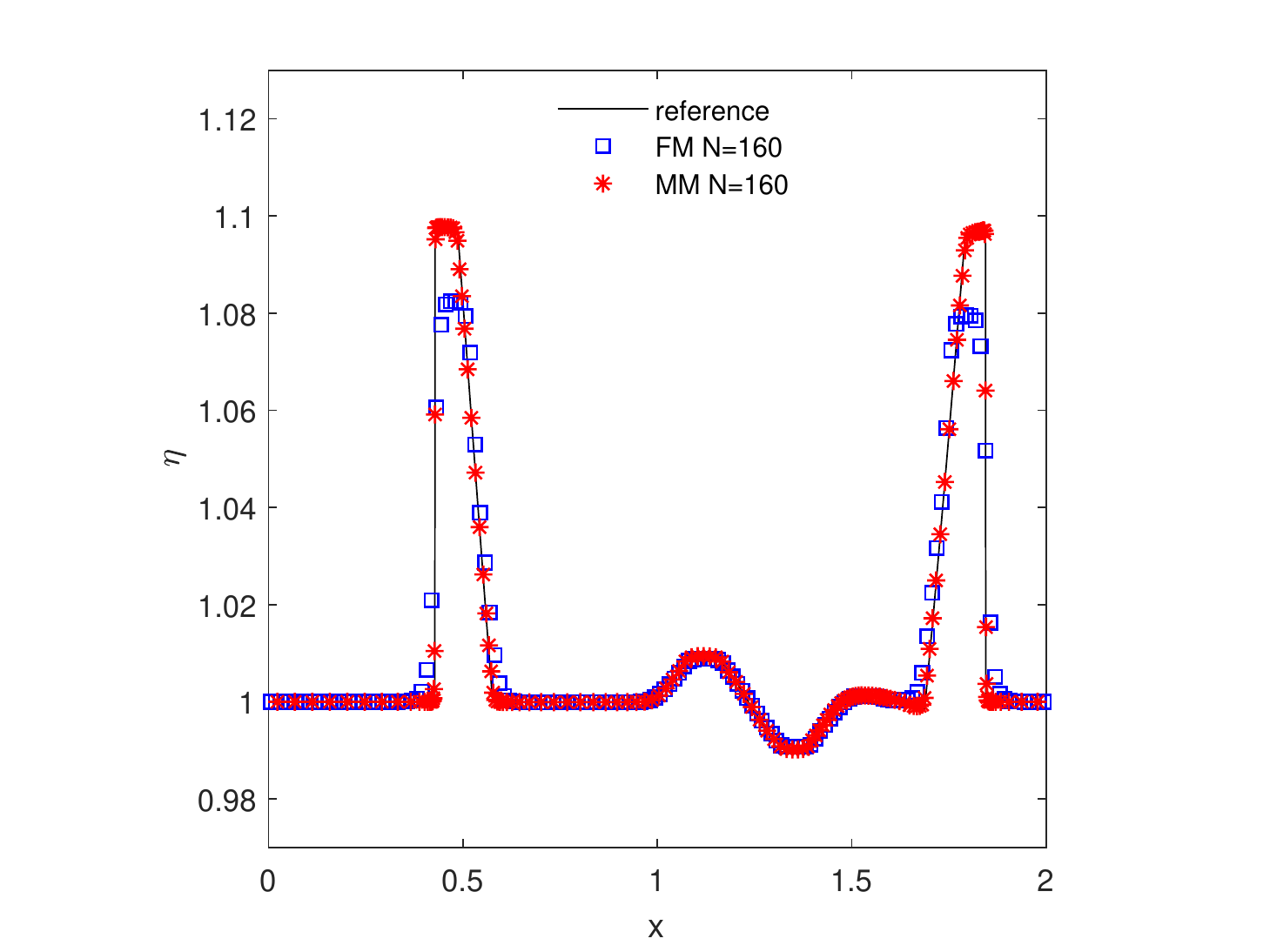}}
\subfigure[Close view of (a)]{
\includegraphics[width=0.35\textwidth,trim=10 0 39 10,clip]{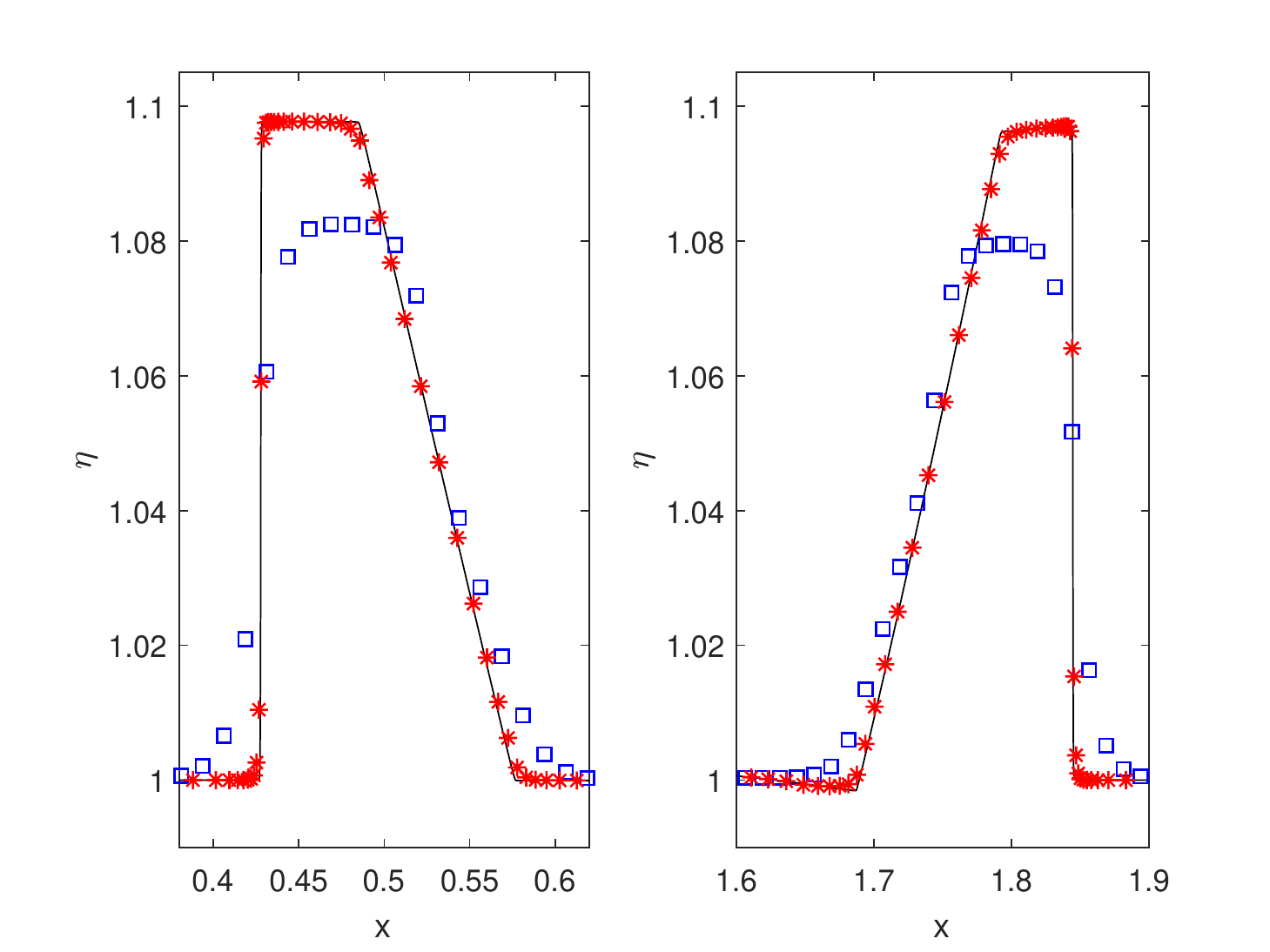}}
\subfigure[$\eta$: FM 480 vs MM 160]{
\includegraphics[width=0.35\textwidth,trim=10 0 40 10,clip]{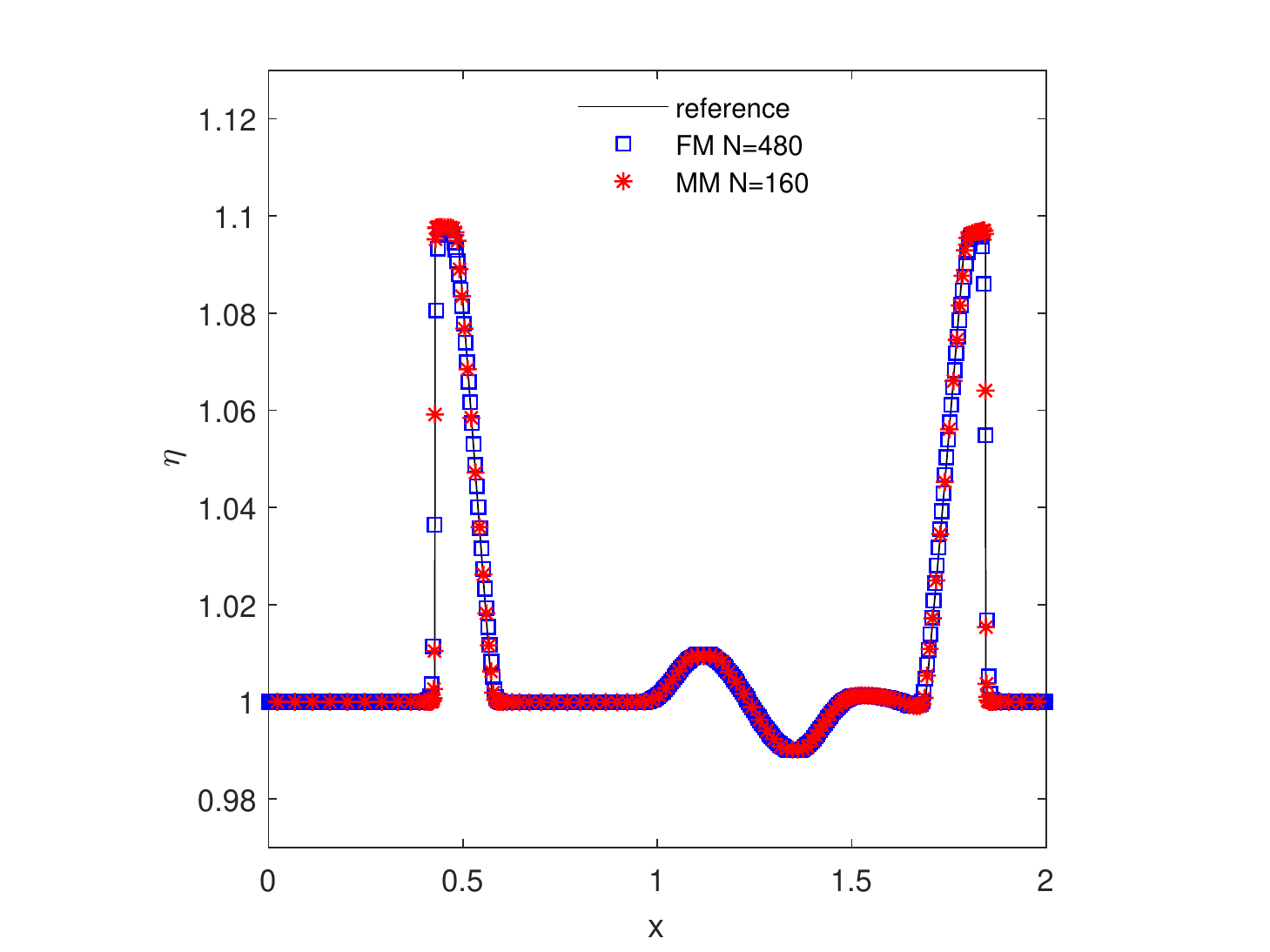}}
\subfigure[Close view of (c)]{
\includegraphics[width=0.35\textwidth,trim=10 0 39 10,clip]{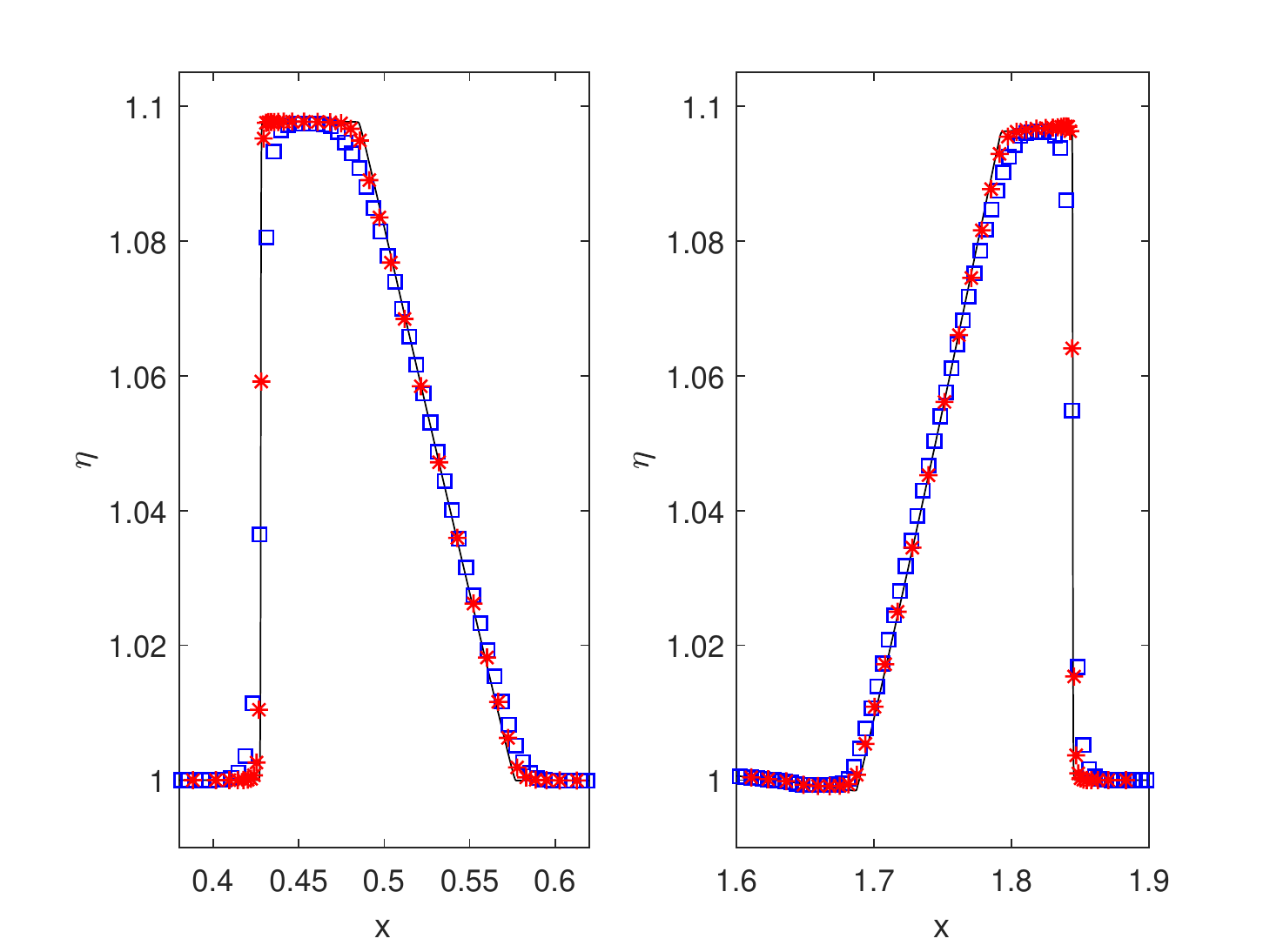}}
\caption{Example \ref{test3-1d}. The water surface level $\eta$ at $t=0.2$ obtained with the $P^2$-DG method and a moving mesh of $N=160$ is compared with those obtained with fixed meshes of $N=160$ and $N=480$ for a large pulse $\varepsilon=0.2$.}
\label{Fig:test3-1d-large-eta}
\end{figure}

\begin{figure}[H]
\centering
\subfigure[$hu$: FM 160 vs MM 160]{
\includegraphics[width=0.35\textwidth,trim=10 0 40 10,clip]{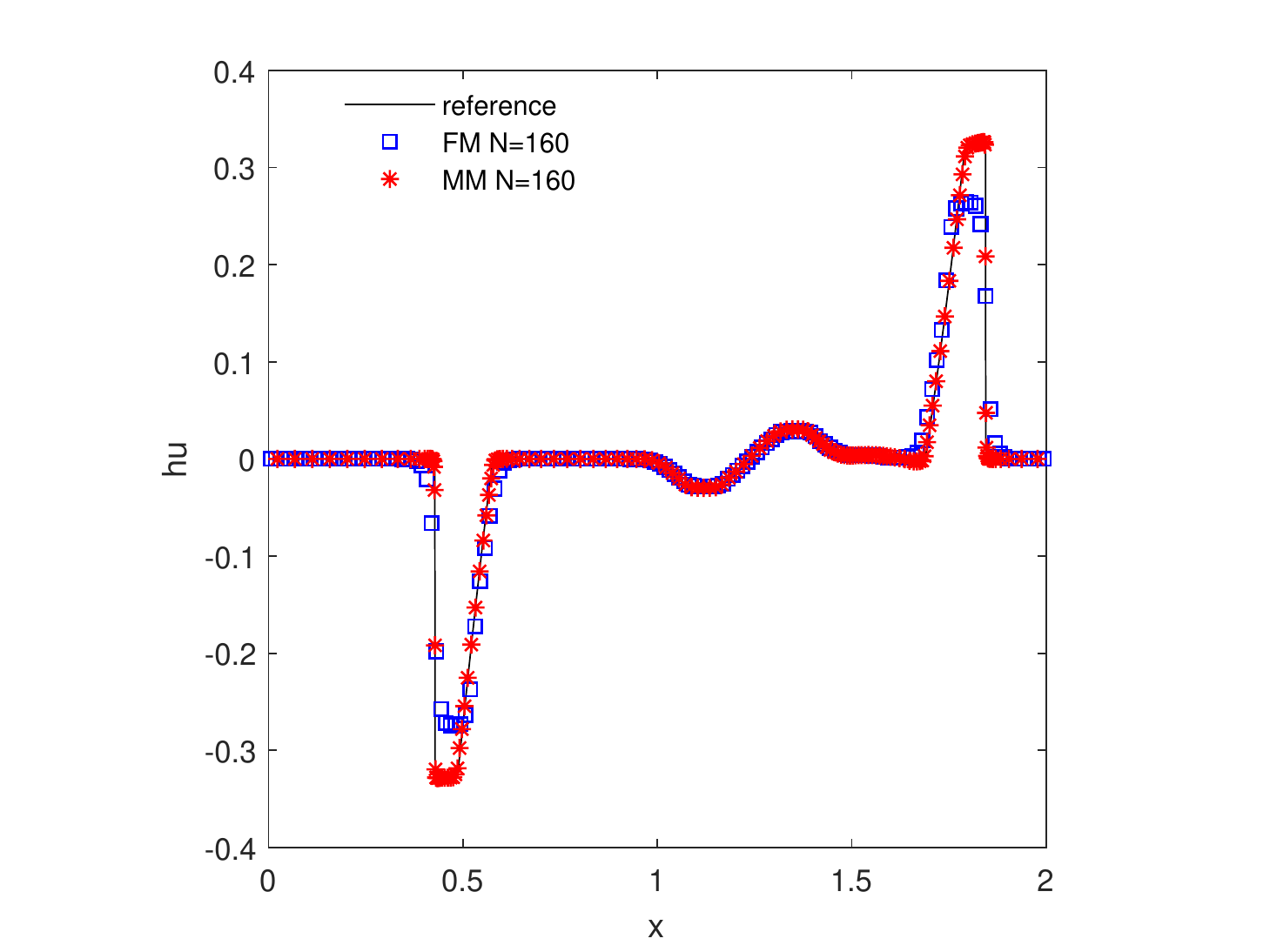}}
\subfigure[Close view of (a)]{
\includegraphics[width=0.35\textwidth,trim=10 0 39 10,clip]{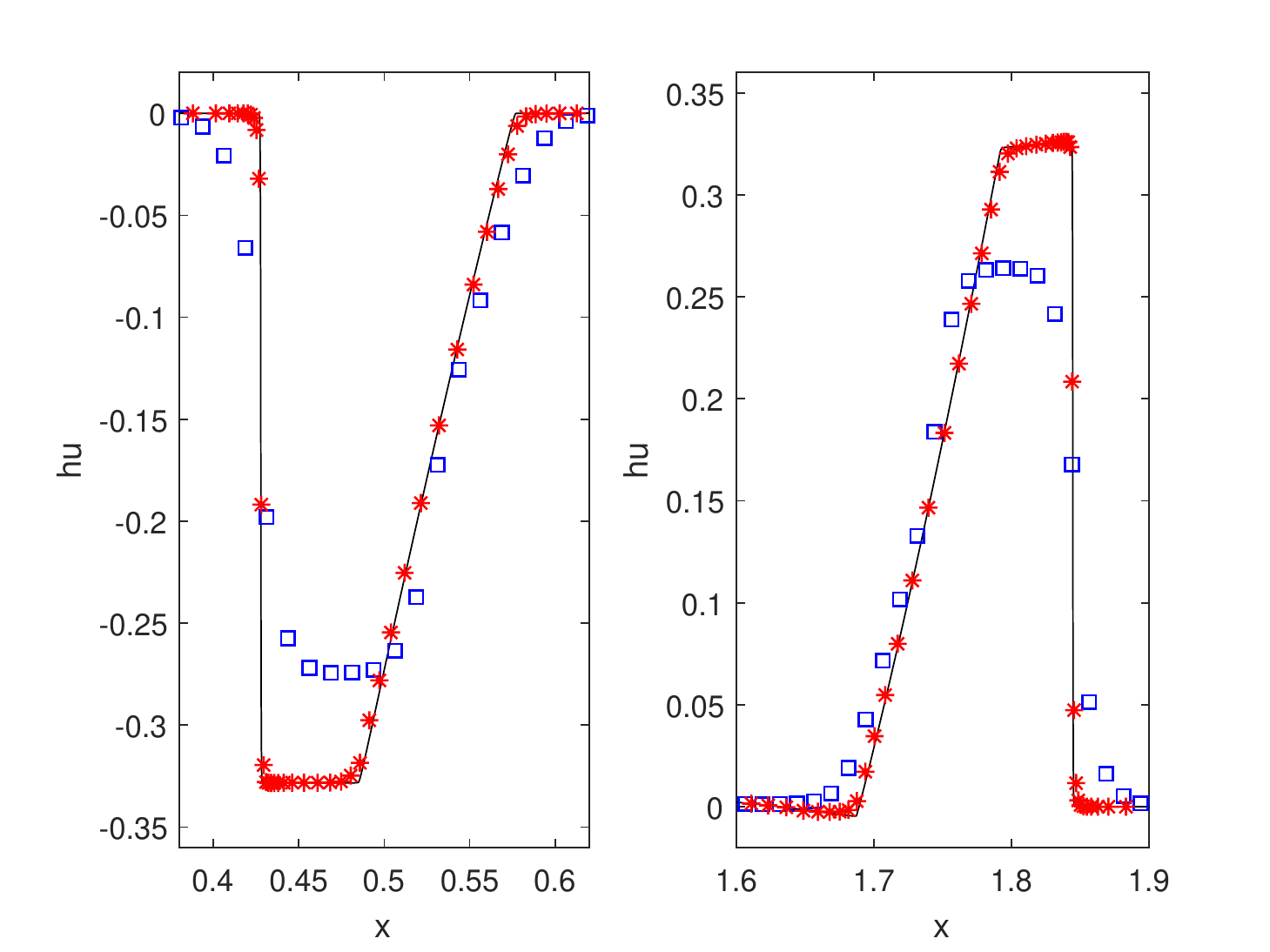}}
\subfigure[$hu$: FM 480 vs MM 160]{
\includegraphics[width=0.35\textwidth,trim=10 0 40 10,clip]{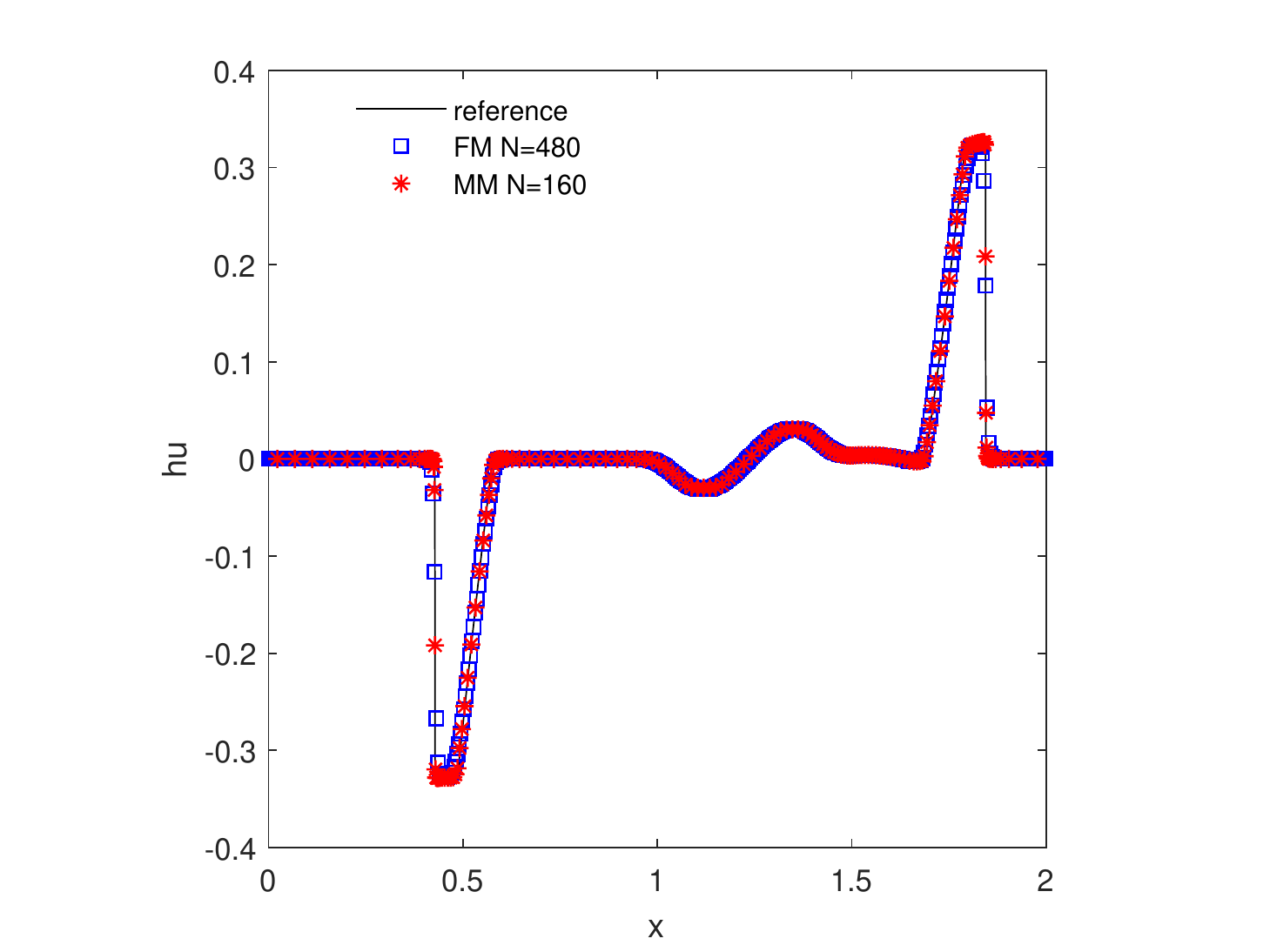}}
\subfigure[Close view of (c)]{
\includegraphics[width=0.35\textwidth,trim=10 0 39 10,clip]{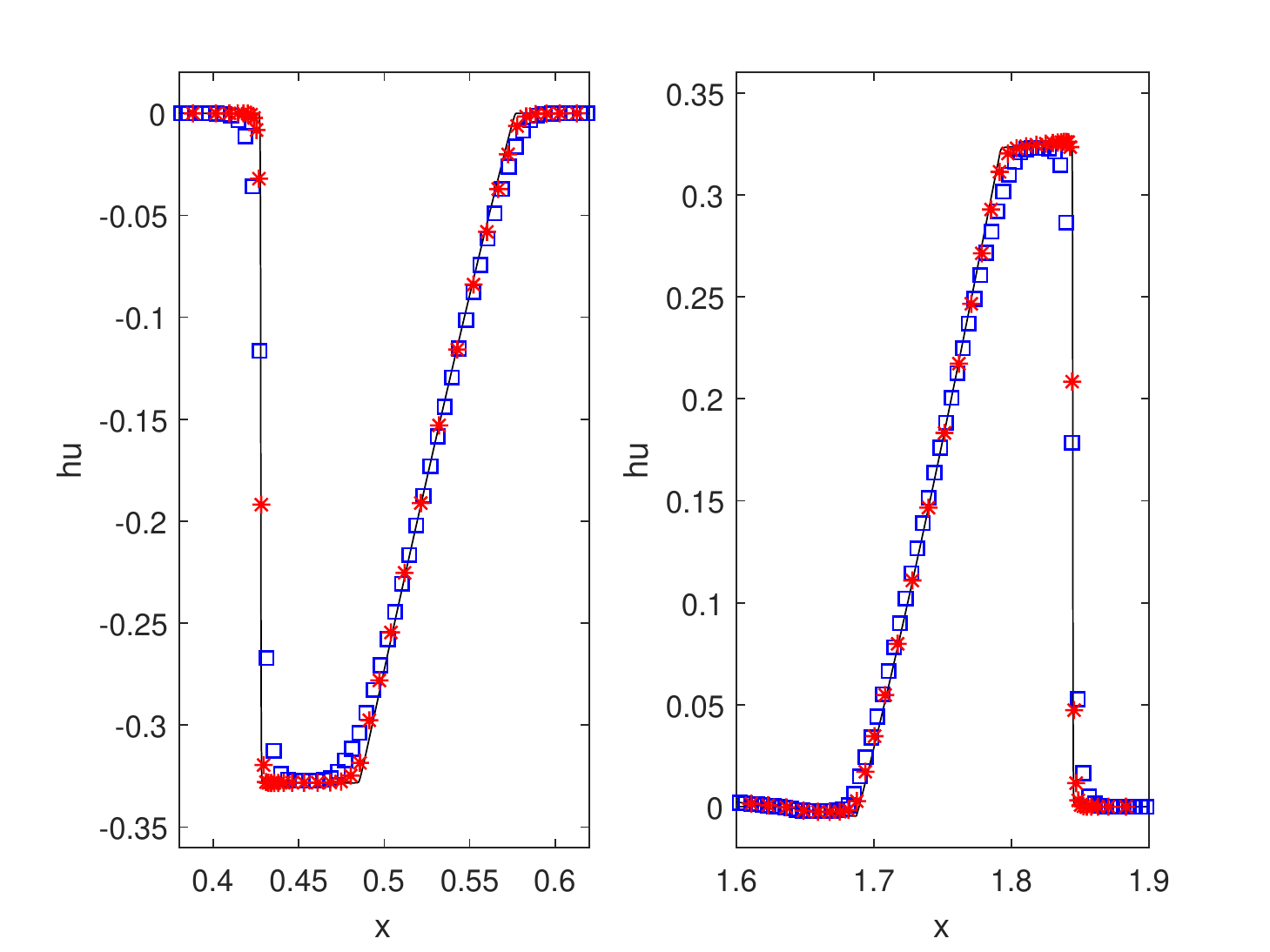}}
\caption{Example \ref{test3-1d}. The water discharge $hu$ at $t=0.2$ obtained with the $P^2$-DG method and a moving mesh of $N=160$ is compared with those obtained with fixed meshes of $N=160$ and $N=480$ for a large pulse $\varepsilon=0.2$.}
\label{Fig:test3-1d-large-hu}
\end{figure}

\begin{figure}[H]
\centering
\subfigure[$\eta$: FM 160 vs MM 160]{
\includegraphics[width=0.35\textwidth,trim=10 0 40 10,clip]{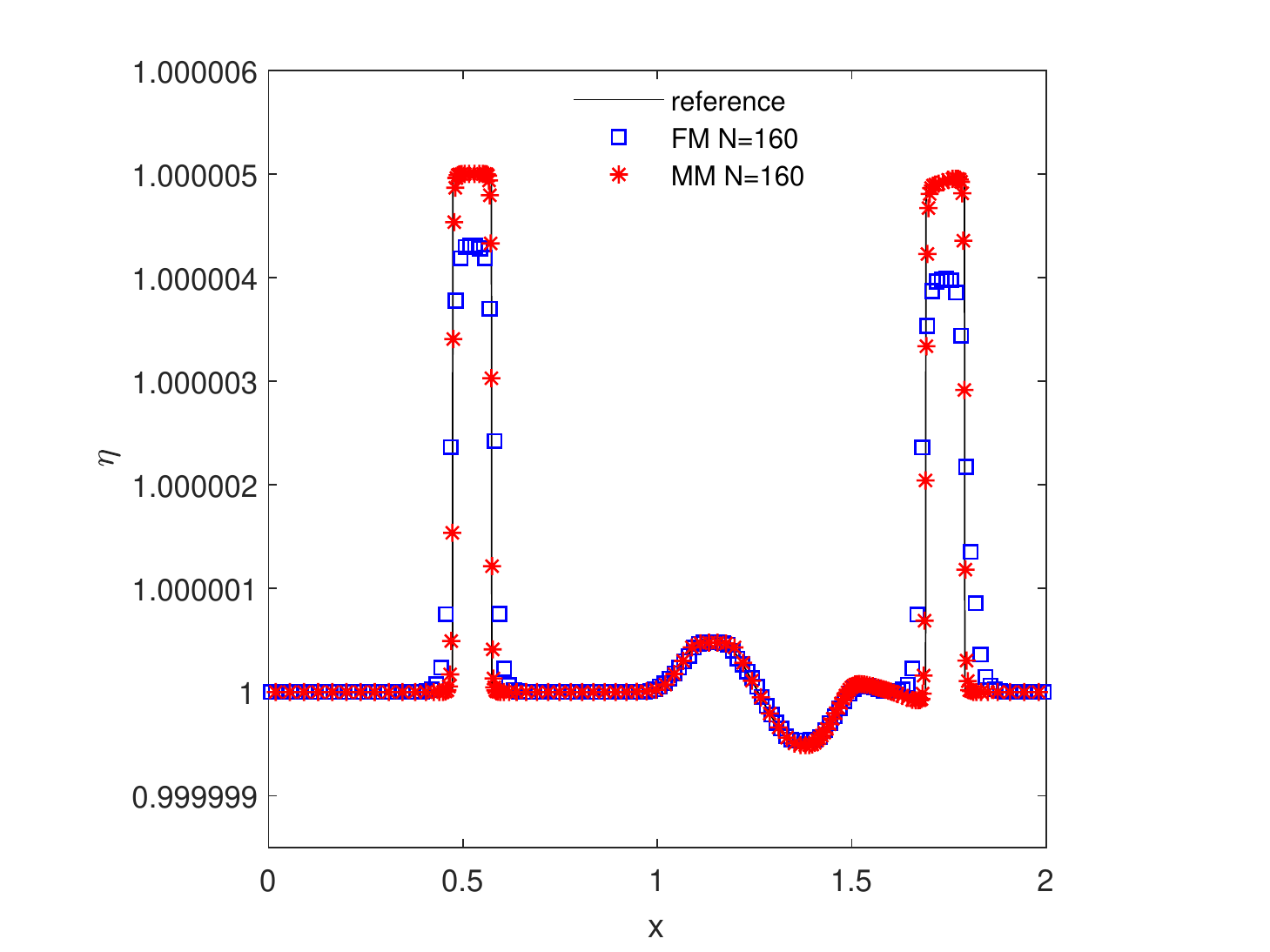}}
\subfigure[Close view of (a)]{
\includegraphics[width=0.35\textwidth,trim=10 0 39 10,clip]{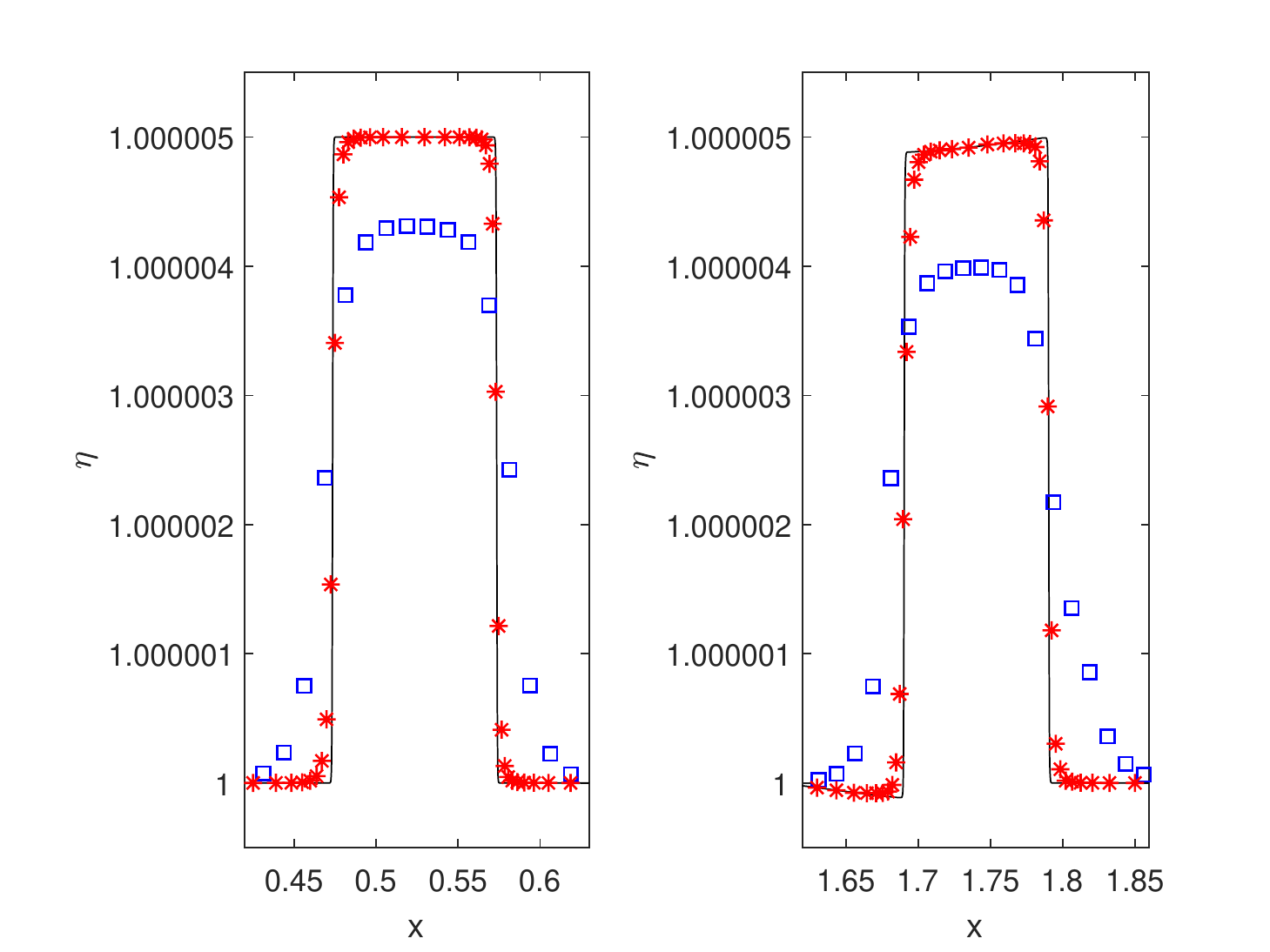}}
\subfigure[$\eta$: FM 480 vs MM 160]{
\includegraphics[width=0.35\textwidth,trim=10 0 40 10,clip]{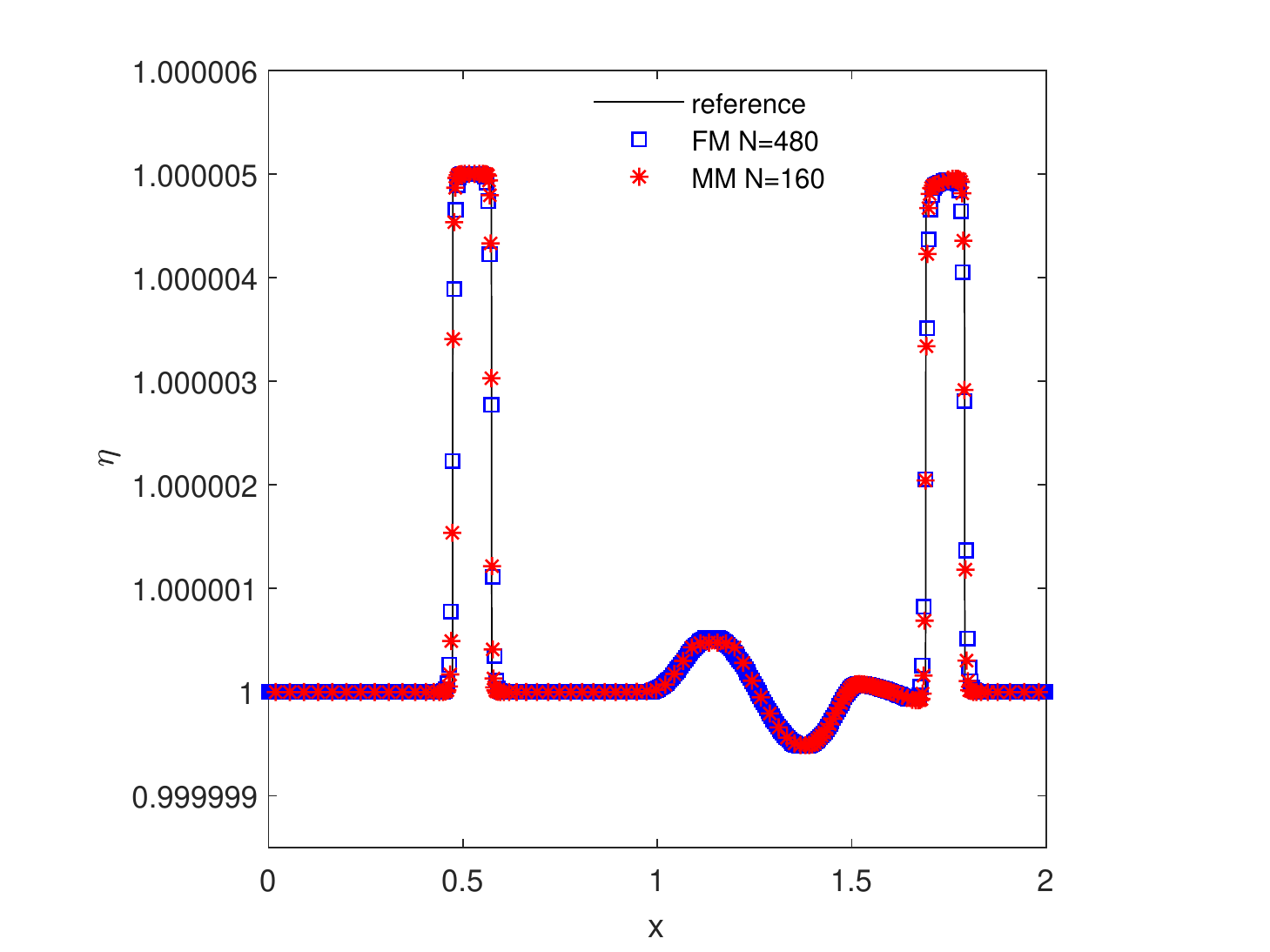}}
\subfigure[Close view of (c)]{
\includegraphics[width=0.35\textwidth,trim=10 0 39 10,clip]{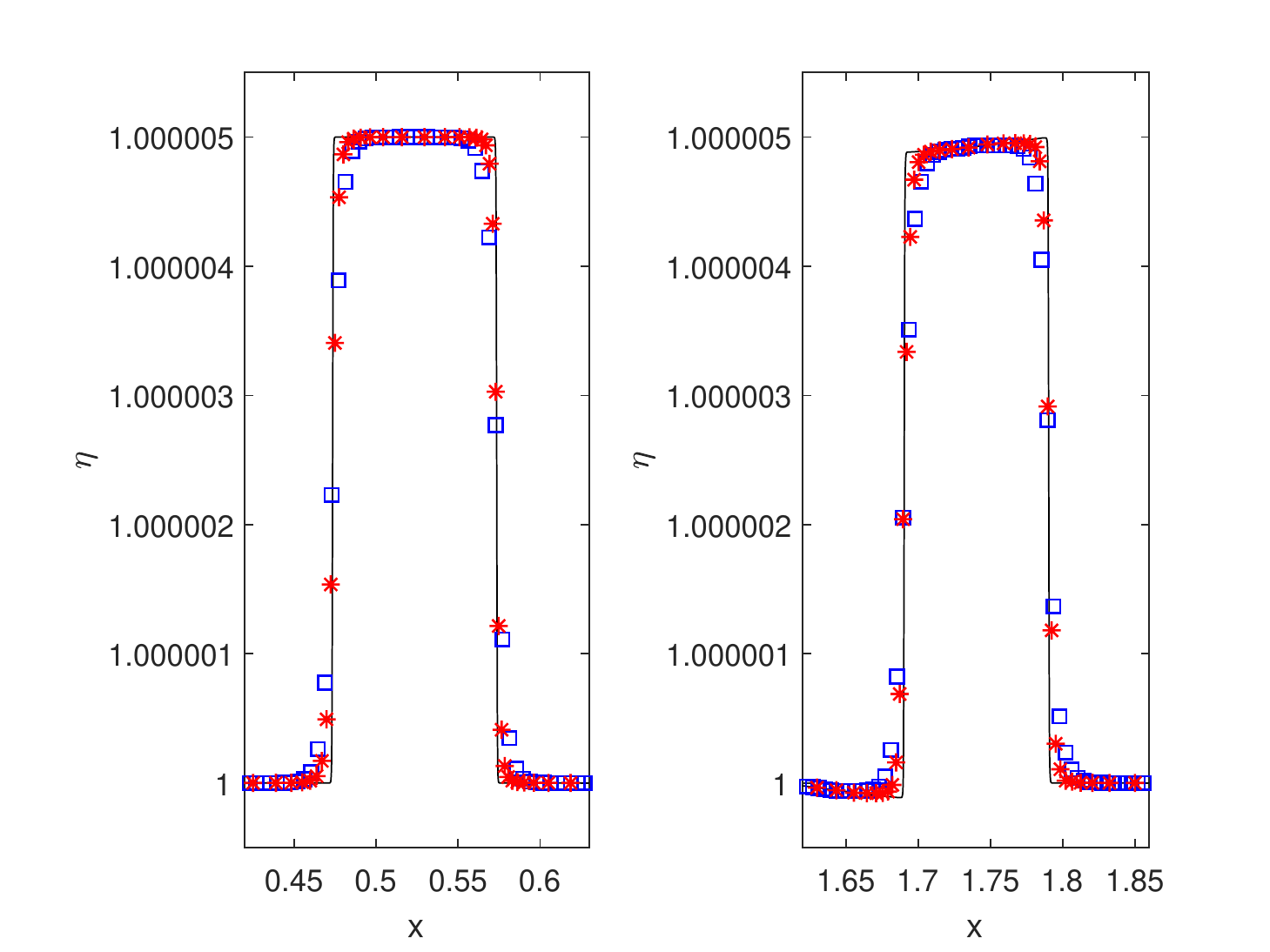}}
\caption{Example \ref{test3-1d}. The water surface level $\eta$ at $t=0.2$ obtained with the $P^2$-DG method and a moving mesh of $N=160$ is compared with those obtained with fixed meshes of $N=160$ and $N=480$ for a small pulse $\varepsilon=10^{-5}$.}
\label{Fig:test3-1d-small-eta}
\end{figure}

\begin{figure}[H]
\centering
\subfigure[$hu$: FM 160 vs MM 160]{
\includegraphics[width=0.35\textwidth,trim=10 0 40 10,clip]{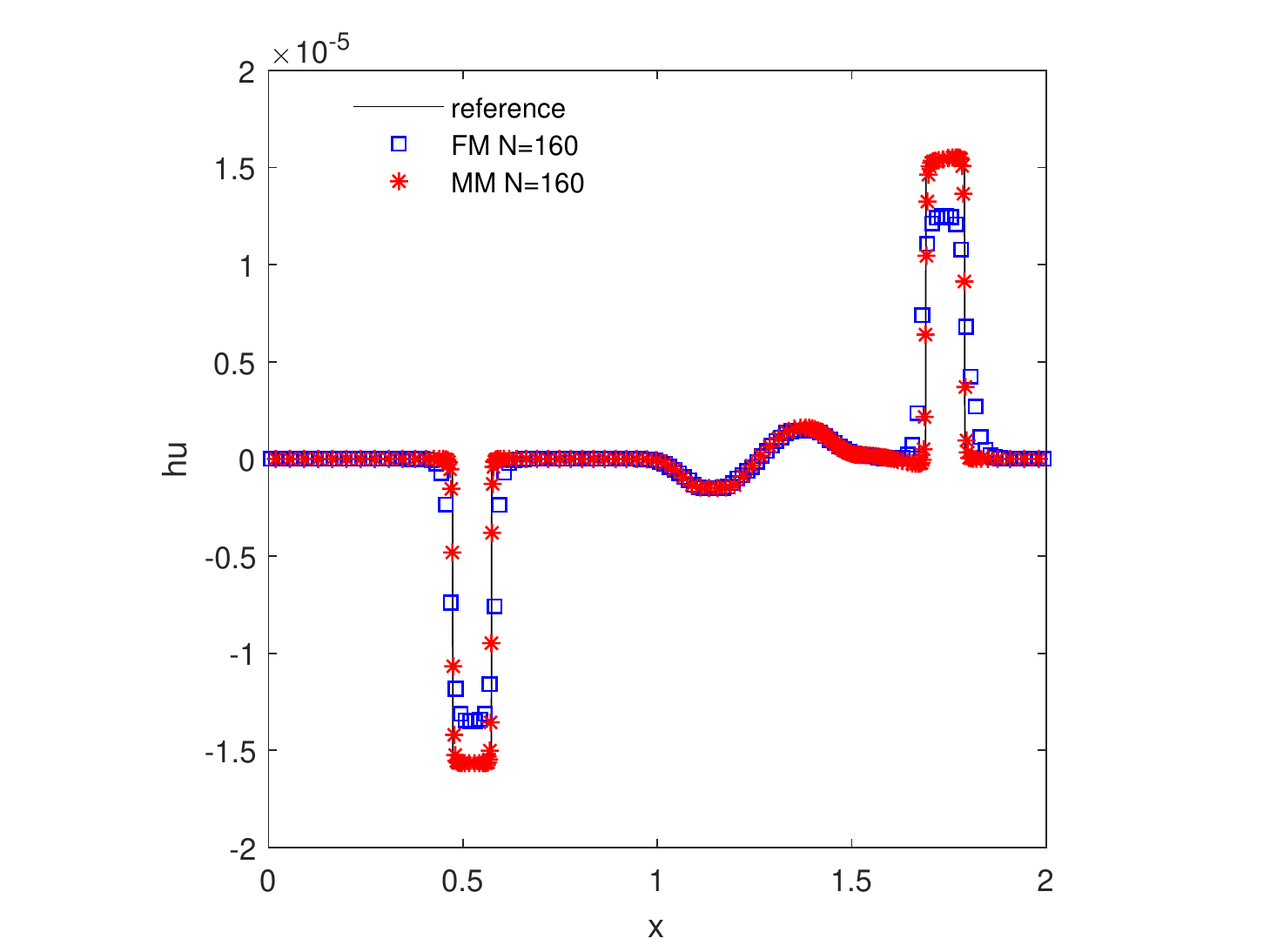}}
\subfigure[Close view of (a)]{
\includegraphics[width=0.35\textwidth,trim=10 0 39 10,clip]{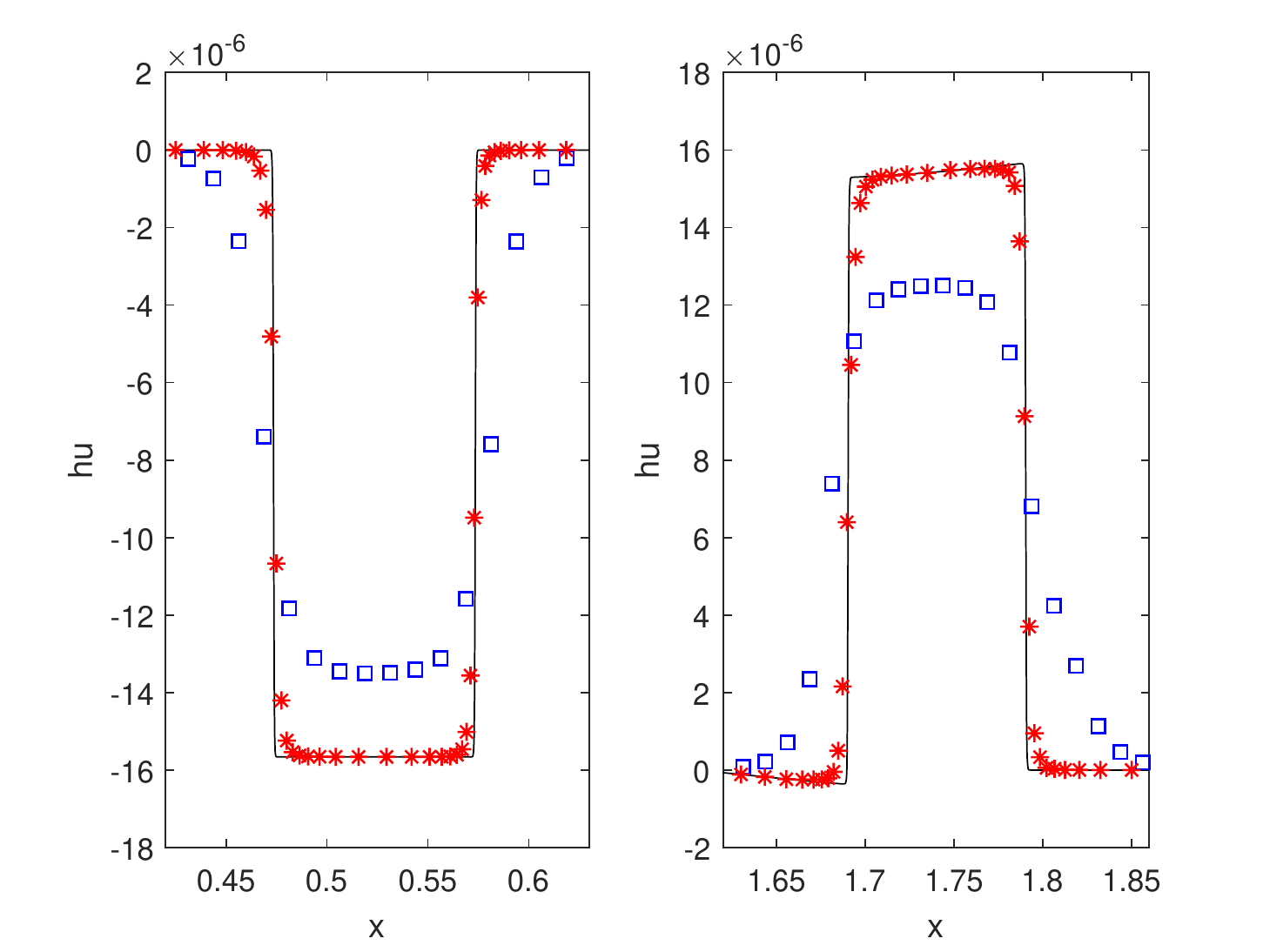}}
\subfigure[$hu$: FM 480 vs MM 160 ]{
\includegraphics[width=0.35\textwidth,trim=10 0 40 10,clip]{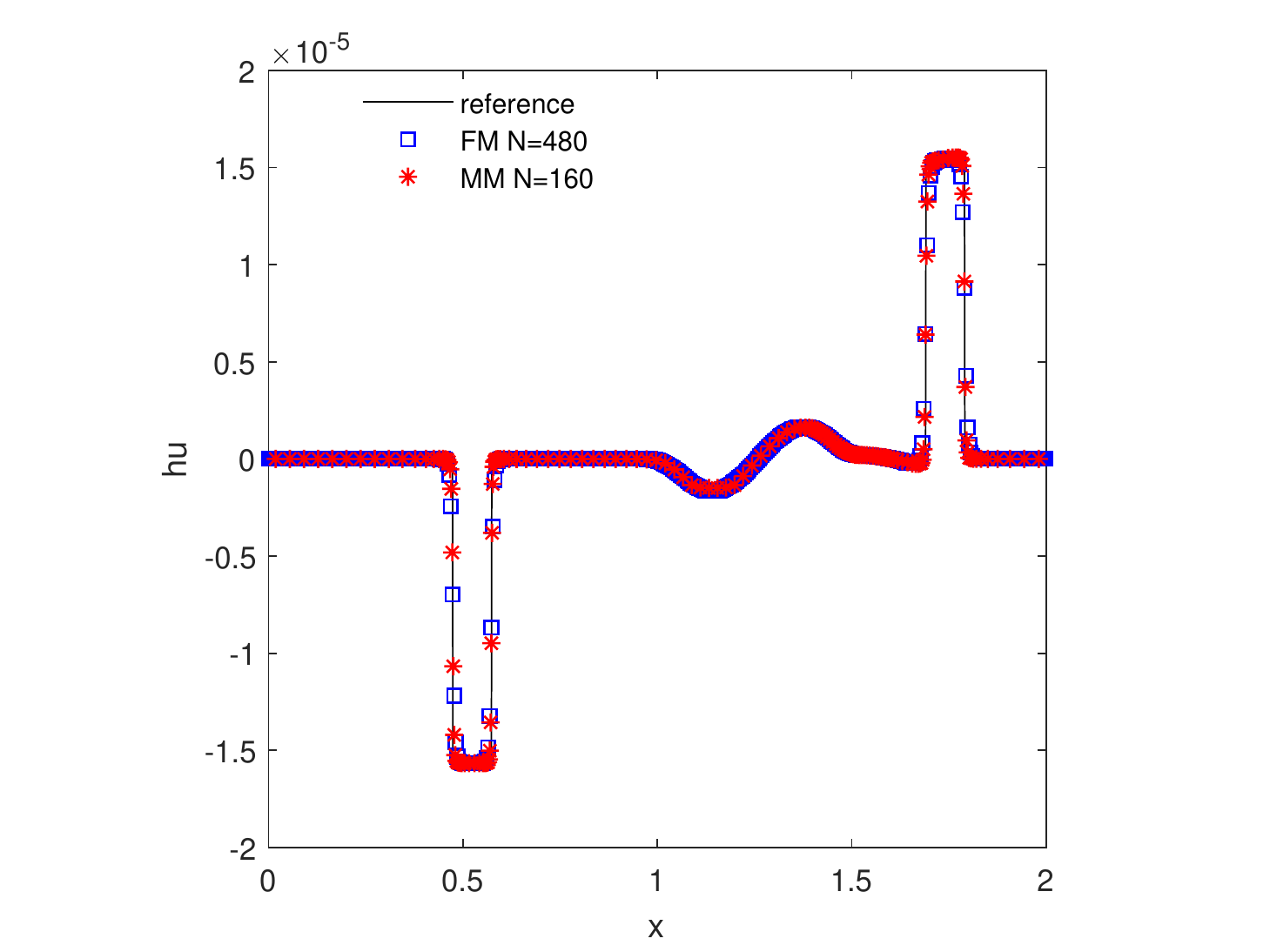}}
\subfigure[Close view of (c)]{
\includegraphics[width=0.35\textwidth,trim=10 0 39 10,clip]{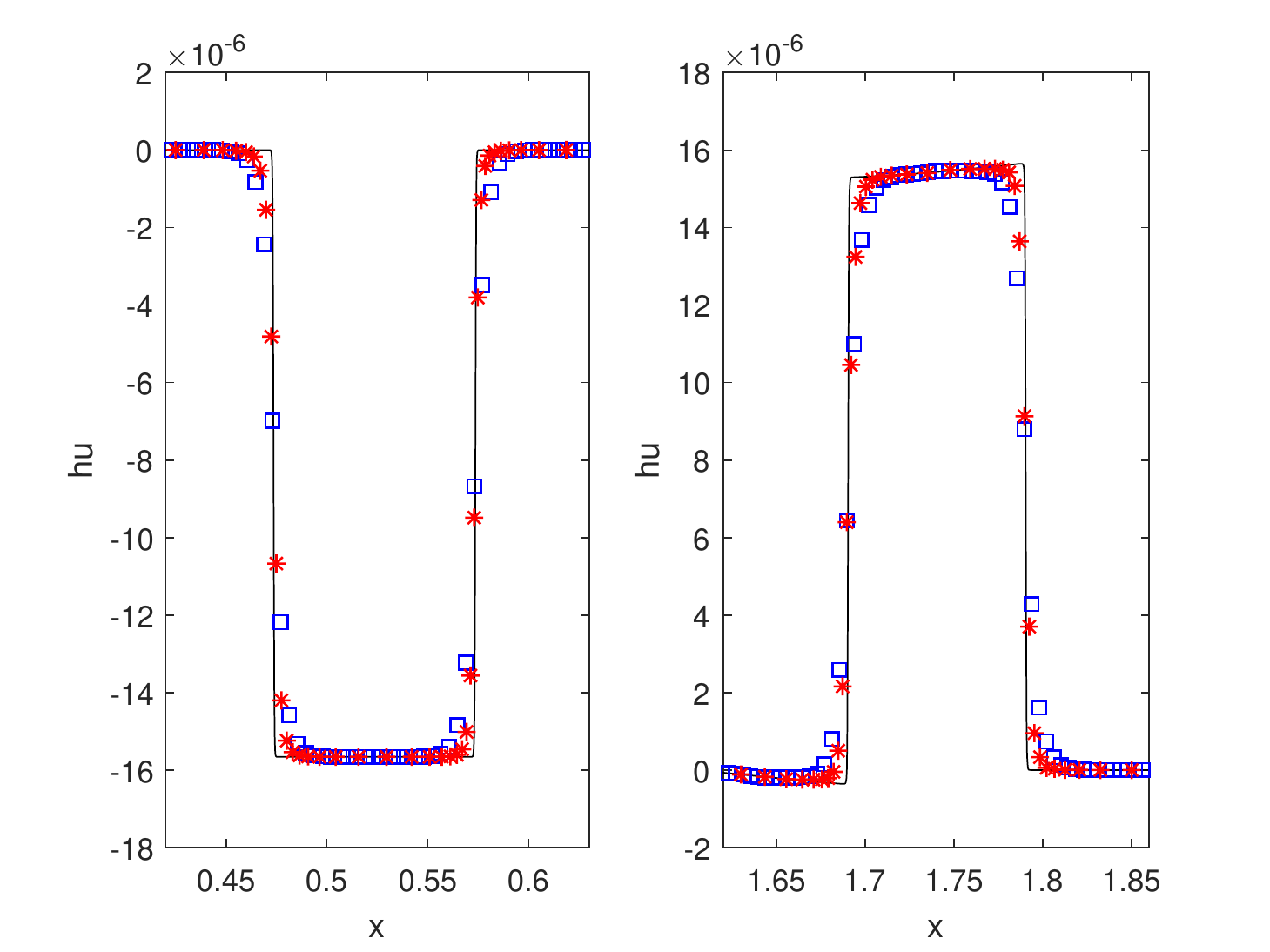}}
\caption{Example \ref{test3-1d}. The water discharge $hu$ at $t=0.2$ obtained with the $P^2$-DG method and a moving mesh of $N=160$ is compared with those obtained with fixed meshes of $N=160$ and $N=480$ for a small pulse $\varepsilon=10^{-5}$.}
\label{Fig:test3-1d-small-hu}
\end{figure}


\begin{figure}[H]
\centering
\subfigure[Initial surface and bottom]{
\includegraphics[width=0.35\textwidth,trim=20 0 35 10,clip]{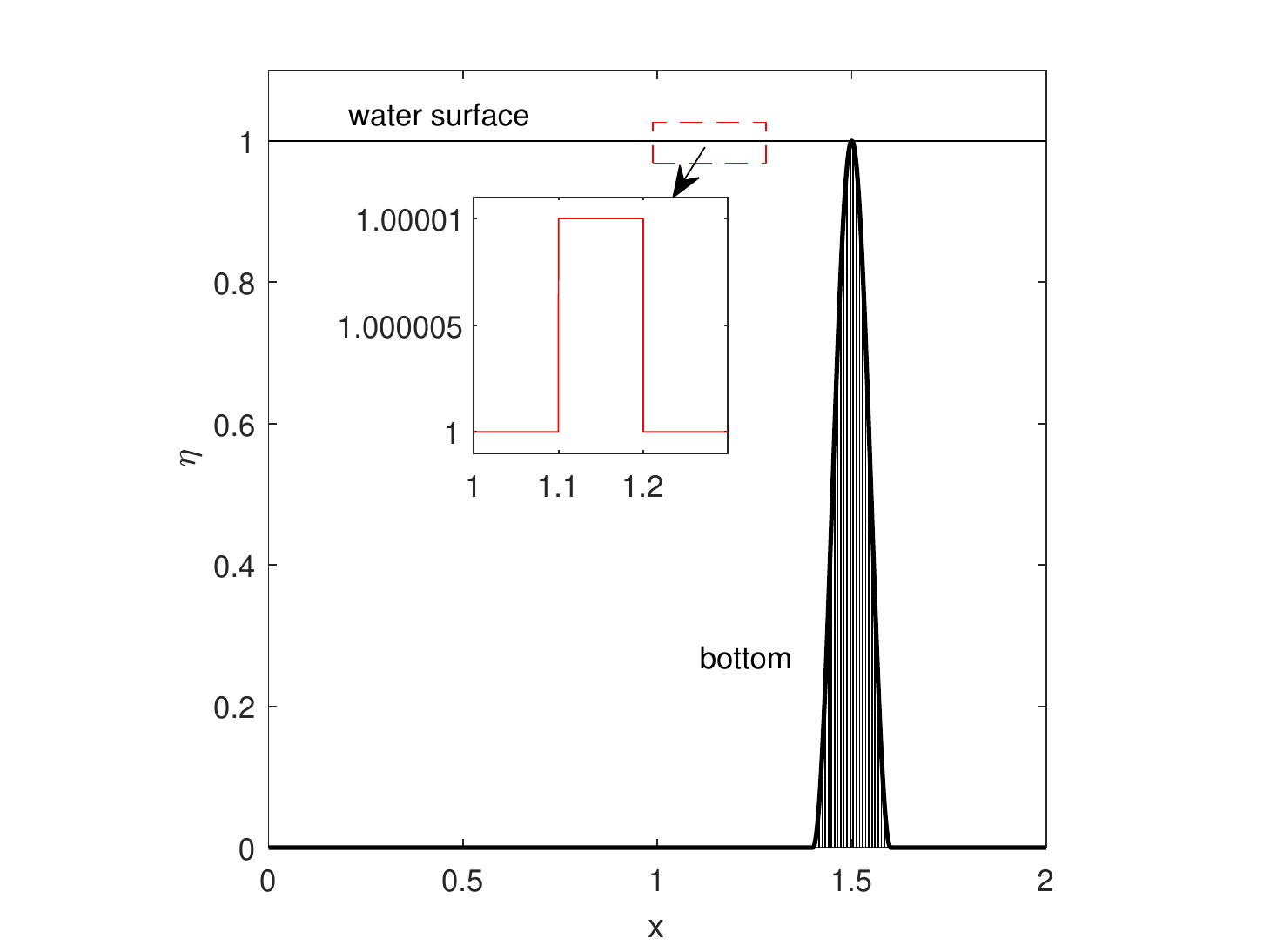}}
\subfigure[Mesh trajectories]{
\includegraphics[width=0.35\textwidth,trim=20 0 35 10,clip]{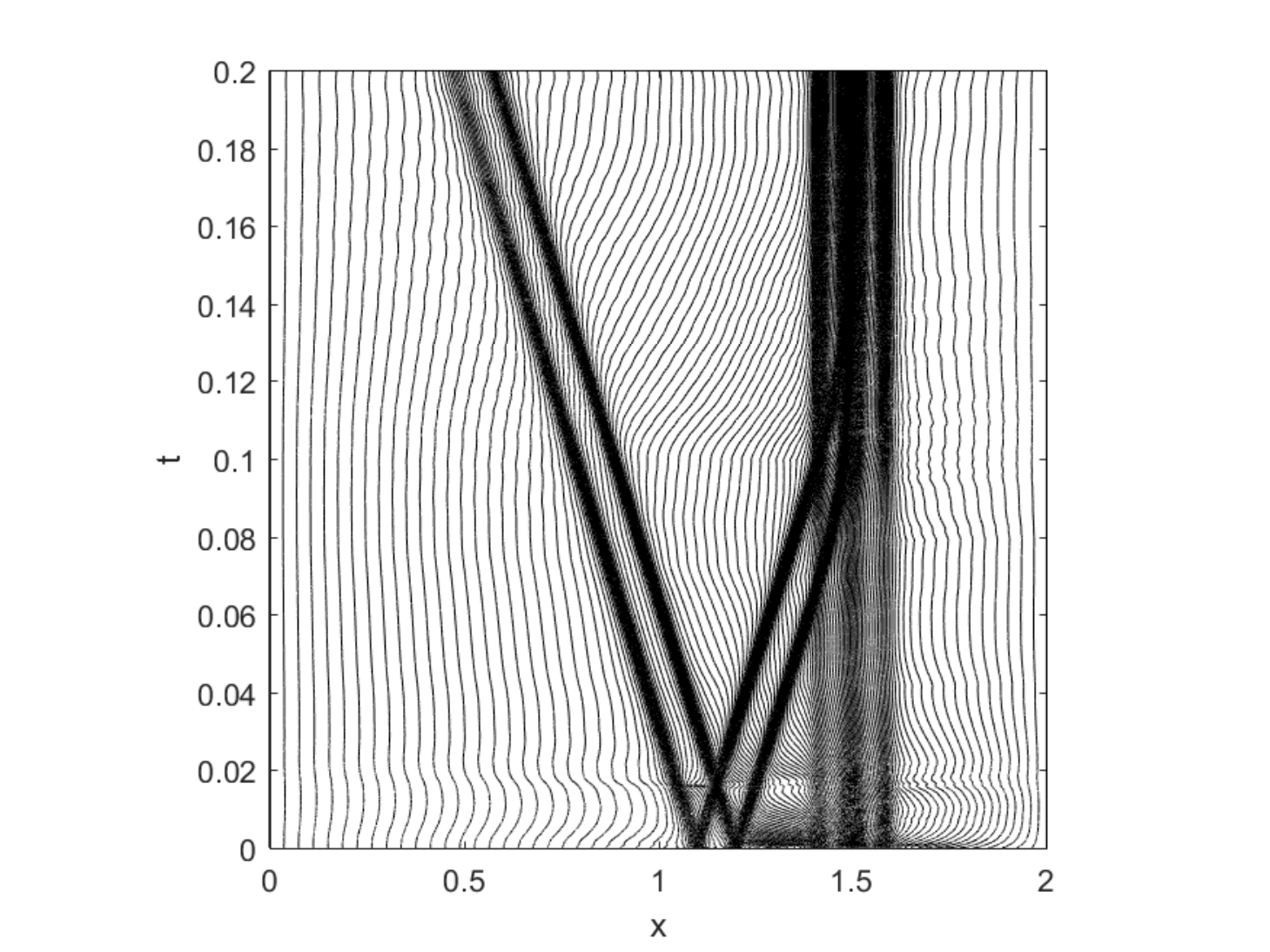}}
\caption{Example \ref{test3-1d}.
(a) The initial water surface $\eta$ and the bottom $B$ \eqref{test3-1d-B2} for the small perturbation test with a dry region.
(b) The mesh trajectories obtained with $P^2$ QLMM-DG method of $N=160$.}
\label{Fig:PP-test3-1d-initial}
\end{figure}

\begin{figure}[H]
\centering
\subfigure[$\eta$: FM 160 vs MM 160]{
\includegraphics[width=0.35\textwidth,trim=20 0 35 10,clip]
{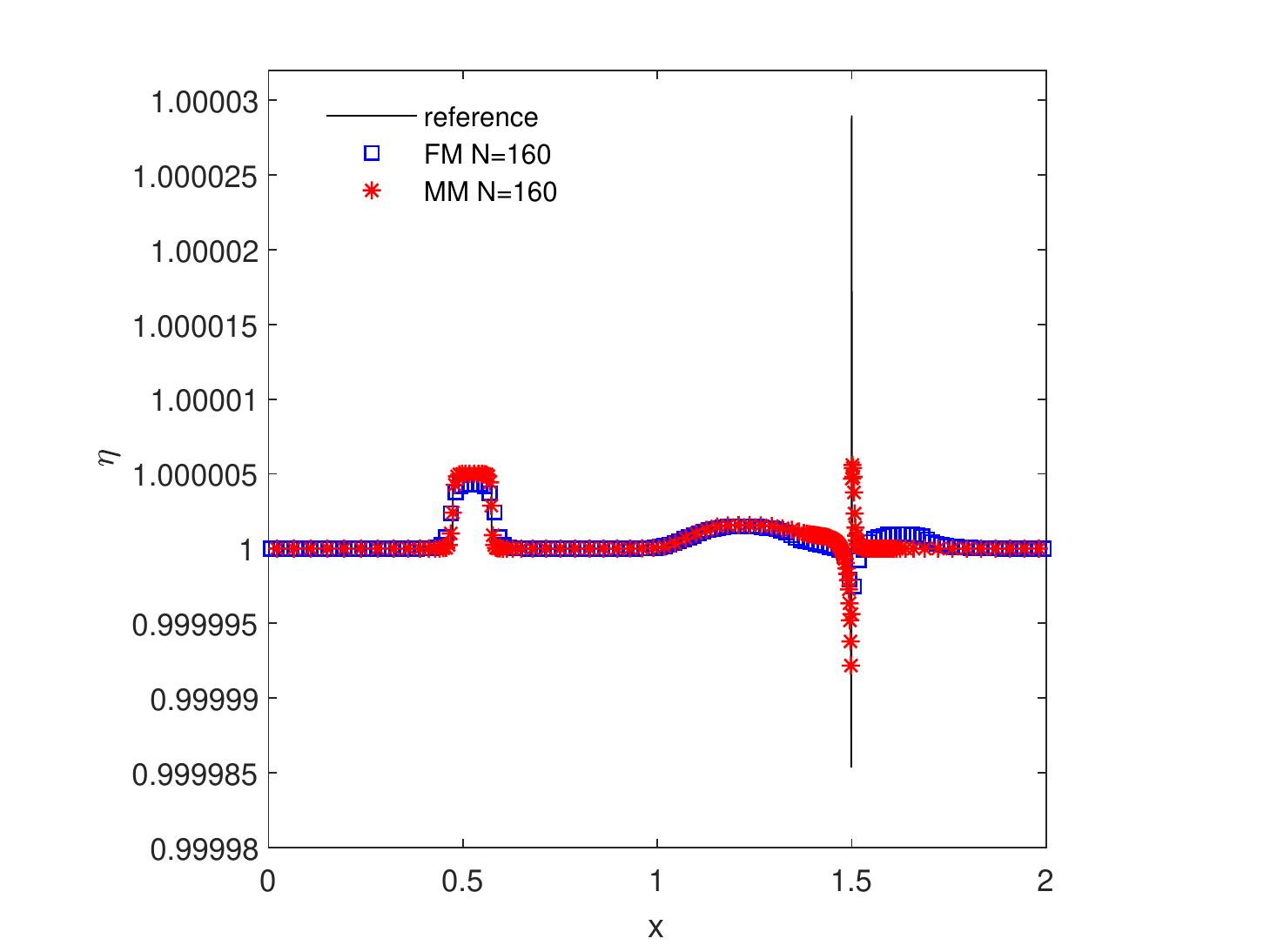}}
\subfigure[Close view of (a)]{
\includegraphics[width=0.35\textwidth,trim=20 0 35 10,clip]{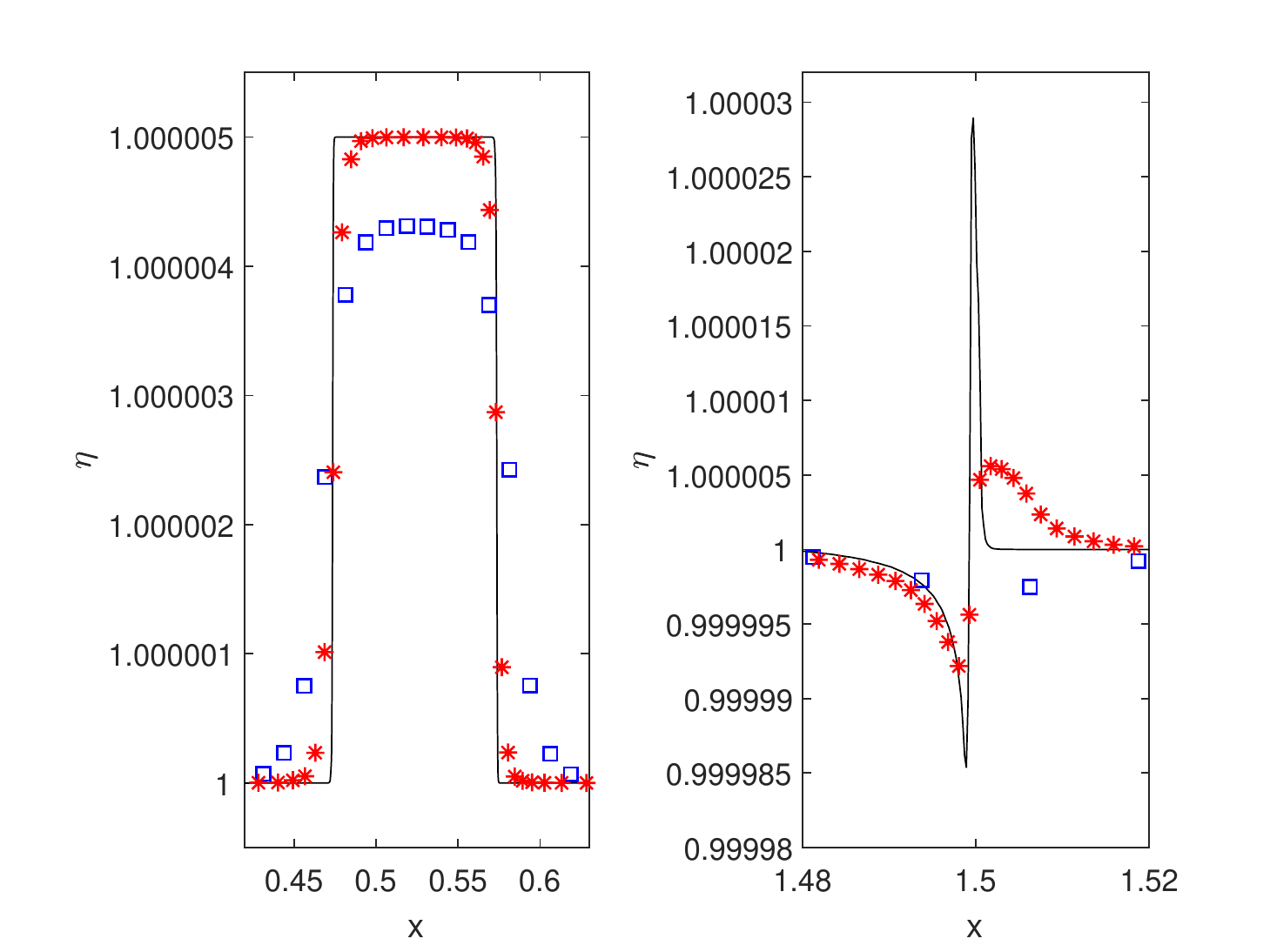}}
\subfigure[$\eta$: FM 640 vs MM 160]{
\includegraphics[width=0.35\textwidth,trim=20 0 35 10,clip]
{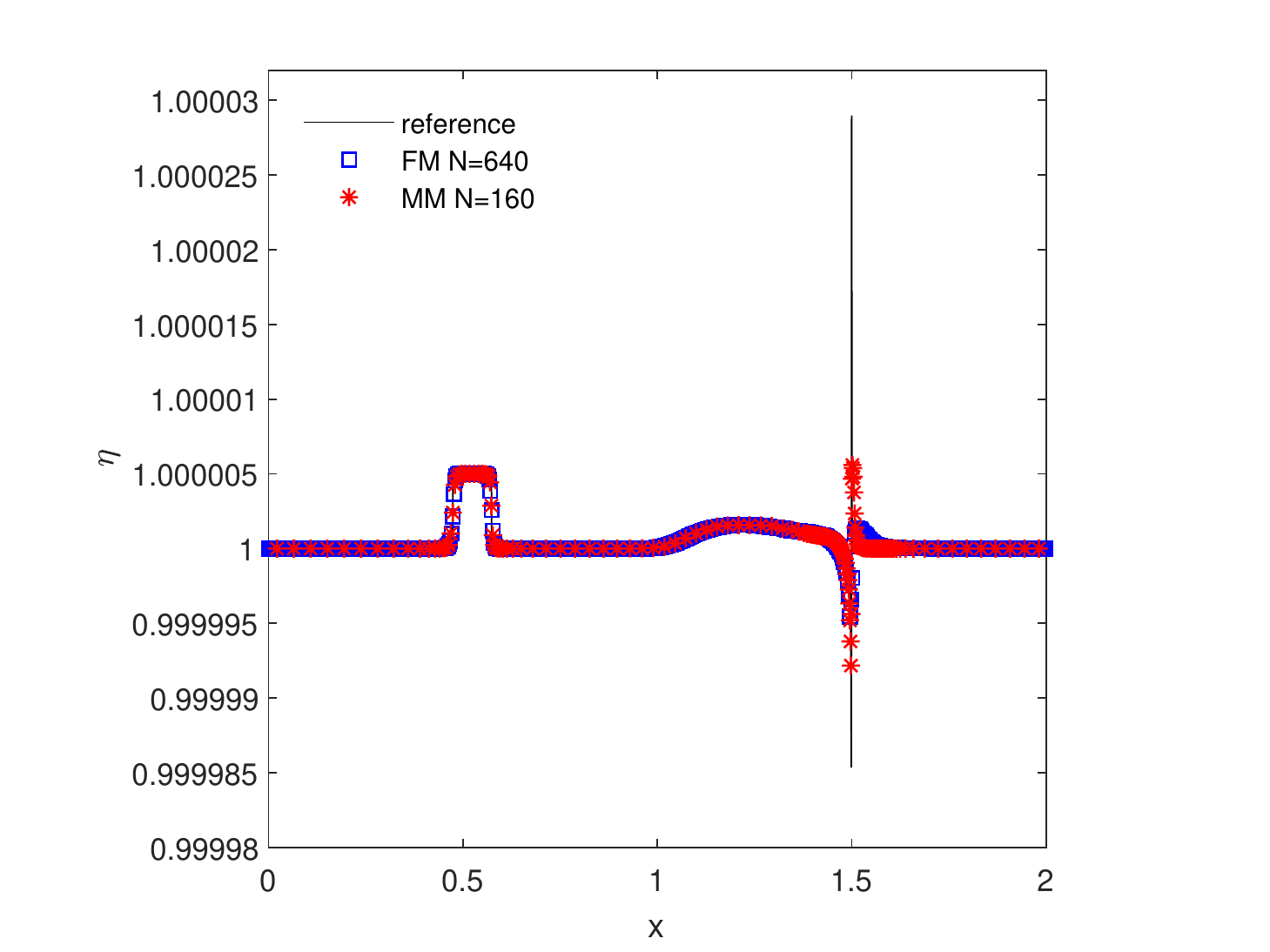}}
\subfigure[Close view of (c)]{
\includegraphics[width=0.35\textwidth,trim=20 0 35 10,clip]{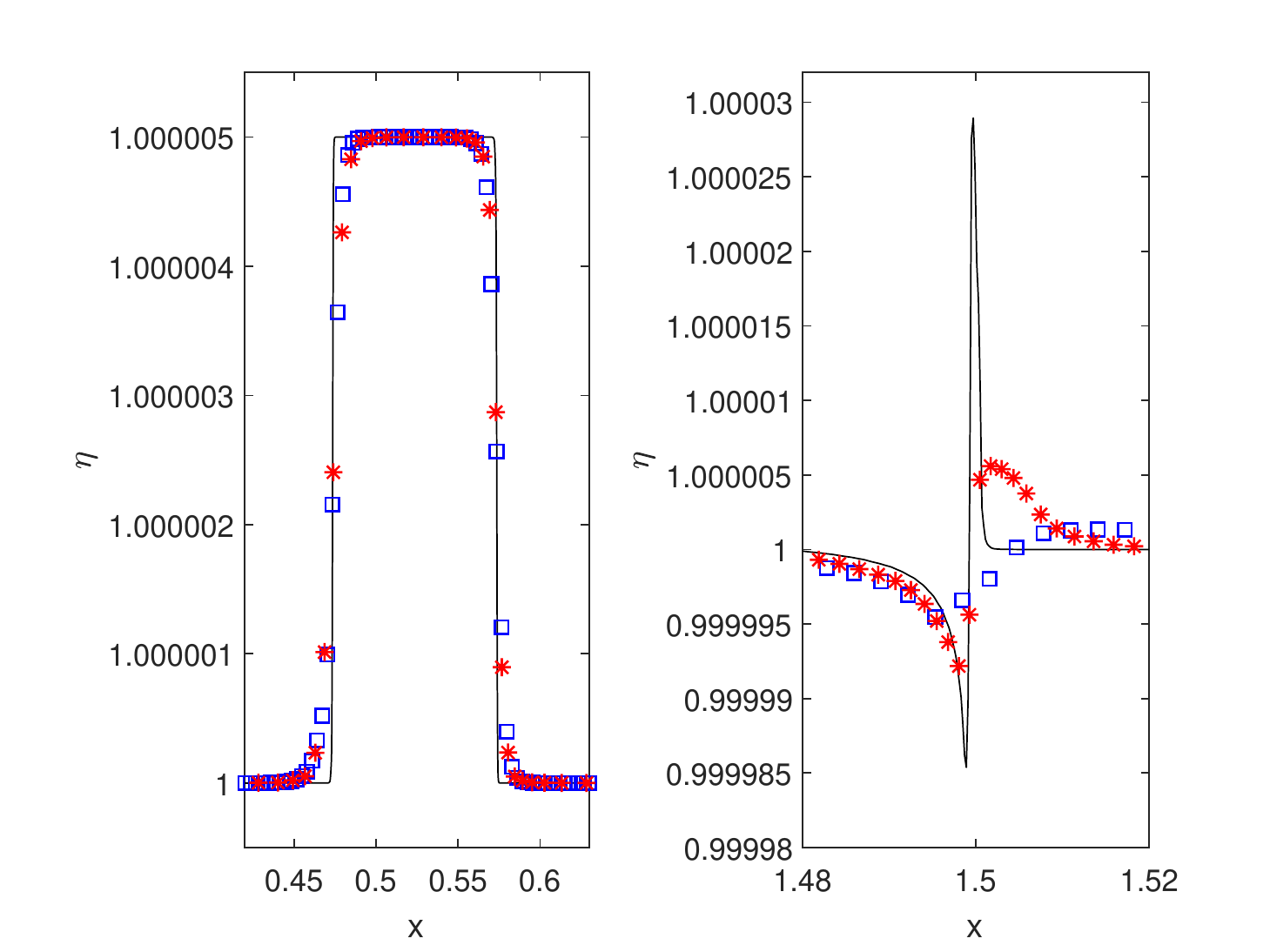}}
\caption{Example \ref{test3-1d} with the bottom topography \eqref{test3-1d-B2} with a dry region.
The water surface $\eta$ at $t=0.2$ obtained with $P^2$-DG and a moving mesh of $N=160$ are compared with those obtained with a fixed mesh of $N=160$ and $N=640$.}
\label{Fig:PP-test3-1d-eta}
\end{figure}

\begin{figure}[H]
\centering
\subfigure[$hu$: FM 160 vs MM 160]{
\includegraphics[width=0.35\textwidth,trim=20 0 35 10,clip]
{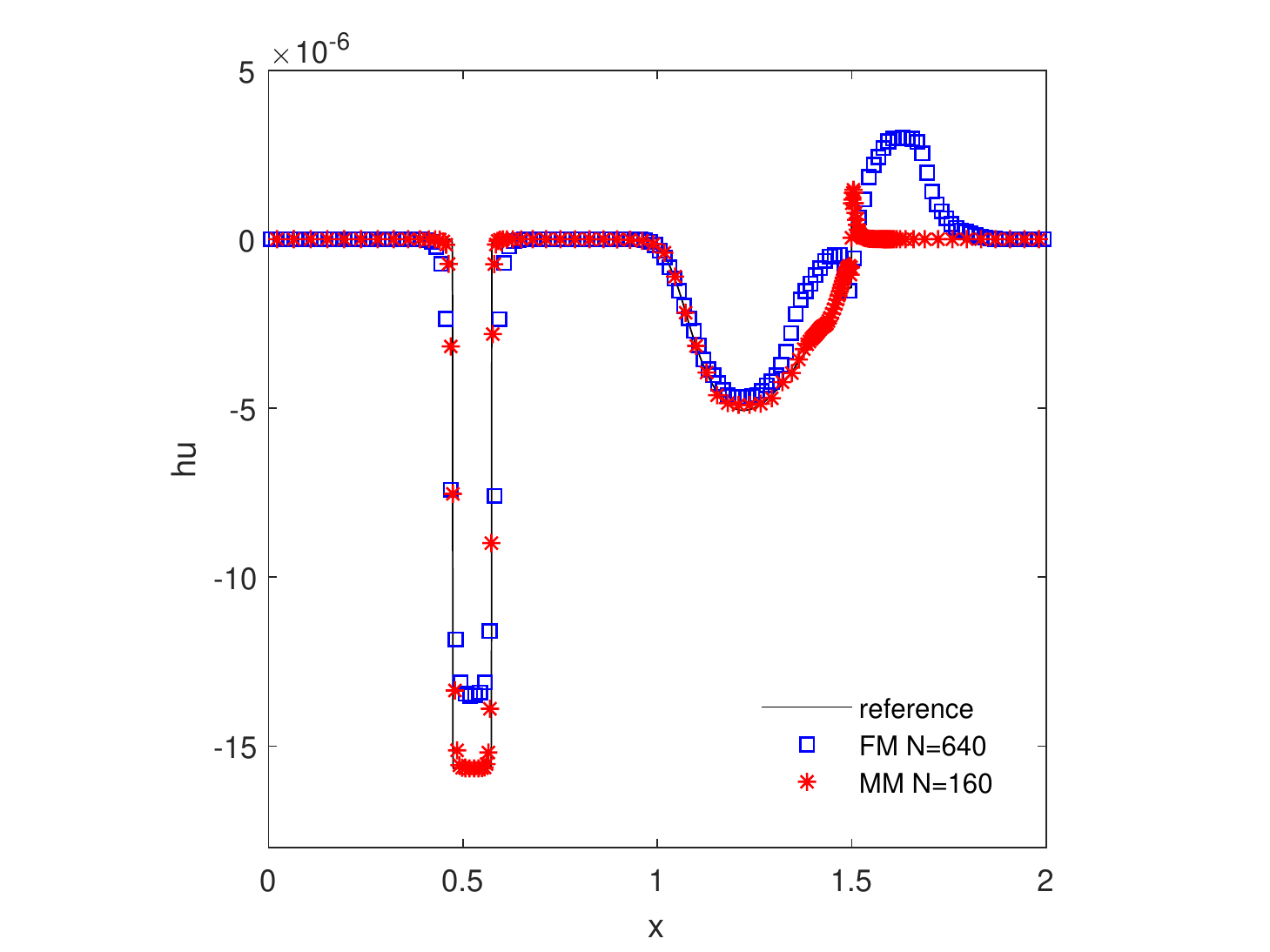}}
\subfigure[Close view of (a)]{
\includegraphics[width=0.35\textwidth,trim=20 0 35 10,clip]{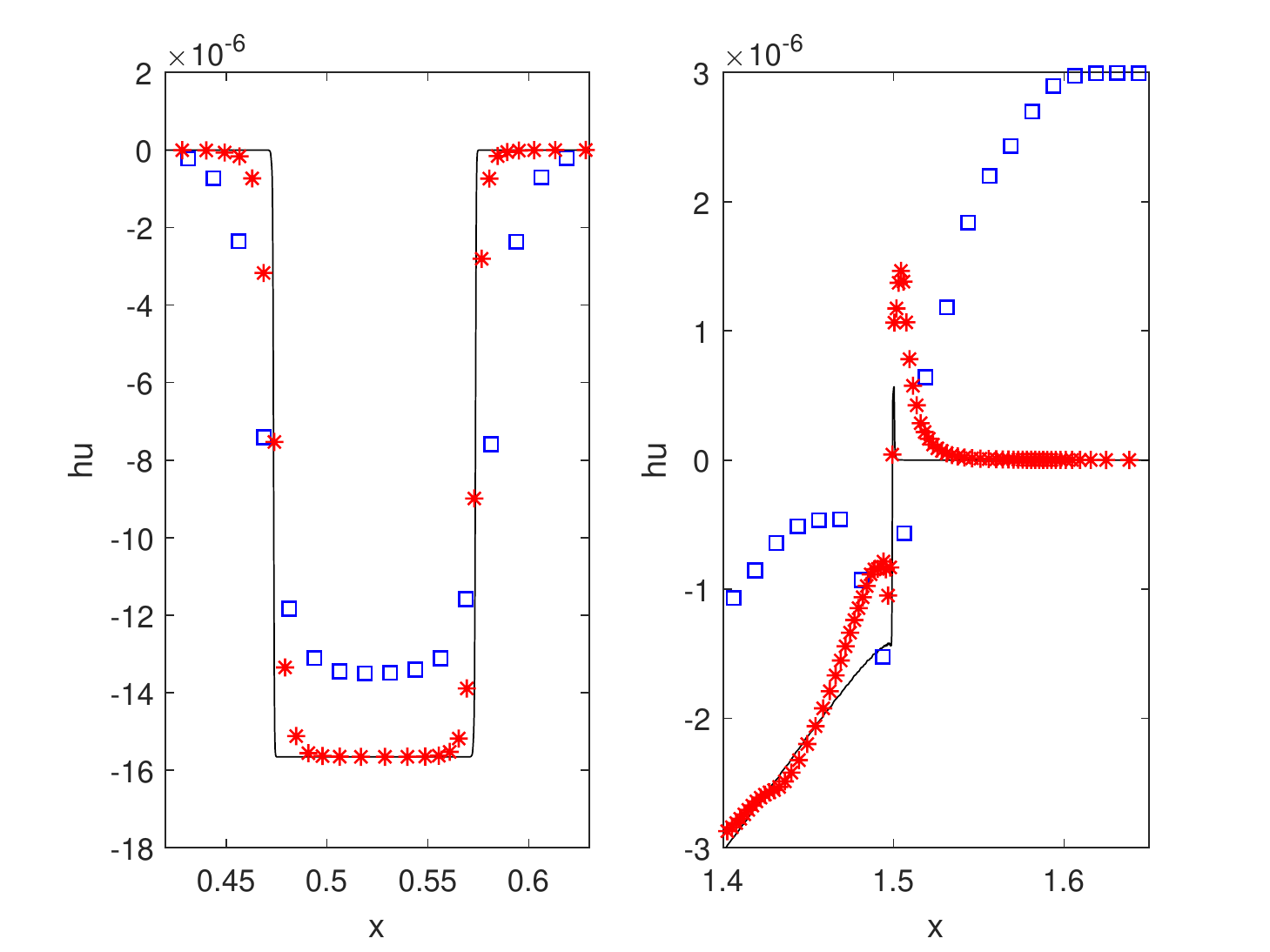}}
\subfigure[$hu$: FM 640 vs MM 160]{
\includegraphics[width=0.35\textwidth,trim=20 0 35 10,clip]
{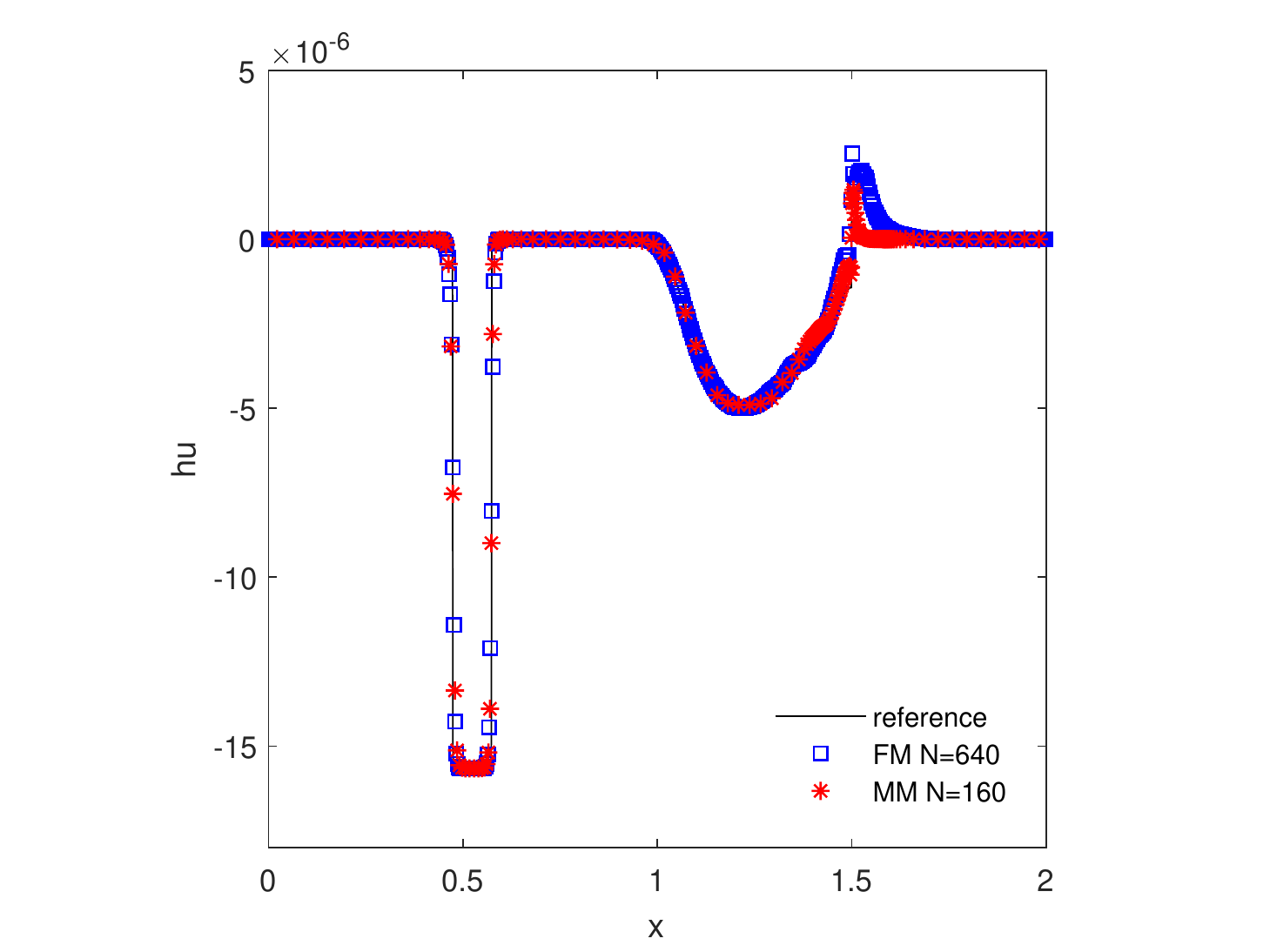}}
\subfigure[Close view of (c)]{
\includegraphics[width=0.35\textwidth,trim=20 0 35 10,clip]{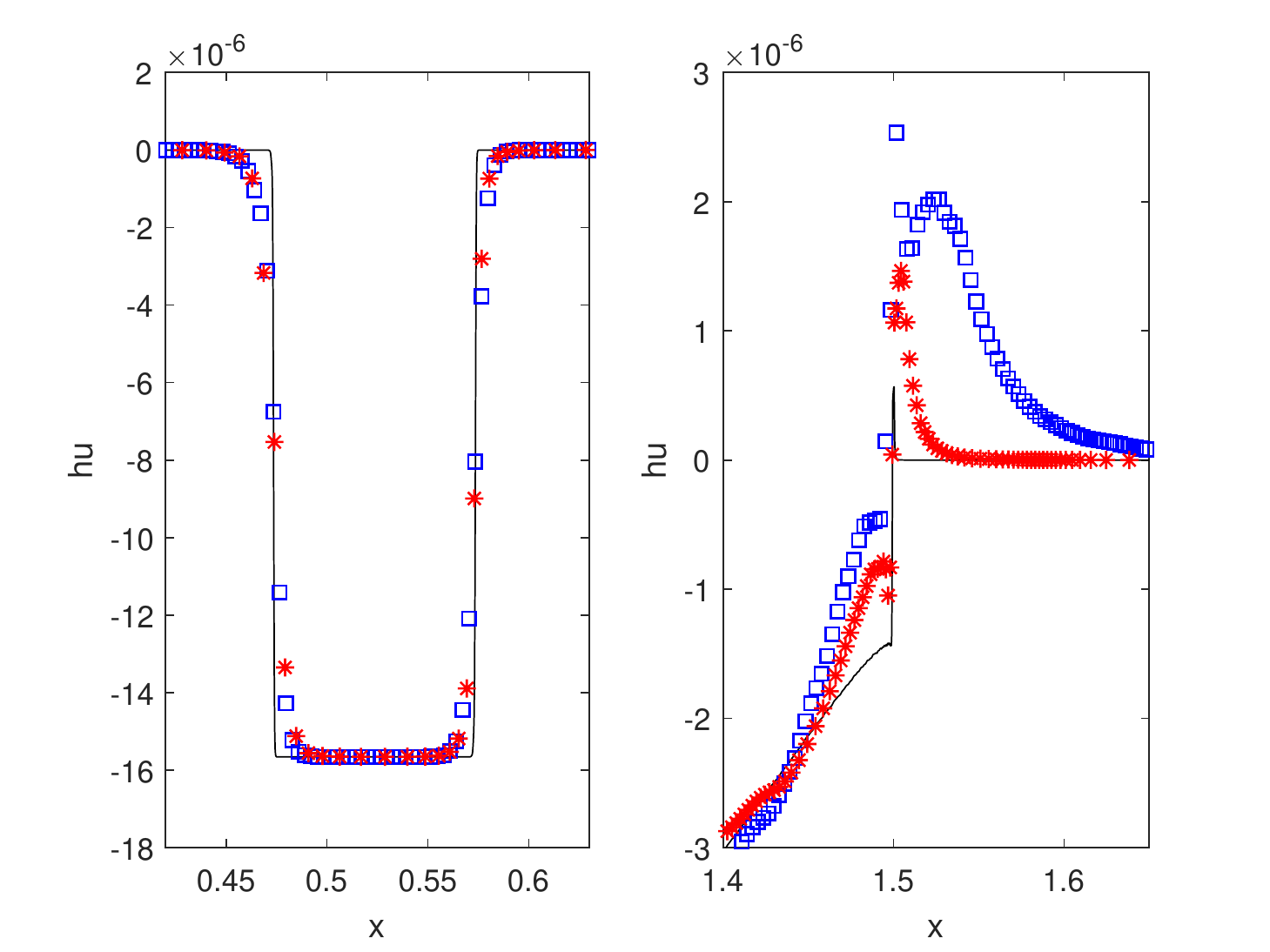}}
\caption{Example \ref{test3-1d} with the bottom topography \eqref{test3-1d-B2} with a dry region.
The water discharge $hu$ at $t=0.2$ obtained with $P^2$-DG and a moving mesh of $N=160$ are compared with those obtained with a fixed mesh of $N=160$ and $N=640$.}
\label{Fig:PP-test3-1d-hu}
\end{figure}

\begin{example}\label{test6-1d}
(The rarefaction and shock waves test for the 1D SWEs with wavy bottom topography.)
\end{example}
In this example we compute the 1D SWEs with a wavy bottom topography \cite{Tang-2004}
\begin{equation}
\label{B-3}
B(x)=
\begin{cases}
0.3 \cos^{30}(\frac{\pi}{2}(x-1)),& \text{for}~0\leq x\leq 2\\
0,& \text{otherwise}.
\end{cases}
\end{equation}
The initial conditions are
\begin{equation*}
\eta(x,0)=
\begin{cases}
2,& \text{for}~x\in (-10,1)\\
0.35,& \text{for}~x\in (1, 10) \\
\end{cases}
\quad \quad
u(x,0)=
\begin{cases}
1,& \text{for}~x\in (-10,1)\\
0,& \text{for}~x\in (1, 10).\\
\end{cases}
\end{equation*}
We choose the transmissive boundary conditions and compute the solution up to $T=1$.
The solution contains several interesting features, including a rarefaction wave traveling left
and two hydraulic jumps/shocks propagating right.

The mesh trajectories ($N=160$) are plotted in Fig.~\ref{Fig:test6-1d-mesh},
showing that the mesh points concentrate properly around the rarefaction, the hydraulic jumps/shocks,
and the region where $B$ is non-flat.
Figs.~\ref{Fig:test6-1d-eta} and \ref{Fig:test6-1d-hu} show the water surface level $\eta$ and water discharge $hu$ at $t=1$ obtained with $P^2$-DG and a moving mesh of $N=160$ and fixed meshes of $N=160$ and $N=1280$.
It can be seen that the moving mesh solutions of $N=160$ are more accurate than those
with a fixed mesh of $N=160$ and comparable with that with the fixed mesh of $N =1280$.
Moreover, the QLMM-DG method does a good job in resolving the shock near $x=2$ which
is known to be a difficult structure for a fixed-mesh method to resolve.
\begin{figure}[H]
\centering
\includegraphics[width=0.35\textwidth,trim=40 0 40 10,clip]{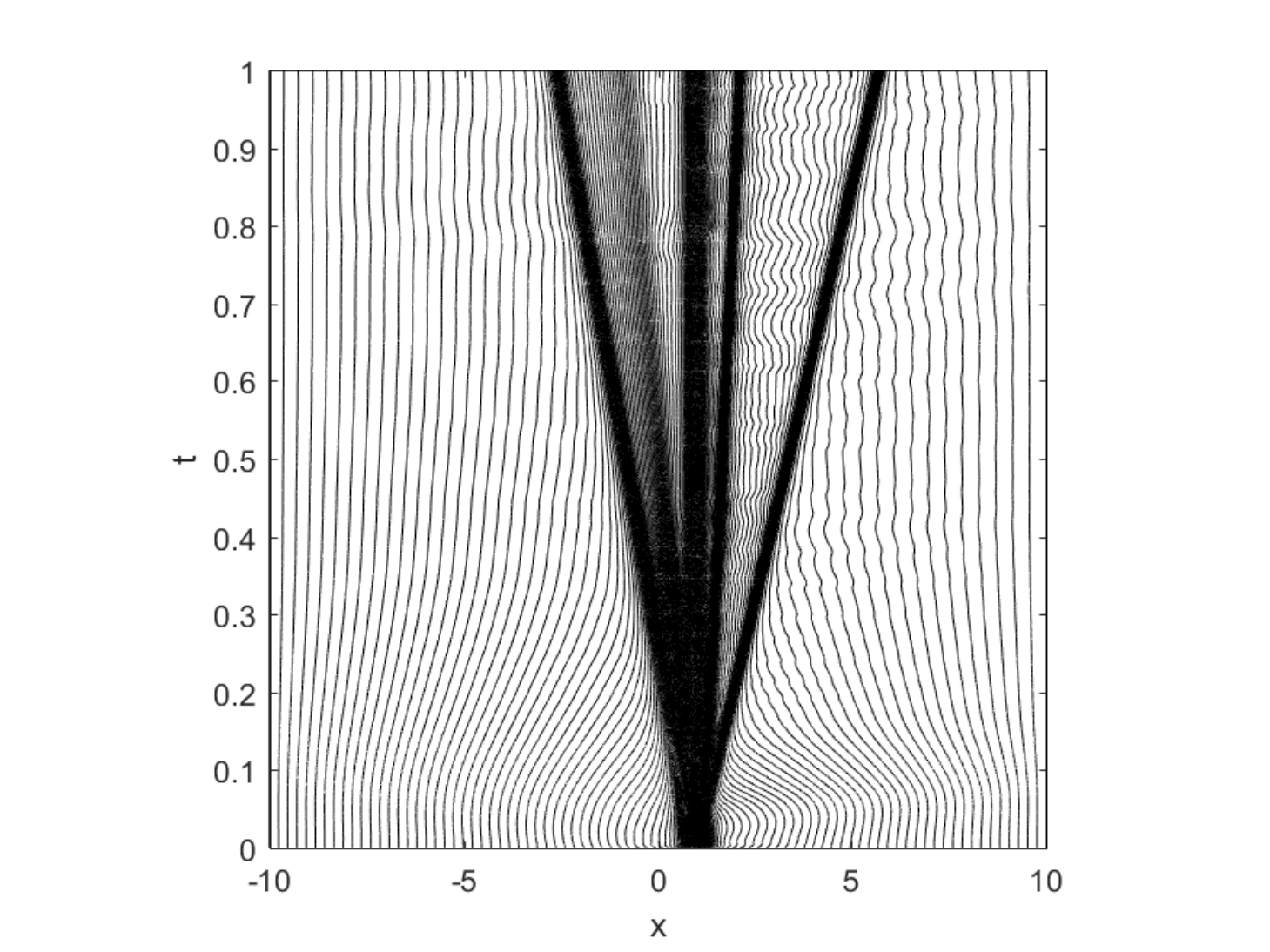}
\caption{Example \ref{test6-1d}.
The mesh trajectories are obtained with the $P^2$-DG method and a moving mesh of $N=160$.}
\label{Fig:test6-1d-mesh}
\end{figure}

\begin{figure}[H]
\centering
\subfigure[$\eta$: FM 160 vs MM 160]{
\includegraphics[width=0.35\textwidth,trim=10 0 30 10,clip]{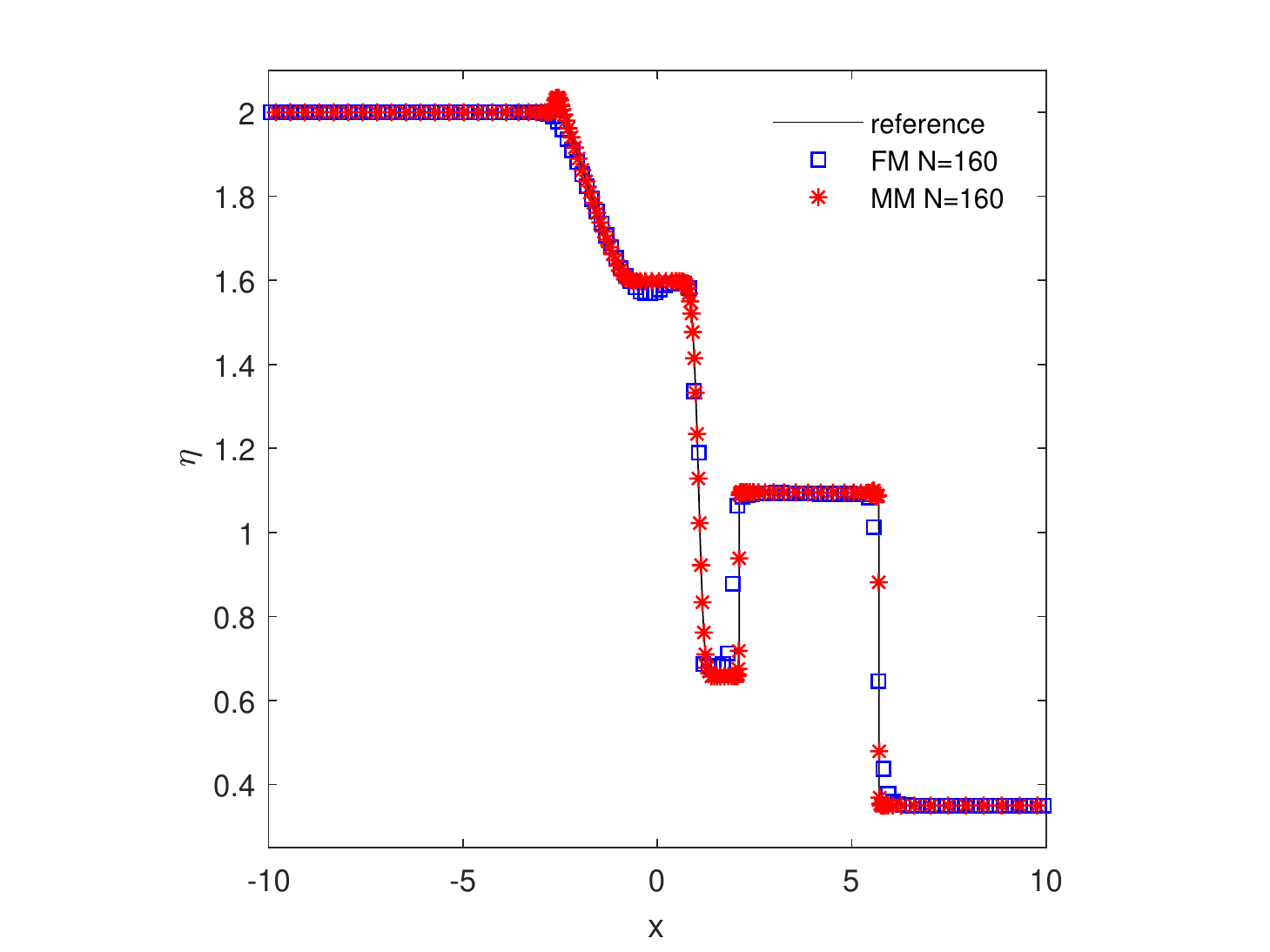}}
\subfigure[Close view of (a)]{
\includegraphics[width=0.35\textwidth,trim=10 0 30 10,clip]{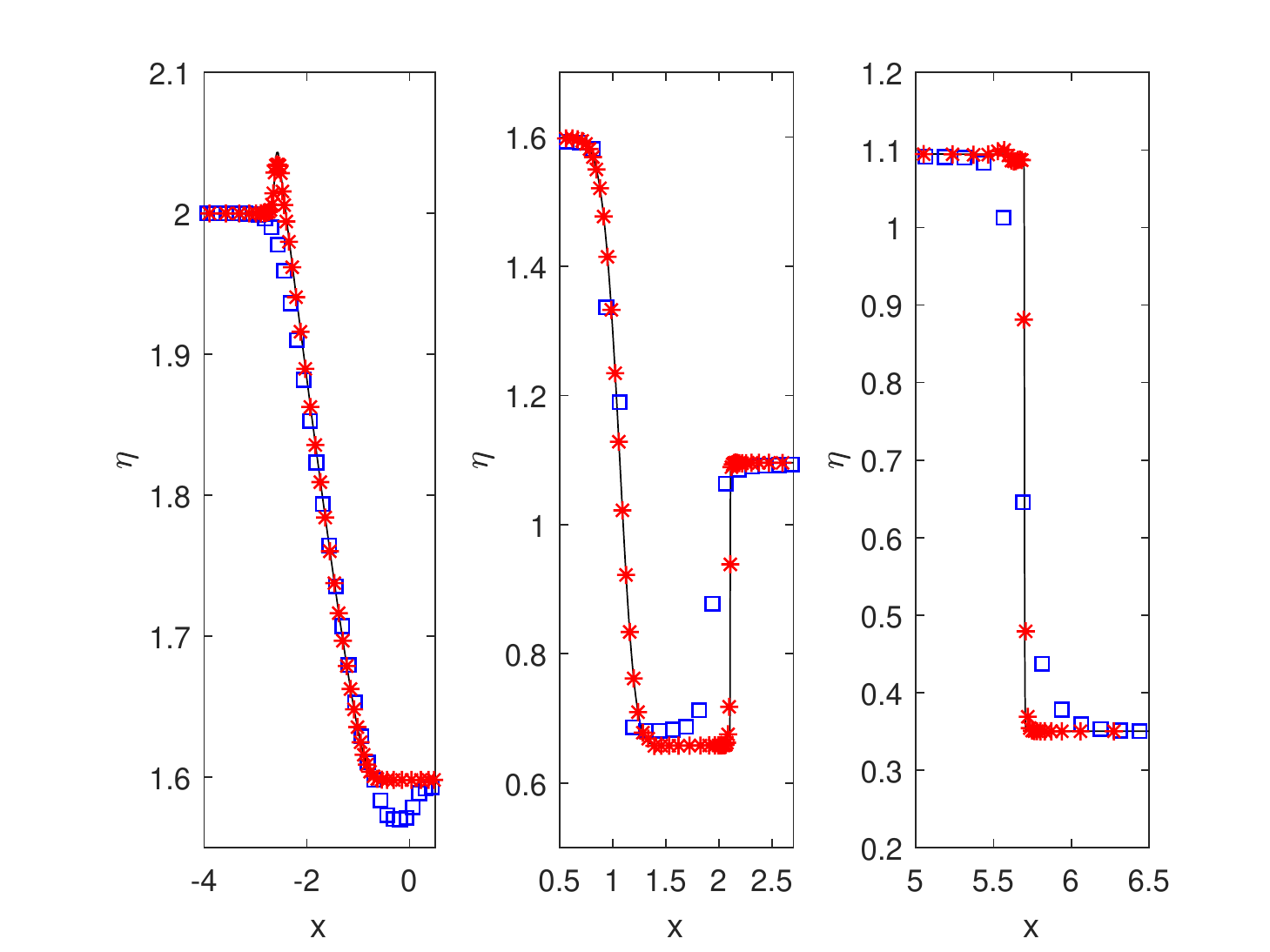}}
\subfigure[$\eta$: FM 1280 vs MM 160]{
\includegraphics[width=0.35\textwidth,trim=10 0 30 10,clip]{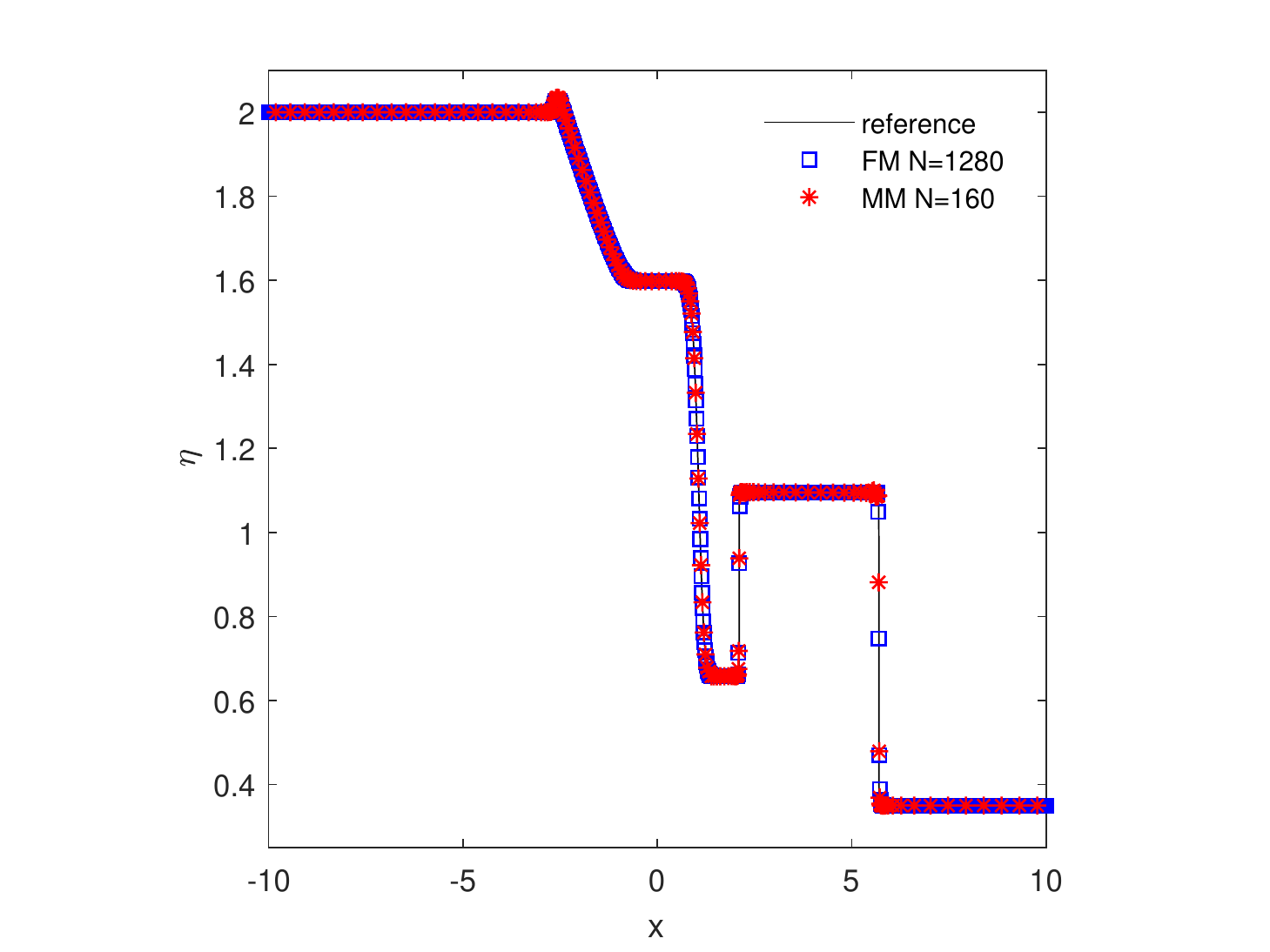}}
\subfigure[Close view of (c)]{
\includegraphics[width=0.35\textwidth,trim=10 0 30 10,clip]{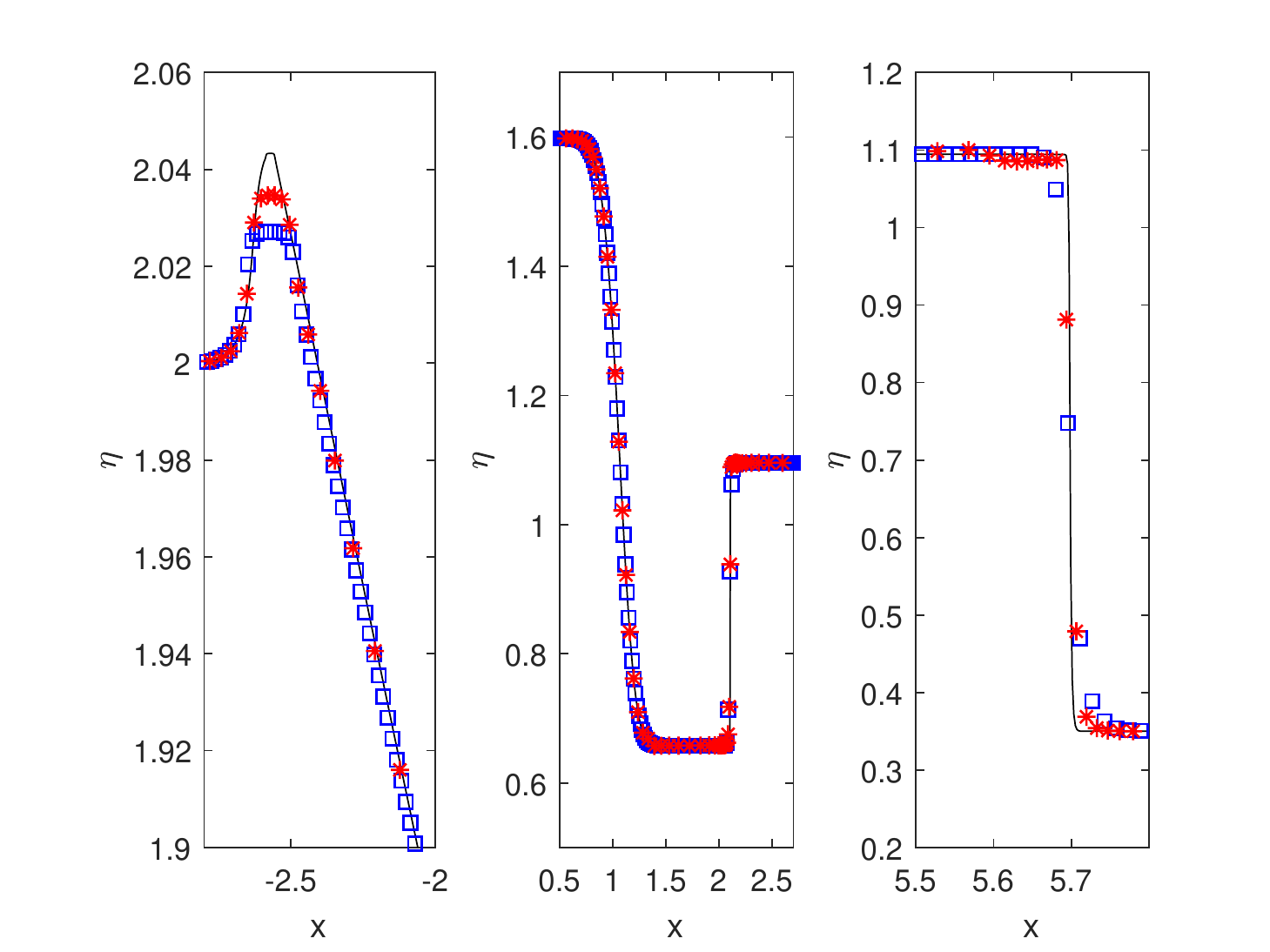}}
\caption{Example \ref{test6-1d}. The water surface level $\eta$ at $t=1$ obtained with the $P^2$-DG method and a moving mesh of $N=160$ is compared with those obtained with fixed meshes of $N=160$ and $N=1280$.}
\label{Fig:test6-1d-eta}
\end{figure}

\begin{figure}[H]
\centering
\subfigure[$hu$: FM 160 vs MM 160]{
\includegraphics[width=0.35\textwidth,trim=10 0 30 10,clip]{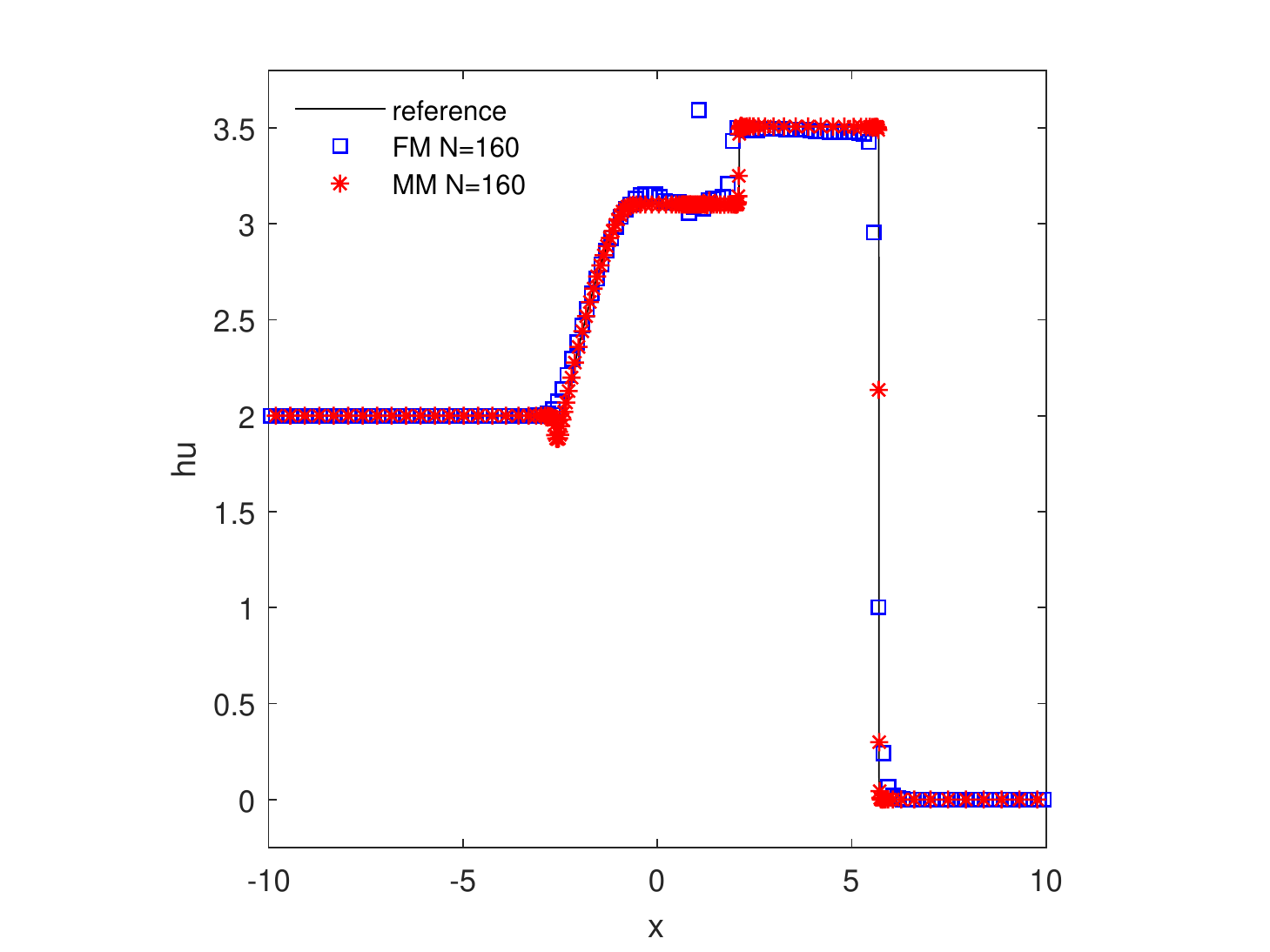}}
\subfigure[Close view of (a)]{
\includegraphics[width=0.35\textwidth,trim=10 0 30 10,clip]{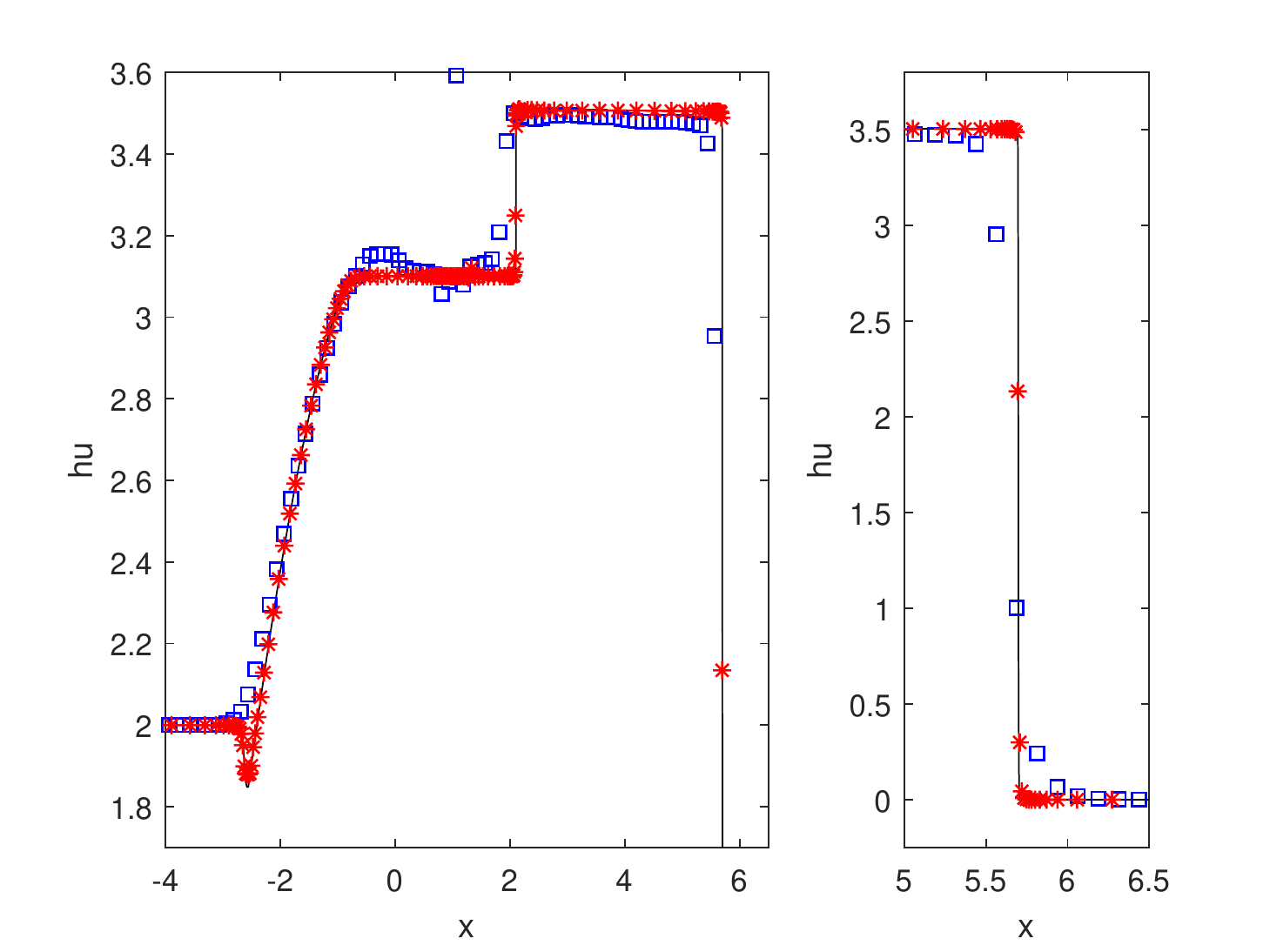}}
\subfigure[$hu$: FM 1280 vs MM 160]{
\includegraphics[width=0.35\textwidth,trim=10 0 30 10,clip]{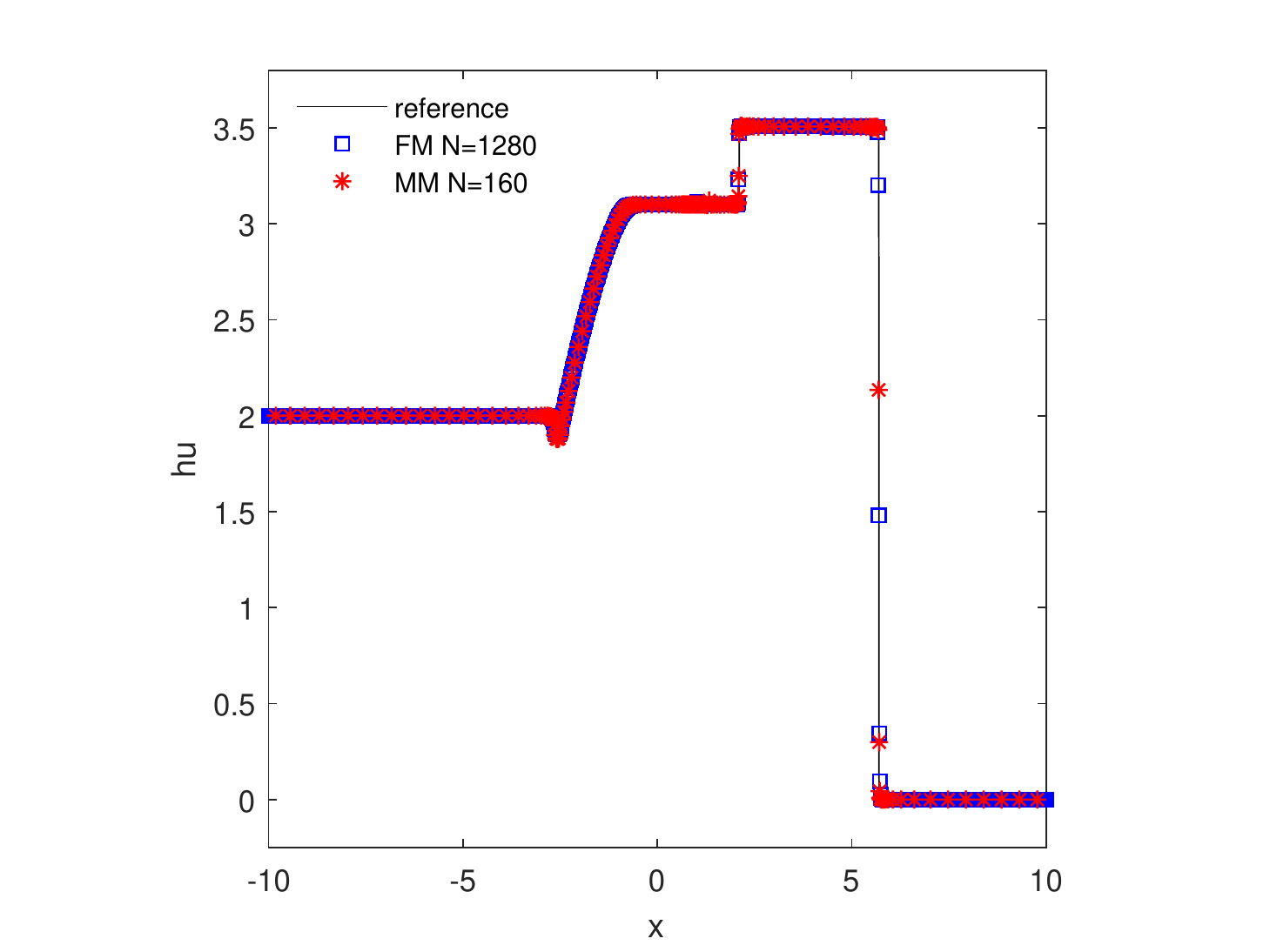}}
\subfigure[Close view of (c)]{
\includegraphics[width=0.35\textwidth,trim=10 0 30 10,clip]{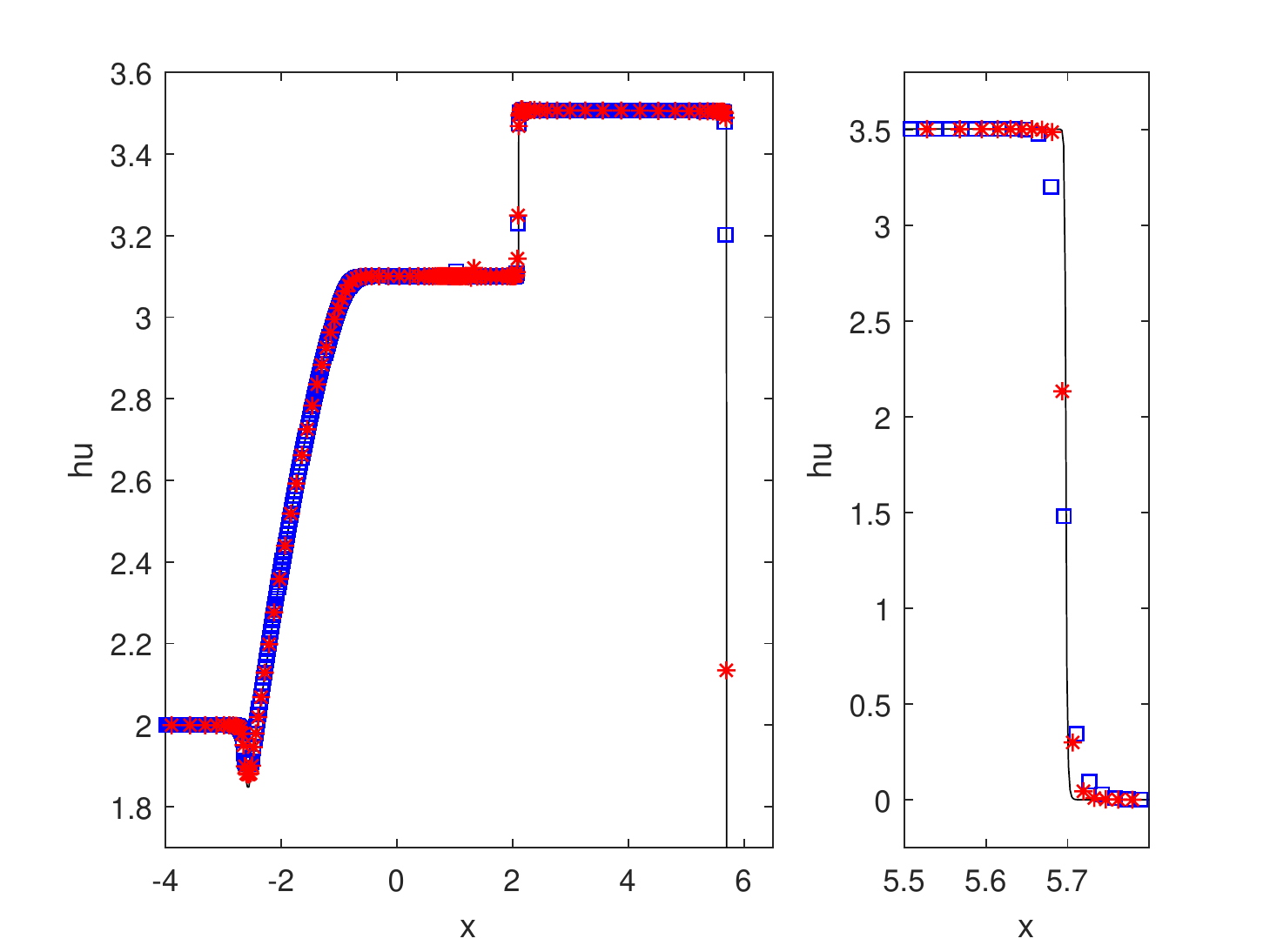}}
\caption{Example \ref{test6-1d}. The water discharge $hu$ at $t=1$ obtained with the $P^2$-DG method and a moving mesh of $N=160$ is compared with those obtained with fixed meshes of $N=160$ and $N=1280$.}
\label{Fig:test6-1d-hu}
\end{figure}

\begin{example}\label{test1-2d}
(The lake-at-rest steady-state flow test for the 2D SWEs.)
\end{example}
We choose this example to verify the well-balance property of the QLMM-DG scheme in two dimensions.
We solve the system on the domain $(x, y) \in (0,1)\times(0,1)$. The bottom topographies are the isolated elliptical-shaped bump \cite{LeVeque-1998JCP} and read as
\begin{align}
&B(x,y)=0.8e^{-50\big((x-0.5)^2+(y-0.5)^2\big)},
\label{B-4}
\\&B(x,y)=e^{-50\big((x-0.5)^2+(y-0.5)^2\big)}.
\label{dry-B-4}
\end{align}
The initial water level and velocities are given by
\begin{equation*}
\eta(x,y,0)=1,
\quad u(x,y,0)=0,\quad  v(x,y,0)=0.
\end{equation*}
The bottom topography \eqref{B-4} and \eqref{dry-B-4} have the similar shape,and the latter contain a dry region near $(x,y) = (0.5,0.5)$.
We use periodic boundary conditions for all unknown variables and compute the solution up to $t=0.1$.
The flow surface should remain steady since the method is well-balanced.

An initial triangular mesh, shown in Fig.~\ref{Fig:test-2d-tri}, is formed by dividing each cell of a rectangular mesh into four triangular elements.
The $L^1$ and $L^\infty$ error for $\eta$, $hu$, and $hv$
are listed in Tables~\ref{tab:test1-2d-p1-error} and ~\ref{tab:test1-2d-p2-error} for $P^1$-DG and $P^2$-DG,
respectively, for the bottom topography \eqref{B-4}.
They show that our DG method, with either fixed or moving meshes, maintains the lake-at-rest steady state
to the level of round-off error in both $L^1$ and $L^\infty$ norm.

To verify the well-balance and PP properties of the QLMM-DG method, we repeat the simulation for the bottom topography \eqref{dry-B-4} where the application of the PP limiter to the water depth is necessary.
The $L^1$ and $L^\infty$ error for $\eta$, $hu$, and $hv$
are listed in Tables~\ref{tab:PPtest1-2d-p1-error} and ~\ref{tab:PPtest1-2d-p2-error} for $P^1$-DG and $P^2$-DG,
respectively. One can see that the QLMM-DG method is well-balanced.

\begin{figure}[H]
\centering
\includegraphics[width=0.3\textwidth,trim=40 0 40 10,clip]{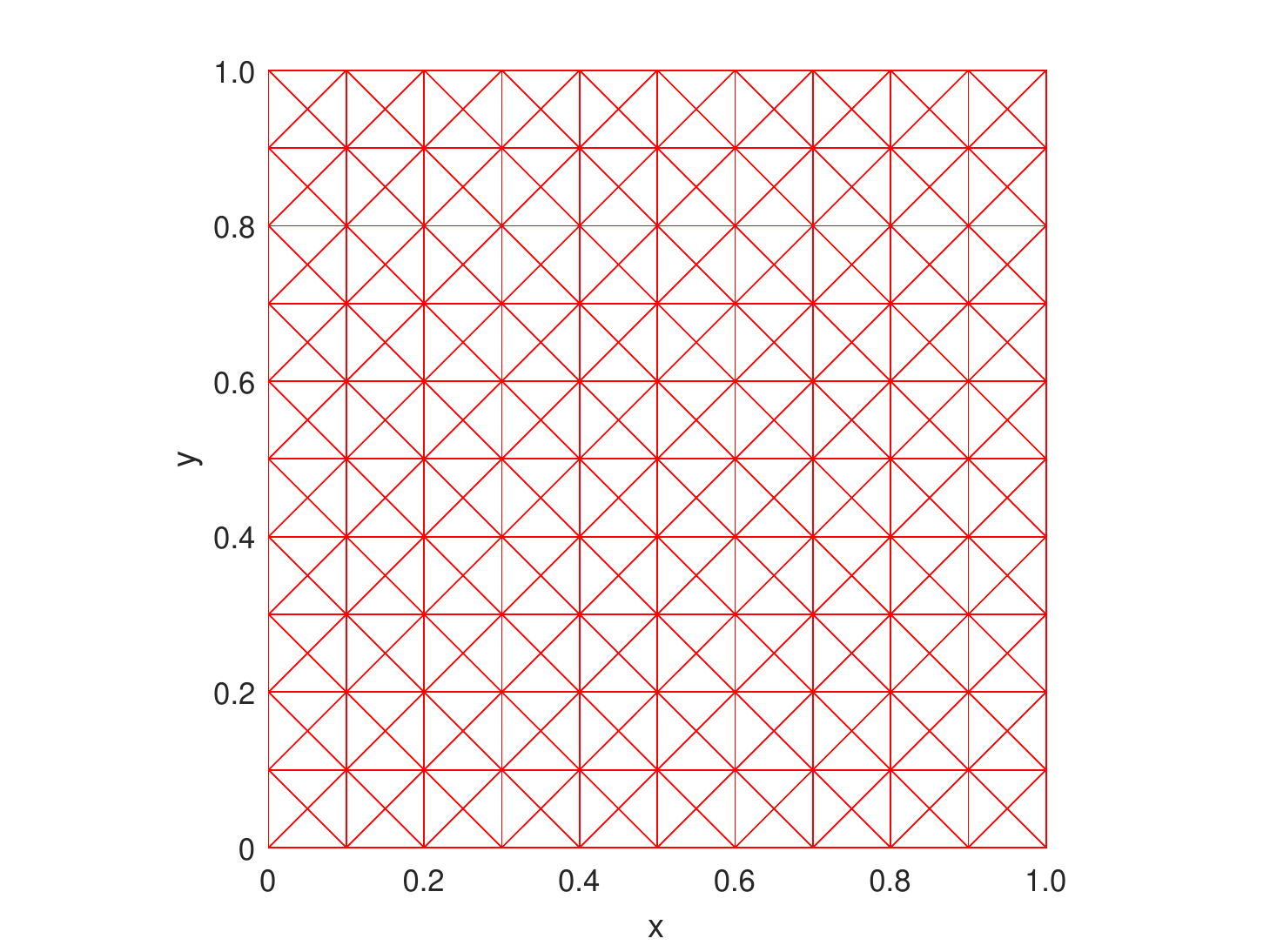}
\caption{Example \ref{test1-2d}. An initial triangular mesh used in the computation is formed by dividing each cell of a rectangular mesh into 4 triangular elements.}
\label{Fig:test-2d-tri}
\end{figure}

\begin{table}[htb]
\caption{Example \ref{test1-2d}. Well-balance test for the $P^1$-DG method with fixed and moving meshes over bottom topography \eqref{B-4}.}
\vspace{3pt}
\centering
\label{tab:test1-2d-p1-error}
\begin{tabular}{ccccccc}
 \toprule
  & \multicolumn{2}{c}{$\eta$}&\multicolumn{2}{c}{$hu$}&\multicolumn{2}{c}{$hv$}\\
$N$&$L^1$-error  &$L^{\infty}$-error  &$L^1$-error  &$L^{\infty}$-error &$L^1$-error  &$L^{\infty}$-error \\
\midrule
~   &  \multicolumn{6}{c}{\em{FM-DG method} }\\
\midrule
$10\times10\times4$	&1.539E-16	&2.837E-16	&1.257E-16	&7.696E-16	&1.343E-16	&7.879E-16	\\
$20\times20\times4$	&1.957E-16	&4.678E-16	&1.690E-16	&1.371E-15	&1.829E-16	&1.362E-15	\\
$40\times40\times4$	&3.259E-16	&1.144E-15	&2.614E-16	&1.965E-15	&2.750E-16	&2.025E-15	\\
\midrule
~   &  \multicolumn{6}{c}{\em{QLMM-DG method} }\\
\midrule
$10\times10\times4$	&1.109E-16  &2.592E-16	&1.335E-16	&9.334E-16	&1.337E-16	&8.740E-16	\\
$20\times20\times4$	&1.149E-16	&3.361E-16	&1.660E-16	&1.709E-15	&1.671E-16	&1.616E-15	\\
$40\times40\times4$	&1.146E-16	&5.533E-16	&2.567E-16	&3.126E-15	&2.580E-16	&3.087E-15	\\
 \bottomrule	
\end{tabular}
\end{table}

\begin{table}[htb]
\caption{Example \ref{test1-2d}. Well-balance test for the $P^2$-DG method with fixed and moving meshes over bottom topography \eqref{B-4}.}
\vspace{3pt}
\centering
\label{tab:test1-2d-p2-error}
\begin{tabular}{ccccccc}
 \toprule
  & \multicolumn{2}{c}{$\eta$}&\multicolumn{2}{c}{$hu$}&\multicolumn{2}{c}{$hv$}\\
$N$&$L^1$-error  &$L^{\infty}$-error  &$L^1$-error  &$L^{\infty}$-error &$L^1$-error  &$L^{\infty}$-error \\
\midrule
~   &  \multicolumn{6}{c}{\em{FM-DG method} }\\
\midrule
$10\times10\times4$	&4.287E-16	&1.841E-15	&8.615E-16	&4.685E-15	&9.310E-16	&5.907E-15	\\
$20\times20\times4$	&4.020E-16	&3.239E-15	&8.930E-16	&5.358E-15	&9.638E-16	&7.076E-15	\\
$40\times40\times4$	&4.454E-16	&4.503E-15	&1.056E-15	&9.702E-15	&1.129E-15	&1.281E-14	\\
\midrule
~   &  \multicolumn{6}{c}{\em{QLMM-DG method} }\\
\midrule
$10\times10\times4$	&4.795E-16	&1.841E-15	&7.965E-16	&4.949E-15	&8.029E-16	&4.821E-15	\\
$20\times20\times4$	&5.023E-16	&2.642E-15	&9.442E-16	&5.800E-15	&8.870E-16	&5.128E-15	\\
$40\times40\times4$	&5.155E-16	&3.598E-15	&1.428E-15	&1.063E-14	&1.167E-15	&7.697E-15	\\
 \bottomrule	
\end{tabular}
\end{table}

\begin{table}[htb]
\caption{Example \ref{test1-2d}. Well-balance test for the $P^1$-DG method with fixed and moving meshes over bottom topography \eqref{dry-B-4} (with a dry region).}
\vspace{3pt}
\centering
\label{tab:PPtest1-2d-p1-error}
\begin{tabular}{ccccccc}
 \toprule
  & \multicolumn{2}{c}{$\eta$}&\multicolumn{2}{c}{$hu$}&\multicolumn{2}{c}{$hv$}\\
$N$&$L^1$-error  &$L^{\infty}$-error  &$L^1$-error  &$L^{\infty}$-error &$L^1$-error  &$L^{\infty}$-error \\
\midrule
~   &  \multicolumn{6}{c}{\em{FM-DG method} }\\
\midrule
$10\times10\times4$	&1.256E-16	&	5.706E-16	&1.674E-16	&	1.631E-15	&1.639E-16	&	1.924E-15	\\
$20\times20\times4$	&1.314E-16	&	1.368E-15	&1.777E-16	&	2.543E-15	&1.926E-16	&	2.684E-15	\\
$40\times40\times4$	&1.434E-16	&	4.448E-15	&2.224E-16	&	3.081E-15	&2.455E-16	&	4.990E-15	\\
\midrule
~   &  \multicolumn{6}{c}{\em{QLMM-DG method} }\\
\midrule
$10\times10\times4$	&1.026E-16	&	5.661E-16	&1.497E-16	&	1.349E-15	&1.524E-16	&	1.511E-15	\\
$20\times20\times4$	&9.847E-17	&	1.555E-15	&1.930E-16	&	2.047E-15	&1.987E-16	&	2.055E-15	\\
$40\times40\times4$	&8.793E-17	&	6.290E-15	&2.351E-16	&	3.217E-15	&2.356E-16	&	3.168E-15	\\
 \bottomrule	
\end{tabular}
\end{table}

\begin{table}[htb]
\caption{Example \ref{test1-2d}. Well-balance test for the $P^2$-DG method with fixed and moving meshes over bottom topography \eqref{dry-B-4} (with a dry region).}
\vspace{3pt}
\centering
\label{tab:PPtest1-2d-p2-error}
\begin{tabular}{ccccccc}
 \toprule
  & \multicolumn{2}{c}{$\eta$}&\multicolumn{2}{c}{$hu$}&\multicolumn{2}{c}{$hv$}\\
$N$&$L^1$-error  &$L^{\infty}$-error  &$L^1$-error  &$L^{\infty}$-error &$L^1$-error  &$L^{\infty}$-error \\
\midrule
~   &  \multicolumn{6}{c}{\em{FM-DG method} }\\
\midrule
$10\times10\times4$	&4.830E-16	&	1.673E-14	&9.355E-16	&	1.432E-14	&9.978E-16	&	1.315E-14	\\
$20\times20\times4$	&5.039E-16	&	6.368E-14	&9.708E-16	&	2.008E-14	&1.053E-15	&	2.865E-14	\\
$40\times40\times4$	&5.421E-16	&	1.280E-13	&1.199E-15	&	2.173E-14	&1.274E-15	&	2.015E-14	\\
\midrule
~   &  \multicolumn{6}{c}{\em{QLMM-DG method} }\\
\midrule
$10\times10\times4$	&5.735E-16	&	1.623E-14	&8.498E-16	&	1.366E-14	&8.691E-16	&	1.461E-14	\\
$20\times20\times4$	&6.264E-16	&	5.730E-14	&9.876E-16	&	1.663E-14	&9.285E-16	&	2.035E-14	\\
$40\times40\times4$	&7.067E-16	&	1.021E-13	&1.411E-15	&	1.231E-14	&1.163E-15	&	1.243E-14	\\
 \bottomrule	
\end{tabular}
\end{table}

\begin{example}\label{test2-2d}
(The perturbed lake-at-rest steady-state flow test for the 2D SWEs.)
\end{example}
We choose this example first used by LeVeque \cite{LeVeque-1998JCP}
to demonstrate the ability of the QLMM-DG method to simulate small perturbations of the water surface.
The bottom topography is an isolated elliptical shaped hump,
\begin{equation}\label{test2-2d-B1}
B(x,y)=0.8e^{\big(-5(x-0.9)^2-50(y-0.5)^2\big)},\quad (x, y) \in (-1,2)\times(0,1).
\end{equation}
The initial conditions are given by
\begin{equation*}
\begin{split}
&\eta (x,y,0)=\begin{cases}
1+0.01,& \hbox{for~$x\in(0.05 , 0.15)$}\\
1,&\hbox{otherwise}\\
\end{cases}
\\&u(x,y,0)=0, \quad \hbox{and}\quad v(x,y,0)=0.
\end{split}
\end{equation*}
Reflection boundary conditions \cite{Tumolo-2013JCP} are used for all domain boundary.
Theoretically, this perturbation splits into two waves, propagating left and right
at the characteristic speeds $\pm \sqrt{gh}$.
One of these waves is moving towards the bump in the bottom topography, interacting with it, and generating
a complex wave structure. The difficulty of this test case is to resolve the waves that are very small
in magnitude in comparison to the average values of the quantities.

The moving mesh of $N=150\times 50\times4$ at $t=0.12,0.24,0.36,0.48$ obtained with $P^2$ QLMM-DG method are shown in Fig.~\ref{Fig:test2-2d-P2-mesh}. The contours of the obtained solutions $\eta$, $hu$, and $hv$
are shown in Figs.~\ref{Fig:test2-2d-P2-h-hu-hv-t12} -- \ref{Fig:test2-2d-P2-h-hu-hv-t48}.
For comparison purpose, the numerical solutions obtained with fixed meshes of $N=150\times 50\times4$
and $N=600\times 200\times4$ are also shown.
We can see that the mesh points concentrate correctly around the waves and the point $(x,y) = (0.9, 0.5)$,
the center of the non-flat region of the bottom topography.
Moreover, the QLMM-DG method resolves well the complex small-scale features of the water flow.
The moving mesh solutions with $N=150\times 50\times4$ do not contain visibly spurious oscillations
and is more accurate than that with a fixed mesh of $N=150\times 50\times4$ and comparable with
that with a fixed mesh of $N=600\times 200\times4$.
\begin{figure}[H]
\centering
\subfigure[Mesh at $t=0.12$]{
\includegraphics[width=0.4\textwidth,trim=20 50 0 60,clip]{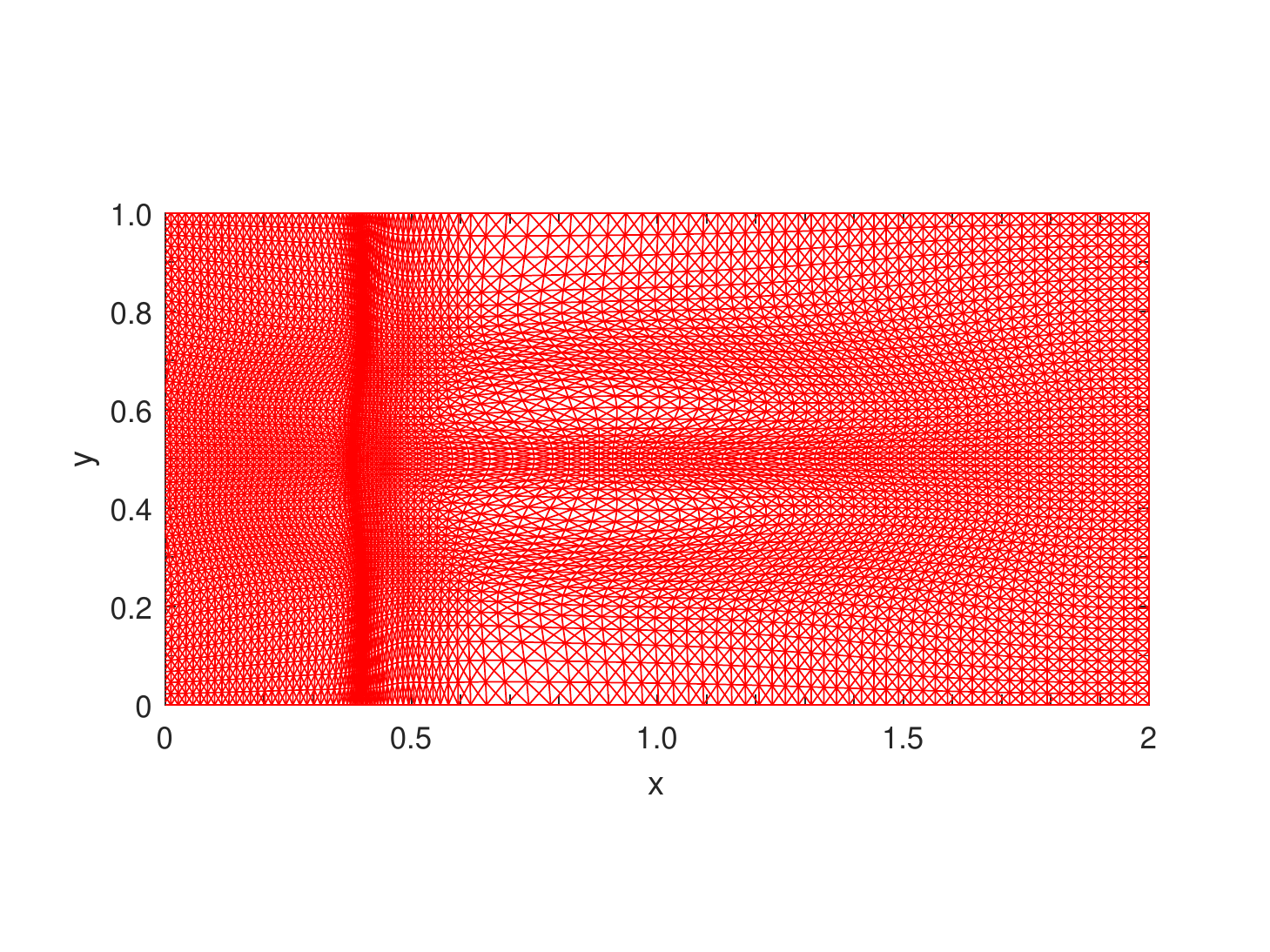}}
\subfigure[Mesh at $t=0.24$]{
\includegraphics[width=0.4\textwidth,trim=20 50 0 60,clip]{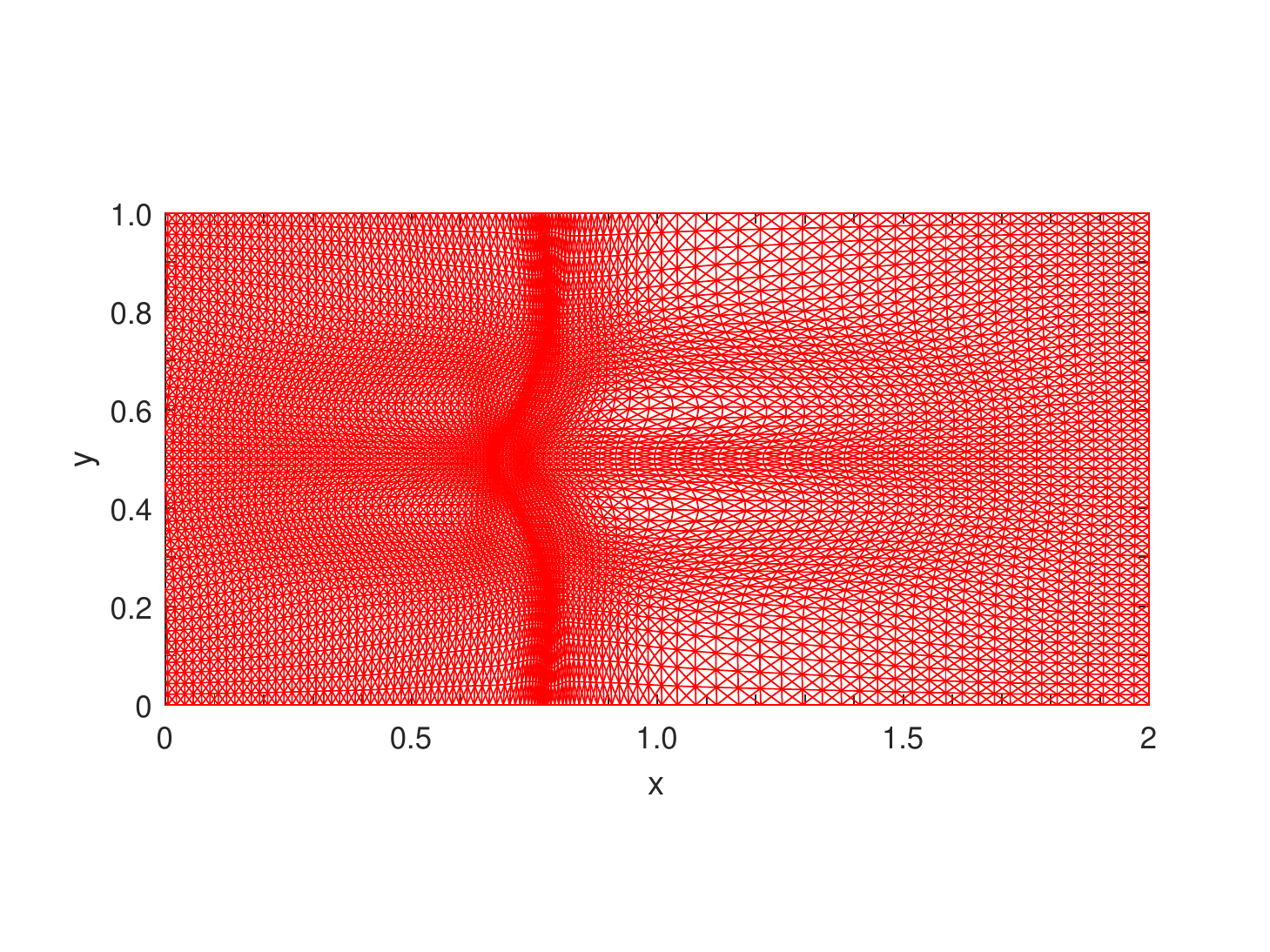}}
\subfigure[Mesh at $t=0.36$]{
\includegraphics[width=0.4\textwidth,trim=20 50 0 60,clip]{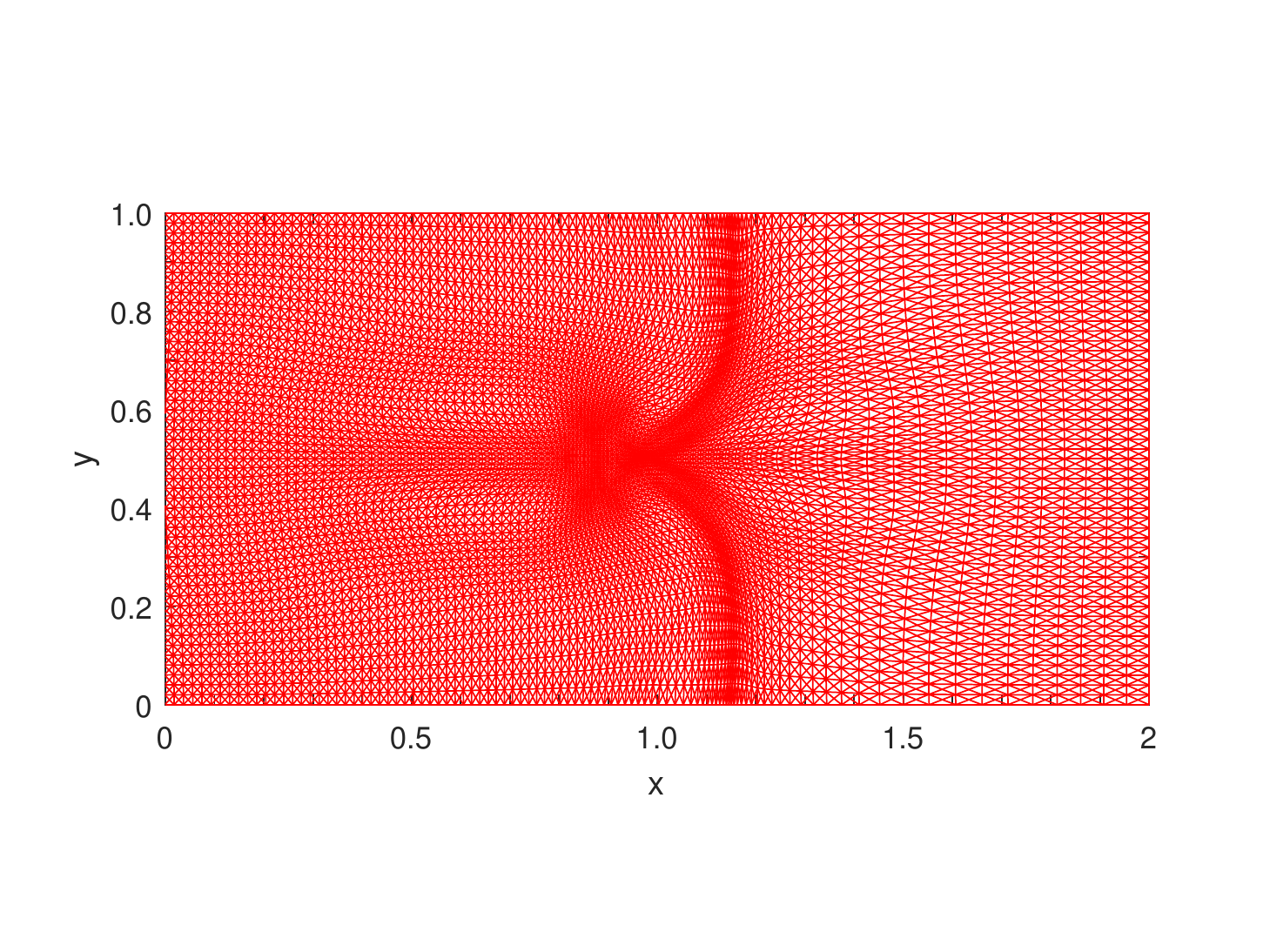}}
\subfigure[Mesh at $t=0.48$]{
\includegraphics[width=0.4\textwidth,trim=20 50 0 60,clip]{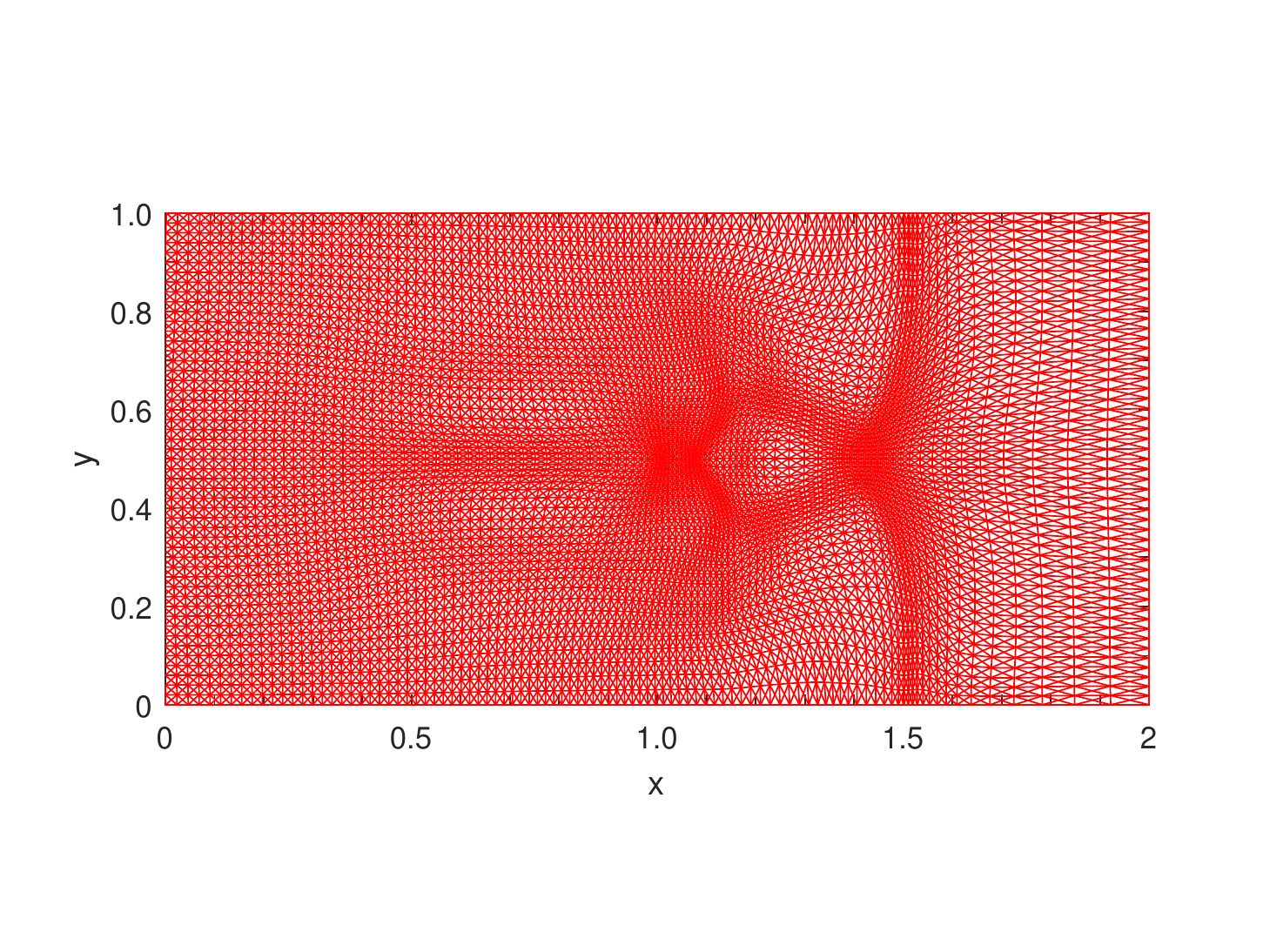}}
\caption{Example \ref{test2-2d}. The moving mesh of $N=150\times 50\times4$ at $t=0.12,~0.24,~0.36,~0.48$ is obtained with the $P^2$ QLMM-DG method.}
\label{Fig:test2-2d-P2-mesh}
\end{figure}

\begin{figure}[H]
\centering
\subfigure[$\eta$: MM $N=150\times 50\times4$]{
\includegraphics[width=0.30\textwidth, trim=15 60 15 60, clip]{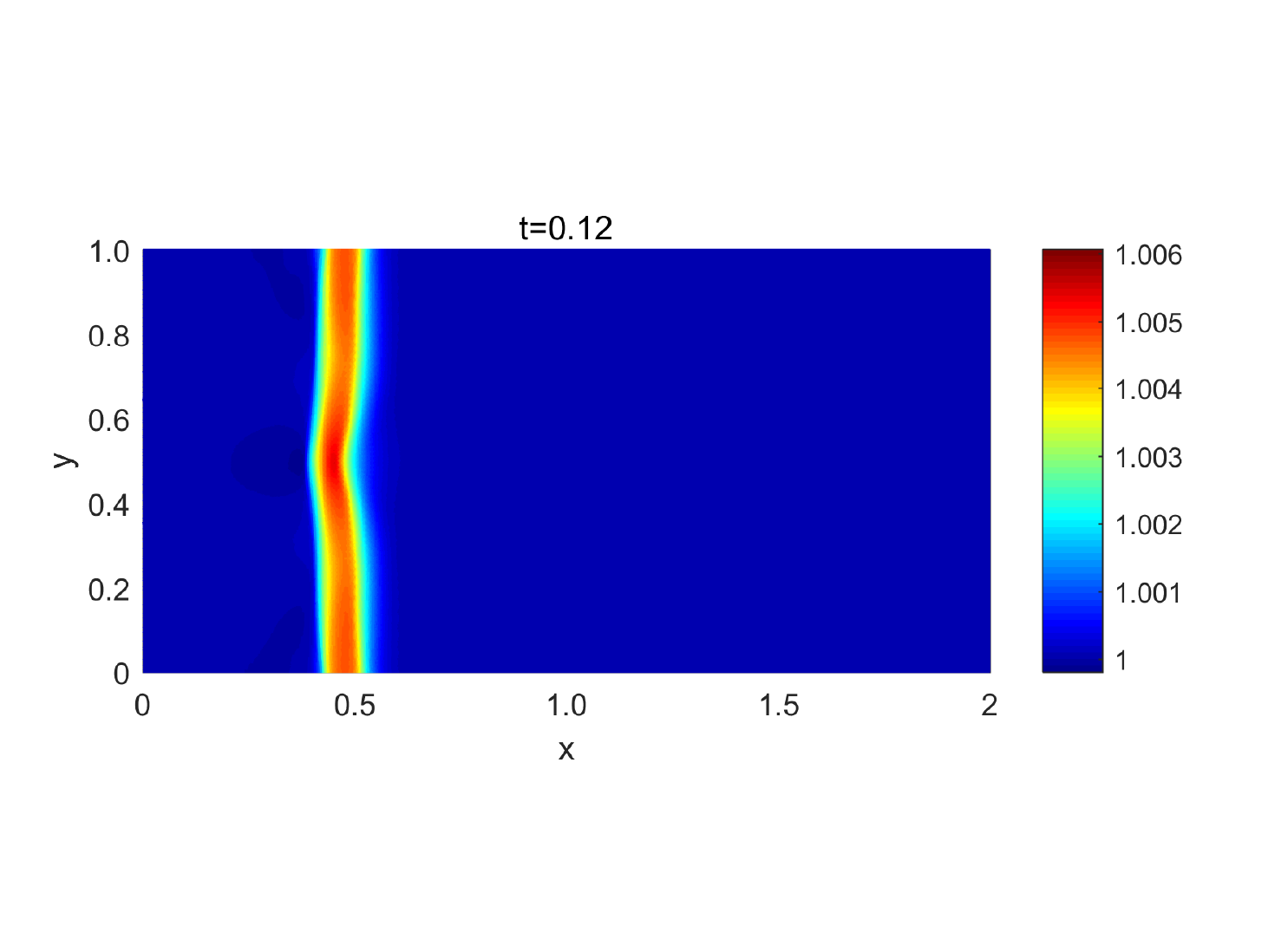}}
\subfigure[$hu$: MM $N=150\times 50\times4$]{
\includegraphics[width=0.30\textwidth, trim=15 60 15 60, clip]{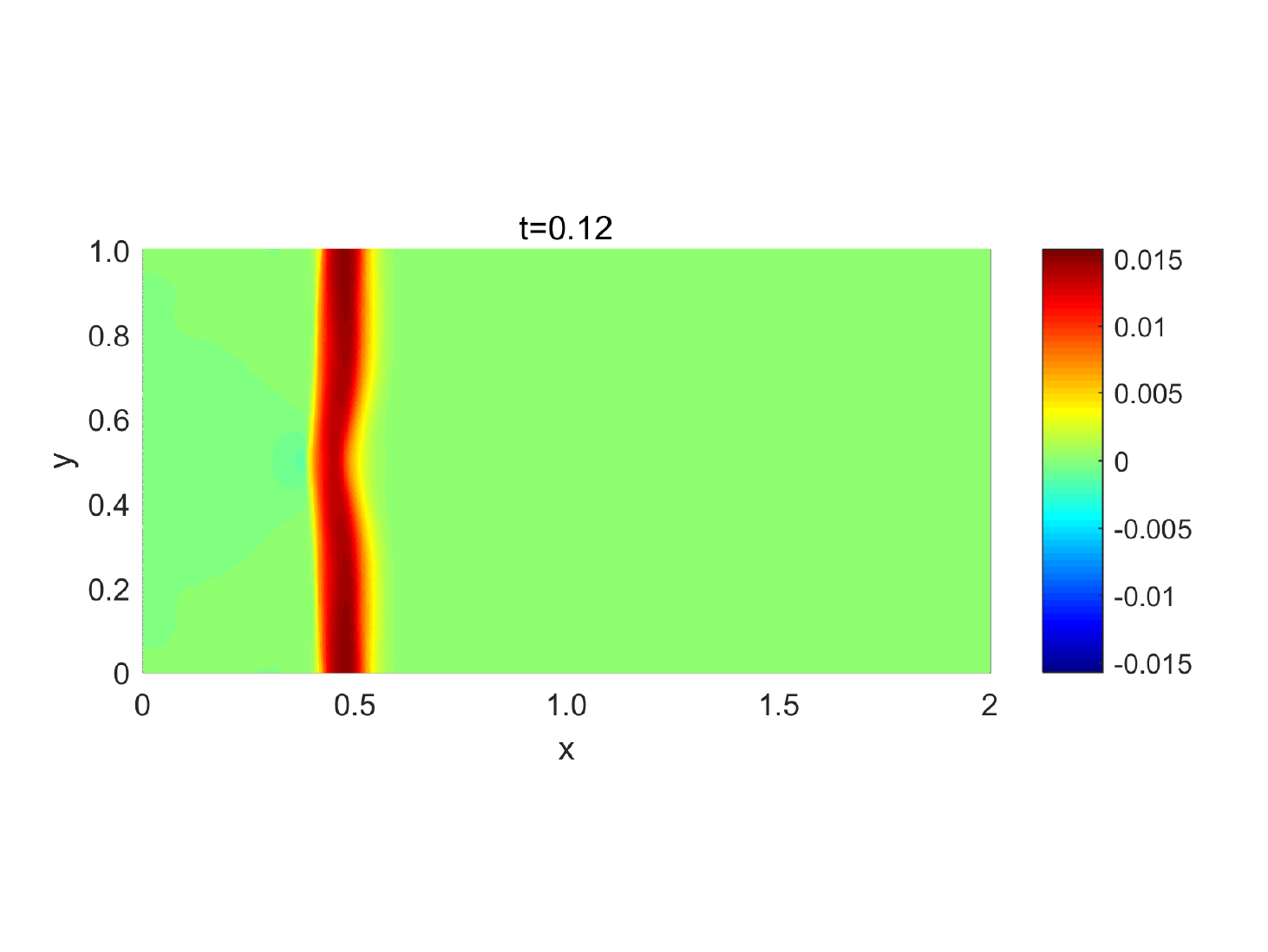}}
\subfigure[$hv$: MM $N=150\times 50\times4$]{
\includegraphics[width=0.30\textwidth, trim=15 60 15 60, clip]{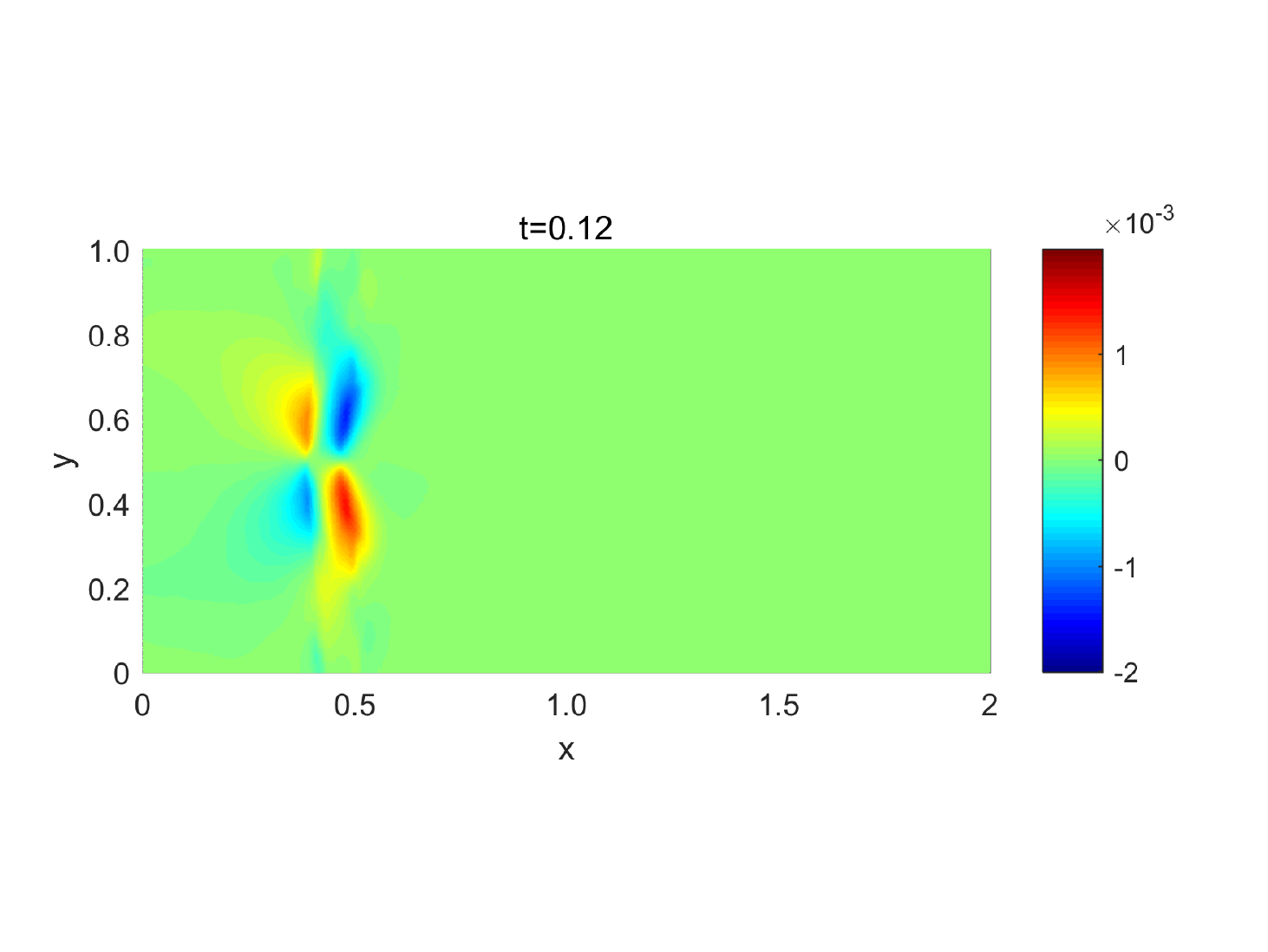}}
\subfigure[$\eta$: FM $N=150\times 50\times4$]{
\includegraphics[width=0.30\textwidth, trim=15 60 15 60, clip]{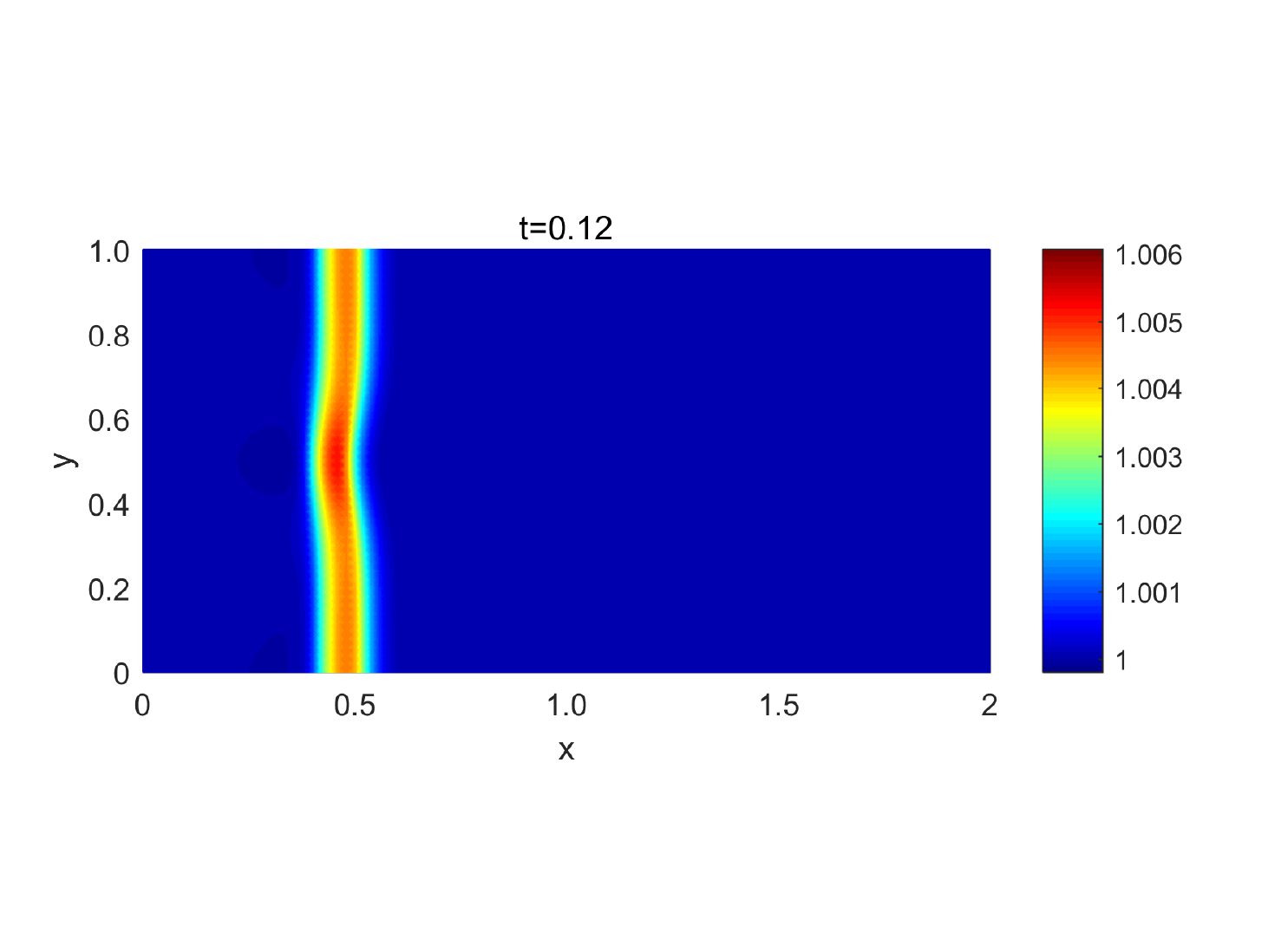}}
\subfigure[$hu$: FM $N=150\times 50\times4$]{
\includegraphics[width=0.30\textwidth, trim=15 60 15 60, clip]{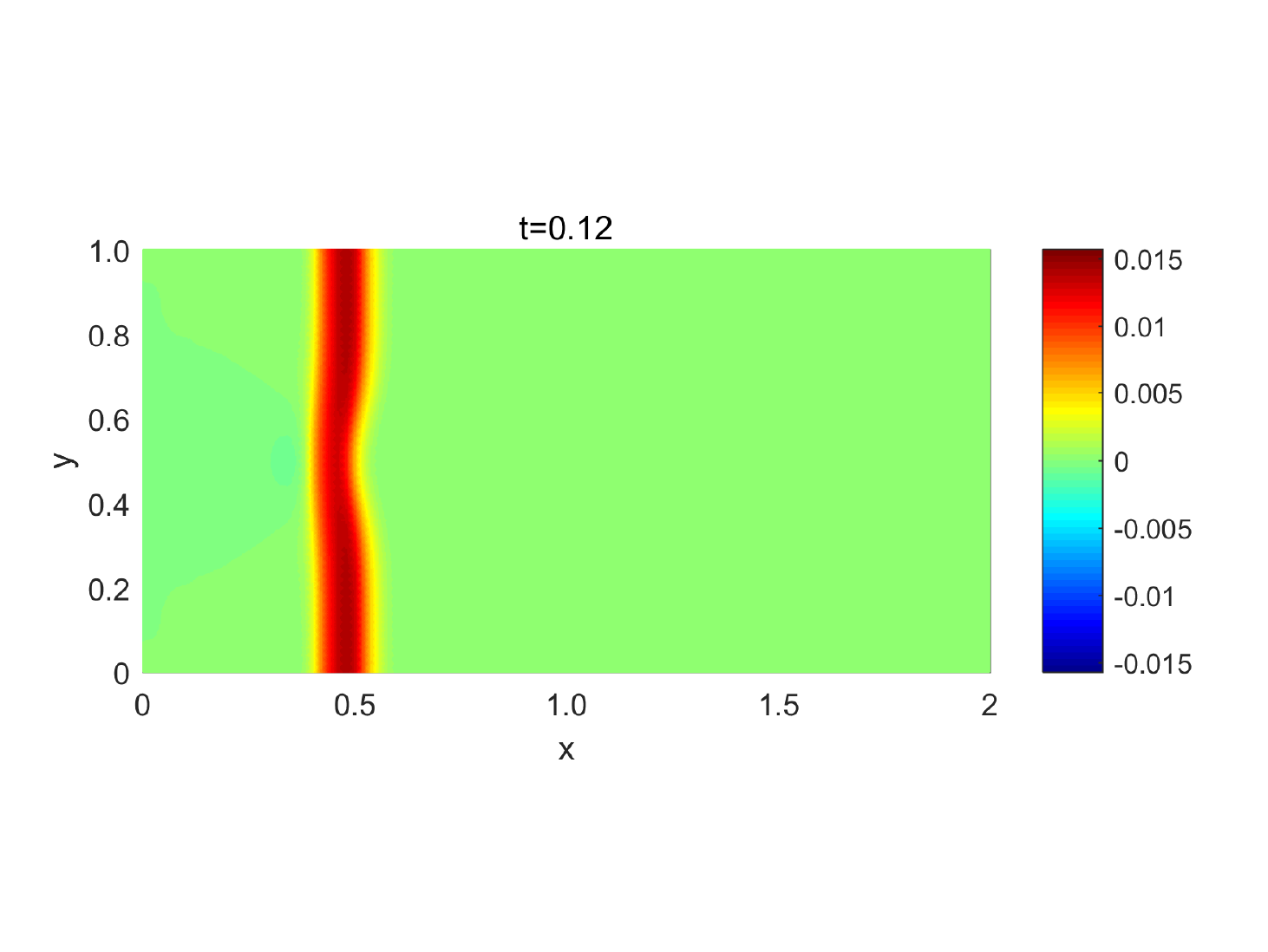}}
\subfigure[$hv$: FM $N=150\times 50\times4$]{
\includegraphics[width=0.30\textwidth, trim=15 60 15 60, clip]{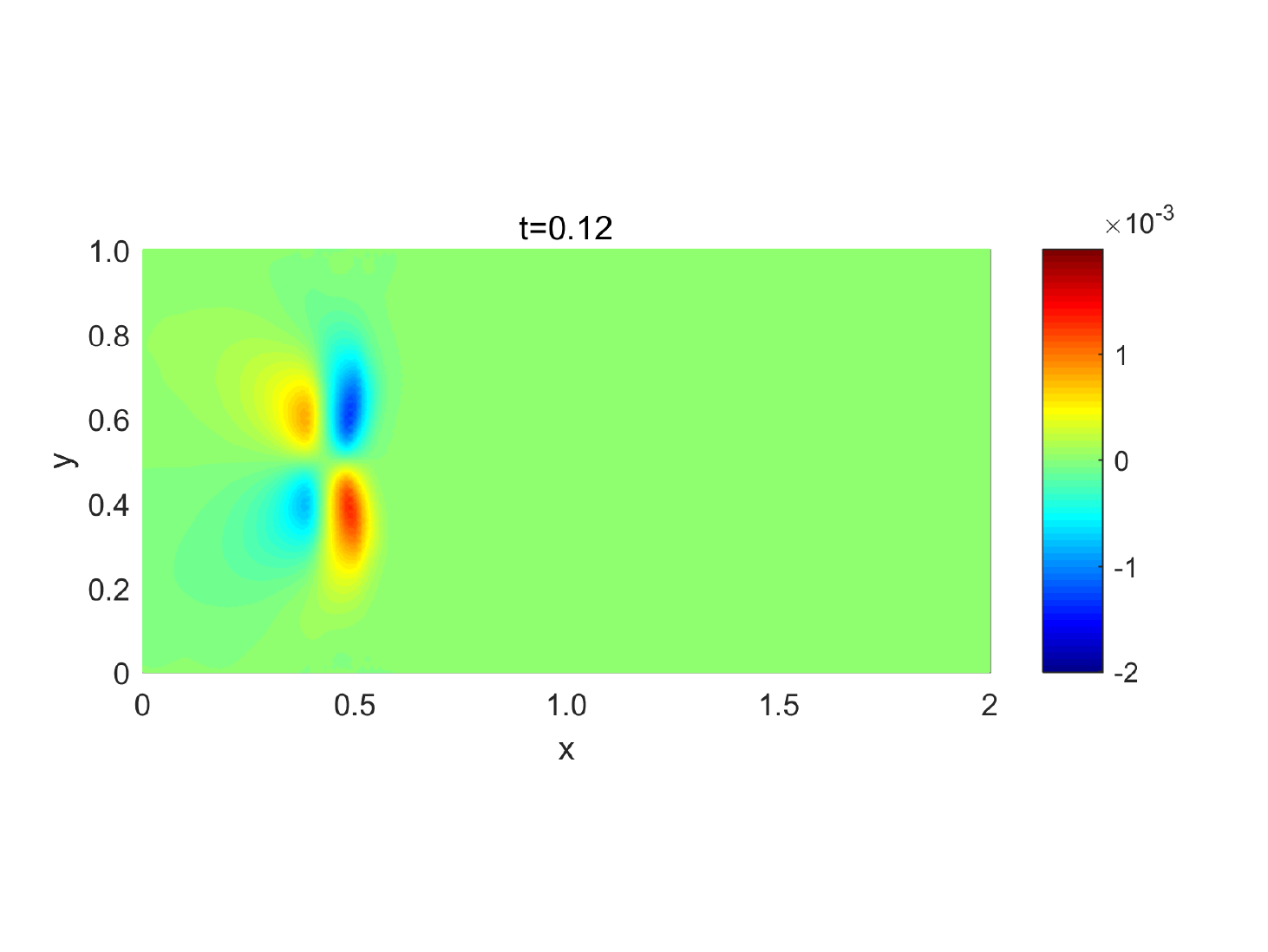}}
\subfigure[$\eta$: FM $600\times 200\times4$]{
\includegraphics[width=0.30\textwidth, trim=15 60 15 60, clip]{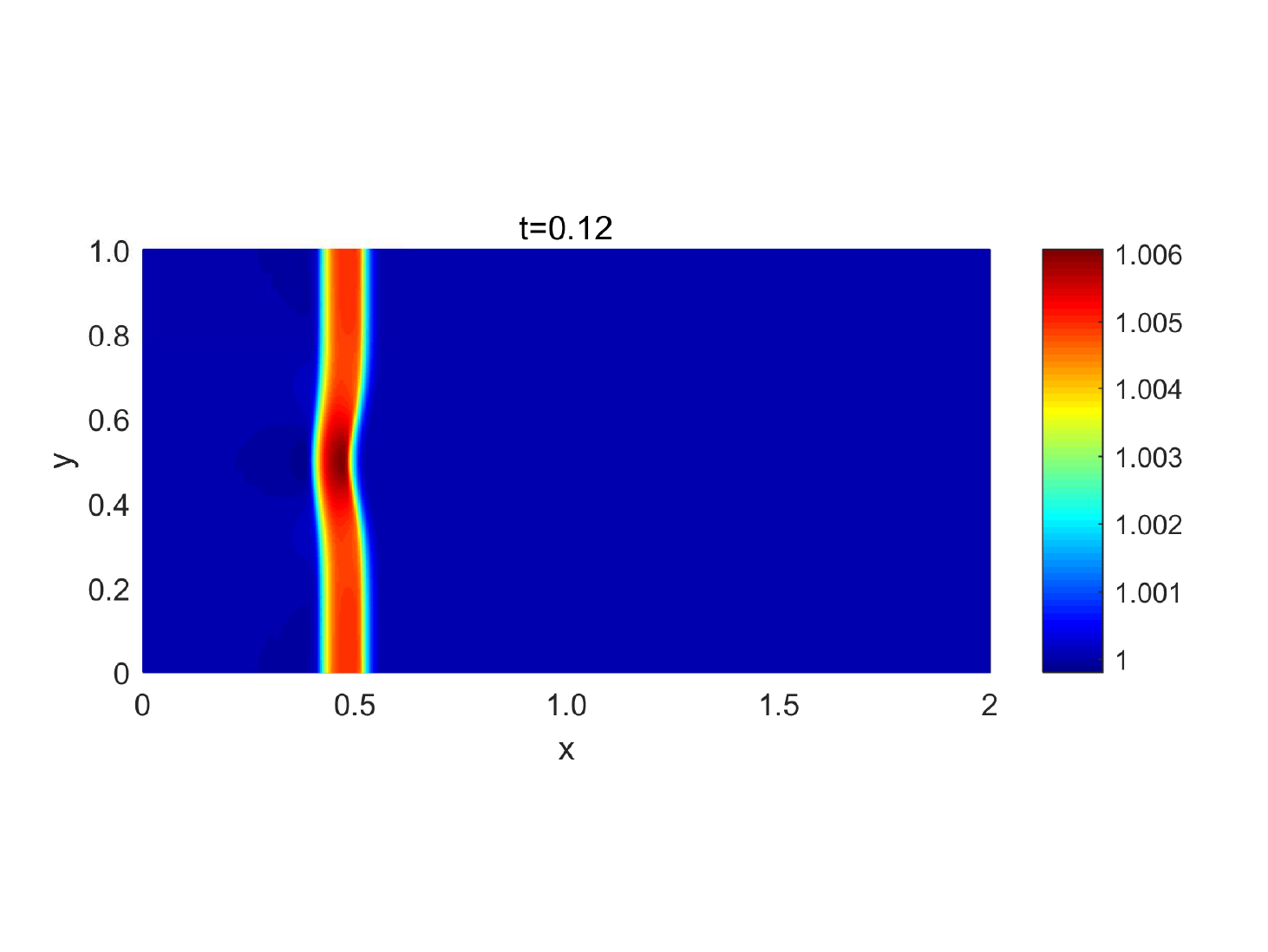}}
\subfigure[$hu$: FM $N=600\times 200\times4$]{
\includegraphics[width=0.30\textwidth, trim=15 60 15 60, clip]{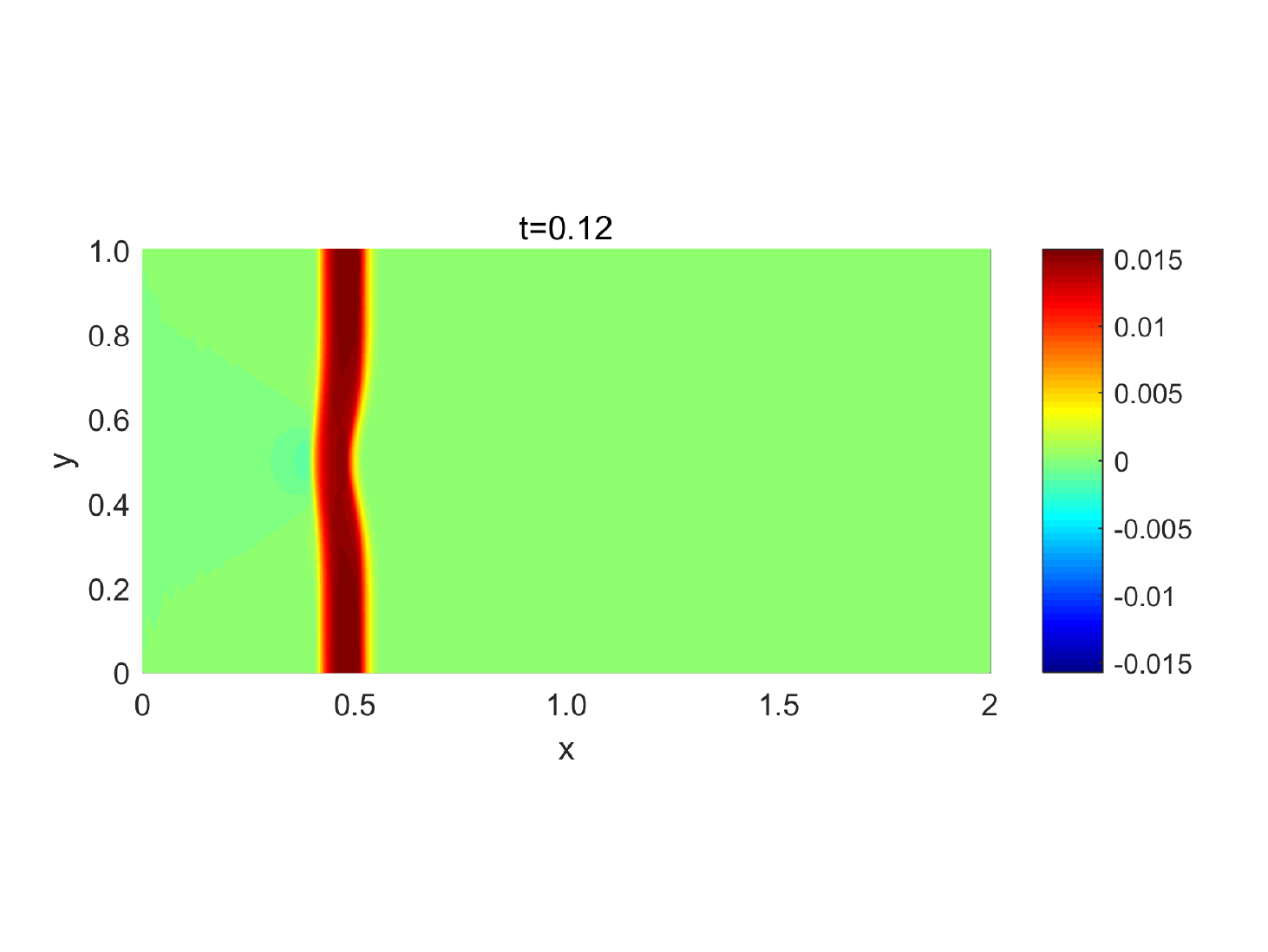}}
\subfigure[$hv$: FM $N=600\times 200\times4$]{
\includegraphics[width=0.30\textwidth, trim=15 60 15 60, clip]{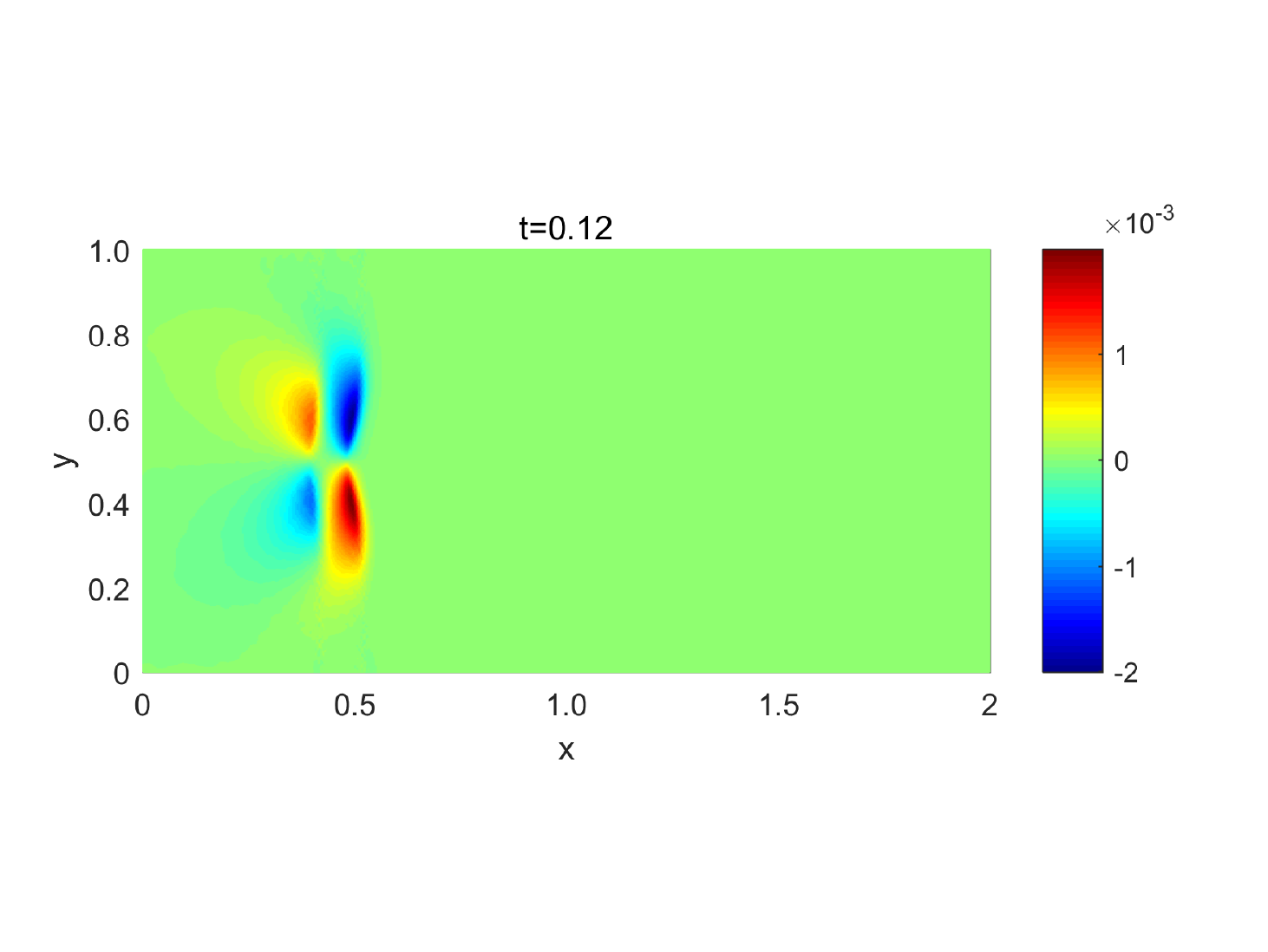}}
\caption{Example \ref{test2-2d}. The contours at $t = 0.12$ of $\eta$, $hu$, and $hv$ at $t=0.12$ are obtained with the $P^2$ QLMM-DG method and a moving mesh of $N=150\times 50\times4$ and fixed meshes of $N=150\times 50\times4$ and $N=600\times 200\times4$.}
\label{Fig:test2-2d-P2-h-hu-hv-t12}
\end{figure}

\begin{figure}[H]
\centering
\subfigure[$\eta$: MM $N=150\times 50\times4$]{
\includegraphics[width=0.30\textwidth, trim=15 60 15 60, clip]{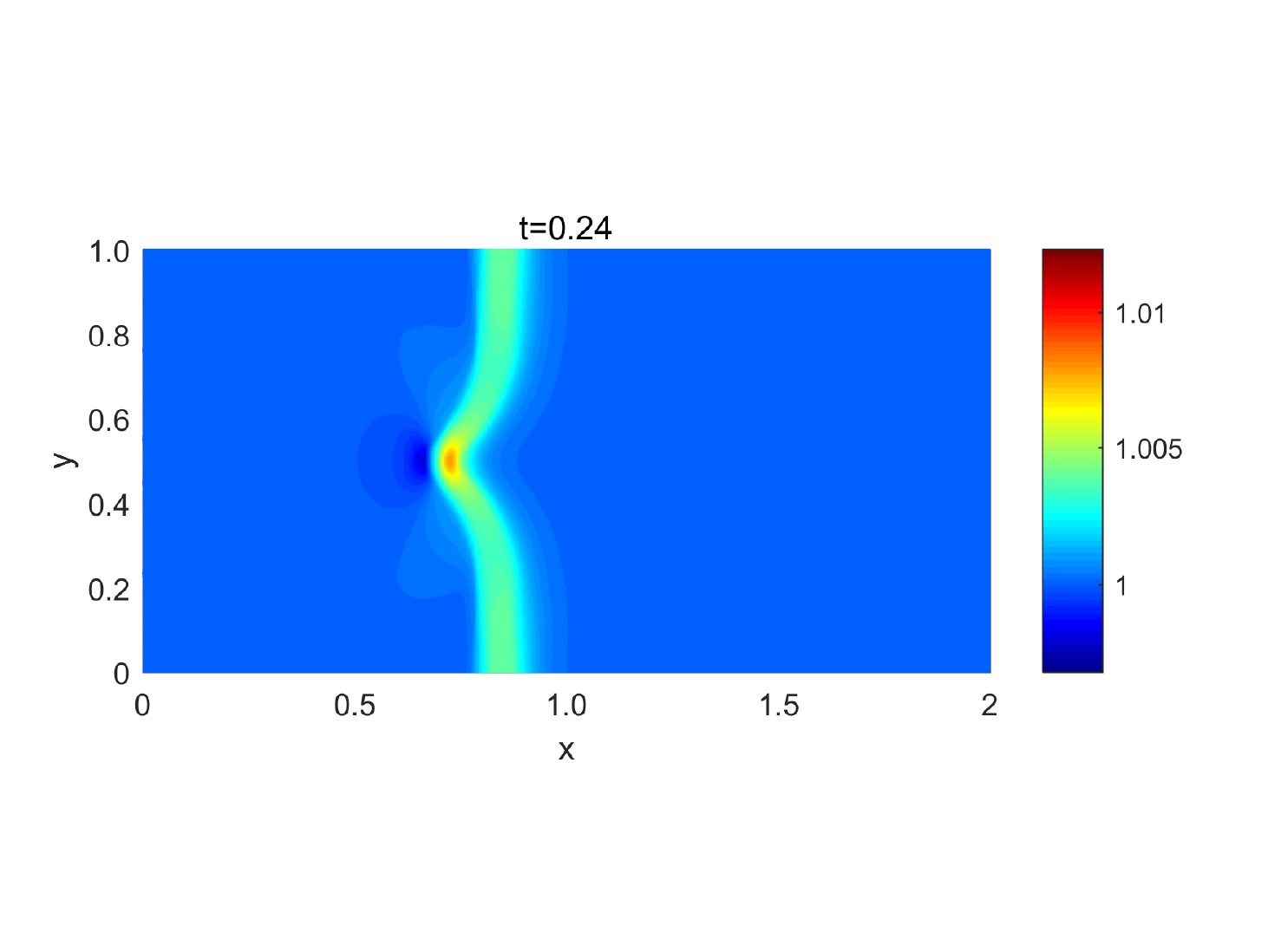}}
\subfigure[$hu$: MM $N=150\times 50\times4$]{
\includegraphics[width=0.30\textwidth, trim=15 60 15 60, clip]{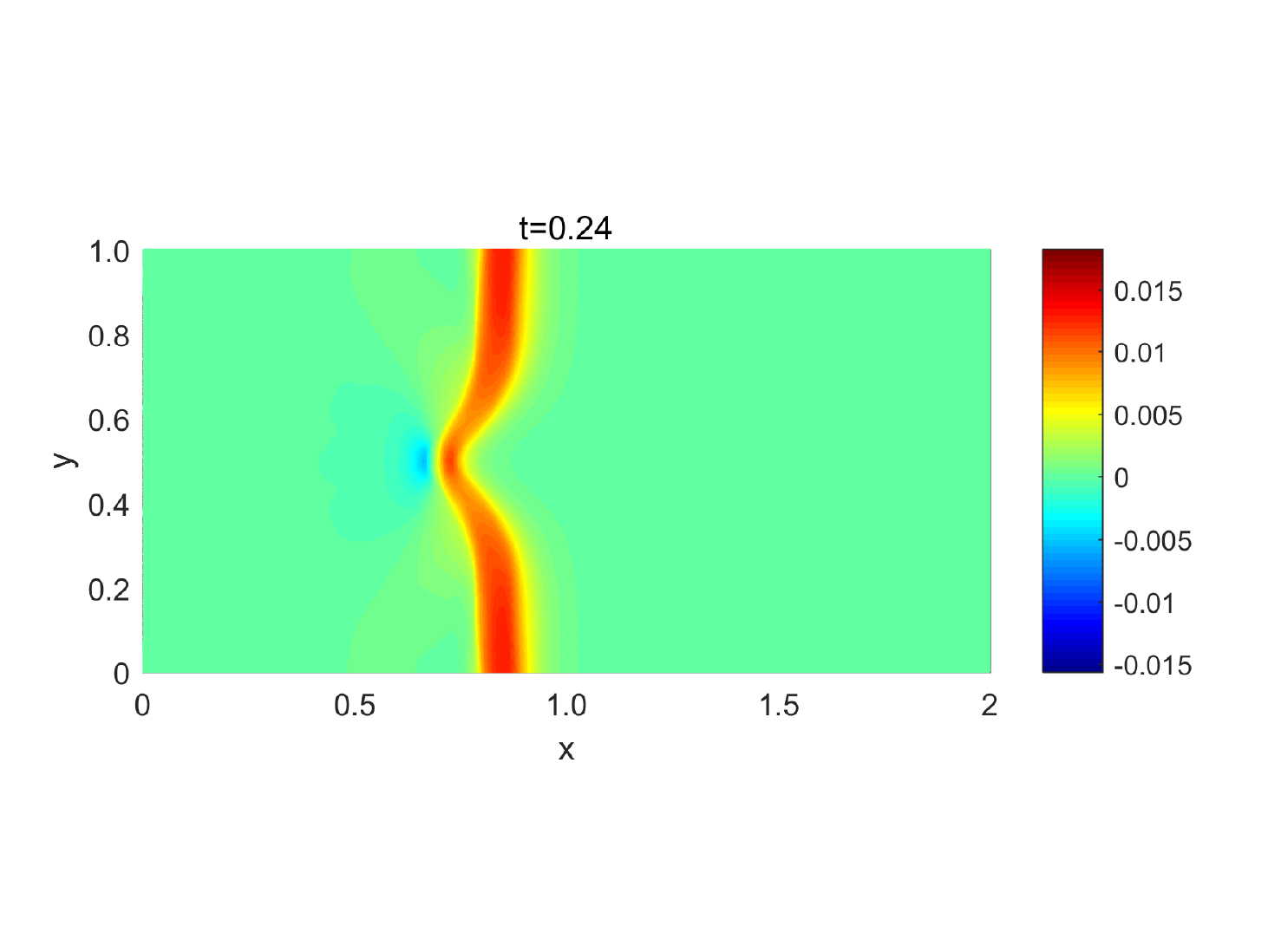}}
\subfigure[$hv$: MM $N=150\times 50\times4$]{
\includegraphics[width=0.30\textwidth, trim=15 60 15 60, clip]{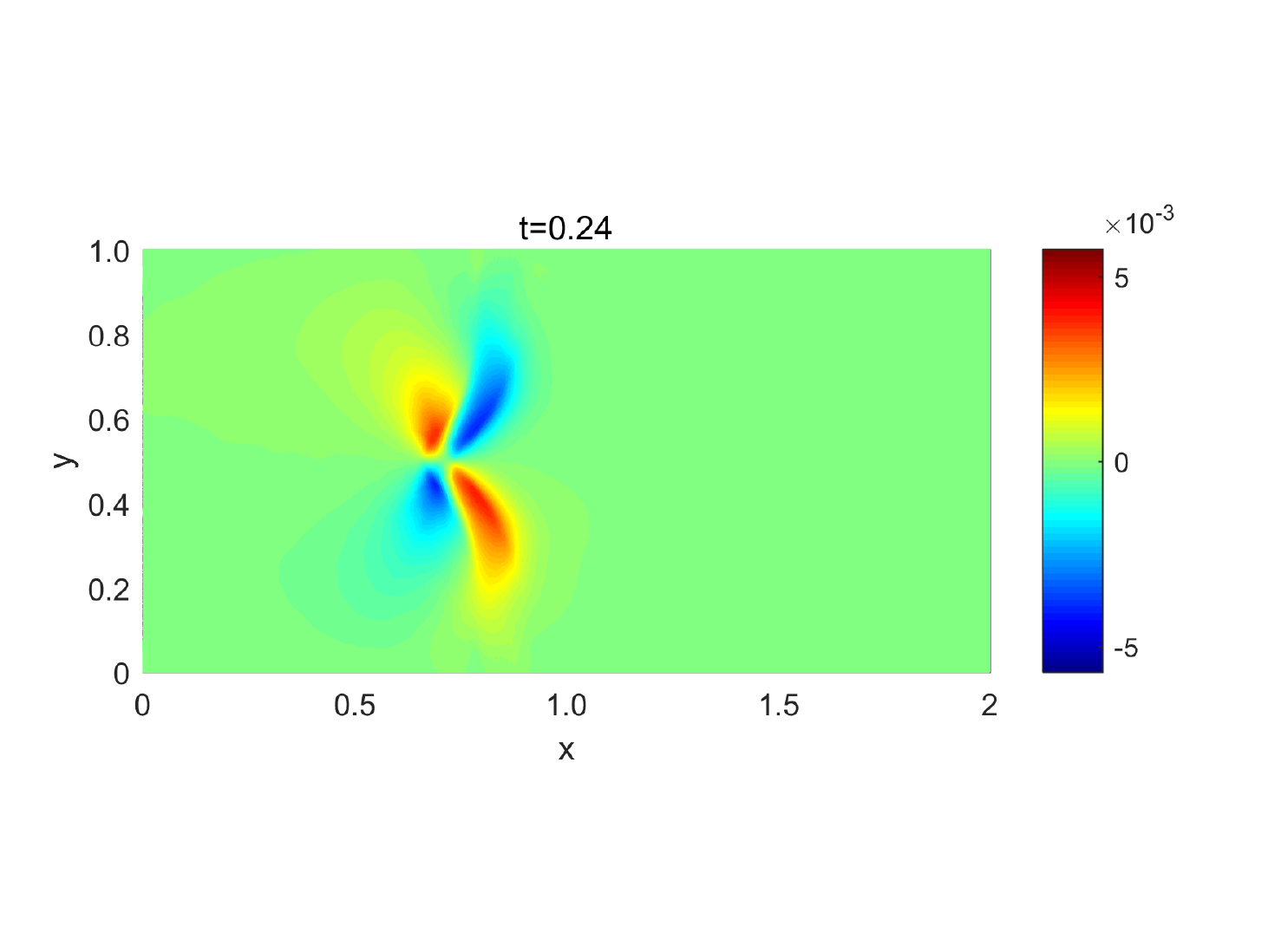}}
\subfigure[$\eta$: FM $N=150\times 50\times4$]{
\includegraphics[width=0.30\textwidth, trim=15 60 15 60, clip]{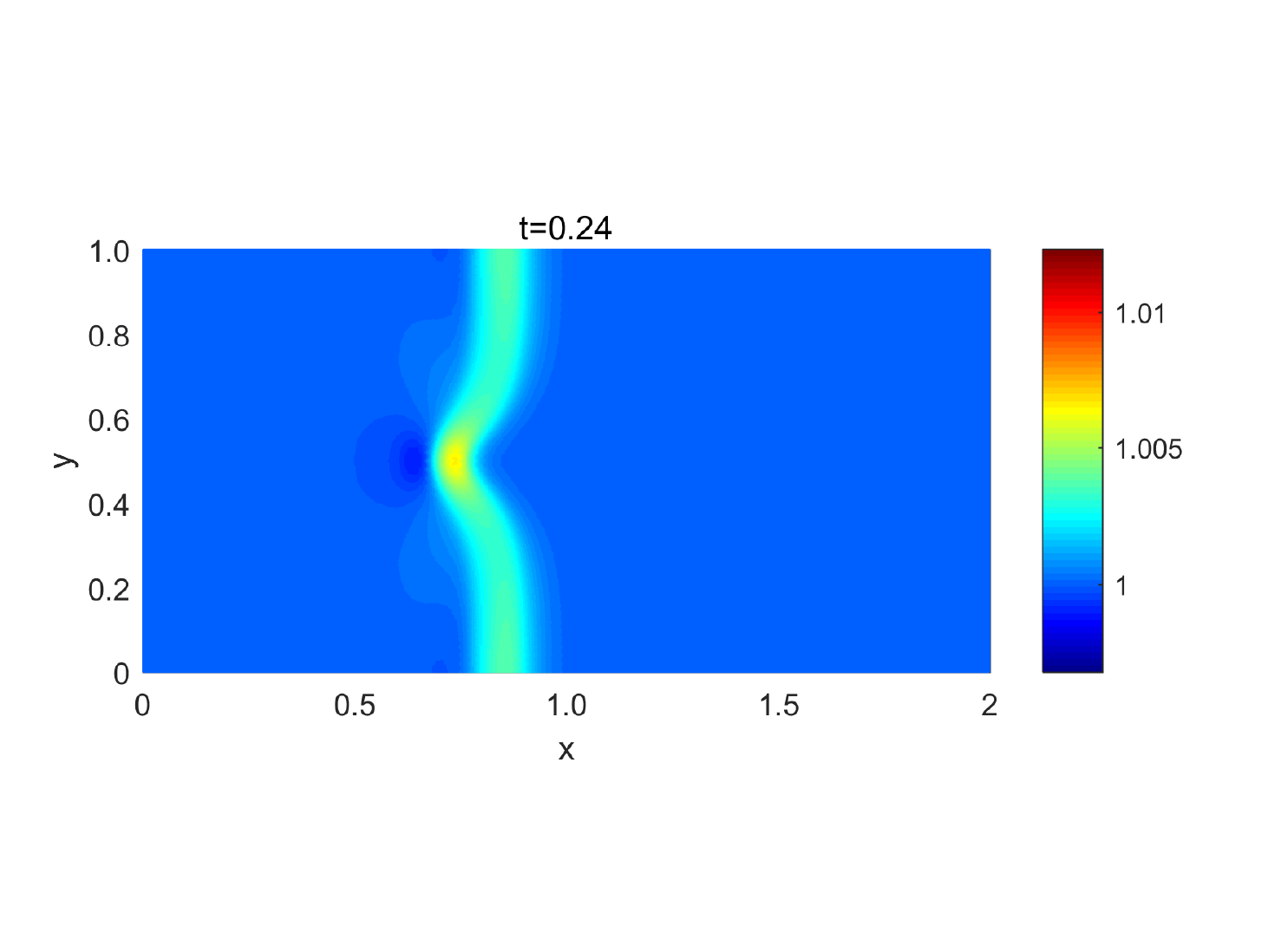}}
\subfigure[$hu$: FM $N=150\times 50\times4$]{
\includegraphics[width=0.30\textwidth, trim=15 60 15 60, clip]{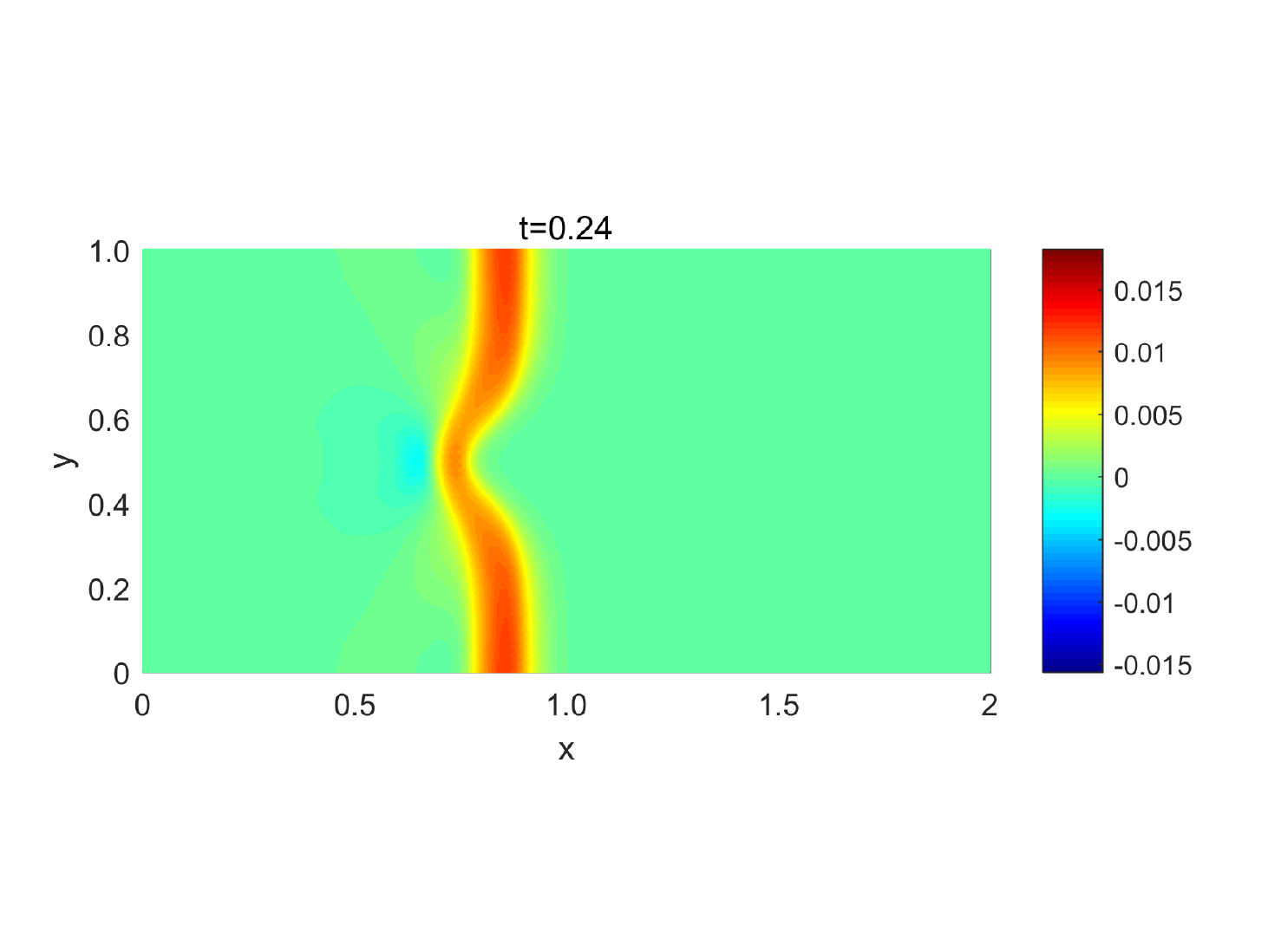}}
\subfigure[$hv$: FM $N=150\times 50\times4$]{
\includegraphics[width=0.30\textwidth, trim=15 60 15 60, clip]{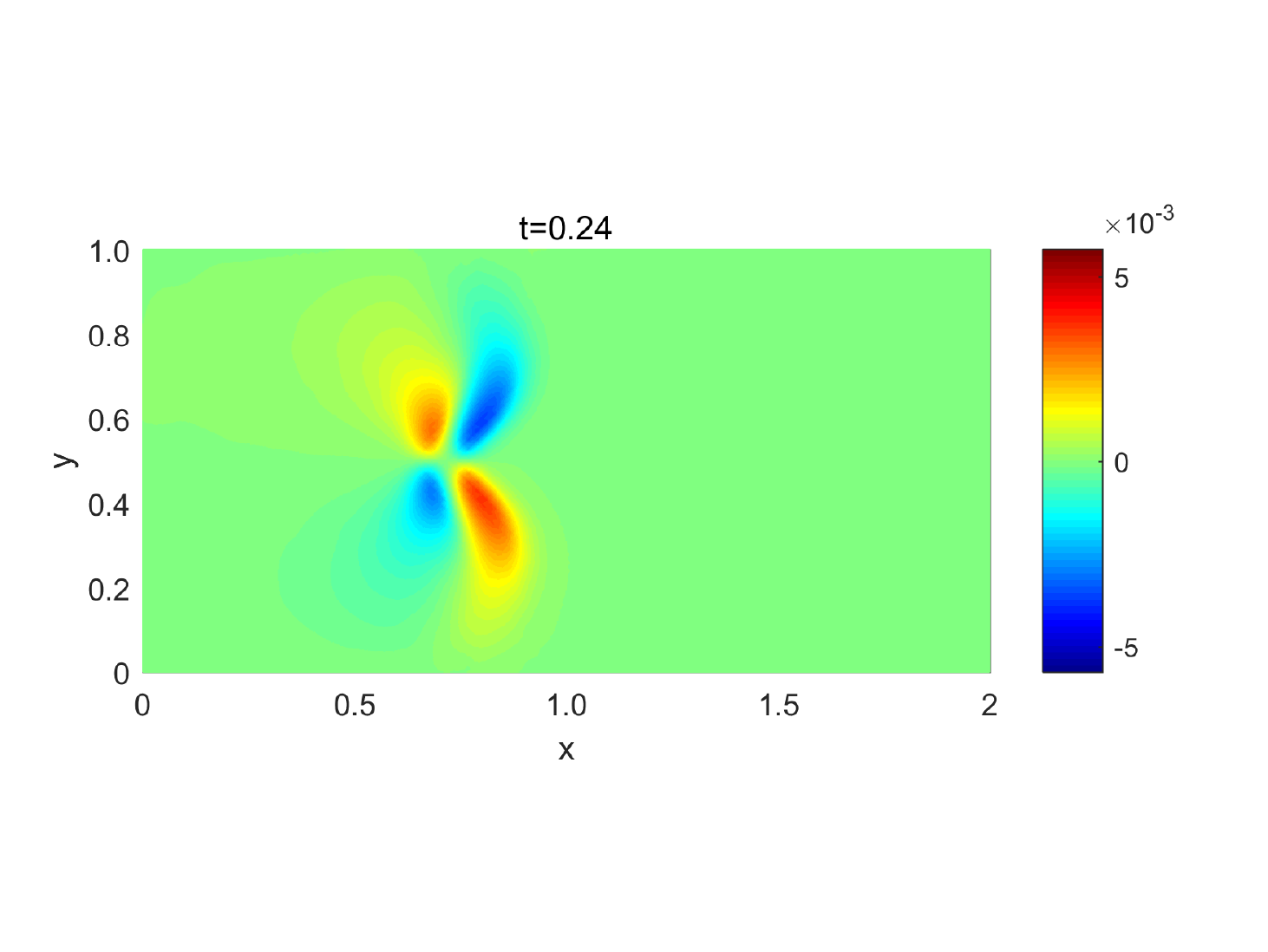}}
\subfigure[$\eta$: FM $600\times 200\times4$]{
\includegraphics[width=0.30\textwidth, trim=15 60 15 60, clip]{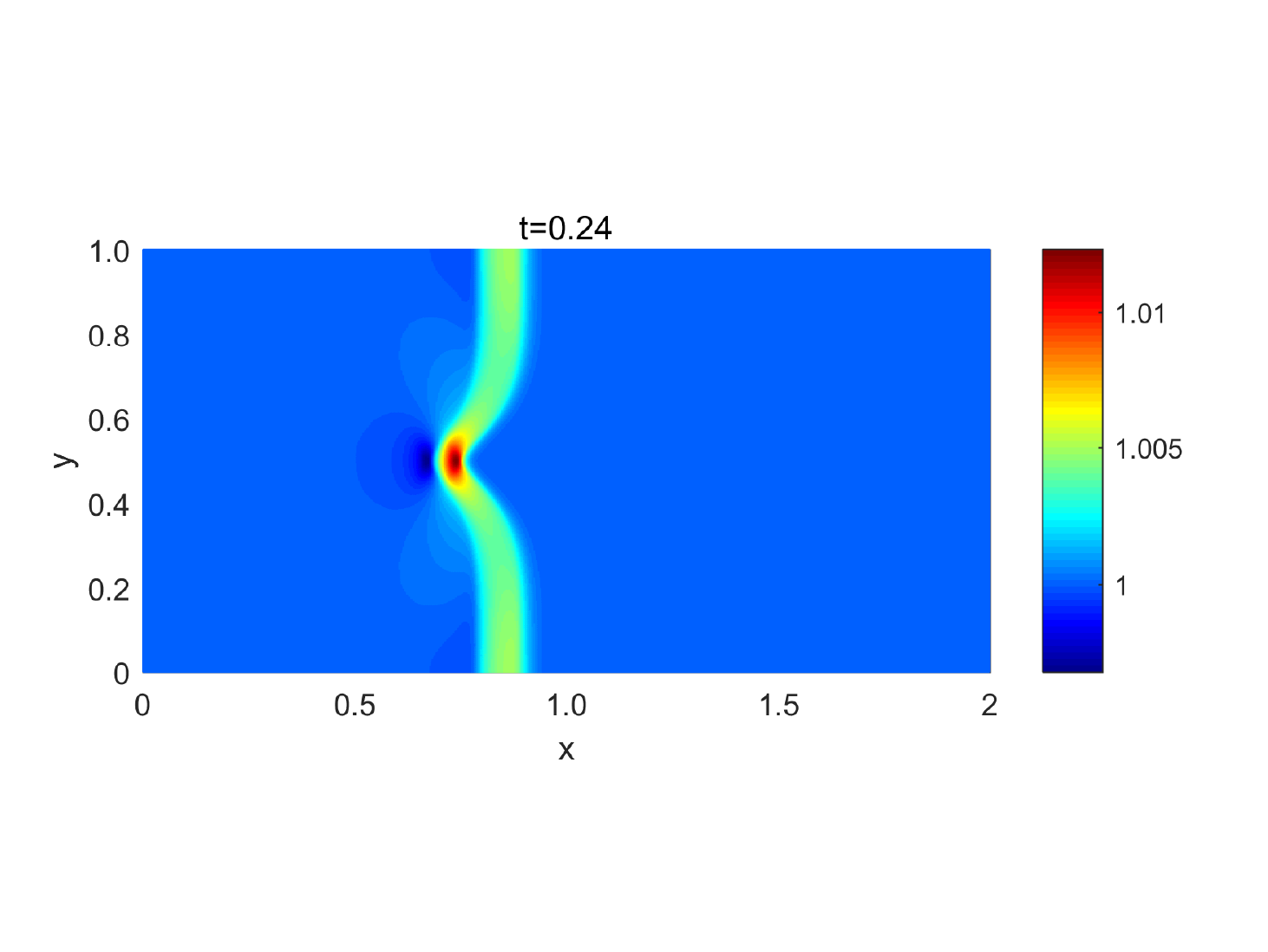}}
\subfigure[$hu$: FM $N=600\times 200\times4$]{
\includegraphics[width=0.30\textwidth, trim=15 60 15 60, clip]{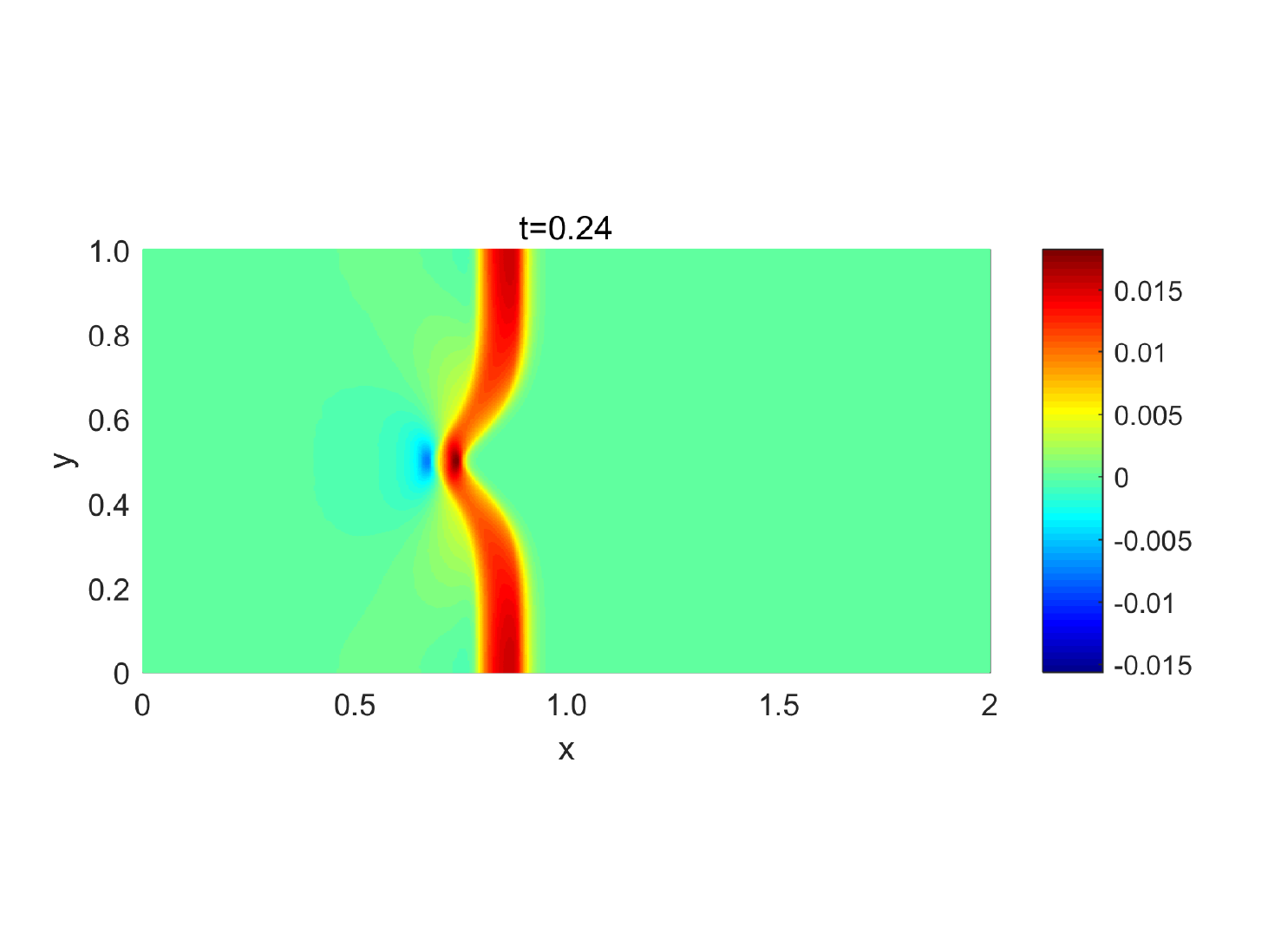}}
\subfigure[$hv$: FM $N=600\times 200\times4$]{
\includegraphics[width=0.30\textwidth, trim=15 60 15 60, clip]{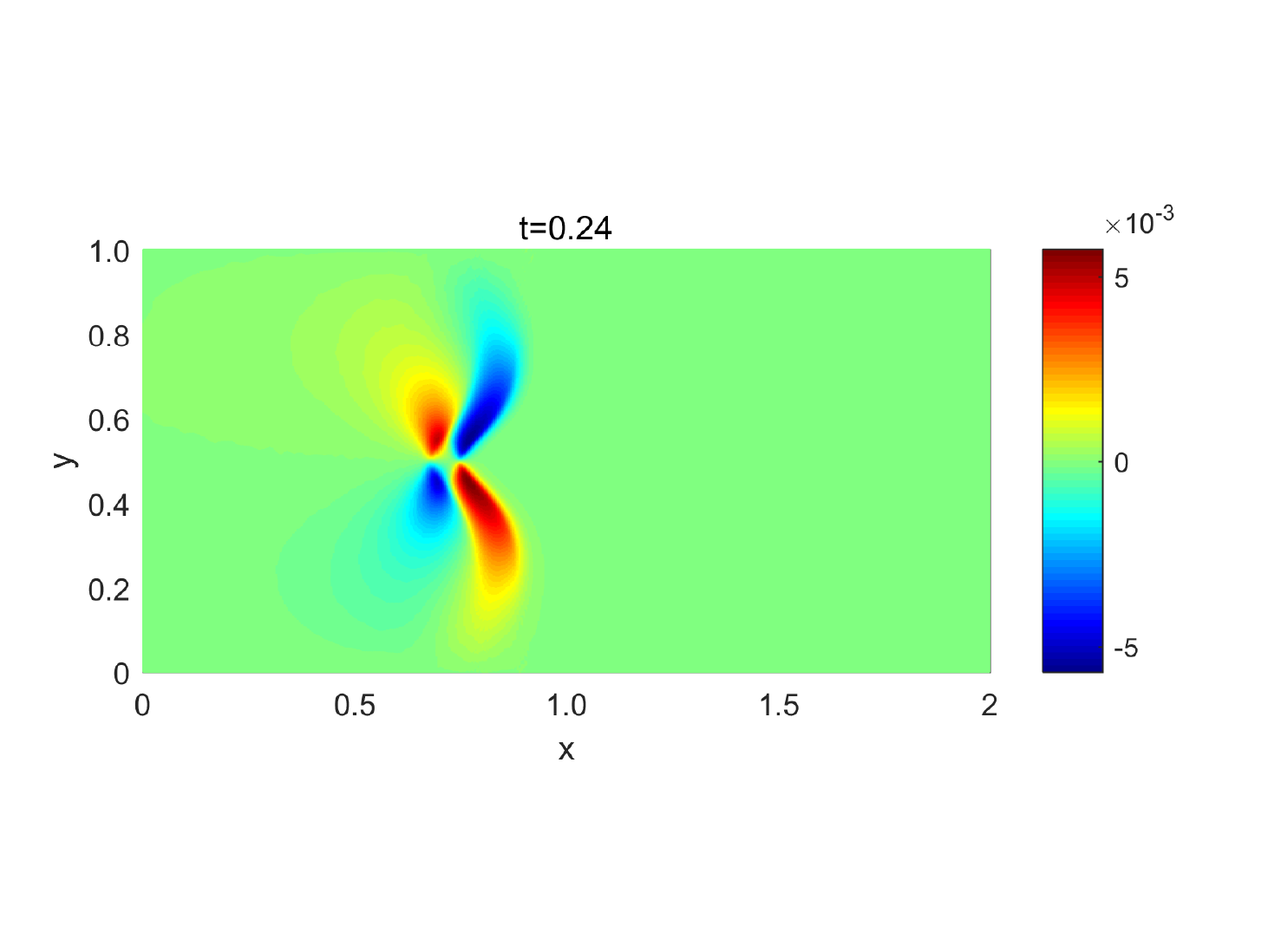}}
\caption{Continuation of Fig.~\ref{Fig:test2-2d-P2-h-hu-hv-t12}: $t = 0.24$.}
\label{Fig:test2-2d-P2-h-hu-hv-t24}
\end{figure}

\begin{figure}[H]
\centering
\subfigure[$\eta$: MM $N=150\times 50\times4$]{
\includegraphics[width=0.30\textwidth, trim=15 60 15 60, clip]{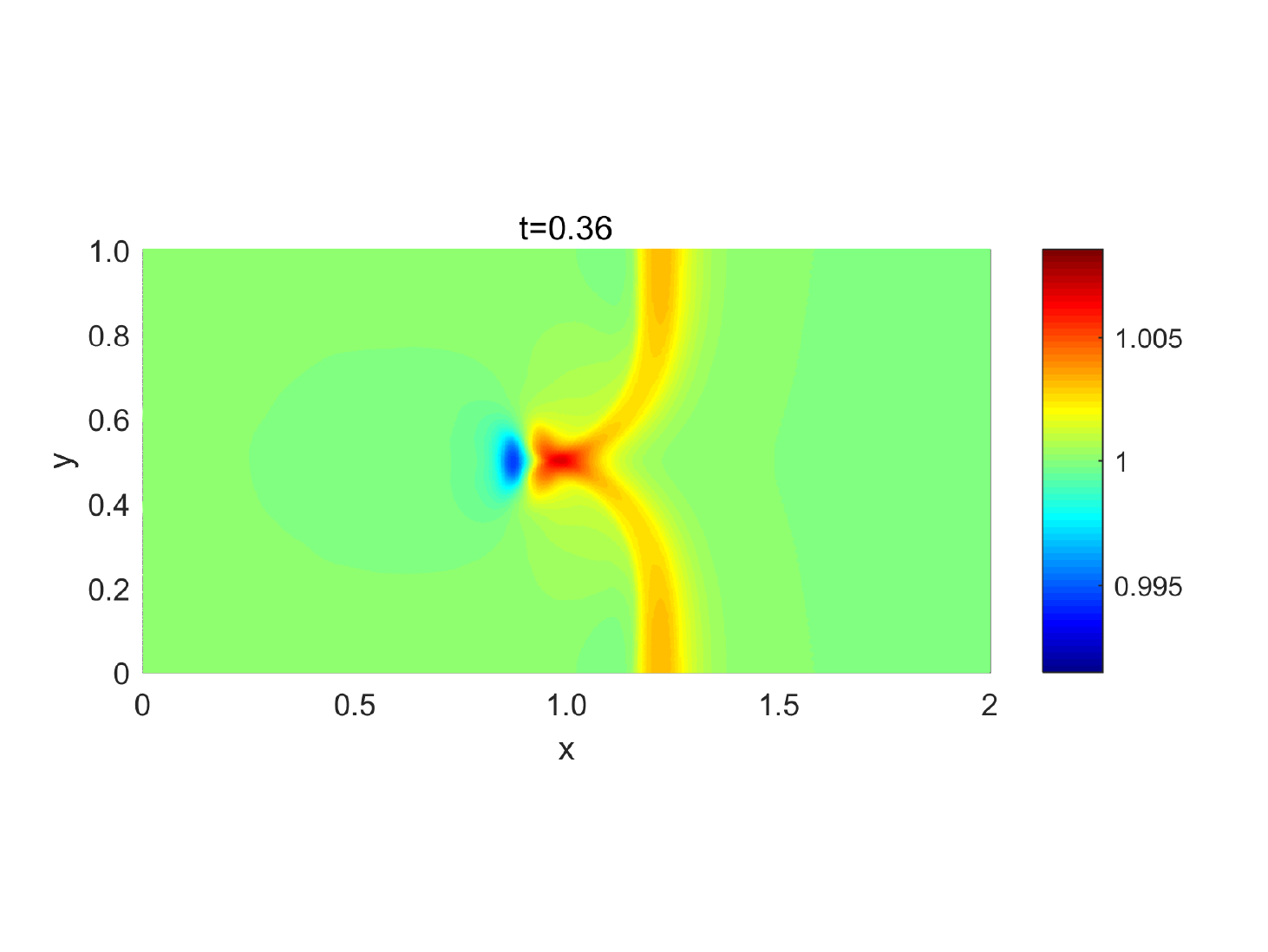}}
\subfigure[$hu$: MM $N=150\times 50\times4$]{
\includegraphics[width=0.30\textwidth, trim=15 60 15 60, clip]{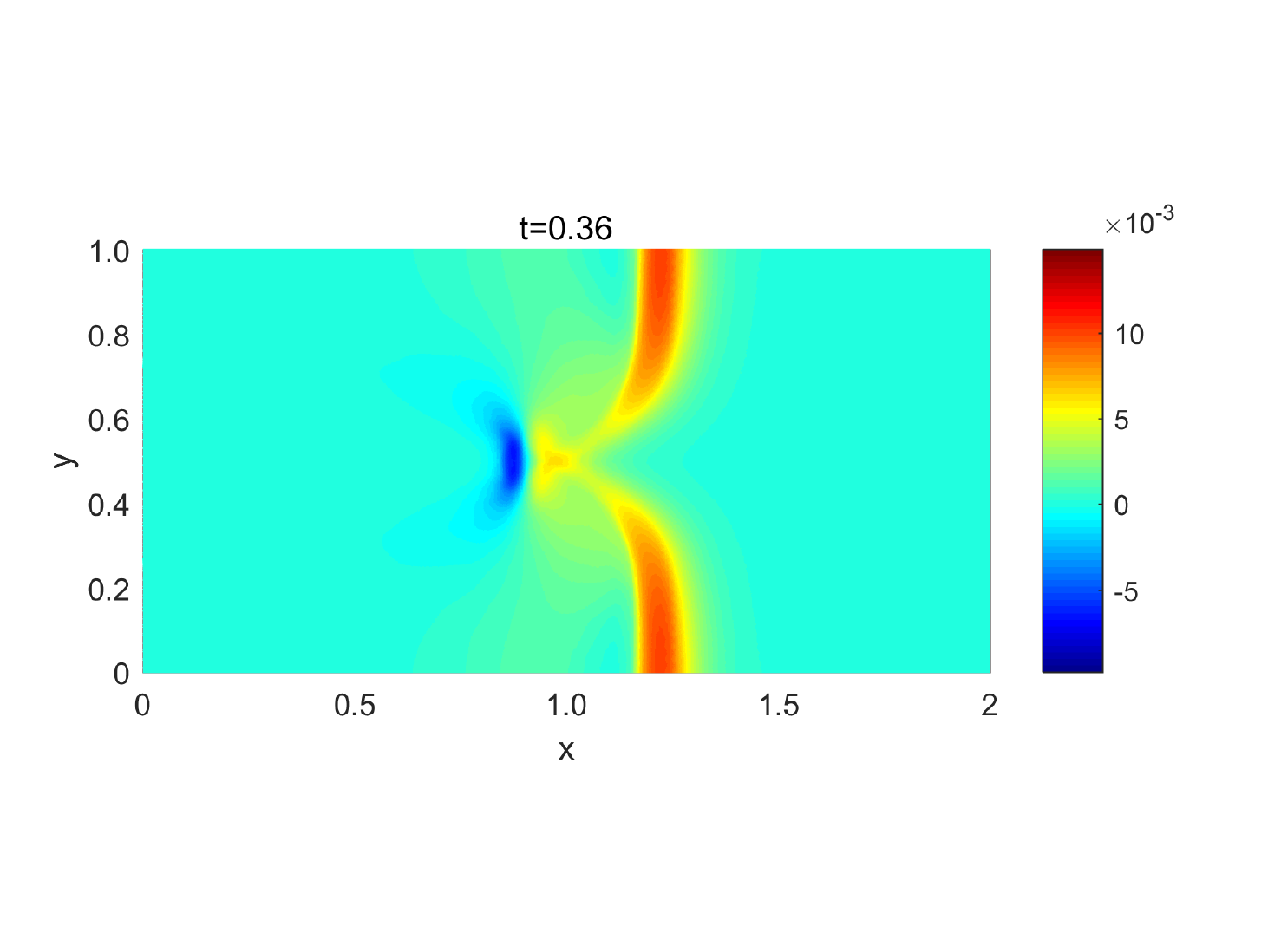}}
\subfigure[$hv$: MM $N=150\times 50\times4$]{
\includegraphics[width=0.30\textwidth, trim=15 60 15 60, clip]{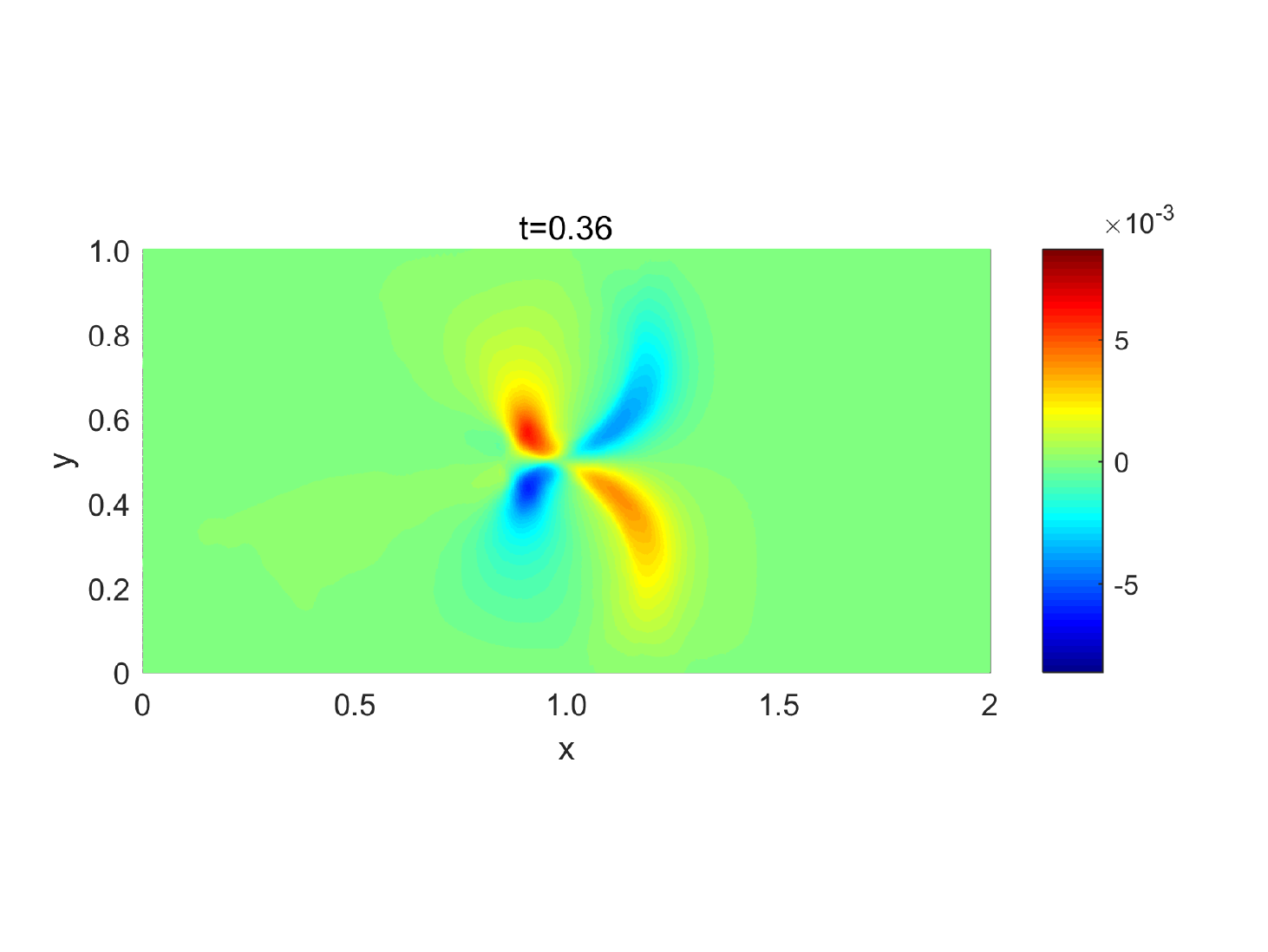}}
\subfigure[$\eta$: FM $N=150\times 50\times4$]{
\includegraphics[width=0.30\textwidth, trim=15 60 15 60, clip]{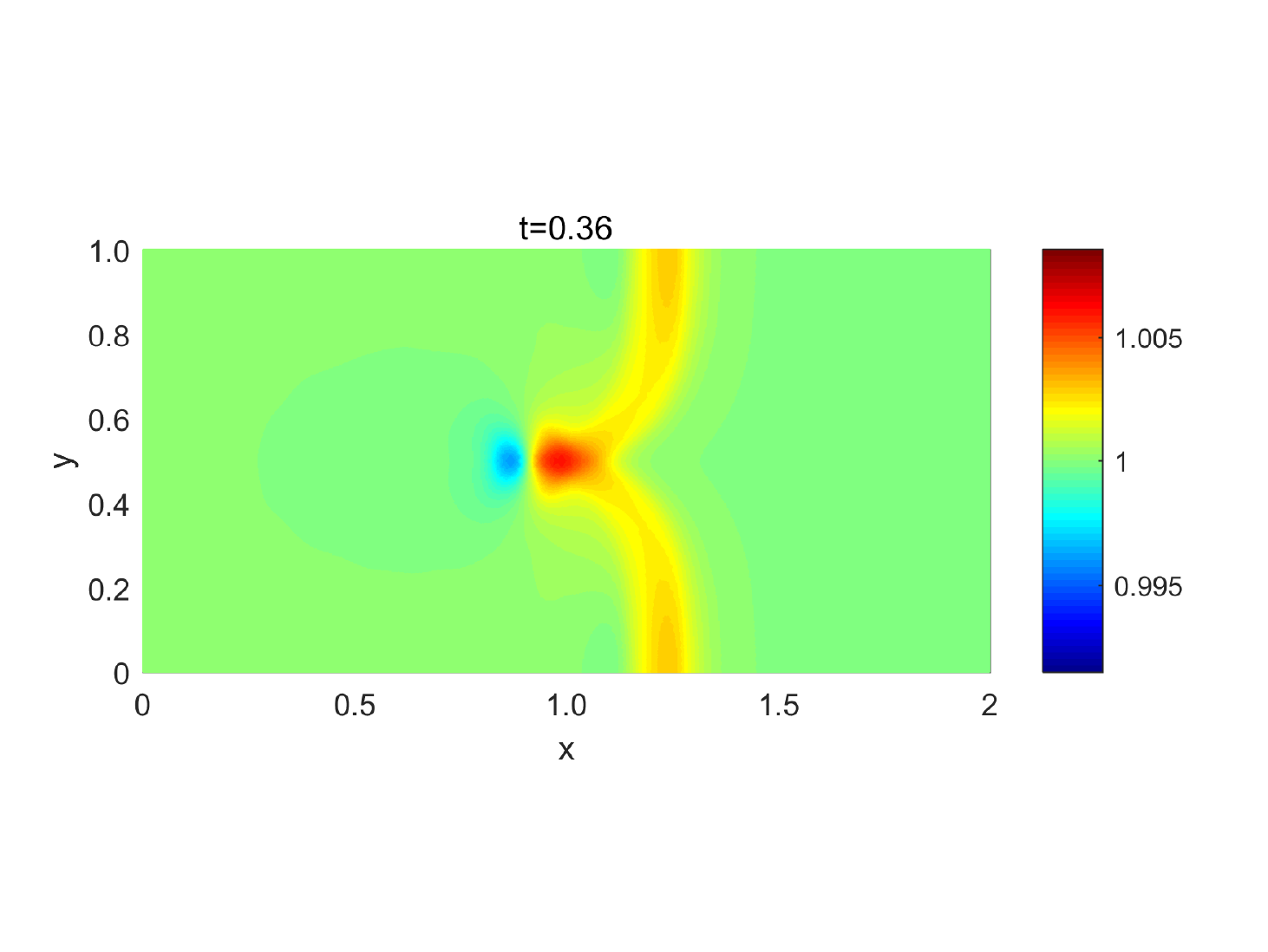}}
\subfigure[$hu$: FM $N=150\times 50\times4$]{
\includegraphics[width=0.30\textwidth, trim=15 60 15 60, clip]{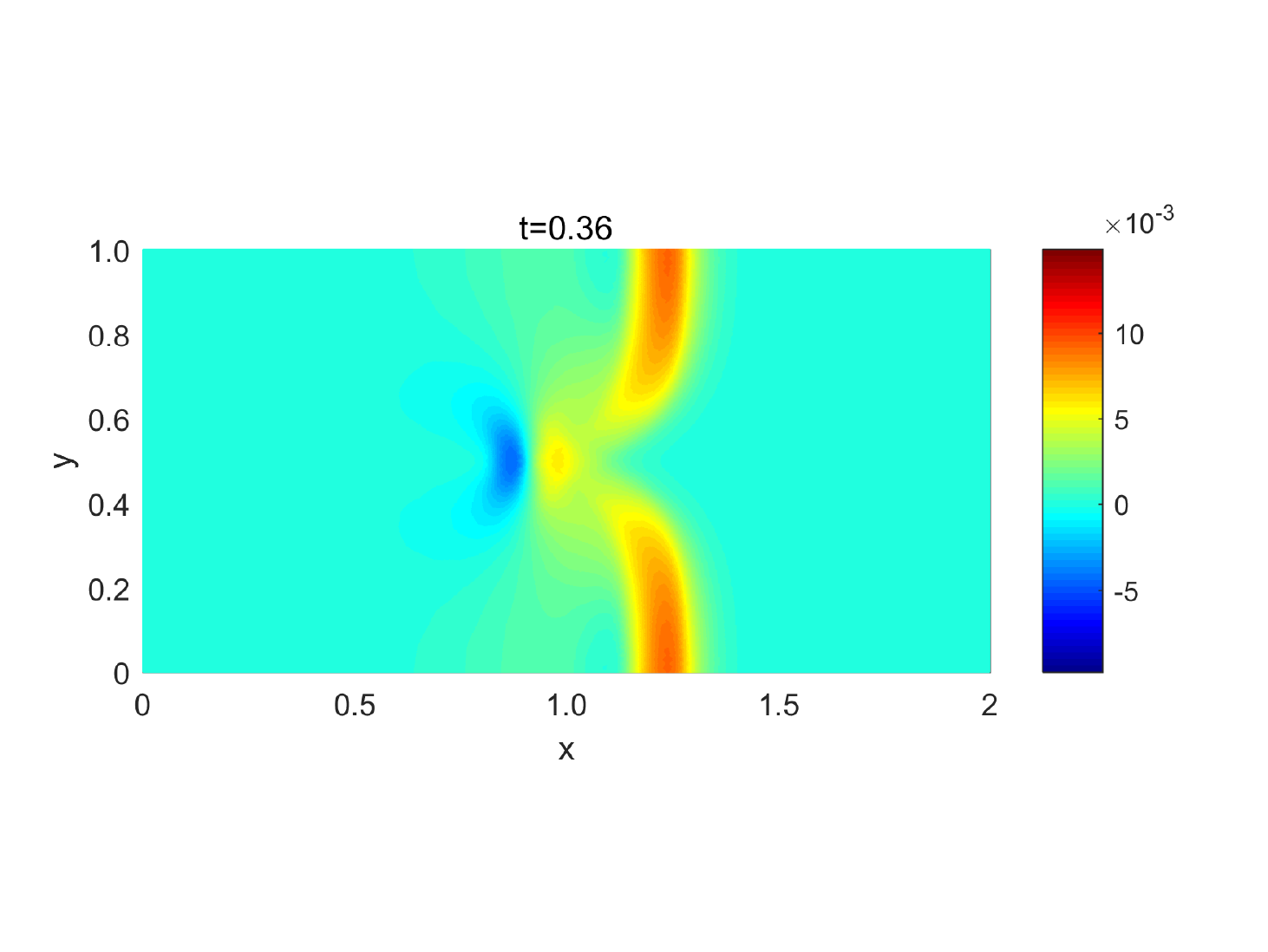}}
\subfigure[$hv$: FM $N=150\times 50\times4$]{
\includegraphics[width=0.30\textwidth, trim=15 60 15 60, clip]{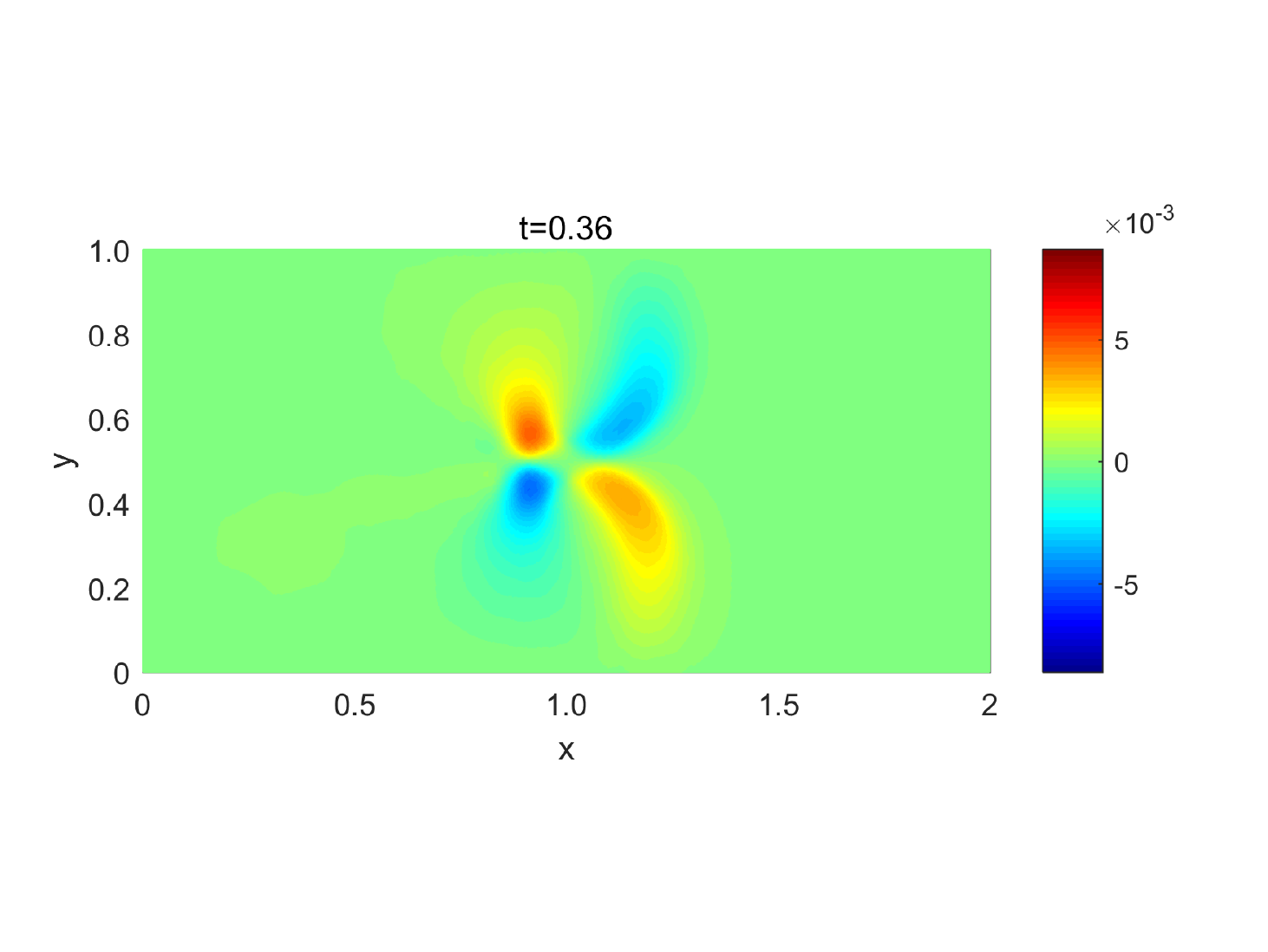}}
\subfigure[$\eta$: FM $600\times 200\times4$]{
\includegraphics[width=0.30\textwidth, trim=15 60 15 60, clip]{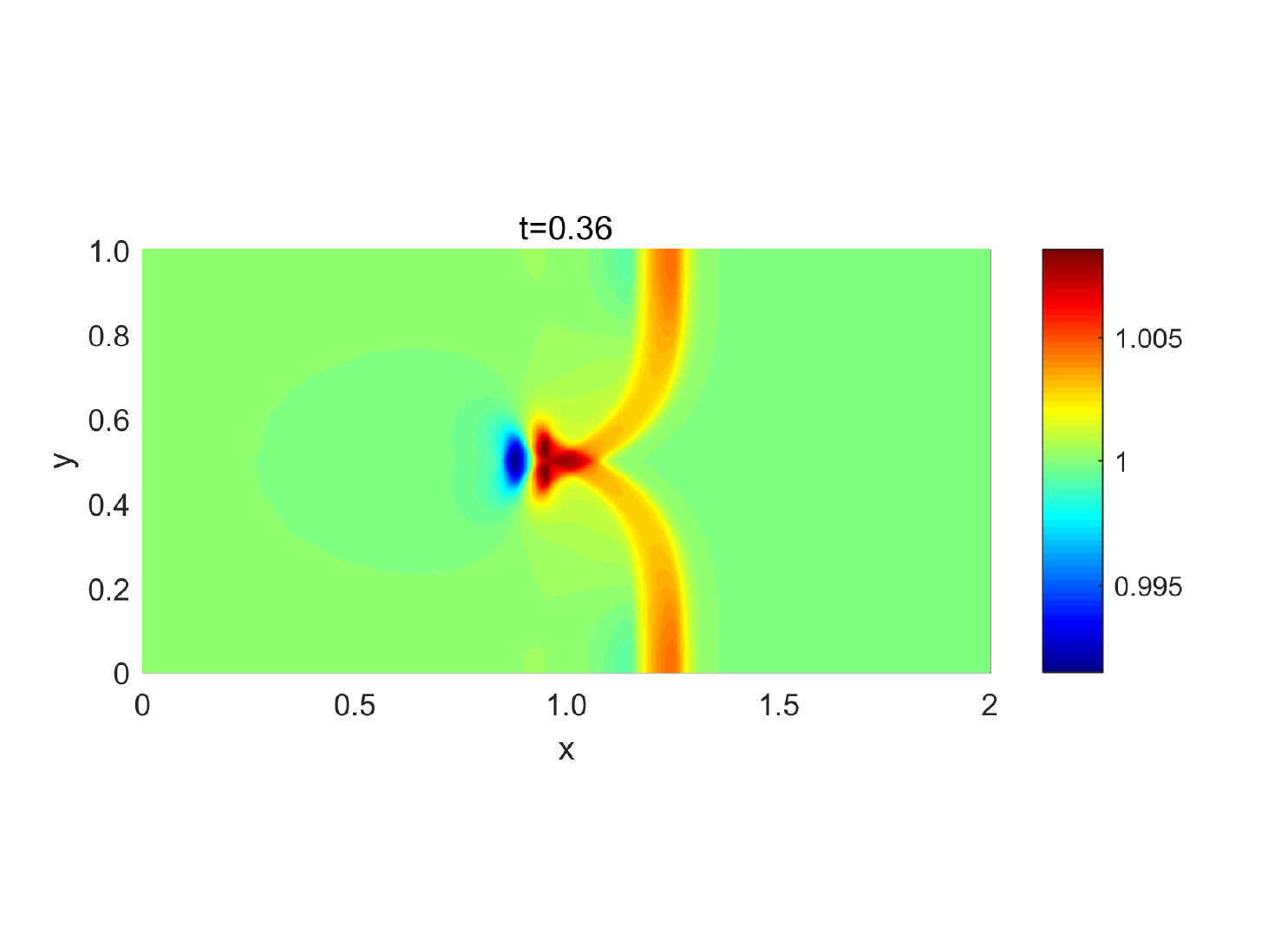}}
\subfigure[$hu$: FM $N=600\times 200\times4$]{
\includegraphics[width=0.30\textwidth, trim=15 60 15 60, clip]{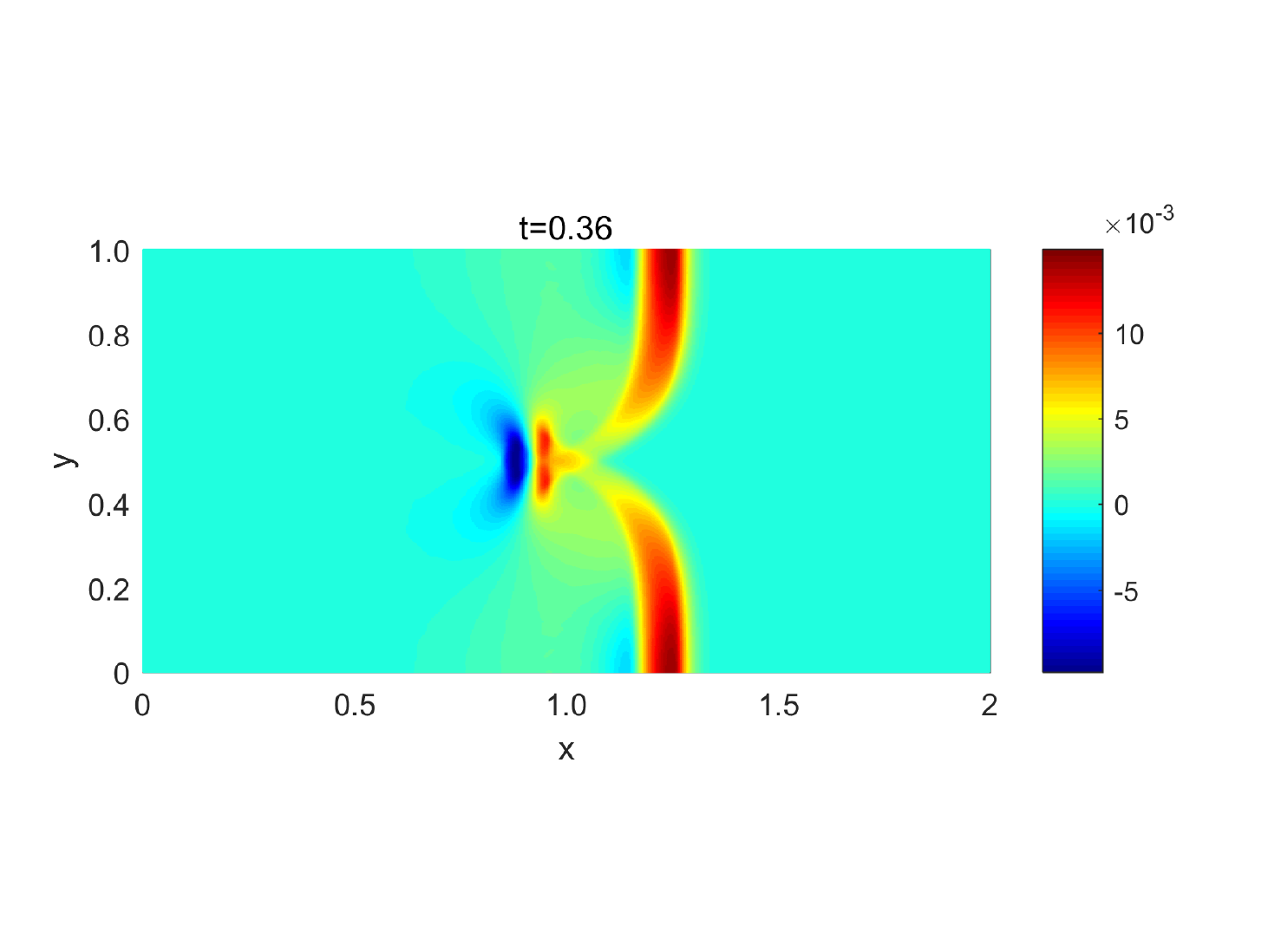}}
\subfigure[$hv$: FM $N=600\times 200\times4$]{
\includegraphics[width=0.30\textwidth, trim=15 60 15 60, clip]{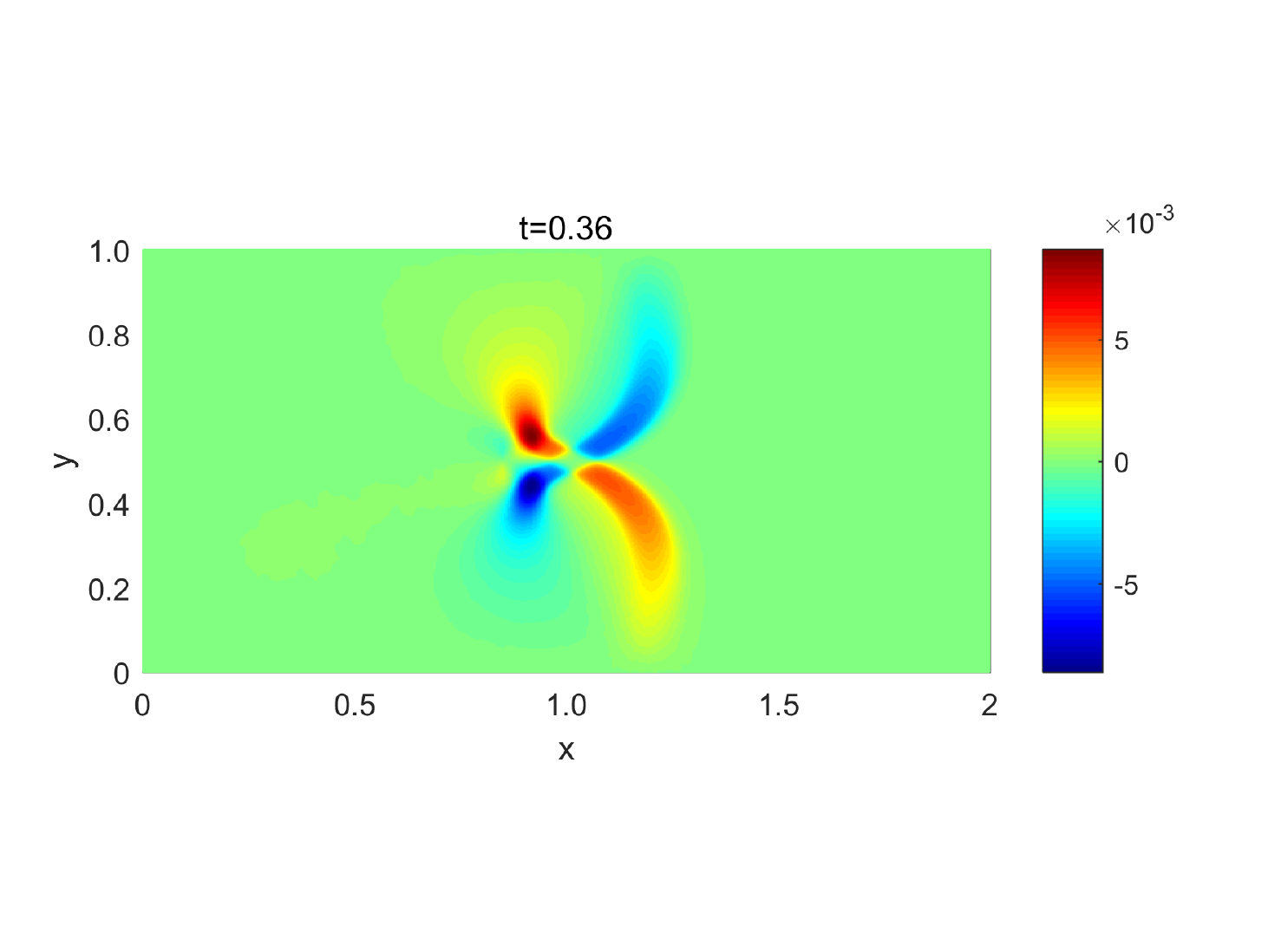}}
\caption{Continuation of Fig.~\ref{Fig:test2-2d-P2-h-hu-hv-t12}: $t = 0.36$.}
\label{Fig:test2-2d-P2-h-hu-hv-t36}
\end{figure}

\begin{figure}[H]
\centering
\subfigure[$\eta$: MM $N=150\times 50\times4$]{
\includegraphics[width=0.30\textwidth, trim=15 60 15 60, clip]{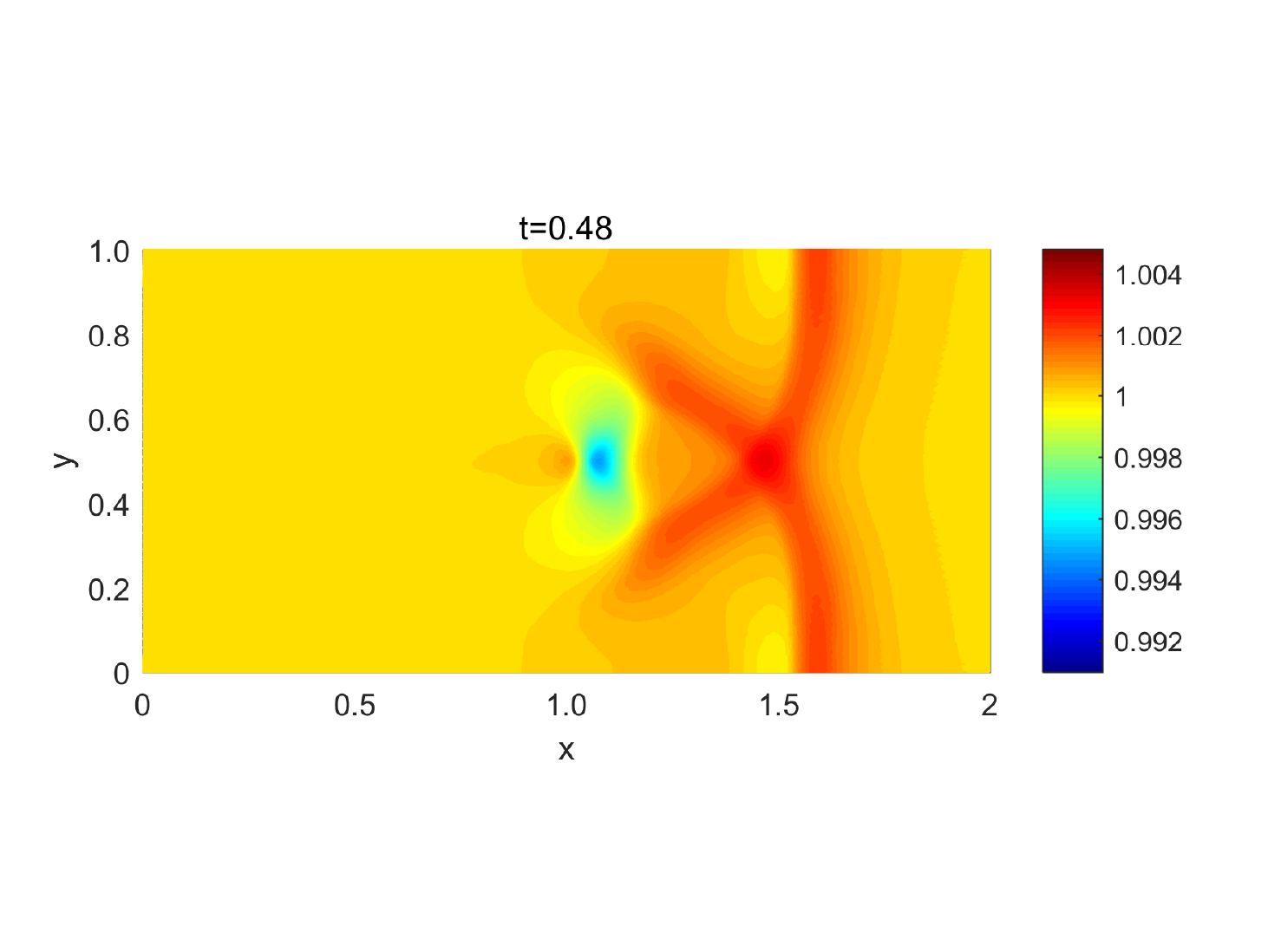}}
\subfigure[$hu$: MM $N=150\times 50\times4$]{
\includegraphics[width=0.30\textwidth, trim=15 60 15 60, clip]{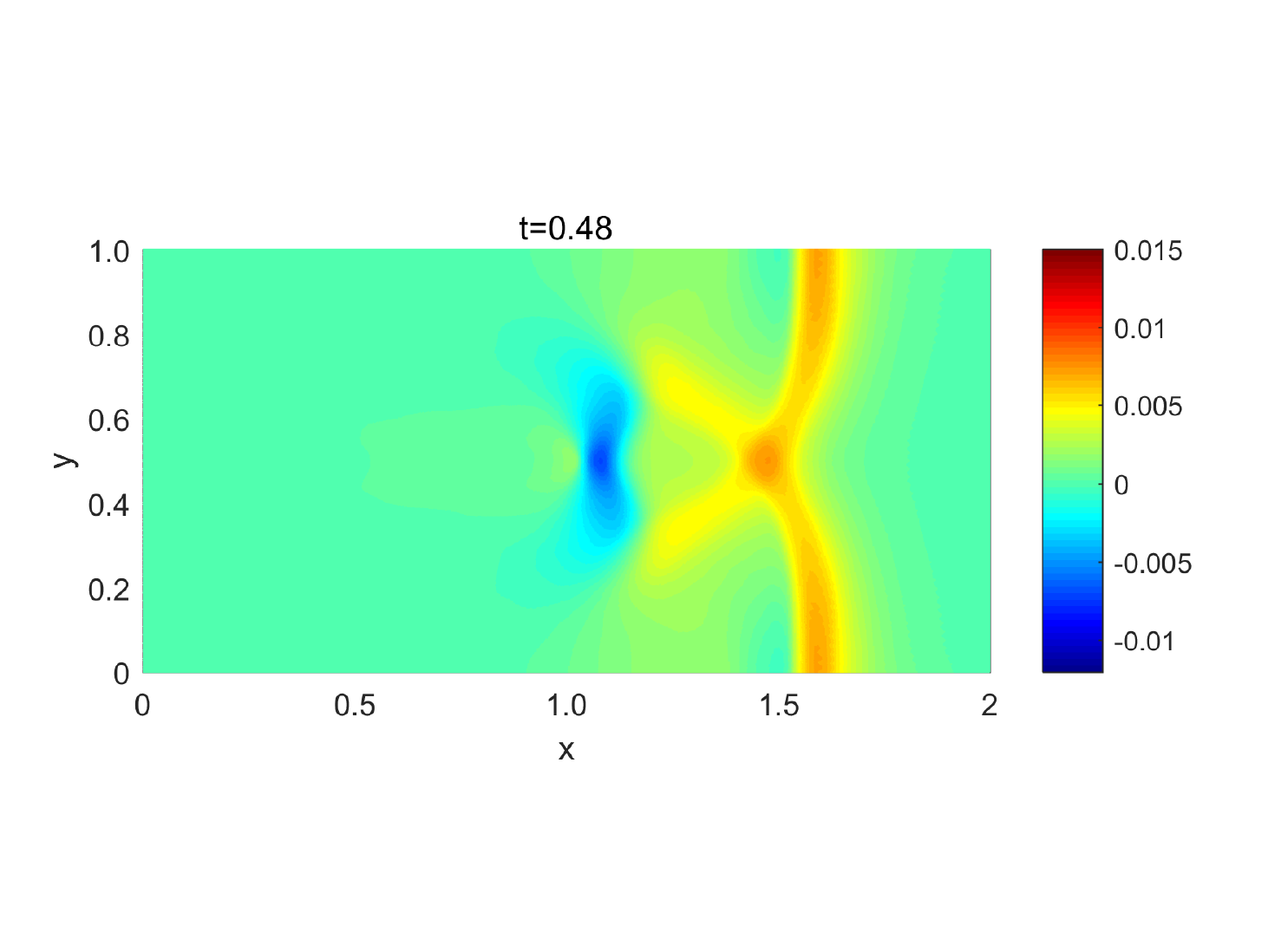}}
\subfigure[$hv$: MM $N=150\times 50\times4$]{
\includegraphics[width=0.30\textwidth, trim=15 60 15 60, clip]{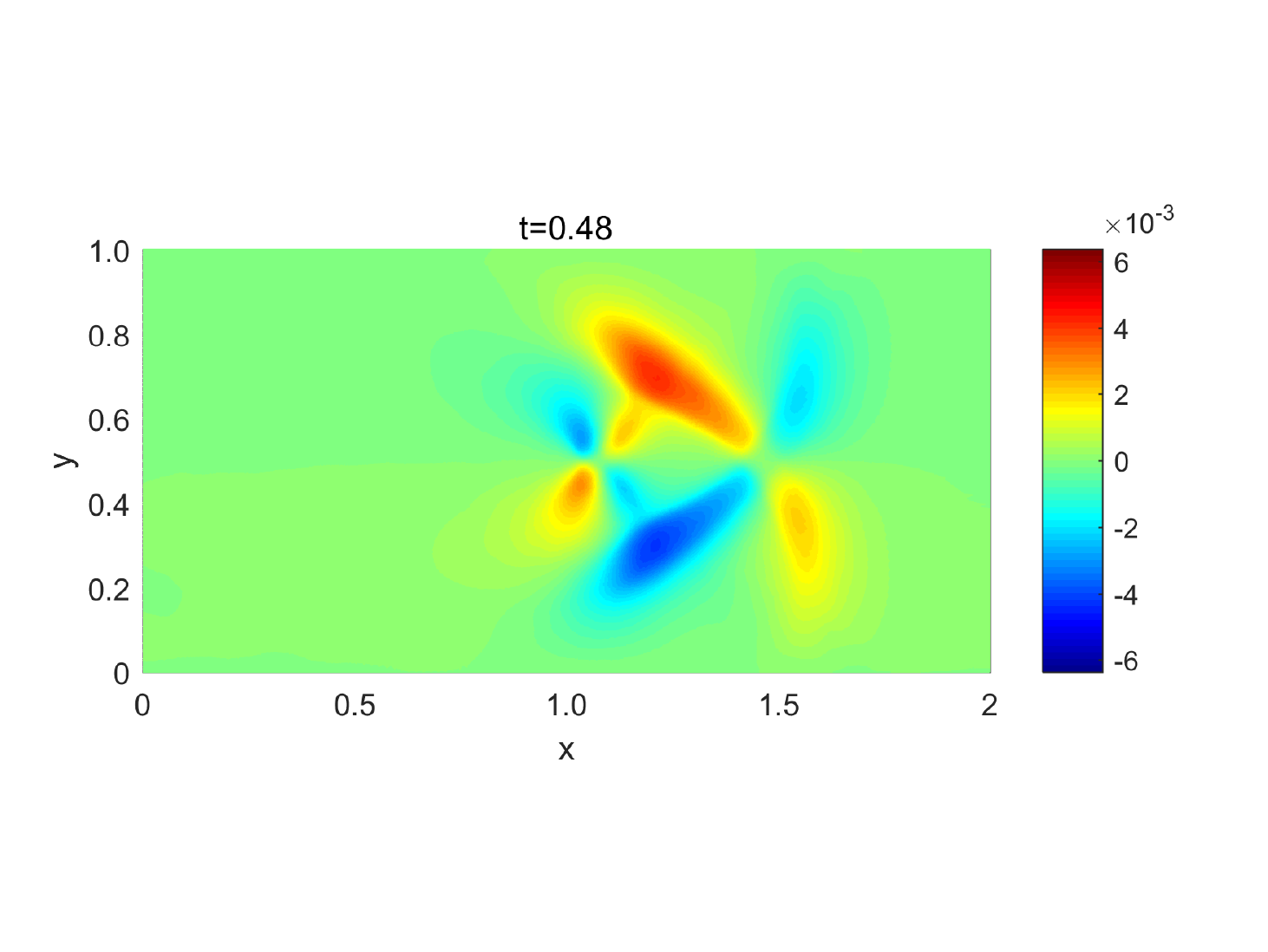}}
\subfigure[$\eta$: FM $N=150\times 50\times4$]{
\includegraphics[width=0.30\textwidth, trim=15 60 15 60, clip]{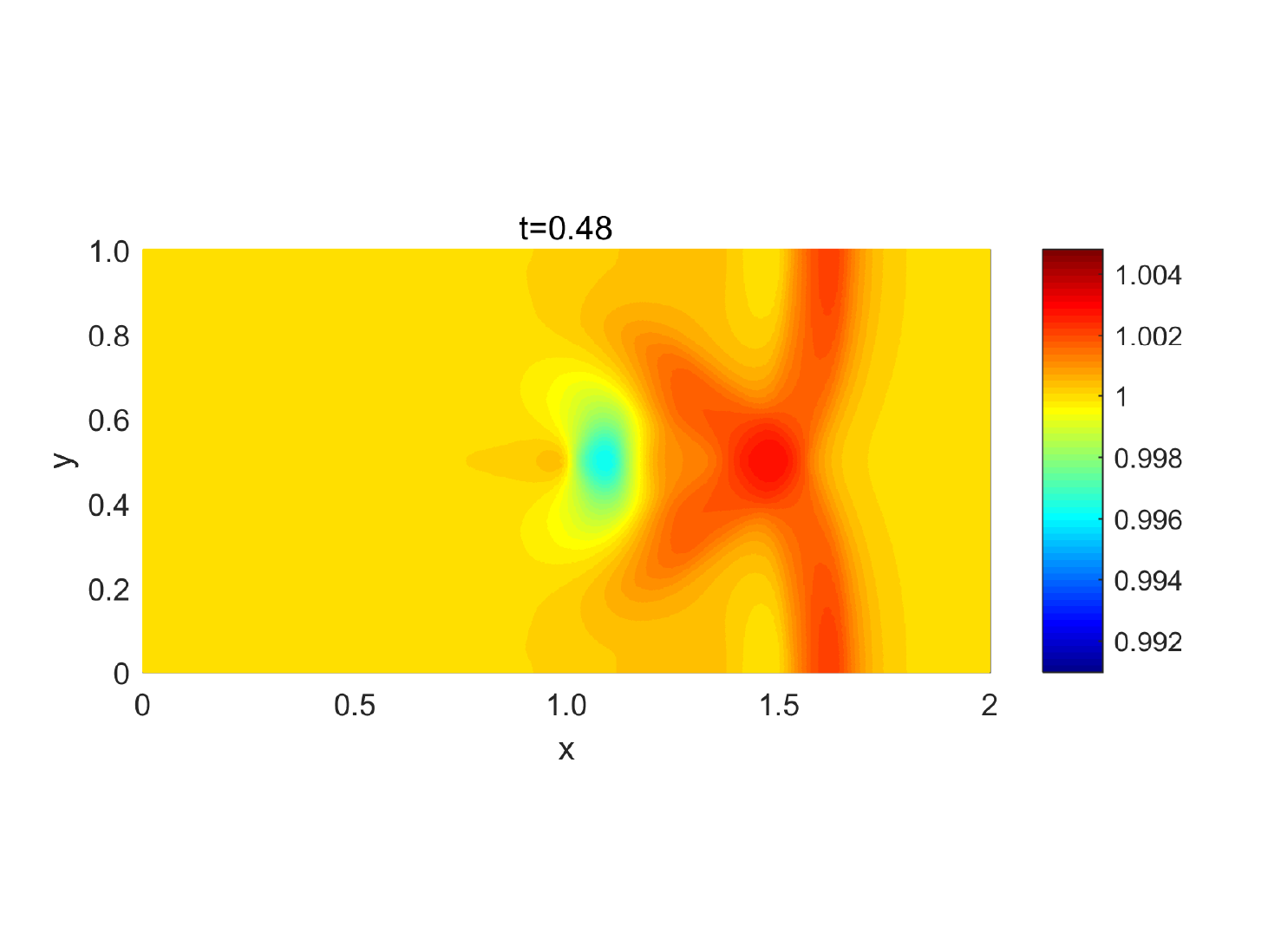}}
\subfigure[$hu$: FM $N=150\times 50\times4$]{
\includegraphics[width=0.30\textwidth, trim=15 60 15 60, clip]{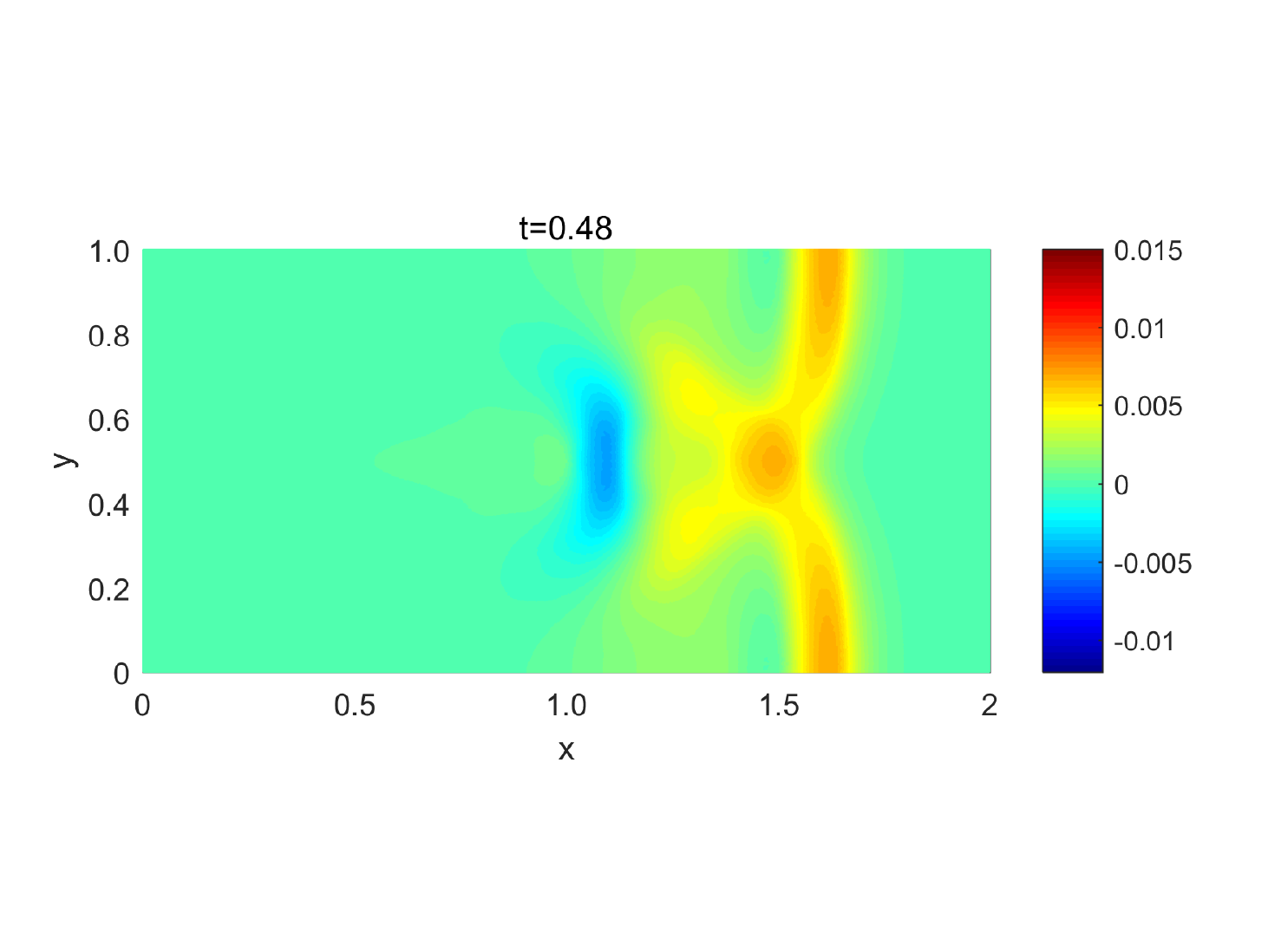}}
\subfigure[$hv$: FM $N=150\times 50\times4$]{
\includegraphics[width=0.30\textwidth, trim=15 60 15 60, clip]{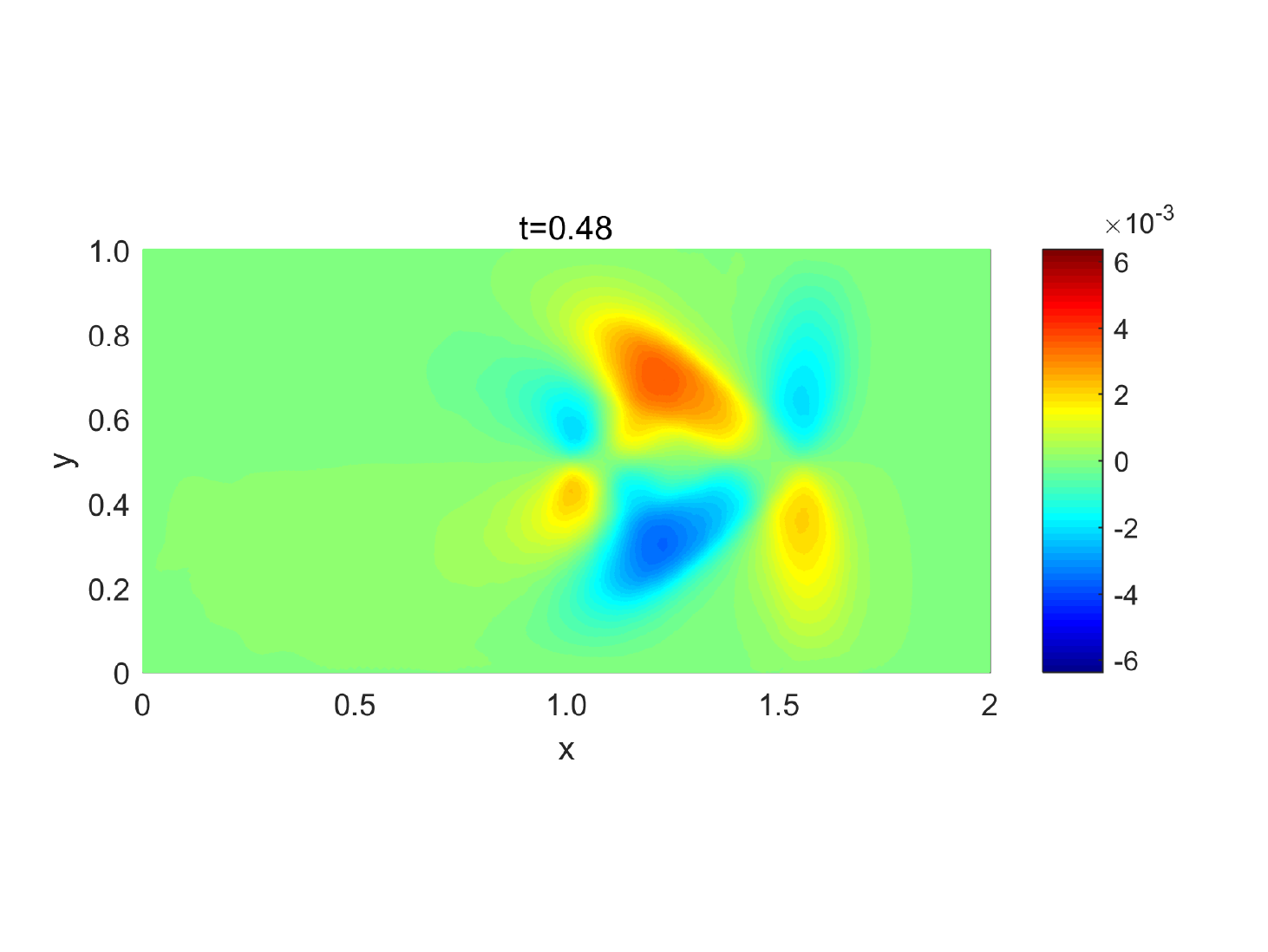}}
\subfigure[$\eta$: FM $600\times 200\times4$]{
\includegraphics[width=0.30\textwidth, trim=15 60 15 60, clip]{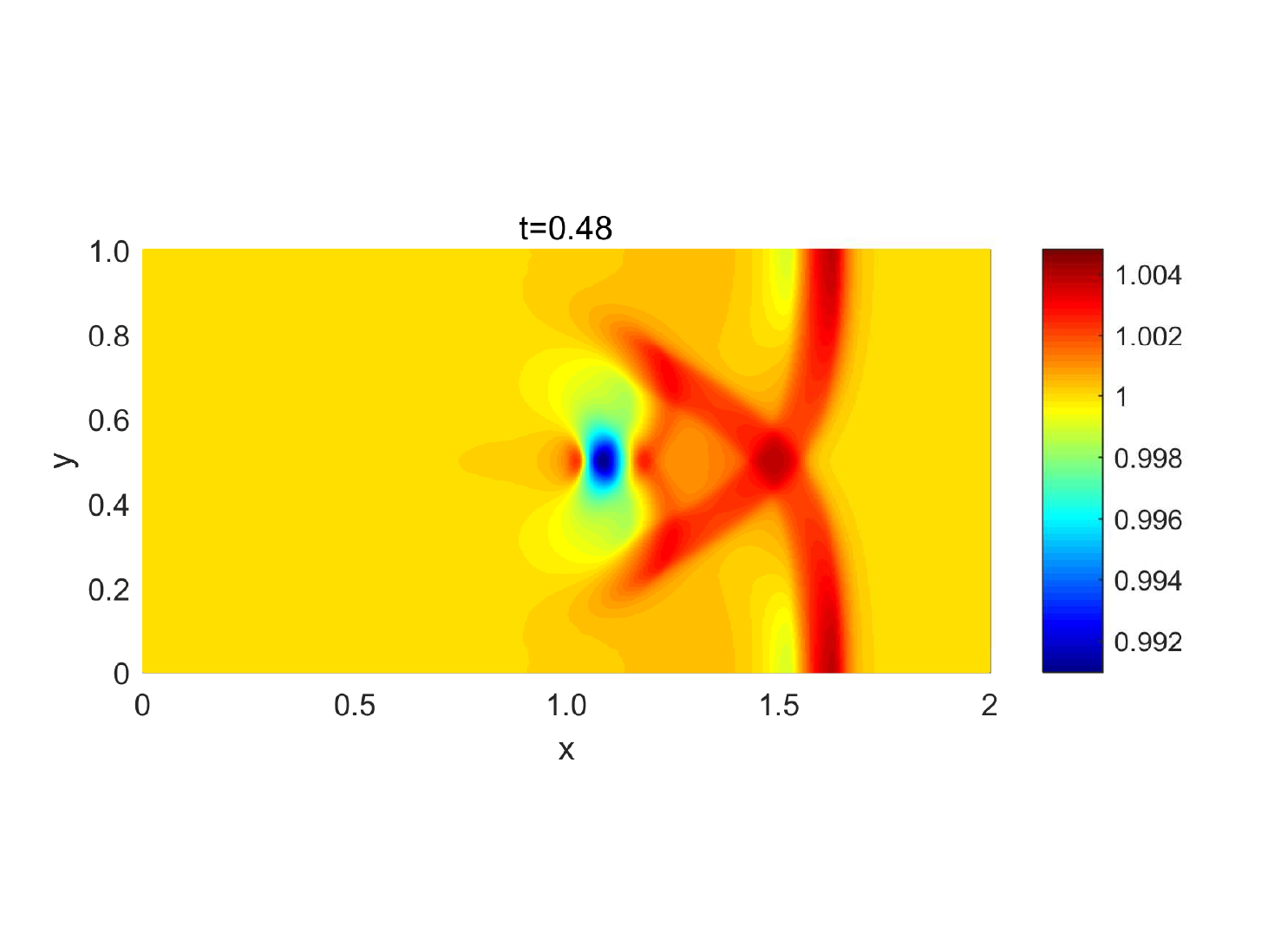}}
\subfigure[$hu$: FM $N=600\times 200\times4$]{
\includegraphics[width=0.30\textwidth, trim=15 60 15 60, clip]{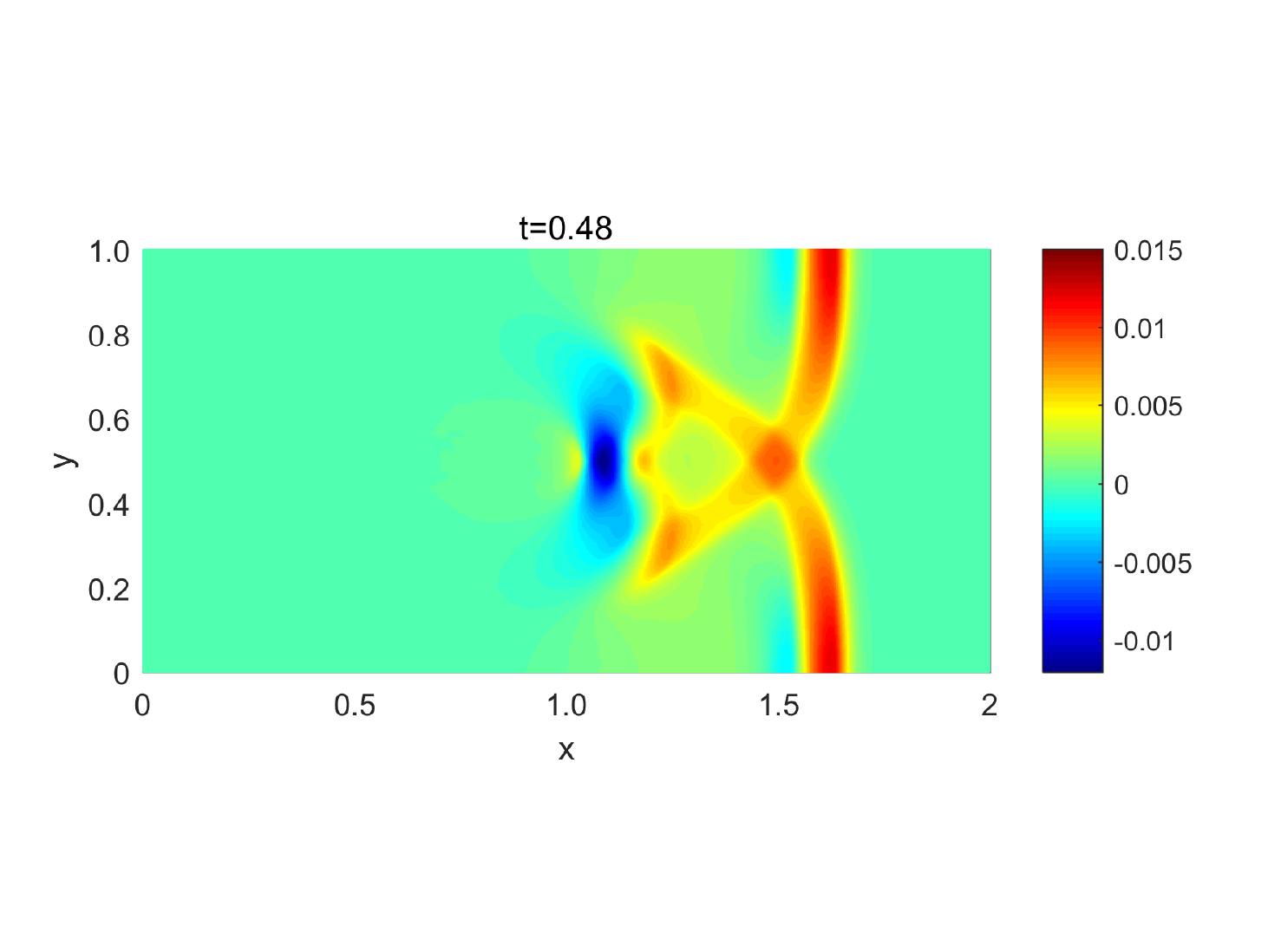}}
\subfigure[$hv$: FM $N=600\times 200\times4$]{
\includegraphics[width=0.30\textwidth, trim=15 60 15 60, clip]{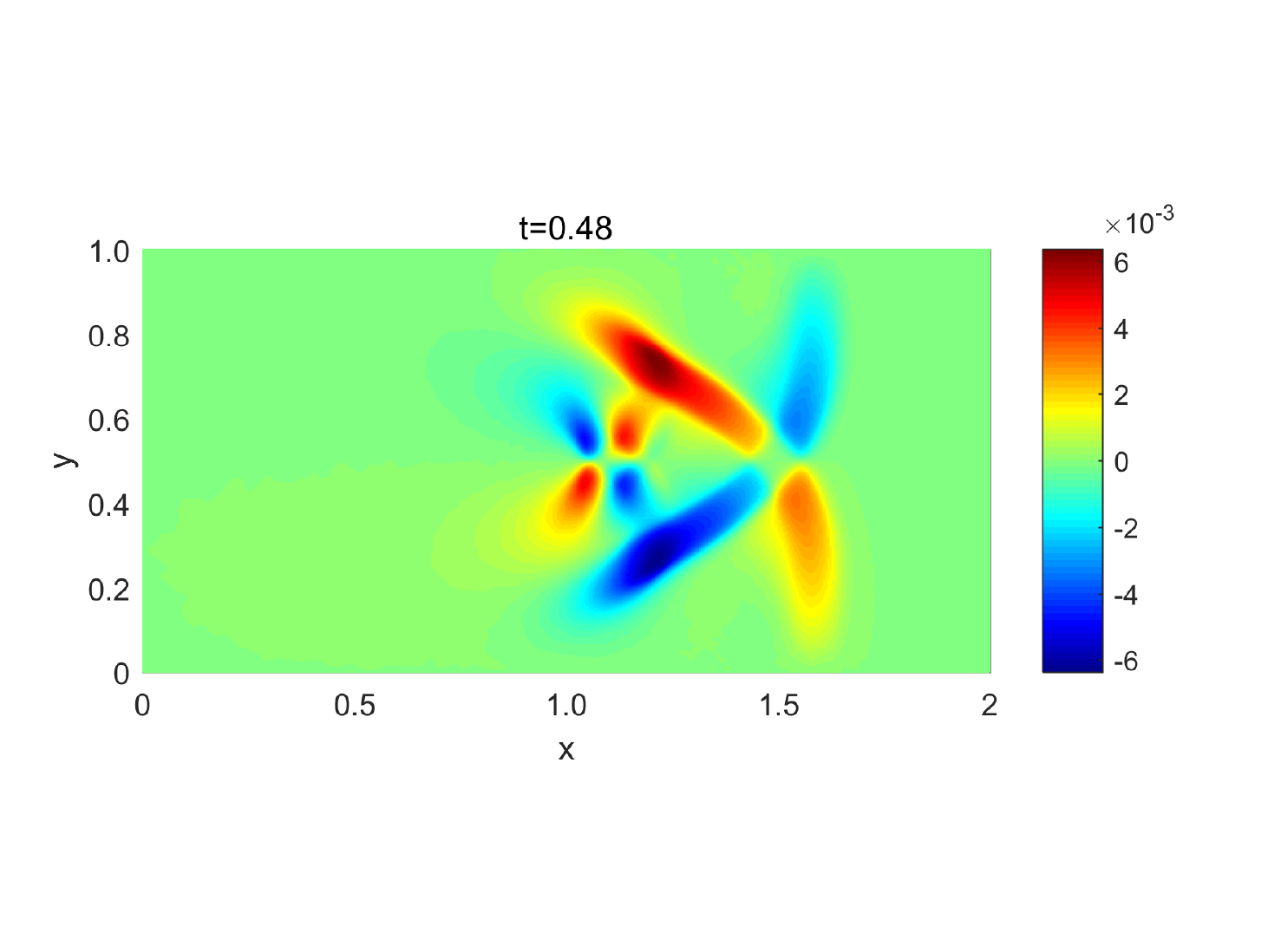}}
\caption{Continuation of Fig.~\ref{Fig:test2-2d-P2-h-hu-hv-t12}: $t = 0.48$.}
\label{Fig:test2-2d-P2-h-hu-hv-t48}
\end{figure}

\section{Conclusions}
\label{sec:conclusions}

We have developed a high-order, well-balanced, positivity-preserving quasi-Lagrange moving mesh DG (QLMM-DG)
method for the SWEs with non-flat bottom topography in the previous sections.
The method combines the quasi-Lagrange moving mesh DG method
\cite{Luo-Huang-Qiu-2019JCP,Zhang-Cheng-Huang-Qiu-2020CiCP} with
the hydrostatic reconstruction technique \cite{Audusse-etal-2004Siam,Xing-Shu-2006CiCP,Xing-Zhang-2013JSC}
and a change of unknown variables to achieve the well-balance property.
Specifically, we use the new variables $(\eta= h + B,\, hu,\, hv)$ instead of
the original ones $(h,\, hu,\, hv)$ and rewrite the flux in a special form (\ref{swe-2d-FUh}) where
some $h$ are replaced by $\eta$ and the others remain the same.
In the construction of the DG numerical flux, the value of $h$ is modified using
the hydrostatic reconstruction technique whereas $\eta$ stays unmodified.
It has been shown that the method, in both semi-discrete and fully discrete forms, preserves
the lake-at-rest steady-state solutions while maintaining the high-order accuracy of DG methods.
It has also been shown that a QLMM-DG scheme can be developed based on the SWEs
in the original variables $(h,\, hu,\, hv)$ but it is well-balanced only in semi-discrete form.

It is worth pointing out that the bottom topography $B$ needs to be updated on the new mesh
at each time step. In the rezoning moving mesh DG method recently developed
in \cite{Zhang-Huang-Qiu-2020arXiv}, it is required that $B$ be updated using the same scheme
as that for the flow variables to attain the well-balance property. This makes the choice
of the scheme for updating $B$ limited. A DG-interpolation scheme \cite{Zhang-Huang-Qiu-2019arXiv}
has been used in \cite{Zhang-Huang-Qiu-2020arXiv} for the purpose.
In contrast, there is no constraint on the choice of the scheme for updating $B$
in the current QLMM-DG method.
We have used $L^2$-projection for updating $B$ in our computation
since it is straightforward and economic to implement.

It should be emphasized that the water depth should be kept nonnegative in the computation.
Following \cite{Xing-Zhang-Shu-2010,Xing-Zhang-2013JSC}, we use a linear scaling positivity-preserving limiter \cite{Liu-Osher1996,ZhangShu2010,ZhangXiaShu2012} to ensure the nonnegativity of the water depth.
To recover the well-balance property violated by the PP limiter, a high-order correction
is made to the approximation of the bottom topography according to the modifications
in the water depth due to the PP limiting; see (\ref{B-update-2}).

The numerical results for a selection of one- and two-dimensional examples have been presented
to demonstrate the well-balance and positive-preserving properties and high-order accuracy of the QLMM-DG method.
They have also shown that the method works well for the lake-at-rest steady state and its perturbations
and is able to adapt the mesh according to structures in the flow and bottom topography.

\appendix
\section{The MMPDE moving mesh method}
\label{sec:mmpde}

In this appendix we describe the generation of the new physical mesh $\mathcal{T}^{n+1}_{h}$
from the old one $\mathcal{T}^{n}_{h}$ using the MMPDE moving mesh method \cite{Huang-etal-1994Siam,Huang-Sun-2003JCP,Huang-Russell-2011}.
We use the $\bm{\xi}$-formulation of the method and its new implementation
proposed in \cite{Huang-Kamenski-2015JCP}.

For mesh generation purpose, we introduce a computational mesh
$\mathcal{T}_c$ with the vertices $\bm{\xi}_1, \;...,\;\bm{\xi}_{N_v}$ and a physical mesh
$\mathcal{T}_h$ with the vertices $\bm{x}_1, \;...,\;\bm{x}_{N_v}$ which can be viewed
as deformations of the mesh $\mathcal{T}^{n}_{h}$. These two meshes serve as intermediate variables.
We also assume that a reference computational mesh
$\hat{\mathcal{T}}_c = \{\hat{ \bm{\xi}}_1,\; ...,\; \hat{ \bm{\xi}}_{N_v} \}$ has been given.
This mesh is kept fixed in the computation and should be chosen as uniform as possible.
Often it can be taken as the initial physical mesh.

A key idea of the MMPDE method is to view any nonuniform mesh as a uniform one
in some Riemannian metric \cite{Huang-Russell-2011}.
The metric tensor $\mathbb{M}$,  a symmetric and uniformly positive definite
matrix-valued function defined on $\mathcal{D}$, provides the information
needed for determining the size, shape, and orientation of the mesh elements throughout the domain.
A choice of $\mathbb{M}$ has been given in \S\ref{sec:numerical-results}.

Since $\mathcal{T}_h$ and $\mathcal{T}_c$ can be viewed
as deformations of $\mathcal{T}^{n}_{h}$,
for any element $K\in \mathcal{T}_h$, there exits an element $K_c\in \mathcal{T}_c$ corresponding to $K$.
Denote the affine mapping from $K_c$ to $K$ as $F_K$ and its Jacobian matrix as $F'_K$.
It is known \cite{Huang-Russell-2011}
that any mesh $\mathcal{T}_h$ which is uniform
in the metric $\mathbb{M}$ in reference to $\mathcal{T}_c$, satisfies
\begin{align}
\label{ec}
& |K|\sqrt{\det(\mathbb{M}_K)}=\frac{\sigma_h|K_c|}{|\mathcal{D}_c|},&\quad \forall K\in \mathcal{T}_h
\\
\label{al}
& \frac{1}{d}\hbox{tr}\big{(} (F'_K)^{-1}\mathbb{M}_K^{-1}(F'_K)^{-T}\big{)}
= \hbox{det}\big{(}
(F'_K)^{-1}\mathbb{M}_K^{-1}(F'_K)^{-T}\big{)}^{\frac{1}{d}},&\quad \forall K \in \mathcal{T}_h
\end{align}
where $d$ is the dimension of the domain ($d=2$ in two dimensions), $\hbox{tr}(\cdot)$ and $\hbox{det}(\cdot)$
denote the trace and determinant of a matrix, respectively,
$\mathbb{M}_K$ is the average of $\mathbb{M}$ over $K$, and
\[
|\mathcal{D}_c|=\sum\limits_{K_c\in\mathcal{T}_c}|K_c|,\quad \sigma_h
=\sum\limits_{K\in\mathcal{T}_h}|K|\hbox{det}(\mathbb{M}_K)^{\frac{1}{2}} .
\]
Condition \eqref{ec} is called the equidistribution condition that determines the size of elements through
the metric tensor $\mathbb{M}$ and requires that all the elements have the same size in the metric tensor $\mathbb{M}$.
Condition \eqref{al} is called the alignment condition that determines the shape and orientation of $K$ through $\mathbb{M}$ and the shape of $K_c$ and requires that $K$, when measured in the metric $\mathbb{M}_K$, be similar to $K_c$ measured in the Euclidean metric.
A mesh energy function, with its minimization resulting in a mesh satisfying the equidistribution and
alignment conditions as closely as possible, is given by
\begin{equation}
\label{energy}
\begin{split}
I_h(\mathcal{T}_h;\mathcal{T}_c)
=&\frac{1}{3}\sum_{K\in\mathcal{T}_h}|K|\hbox{det}(\mathbb{M}_K)^{\frac{1}{2}}\big{(}
\hbox{tr}((F'_K)^{-1}\mathbb{M}^{-1}_K(F'_K)^{-T})\big{)}^{\frac{3 d}{4}}
\\&+\frac{1}{3} d^{\frac{3 d}{4}}\sum_{K\in\mathcal{T}_h}|K|\hbox{det}(\mathbb{M}_K)^{\frac{1}{2}}
\left (\hbox{det}(F'_K) \hbox{det}(\mathbb{M}_K)^{\frac{1}{2}} \right )^{-\frac{3}{2}},
\end{split}
\end{equation}
which is a Riemann sum of a continuous functional developed in \cite{Huang-Russell-2011}.

For a given mesh $\mathcal{T}_h^{n}$, we want to find a new mesh $\mathcal{T}_h^{n+1}$ by minmimizing $I_h$.
Notice that $I_h(\mathcal{T}_h,\mathcal{T}_c)$ is a function of $\mathcal{T}_c$ and $\mathcal{T}_h$
or a function of their vertices.
We take $\mathcal{T}_h$ as $\mathcal{T}_h^{n}$,
minimize $I_h(\mathcal{T}_h^{n},\mathcal{T}_c)$ with respect to $\mathcal{T}_c$
(and denote the final mesh as $\mathcal{T}_c^{n+1}$),
and obtain $\mathcal{T}_h^{n+1}$ through the relation between
$\mathcal{T}_h^{n}$ and $\mathcal{T}_c^{n+1}$.
The minimization of $I_h(\mathcal{T}_h^{n},\mathcal{T}_c)$ is carried out by solving
the mesh equation defined as the gradient system of the energy function (the MMPDE approach), i.e.,
\begin{equation}\label{MM}
\begin{split}
\frac{d \bm{\xi}_i }{dt}
=-\frac{\hbox{det}(\mathbb{M}(\bm{x_i}))^{\frac{1}{2}} }{\tau}
\Big{(}\frac{\partial I_h}{\partial \pmb{\xi}_i}\Big{)}^T,
\quad i=1,...,N_v
\end{split}
\end{equation}
where ${\partial I_h }/{\partial \bm{\xi}_i}$ is considered as a row vector and
$\tau>0$ is a parameter used to adjust the response time of mesh movement to
the changes in $\mathbb{M}$.
We take $\tau=0.1 N^{-\frac{1}{d}}$ in our computation.
Define a function $G$ associated with the energy \eqref{energy} as
\begin{equation}\label{G}
G(\mathbb{J},\hbox{det}(\mathbb{J}))=
\frac{1}{3}\hbox{det}(\mathbb{M}_{K})^{\frac{1}{2}}
(\hbox{tr}(\mathbb{J}\mathbb{M}_{K}^{-1}\mathbb{J}^T))^{\frac{3 d}{4}}
+\frac{1}{3} d^{\frac{3 d}{4}}\hbox{det}(\mathbb{M}_{K})^{\frac{1}{2}}
\left (\frac{\hbox{det}(\mathbb{J})}{\hbox{det}(\mathbb{M}_{K})^{\frac{1}{2}}} \right )^{\frac{3}{2}},
\end{equation}
where $\mathbb{J}=(F'_K)^{-1} = E_{K_c}E_K^{-1}$
and $E_K=[\bm{x}_1^K-\bm{x}_0^K,\;...,\; \bm{x}_d^K-\bm{x}_0^K]$ and
$E_{K_c}=[\bm{\xi}_1^K - \bm{\xi}_0^K,\;...,\; \bm{\xi}_d^K -\bm{\xi}_0^K ]$ are the edge matrices of $K$ and $K_c$, respectively.
By scalar-by-matrix differentiation, we can obtain the derivatives of $G$ with respect
to $\mathbb{J}$ and $\hbox{det}(\mathbb{J})$ \cite{Huang-Kamenski-2015JCP} as
\begin{align}
&\frac{\partial G}{\partial\mathbb{J}}=
\frac{d}{2}\hbox{det}(\mathbb{M}_{K})^{\frac{1}{2}}
(\hbox{tr}(\mathbb{J}\mathbb{M}_K^{-1}\mathbb{J}^T))^{\frac{3 d}{4}-1}\mathbb{M}_K^{-1}\mathbb{J}^T,
\label{partial-J}
\\
&\frac{\partial G}{\partial \hbox{det}(\mathbb{J})}=
\frac{1}{2} d^\frac{3 d}{4}\hbox{det}(\mathbb{M}_{K})^{-\frac{1}{4}}\hbox{det}(\mathbb{J})^{\frac{1}{2}}
\label{partial-detJ}.
\end{align}
Using these formulas, we can rewrite \eqref{MM} as
\begin{equation}\label{xim}
\begin{split}
\frac{d\bm{\xi}_i}{dt}=
\frac{\hbox{det}(\mathbb{M}(\bm{x_i}))^{\frac{1}{2}} }{\tau}
\sum_{K\in\omega_i}|K|\bm{v}^K_{i_K},
\quad i=1,...,N_v
\end{split}
\end{equation}
where $\omega_i$ is the element patch associated with the vertex $\bm{x}_i$,
$i_K$ is the local index of $\bm{x}_i$ on $K$,
and $\bm{v}^K_{i_K}$ is the local velocity contributed by the element $K$ to the vertex $i_K$.
The local velocities associated with $K$ are given by
\begin{equation}\label{vjk}
\begin{split}
\left[
  \begin{array}{c}
    ( \bm{v}_1^K  )^T  \\
    ( \bm{v}_2^K  )^T   \\
    \vdots\\
    ( \bm{v}_d^K  )^T   \\
   \end{array}
 \right]
=
- E_K^{-1}\frac{\partial G }{\partial \mathbb{J} }
- \frac{\partial G}{\partial \hbox{det}(\mathbb{J})}\frac{ \hbox{det}( E_{K_c} )}
{\hbox{det}(E_K)} E_{K_c}^{-1},
\quad
\bm{v}^K_0=-\sum_{i=1}^d\bm{v}^K_{i} .
\end{split}
\end{equation}
We emphasize that the velocities for the boundary nodes must be modified properly.
For example, they should be set to be zero for the corner vertices.
For other boundary vertices, the velocities should be modified such that their normal components along the domain boundary are zeros so they slide only along the boundary and do not move out of the domain.

Starting with the reference computational mesh $\hat{\mathcal{T}}_c$ as the initial mesh,
the mesh equation \eqref{xim} is integrated over a physical time step.
The obtained new mesh is denoted by $\mathcal{T}_c^{n+1}$.
Notice that $\mathcal{T}_h^{n}$ is kept fixed during the integration
and forms a correspondence with $\mathcal{T}_c^{n+1}$. We denote this correspondence
formally by $\mathcal{T}_h^{n}=\Phi_h(\mathcal{T}_c^{n+1})$ with $\bm{x}_i^n = \Phi_h(\bm{\xi}_i^{n+1})$,
$i = 1, ..., N_v$.
Then the new physical mesh $\mathcal{T}_h^{n+1}$ is defined
 as $\mathcal{T}_h^{n+1}=\Phi_h(\hat{\mathcal{T}}_c)$ with $\bm{x}_i^{n+1} = \Phi_h(\hat{\bm{\xi}}_i)$,
$i = 1, ..., N_v$. Notice that $\mathcal{T}_h^{n+1}$ can be computed using linear interpolation.


\end{document}